\documentclass[a4paper,11pt,reqno]{amsart}
\usepackage[utf8]{inputenc}
\usepackage[T1]{fontenc}
\usepackage{lmodern}
\usepackage[english]{babel}
\usepackage{amsmath,a4wide}
\usepackage{xfrac}
\usepackage{esint}
\usepackage{stmaryrd,mathrsfs,bm,amsthm,mathtools,yfonts,amssymb,color,braket,booktabs,graphicx,graphics,amsfonts}
\usepackage{latexsym,microtype,indentfirst,hyperref}
\usepackage{xcolor}
\usepackage{courier}
\usepackage[colorinlistoftodos]{todonotes}

\begingroup
\newtheorem{theorem}{Theorem}[section]
\newtheorem{lemma}[theorem]{Lemma}
\newtheorem{proposition}[theorem]{Proposition}
\newtheorem{corollary}[theorem]{Corollary}
\endgroup

\theoremstyle{definition}
\newtheorem{definition}[theorem]{Definition}
\newtheorem{remark}[theorem]{Remark}
\newtheorem{ipotesi}[theorem]{Assumption}

\setcounter{section}{-1}
\numberwithin{equation}{section}
\numberwithin{subsection}{section}

\interfootnotelinepenalty=10000

%--------------MACRO PER CAMPI NUMERICI-----------------

\newcommand{\Na}{\mathbb{N}} %naturali
\newcommand{\Z}{\mathbb{Z}} %interi
\newcommand{\R}{\mathbb{R}} %reali
\newcommand{\C}{\mathbb{C}} %complessi
\newcommand{\bbD}{\mathbb{D}} %disc 
\newcommand{\Sf}{\mathbb{S}} %sphere

%--------------MACRO PER MANIFOLDS---------------------

\newcommand{\M}{\mathcal{M}} %manifold
\newcommand{\T}{\mathcal{T}} %tangent bundle
\newcommand{\N}{\mathcal{N}} %conflict: manifold or normal bundle

%---------------MACRO PER GEOMETRIC OBJECTS-------------

\newcommand{\K}{\mathcal{K}} %cubic complex
\newcommand{\p}{\mathbf{p}} %projection
\newcommand{\U}{\mathbf{U}} %tubular neighborhood
\newcommand{\Rm}{\mathrm{Rm}} %Riemann tensor
\newcommand{\Ric}{\mathrm{Ric}} %Ricci tensor
\newcommand{\Ri}{\mathcal{R}} %partial Ricci operator

%----------------MACRO PER FUNZIONI MULTIVALORE-----------

\newcommand{\A}{\mathcal{A}} %Q-punti
\newcommand{\G}{\mathcal{G}} %distanza su A_Q
 %palla in A_Q
\newcommand{\bfeta}{\boldsymbol{\eta}} %media
\newcommand{\Dir}{\mathrm{Dir}} %multivalued Dirichlet functional
\newcommand{\Jac}{\mathrm{Jac}} %multivalued Jacobi functional
\newcommand{\reg}{\mathrm{reg}} %regular set
\newcommand{\sing}{\mathrm{sing}} %singular set

%----------------MACRO PER CORRENTI-------------------------

 %general currents
\newcommand{\mass}{\mathbb{M}} %mass of a current
 %integer rectifiable currents
 %integral currents

%--------------MACRO PER MEASURE THEORETICAL OBJECTS------------

\newcommand{\Ha}{\mathcal{H}} %Hausdorff distance
\newcommand{\dHa}{{\rm d}\mathcal{H}}

%------------------MACRO VARIE------------------------

 %epsilon
\newcommand{\spt}{\mathrm{spt}} %support (usually of a measure)
 %support (usually of a function)
\newcommand{\dist}{\mathrm{dist}} %distance
\newcommand{\tr}{\mathrm{tr}} %trace
\newcommand{\di}{\mathrm{div}} %divergence
\newcommand{\diam}{\mathrm{diam}} %diameter
\newcommand{\B}{\mathbf{B}} %geodesic ball
\newcommand{\ddist}{\mathbf{dist}} %geodesic distance
\newcommand{\Lip}{\mathrm{Lip}} %Lipschitz
 %open nbd

%------------------COMANDI EXTRA---------------------------

\def\XXint#1#2#3{{\setbox0=\hbox{$#1{#2#3}{\int}$ }
\vcenter{\hbox{$#2#3$ }}\kern-.6\wd0}}

\newcommand{\weak}{\rightharpoonup} %weak convergence

\newcommand{\abs}[1]{\lvert#1\rvert} %absolute value
\newcommand{\Abs}[1]{\left\lvert#1\right\rvert} %large absolute value
 %norm

%--------------MACRO PER OPERATORE ---------------------

 %gradient
 %divergence (conflict?)
\DeclareMathOperator*{\esssup}{ess\,sup} %essential supremum

\title{Multiple valued Jacobi fields}

\author{Salvatore Stuvard}

\newcommand{\Addresses}{{% additional braces for segregating \footnotesize
  \bigskip
  \footnotesize

  S. S., \textsc{Universit\"at Z\"urich, Winterthurerstrasse 190, CH-8057 Z\"urich, Switzerland}
  \par\nopagebreak
  
\bigskip 
 
  \textit{E-mail address}, S. S.: \href{salvatore.stuvard@math.uzh.ch}{salvatore.stuvard@math.uzh.ch}
   
}}

\begin{document}

\begin{abstract}
We develop a multivalued theory for the stability operator of (a constant multiple of) a minimally immersed submanifold $\Sigma$ of a Riemannian manifold $\M$. We define the multiple valued counterpart of the classical Jacobi fields as the minimizers of the second variation functional defined on a Sobolev space of multiple valued sections of the normal bundle of $\Sigma$ in $\M$, and we study existence and regularity of such minimizers. Finally, we prove that any $Q$-valued Jacobi field can be written as the superposition of $Q$ classical Jacobi fields everywhere except for a relatively closed singular set having codimension at least two in the domain.

\vspace{4pt}
\noindent \textsc{Keywords:} Almgren's $Q$-valued functions; second variation; stability operator; Jacobi fields; existence and regularity.

\vspace{4pt}
\noindent \textsc{AMS subject classification (2010):} 49Q20, 35J57, 54E40, 53A10. 

\end{abstract}

\maketitle

\tableofcontents

\section{Introduction} \label{sec:intro}

Given an $m$-dimensional area minimizing integer rectifiable current $T$ in $\R^{m+n}$ and any point $x \in \spt(T) \setminus \spt(\partial T)$, it is a by now well known consequence of the monotonicity of the function $r \mapsto \frac{\| T \|(B_{r}(x))}{\omega_{m} r^{m}}$ (cf. \cite[Section 5]{Allard72}) that for any sequence of radii $\{ r_{j} \}_{j=1}^{\infty}$ with $r_{j} \downarrow 0$ there exists a subsequence $r_{j'}$ such that the corresponding blow-ups  $T_{x,r_{j'}} := (\eta_{x,r_{j'}})_{\sharp} T$ (where $\eta_{x,r}(y) := \frac{y - x}{r}$) converge to a (locally) area minimizing $m$-dimensional current $C$ which is invariant with respect to homotheties centered at the origin: such a limit current is called a \emph{tangent cone} to $T$ at $x$. If $x$ is a regular point, and thus $\spt(T)$ is a classical $m$-dimensional minimal submanifold in a neighborhood of $x$, then the cone $C$ is certainly unique, and in fact $C = Q \llbracket \pi \rrbracket$, where $\pi = T_{x}(\spt(T))$ is the tangent space to $\spt(T)$ at $x$ and $Q = \Theta(\|T\|,x)$ is the $m$-dimensional density of the measure $\| T \|$ at $x$. On the other hand, singularities do occur for area minimizing currents of arbitrary codimension as soon as the dimension of the current is $m \geq 2$: indeed, by the regularity theory developed by F. Almgren in his monumental Big Regularity Paper \cite{Almgren00} and recently revisited by C. De Lellis and E. Spadaro in \cite{DLS14, DLS13b, DLS13c}, we know that area minimizing $m$-currents in $\R^{m+n}$ may exhibit a singular set of Hausdorff dimension at most $m-2$, and that this result is sharp when $n \geq 2$ (\cite{Fed65}). Now, if $x$ happens to be singular, then not only we have no information about the limit cone, but in fact it is still an open question whether in general such a limit cone is unique (that is, independent of the approximating sequence) or not. The problem of uniqueness of tangent cones at the singular points of area minimizing currents of general dimension and codimension stands still today as one of the most celebrated of the unsolved problems in Geometric Measure Theory (cf. \cite[Problem 5.2]{openGMT}), and only a few partial answers corresponding to a limited number of particular cases are available in the literature. In \cite{BW83}, B. White showed such uniqueness for two-dimensional area minimizing currents in any codimension, building on a characterization of two-dimensional area minimizing cones proved earlier on by F. Morgan in \cite{Morgan82}. In general dimension, W. Allard and F. Almgren \cite{AA81} were able to prove that uniqueness holds under some additional requirements on the limit cone. Specifically, they have the following theorem, which is valid in the larger class of stationary integral varifolds.
\begin{theorem}[{\cite{AA81}}] \label{All-Alm}
Let $T$ be an $m$-dimensional area minimizing integer rectifiable current in $\R^{m+n}$, and let $x \in \spt(T)$ be an isolated singular point. Assume that there exists a tangent cone $C$ to $T$ at $x$ satisfying the following hypotheses:
\begin{itemize}
\item[$(H1)$] $C$ is the cone over an $(m-1)$-dimensional minimal submanifold $\Sigma$ of $\Sf^{m+n-1}$, and thus $C$ has an isolated singularity at $0$ and $\Theta(\|C\|,x) = 1$ for every $x \in \spt(C) \setminus \{0\}$;

\item[$(H2)$] all normal \emph{Jacobi fields} $N$ of $\Sigma$ in $\Sf^{m+n-1}$ are integrable, that is for every normal Jacobi field $N$ there exists a one-parameter family of minimal submanifolds of $\Sf^{m+n-1}$ having velocity $N$ at $\Sigma$.
\end{itemize}

Then, $C$ is the \emph{unique} tangent cone to $T$ at $x$. Furthermore, the blow-up sequence $T_{x,r}$ converges to $C$ as $r \downarrow 0$ with rate $r^{\mu}$ for some $\mu > 0$.
\end{theorem}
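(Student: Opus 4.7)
The plan is to use hypothesis $(H2)$ to establish a Lojasiewicz--Simon type inequality for the area functional near the link $\Sigma\subset\Sf^{m+n-1}$, and to couple it with the monotonicity formula for $T$ to obtain a differential inequality forcing polynomial decay of the blow-ups $T_{x,r}$ to a unique cone. First I would set up a graphical representation of the blow-ups: by $(H1)$, $C$ is smooth on $\R^{m+n}\setminus\{0\}$ with unit density there; monotonicity provides uniform mass bounds on $\{T_{x,r}\}_{r>0}$, and Allard's regularity theorem then implies that, for every compact annulus $A\Subset\R^{m+n}\setminus\{0\}$ and any sequence $r_j\downarrow 0$ along which $T_{x,r_j}\to C$ as varifolds, the supports $\spt(T_{x,r_j})\cap A$ are eventually graphs of smooth normal sections $u_j$ of $NC|_A$. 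By the homogeneity of $C$ each $u_j$ is determined by its restriction $\varphi_j$ to $\Sigma$, so the blow-up analysis reduces to studying a family of normal sections $\varphi_r\in\Gamma(N\Sigma)$ as $r\downarrow 0$.

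Next I would exploit $(H2)$ to set up the Lojasiewicz--Simon inequality. The second variation of the area at $\Sigma\subset\Sf^{m+n-1}$ is governed by the self-adjoint elliptic Jacobi operator $\mathcal{L}_\Sigma$ on $\Gamma(N\Sigma)$, and $(H2)$ asserts precisely that every element of $\ker(\mathcal{L}_\Sigma)$ is the initial velocity of a one-parameter family of minimal immersions. A standard implicit function theorem argument on H\"older or Sobolev spaces of normal sections then shows that the set of minimal immersions in $\Sf^{m+n-1}$ near $\Sigma$ is a smooth finite-dimensional manifold with tangent space $\ker(\mathcal{L}_\Sigma)$ at $\Sigma$. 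With the critical set locally straightened in this way, the Hessian of the area functional is nondegenerate transversally and has a spectral gap; Taylor expansion combined with this gap yields the Lojasiewicz--Simon inequality with optimal exponent $\theta=\tfrac12$, namely
\[
\abs{A(\Sigma_\varphi)-A(\Sigma)}\ \leq\ C\,\norm{H(\Sigma_\varphi)}_{L^{2}(\Sigma_\varphi)}^{2},
\]
for every $\varphi$ in a suitable $W^{2,p}$-neighborhood of $0\in\Gamma(N\Sigma)$, where $\Sigma_\varphi$ is the graph of $\varphi$ over $\Sigma$ and $H$ its mean curvature vector.

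Finally I would close the argument with the monotonicity formula. The classical identity
\[
\frac{d}{dr}\!\left(\frac{\|T\|(B_r(x))}{\omega_m r^m}\right)\ =\ \frac{1}{r^{m+1}}\int_{\spt(T)\cap B_r(x)}\frac{\abs{(y-x)^\perp}^{2}}{\abs{y-x}^{2}}\,d\|T\|(y)
\]
quantifies the failure of $T_{x,r}$ to be a cone via the derivative of the monotone density $\Theta(r):=\omega_m^{-1}r^{-m}\|T\|(B_r(x))$; expressed in the graphical parameters introduced above, the right-hand side controls from below the $L^2(\Sigma)$-norm of the radial derivative of $\varphi_r$, which in turn, via the minimal surface equation for $\spt(T)$ in $\R^{m+n}$, bounds $\norm{H(\Sigma_{\varphi_r})}_{L^{2}}^{2}$. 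Plugging this into the Lojasiewicz--Simon inequality produces a differential inequality of the form
\[
\Theta(r)-\Theta(0)\ \leq\ C\,r\,\Theta'(r),
\]
whose integration after the change of variables $t=-\log r$ gives $\Theta(r)-\Theta(0)\leq C r^{2\mu}$ for some $\mu>0$; a standard bootstrap upgrades this to $\ddist(T_{x,r},C)\leq C r^{\mu}$, selecting the unique tangent cone $C$ and yielding the claimed rate.

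The technical heart is the Lojasiewicz--Simon inequality: without $(H2)$ the area functional is genuinely degenerate along $\ker(\mathcal{L}_\Sigma)$, and only a weaker inequality with exponent $\theta<\tfrac12$ can be expected, requiring analyticity hypotheses and the later machinery of L.~Simon. The role of $(H2)$ is precisely to convert the kernel of $\mathcal{L}_\Sigma$ into a smooth manifold of honest critical points, so that the area excess is controlled by the squared $L^{2}$-norm of the mean curvature transversally. The other delicate piece is the quantitative matching between this variational inequality on the link and the monotonicity formula in the ambient space: identifying the gradient of the area functional with the radial variation of $T$ captured by monotonicity is where the concrete computations concentrate, and is what ultimately converts a static Lojasiewicz inequality into a genuine decay estimate in the scale parameter $r$.
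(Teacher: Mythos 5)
The paper does not actually prove Theorem~\ref{All-Alm}: it is quoted directly from the literature (\cite{AA81}), so there is no internal proof to compare against. I will therefore assess the proposal on its own terms and against the cited source.

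Your strategy is the Lojasiewicz--Simon approach, which is historically L.~Simon's method from \cite{Sim83}, not Allard--Almgren's. The original \cite{AA81} argument is a more elementary three-annuli/linearization scheme: write the current as a graph over the cone $C$ on dyadic annuli, linearize the minimal surface equation around $C$ to get an approximate Jacobi equation in the radial variable, expand in eigenfunctions of $\mathcal{L}_\Sigma$ on the link, and show one-step improvement of closeness to a cone after adjusting the comparison cone by a Jacobi field of $\Sigma$; hypothesis $(H2)$ is invoked precisely to convert ``adjust by a Jacobi field'' into ``move to a genuinely nearby minimal cone,'' so that the iteration can be closed. Your proposal packages the same input (integrability of Jacobi fields) in Morse--Bott form to obtain a Lojasiewicz inequality with exponent $\sfrac12$. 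Both routes are correct; the Lojasiewicz--Simon route is more conceptual and scales better (it is the one Simon generalized to drop $(H2)$), while the Allard--Almgren iteration is more self-contained and elementary, avoiding the infinite-dimensional Lyapunov--Schmidt reduction and the abstract gradient-flow comparison.

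Two steps in your sketch are substantively glossed over. First, ``the radial derivative of $\varphi_r$ bounds $\norm{H(\Sigma_{\varphi_r})}_{L^2}^2$'' is not a direct consequence of the minimal surface equation: minimality of $\spt(T)$ couples $\partial_r^2\varphi$, $r^{-1}\partial_r\varphi$, and $r^{-2}\mathcal{L}_\Sigma\varphi$, and the genuine content is an ODE-in-$r$ analysis of the resulting system after projecting out the kernel of $\mathcal{L}_\Sigma$ (this is where the hard work of \cite{AA81} is concentrated). Second, graphicality from Allard's theorem is only available on compact annuli $A\Subset\R^{m+n}\setminus\{0\}$; propagating the estimates across all scales down to $r\to 0$, and showing that the density excess $\Theta(r)-\Theta(0)$ controls $\ddist(T_{x,r},C)$ in the relevant norm, requires a non-trivial gluing argument that is not automatic. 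These are exactly the technical points the cited references spend most of their effort on, so while your high-level plan is sound, it does not yet constitute a proof.
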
 

The hypotheses $(H1)$ and $(H2)$ are however quite restrictive. Allard and Almgren were able to show that $(H2)$ holds in case $\Sigma$ is the product of two lower dimensional standard spheres (of appropriate radii to ensure minimality), since in this case all normal Jacobi fields of $\Sigma$ in $\Sf^{m+n-1}$ arise from isometric motions of $\Sf^{m+n-1}$. It seems however rather unlikely that the condition can hold for any general $\Sigma$ admitting normal Jacobi fields other than those generated by rigid motions of the sphere. In \cite{Sim83}, L. Simon was able to prove Theorem \ref{All-Alm} dropping the hypothesis $(H2)$, with a quite different approach with respect to \cite{AA81} and purely PDE-based techniques. Not much has been done, instead, in the direction of removing the hypothesis $(H1)$: to our knowledge, indeed, the only result concerning the case when a tangent cone $C$ has more than one isolated singularity at the origin is contained in L. Simon's work \cite{Sim94}, where the author proves uniqueness of tangent cones to any \emph{codimension one} area minimizing $m$-current $T$ whenever one limit cone $C$ is of the form $C = C_{0} \times \R$, with $C_{0}$ a \emph{strictly stable}, \emph{strictly minimizing} $(m-1)$-dimensional cone in $\R^{m}$ with an isolated singularity at the origin, and under additional assumptions on the Jacobi fields of $C$ and on the spectrum of the Jacobi normal operator of $C_{0}$.   

However, all the results discussed above do not cover the cases when a tangent cone has higher multiplicity: it is remarkable that uniqueness is still open even under the strong assumption that \emph{all} tangent cones to an area minimizing $m$-current $T$ ($m > 2$) at an interior singular point $x$ are of the form $C = Q \llbracket \pi \rrbracket$, where $\llbracket \pi \rrbracket$ is the rectifiable current associated with an oriented $m$-dimensional linear subspace of $\R^{m+n}$ and $Q > 1$ (cf. \cite[Section I.11(2), p. 9]{Almgren00}). 

The purpose of this work is to present a \emph{multivalued} theory of the Jacobi normal operator: we believe that such a theory may facilitate the understanding of the qualitative behaviour of the area functional near a minimal submanifold \emph{with multiplicity}, and eventually lead to a generalization of Theorem \ref{All-Alm} (and neighbouring results) to relevant cases when the condition that $\Theta(\|C\|,x) = 1$ for every $x \in \spt(C) \setminus \{0\}$ fails to hold.

In our investigation, we will make use of tools and techniques coming from the theory of \emph{multiple valued functions} minimizing the Dirichlet energy, developed by Almgren in \cite{Almgren00} and revisited by De Lellis and Spadaro in \cite{DLS11a}. A quick tutorial on the theory of multiple valued functions is contained in $\S$ \ref{ssec:Q_func}, in order to ease the reading of the remaining part of the paper. As a byproduct, the theory of multiple valued Jacobi fields will show that the regularity theory for $\Dir$-minimizing $Q$-valued functions is robust enough to allow one to produce analogous regularity results for minimizers of functionals defined on Sobolev spaces of $Q$-valued functions other than the Dirichlet energy (see also \cite{DLFS11} for a discussion about general integral functionals defined on spaces of multiple valued functions and their semi-continuity properties, and \cite{Hir16,HSV} for a regularity theory for multiple valued energy minimizing maps with values into a Riemannian manifold).

\subsection{Main results}  

Let us first recall what is classically meant by Jacobi operator and Jacobi fields. Let $\Sigma$ be an $m$-dimensional compact oriented submanifold (with or without boundary) of an $(m+k)$-dimensional Riemannian manifold $\M \subset \R^{d}$, and assume that $\Sigma$ is stationary with respect to the $m$-dimensional area functional. Then, a one-parameter family of normal variations of $\Sigma$ in $\M$ can be defined by setting $\Sigma_{t} := F_{t}(\Sigma)$, where $F_{t}$ is the flow generated by a smooth cross-section $N$ of the normal bundle $\N \Sigma$ of $\Sigma$ in $\M$ which has compact support in $\Sigma$. It is known that the second variation formula corresponding to such a family of variations can be expressed in terms of an elliptic differential operator $\mathcal{L}$ defined on the space $\Gamma(\N\Sigma)$ of the cross-sections of the normal bundle. This operator, usually called the \emph{Jacobi normal operator}, is given by $\mathcal{L} = - \Delta_{\Sigma}^{\perp} - \mathscr{A} - \mathscr{R}$, where $\Delta_{\Sigma}^{\perp}$ is the Laplacian on $\N\Sigma$, and $\mathscr{A}$ and $\mathscr{R}$ are linear transformations of $\N\Sigma$ defined in terms of the second fundamental form of the immersion $\iota \colon \Sigma \to \M$ and of a partial Ricci tensor of the ambient manifold $\M$, respectively. The notions of \emph{Morse index}, \emph{stability} and \emph{Jacobi fields}, central in the analysis of the properties of the class of minimal submanifolds of a given Riemannian manifold, are all defined by means of the Jacobi normal operator and its spectral properties (see Section \ref{sec:2nd_var} for the precise definitions and for a discussion about the most relevant literature related to the topic). In particular, Jacobi fields are defined as those sections $N \in \Gamma(\N\Sigma)$ lying in the kernel of the operator $\mathcal{L}$, and thus solving the system of partial differential equations $\mathcal{L}(N) = 0$. 

In this work, we consider instead \emph{multivalued} normal variations in the following sense. Let $\Sigma$ and $\M$ be as above, and consider, for a fixed integer $Q > 1$, a Lipschitz multiple valued vector field $N \colon \Sigma \to \A_{Q}(\R^d)$ vanishing at $\partial \Sigma$ and having the form $N = \sum_{\ell=1}^{Q} \llbracket N^{\ell} \rrbracket$, where $N^{\ell}(x)$ is tangent to $\M$ and orthogonal to $\Sigma$ at every point $x \in \Sigma$ and for every $\ell = 1,\dots,Q$. The ``flow'' of such a multiple valued vector field generates a one-parameter family $\Sigma_{t}$ of $m$-dimensional integer rectifiable currents supported on $\M$ such that $\Sigma_{0} = Q \llbracket \Sigma \rrbracket$ and $\partial \Sigma_{t} = Q \llbracket \partial \Sigma \rrbracket$ for every $t$. The second variation 
\[
\delta^{2}\llbracket \Sigma \rrbracket(N) := \frac{d^{2}}{dt^{2}} \mass(\Sigma_{t})\big|_{t=0},
\]
$\mass(\cdot)$ denoting the mass of a current, is a well-defined functional on the space $\Gamma_{Q}^{1,2}(\N\Sigma)$ of $Q$-valued $W^{1,2}$ sections of the normal bundle $\N\Sigma$ of $\Sigma$ in $\M$. We will denote such Jacobi functional by $\Jac$. Explicitly, the $\Jac$ functional is given by
\begin{equation} \label{Jac0}
\Jac(N,\Sigma) := \int_{\Sigma} \sum_{\ell=1}^{Q} \left( |\nabla^{\perp}N^{\ell}|^{2} - |A \cdot N^{\ell}|^{2} - \mathcal{R}(N^{\ell},N^{\ell}) \right) \, \dHa^{m},
\end{equation}
where $\nabla^{\perp}$ is the projection of the Levi-Civita connection of $\M$ onto $\N\Sigma$, $|A \cdot N^{\ell}|$ is the Hilbert-Schmidt norm of the projection of the second fundamental form of the embedding $\Sigma \hookrightarrow \M$ onto $N^{\ell}$ and $\mathcal{R}(N^\ell, N^\ell)$ is a partial Ricci tensor of the ambient manifold $\M$ in the direction of $N^\ell$ (see Section \ref{sec:2nd_var} for the precise definition of the notation used in \eqref{Jac0}).

Unlike the classical case, it is not possible to characterize the stationary maps of the $\Jac$ functional as the solutions of a certain Euler-Lagrange equation, and no PDE techniques seem available to study their regularity. Therefore, we develop a completely variational theory of multiple valued Jacobi fields. Hence, we give the following definition.

\begin{definition}\label{Jac_min}
Let $\Omega \subset \Sigma \hookrightarrow \M$ be a Lipschitz open set. A map $N \in \Gamma_{Q}^{1,2}(\N \Omega)$ is said to be a \emph{Jac-minimizer}, or a \emph{Jacobi $Q$-field} in $\Omega$, if it minimizes the Jacobi functional among all $Q$-valued $W^{1,2}$ sections of the normal bundle of $\Omega$ in $\M$ having the same trace at the boundary, that is
\begin{equation}
\Jac(N,\Omega) \leq \Jac(u,\Omega) \hspace{0.5cm} \mbox{ for all } u \in \Gamma_{Q}^{1,2}(\N \Omega) \mbox{ such that } u|_{\partial \Omega} = N|_{\partial \Omega}.
\end{equation}
\end{definition}

We are now ready to state the main theorems of this paper. They develop the theory of Jacobi $Q$-fields along three main directions, concerning \emph{existence}, \emph{regularity} and \emph{estimate of the singular set}.

\begin{theorem}[Conditional existence] \label{cond_ex}
Let $\Omega$ be an open and connected subset of $\Sigma \hookrightarrow \M$ with $C^{2}$ boundary. Assume that the following \emph{strict stability condition} is satisfied: the only $Q$-valued Jacobi field $N$ in $\Omega$ such that $N|_{\partial\Omega} = Q\llbracket 0 \rrbracket$ is the null field $N_{0} \equiv Q \llbracket 0 \rrbracket$. Then, for any $g \in \Gamma_{Q}^{1,2}(\N\Omega)$ such that $g|_{\partial\Omega} \in W^{1,2}(\partial\Omega,\A_{Q}(\R^d))$ there is a Jacobi $Q$-field $\overline{N}$ such that $\overline{N}|_{\partial \Omega} = g|_{\partial\Omega}$.
\end{theorem}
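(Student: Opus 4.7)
The plan is to apply the direct method of the calculus of variations to the admissible class of $Q$-valued sections $N \in \Gamma_{Q}^{1,2}(\N\Omega)$ satisfying $N|_{\partial\Omega} = g|_{\partial\Omega}$. Two ingredients must be verified: weak $W^{1,2}$ lower semicontinuity of $N \mapsto \Jac(N,\Omega)$, and coercivity of the same functional on this class. Only the second step uses the strict stability assumption and constitutes the main obstacle.

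For lower semicontinuity I decompose
\[
\Jac(N,\Omega) = \Dir(N,\Omega) - \int_{\Omega} \sum_{\ell=1}^{Q} \bigl( |A \cdot N^{\ell}|^{2} + \mathcal{R}(N^{\ell},N^{\ell}) \bigr) \, \dHa^{m},
\]
note that $\Dir$ is weakly lower semicontinuous by Almgren's theory of $\Dir$-minimizing $Q$-valued maps (cf.\ \cite{DLS11a}), and observe that the integrand of the second term is quadratic in $N^{\ell}$ with coefficients bounded in terms of the geometry of the compact embedding $\Sigma \hookrightarrow \M$. The Rellich-type compactness theorem for $Q$-valued Sobolev maps then upgrades weak $W^{1,2}$ convergence to strong $L^{2}$ convergence, so this lower-order term passes continuously to the limit.

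Coercivity I would obtain by a blow-up argument. Suppose toward a contradiction that a minimizing sequence $\{N_{k}\}$ in the admissible class has $L_{k} := \|N_{k}\|_{W^{1,2}} \to +\infty$, and define $\tilde{N}_{k} := L_{k}^{-1} N_{k}$ via the natural scalar multiplication on $\A_{Q}(\R^{d})$. Since $\Jac(\cdot,\Omega)$ is $2$-homogeneous, $\Jac(\tilde{N}_{k},\Omega) = L_{k}^{-2}\Jac(N_{k},\Omega) \to 0$; moreover $\|\tilde{N}_{k}\|_{W^{1,2}} = 1$ and the boundary traces $\tilde{N}_{k}|_{\partial\Omega} = L_{k}^{-1} g|_{\partial\Omega}$ converge to $Q\llbracket 0 \rrbracket$ strongly in $W^{1,2}(\partial\Omega)$. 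Extracting a subsequence $\tilde{N}_{k} \rightharpoonup \tilde{N}$ weakly in $W^{1,2}$ and strongly in $L^{2}$, the lower semicontinuity step yields $\Jac(\tilde{N},\Omega) \leq 0$, while the trace of $\tilde{N}$ is $Q\llbracket 0 \rrbracket$. The strict stability hypothesis asserts that the null field is the unique $\Jac$-minimizer among zero-trace competitors, so in particular $\inf \Jac(\cdot,\Omega) = \Jac(Q\llbracket 0 \rrbracket, \Omega) = 0$ on that class; hence $\Jac(\tilde{N},\Omega) = 0$ and uniqueness forces $\tilde{N} \equiv Q\llbracket 0 \rrbracket$. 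Strong $L^{2}$ convergence $\tilde{N}_{k} \to 0$ then drives the lower-order term of $\Jac(\tilde{N}_{k},\Omega)$ to zero, so $\Dir(\tilde{N}_{k},\Omega) \to 0$ as well, and consequently $\|\tilde{N}_{k}\|_{W^{1,2}} \to 0$, contradicting the normalization $\|\tilde{N}_{k}\|_{W^{1,2}} = 1$.

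With coercivity and lower semicontinuity in hand, a standard weak-compactness argument produces a minimizer $\overline{N}$ with the prescribed trace. The delicate point is the coercivity step: the indefinite sign of the lower-order term in $\Jac$ precludes any direct bound, and one must upgrade the qualitative uniqueness statement in the strict stability hypothesis to a quantitative Poincaré-type inequality. The blow-up and compactness scheme above is the multivalued counterpart of the classical Fredholm-alternative argument, with scalar multiplication and Rellich compactness on $\A_{Q}(\R^{d})$-valued Sobolev maps substituting for the vector-space structure used in the single-valued setting.
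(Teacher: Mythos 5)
Your proof is correct but takes a genuinely different route from the paper's. The paper establishes coercivity quantitatively: a $Q$-valued Luckhaus interpolation lemma (Proposition~\ref{Luckhaus}) and its extension corollary (Corollary~\ref{Extension}) are used to manufacture a zero-trace competitor from a given section, so that the Poincar\'e-type reformulation of strict stability (Lemma~\ref{str_stab_eq}) yields the explicit bound $\Jac(N,\Omega)\geq c(\Omega)\|N\|_{L^2(\Omega)}^2-C(\Omega,g_0)$ on the whole admissible class. You instead prove coercivity qualitatively by a normalize-and-blow-up contradiction: rescale a minimizing sequence by its $W^{1,2}$ norm, extract a limit (strong in $L^2$, weak in $W^{1,2}$, with zero boundary trace), invoke lower semicontinuity together with strict stability to force the limit to vanish, and then observe that the Dirichlet energy of the rescaled sequence must also collapse, contradicting the unit normalization. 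Your route bypasses the Luckhaus machinery of $\S$~\ref{sec:lemma_luckhaus} entirely, and it also appears not to use the hypothesis $g|_{\partial\Omega}\in W^{1,2}(\partial\Omega,\A_{Q}(\R^d))$ at all --- the $L^2(\partial\Omega)$ trace regularity that comes for free with $g\in\Gamma^{1,2}_Q(\N\Omega)$ is enough --- whereas the paper needs the stronger trace regularity to run the extension construction. What the paper's approach buys is the explicit Fredholm-type estimate, which your purely qualitative argument does not produce. Two small points should be tightened, neither of which affects the substance: (i) until boundedness from below is established, homogeneity only gives $\limsup_k\Jac(\tilde N_k,\Omega)\leq 0$ rather than convergence to zero; this still suffices because $\Dir(\tilde N_k,\Omega)=\Jac(\tilde N_k,\Omega)+\mathcal{B}_\Omega(\tilde N_k)$ with $\mathcal{B}_\Omega(\tilde N_k)\to 0$ by strong $L^2$ convergence, so $\limsup_k\Dir(\tilde N_k,\Omega)\leq 0$ and hence $\Dir(\tilde N_k,\Omega)\to 0$; and (ii) identifying the trace of the weak limit requires a varying-trace version of Proposition~\ref{weak_clos}, which follows by applying compactness of the scalar trace operator to the functions $\G(\tilde N_k,T)$ for $T\in\A_Q$ and comparing with $\G(L_k^{-1}g|_{\partial\Omega},T)\to\G(Q\llbracket 0\rrbracket,T)$ in $L^2(\partial\Omega)$.
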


\begin{remark}
Note that the above result strongly resembles the classical Fredholm alternative condition for solving linear elliptic boundary value problems: the solvability of the minimum problem for the $\Jac$ functional in $\Gamma_{Q}^{1,2}(\N\Omega)$ for any given boundary datum $g$ as in the statement is guaranteed whenever $\Omega$ does not admit any non-trivial Jacobi $Q$-field vanishing at the boundary.  
\end{remark}

\begin{theorem}[Regularity] \label{qual_reg_Holder}
Let $\Omega \subset \Sigma$ be an open subset, with $\Sigma \hookrightarrow \M$ as above. There exists a universal constant $\alpha = \alpha(m,Q) \in \left( 0,1 \right)$ such that if $N \in \Gamma_{Q}^{1,2}(\N\Omega)$ is $\Jac$-minimizing then $N \in C^{0,\alpha}_{loc}(\Omega, \A_{Q}(\R^d))$. 
\end{theorem}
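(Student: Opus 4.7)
The plan is to view the $\Jac$ functional as a bounded perturbation of the Dirichlet energy and reduce the problem to the Hölder regularity of $\Dir$-minimizing $Q$-valued functions from \cite{DLS11a} via a Campanato-type energy decay argument.

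\textbf{Step 1 (Local trivialization).} Fix $x_0 \in \Omega$. Choose a geodesic normal chart on $\Sigma$ centered at $x_0$ on a ball $B_{r_0}(x_0) \subset \Omega$, together with a smooth orthonormal frame $\{\nu_1,\dots,\nu_k\}$ for $\N\Sigma$ over $B_{r_0}(x_0)$. A section $N \in \Gamma_Q^{1,2}(\N B_{r_0}(x_0))$, $N = \sum_\ell \llbracket N^\ell \rrbracket$ with $N^\ell = \sum_i u^\ell_i \nu_i$, is thereby identified with a map $u \in W^{1,2}(B_{r_0}(x_0), \A_Q(\R^k))$. A direct computation using the definition \eqref{Jac0} shows that in this trivialization
\[
\Jac(N, B_r) = \int_{B_r} \sum_\ell \Bigl(|Du^\ell|^2 + \langle \mathbf{B}(x)\cdot u^\ell, Du^\ell\rangle + \langle \mathbf{C}(x) u^\ell, u^\ell \rangle\Bigr) \sqrt{g(x)}\, dx,
\]
where $\mathbf{B}, \mathbf{C}$ are smooth bounded tensors encoding the connection coefficients of $\nabla^\perp$, the second fundamental form and the partial Ricci curvature. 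Thus, up to a smooth conformal factor $\sqrt{g(x)} = 1 + O(|x-x_0|^2)$, the $\Jac$ functional is the Dirichlet energy plus terms which are pointwise of order $|u|^2 + |u||Du|$.

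\textbf{Step 2 (Comparison estimate).} For $r \leq r_0$, let $h \in W^{1,2}(B_r,\A_Q(\R^k))$ be the unique $\Dir$-minimizer with $h|_{\partial B_r} = u|_{\partial B_r}$, whose existence is provided by \cite{DLS11a}. Inserting (the section associated to) $h$ as competitor and using Cauchy--Schwarz together with Young's inequality in the lower-order terms, the $\Jac$-minimality of $u$ yields
\[
\Dir(u, B_r) \leq (1 + C r)\,\Dir(h, B_r) + C\bigl(\|u\|_{L^2(B_r)}^2 + \|h\|_{L^2(B_r)}^2\bigr),
\]
with $C$ depending only on the $C^2$-geometry of $\Sigma$ in $\M$. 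A standard Poincaré inequality for $\A_Q$-valued maps (cf.\ \cite[Proposition 2.12]{DLS11a}) controls both $L^2$ norms on $B_r$ by $C r^2 \Dir(h,B_r)$ plus a term involving the trace, which for balls small enough can be absorbed. This gives
\[
\Dir(u, B_r) \leq (1 + C r)\,\Dir(h, B_r).
\]

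\textbf{Step 3 (Campanato decay).} By the Hölder regularity of $\Dir$-minimizers \cite[Theorem 0.9]{DLS11a}, there exists a dimensional constant $\alpha_0 = \alpha_0(m,Q) \in (0,1)$ and $C_0$ such that
\[
\Dir(h, B_\rho) \leq C_0 \bigl(\rho/r\bigr)^{m - 2 + 2\alpha_0} \Dir(h, B_r) \qquad \forall \, 0 < \rho \leq r.
\]
Combining with Step 2 and iterating along a geometric sequence of radii (choosing $r_0$ so that the perturbative factor $(1+Cr)$ is compensated by a slight loss in the exponent, cf.\ the hole-filling lemma), one obtains, for some $\alpha = \alpha(m,Q) \in (0, \alpha_0]$,
\[
\Dir(u, B_\rho(y)) \leq C\, \rho^{m - 2 + 2\alpha} \qquad \text{for every } y \in B_{r_0/2}(x_0),\; \rho \leq r_0/2,
\]
with $C$ depending on $\Dir(u, B_{r_0}(x_0))$ and the geometry.

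\textbf{Step 4 (Morrey--Campanato conclusion).} Since $\A_Q(\R^k)$ embeds bi-Lipschitzly into Euclidean space via Almgren's $\boldsymbol\xi$ map \cite[Corollary 2.2]{DLS11a}, the Campanato--Morrey embedding gives $u \in C^{0,\alpha}_{loc}(B_{r_0/2}(x_0), \A_Q(\R^k))$. A covering of $\Omega$ by finitely many such charts then yields $N \in C^{0,\alpha}_{loc}(\Omega, \A_Q(\R^d))$.

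The main obstacle is Step 2: carefully showing that the non-quadratic cross-term $\langle \mathbf{B}\cdot u, Du\rangle$ and the zero-order term $\langle \mathbf{C} u, u\rangle$ do not destroy the near-minimizing character of $u$ with respect to $\Dir$. One must control the $L^2$ norm of the Dir-minimizer $h$ by its Dirichlet energy on small balls without introducing an additive constant (which would prevent pure power-decay), and this is the step where the smallness of $r$ is essential. Once Step 2 is in place, Steps 3--4 follow by now classical perturbation arguments, and crucially the exponent $\alpha$ inherited from \cite{DLS11a} depends only on $m$ and $Q$.
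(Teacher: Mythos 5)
The route you propose—trivialize $\N\Sigma$ locally and view $\Jac$ as a lower-order perturbation of $\Dir$—is a natural starting point, but Steps 2 and 3 contain gaps that cannot be closed as written, and the paper in fact takes a materially different route precisely to avoid them.

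In Step 2 you claim the $L^2$ norms on $B_r$ are controlled ``by $Cr^2\Dir(h,B_r)$ plus a term involving the trace, which for balls small enough can be absorbed.'' This is where things break. The Poincar\'e inequality for $\A_Q$-valued maps \cite[Proposition 2.12]{DLS11a} controls the oscillation $\int_{B_r}\G(v,\bar v)^2$ around some $\bar v \in \A_Q$, not $\|v\|_{L^2}^2$. What one can actually prove (this is Lemma \ref{Poincare}/Corollary \ref{cor Q-Poincare} in the paper) is $\int_{B_r}|v|^2 \leq (\tfrac1m+\varepsilon)\,r\int_{\partial B_r}|v|^2 + C_\varepsilon r^2\Dir(v,B_r)$, and the boundary term $\int_{\partial B_r}|v|^2$ is not absorbable—it is an independent quantity that must be carried along. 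Consequently, the clean comparison $\Dir(u,B_r)\leq(1+Cr)\Dir(h,B_r)$ does not follow. Step 3 is then also problematic: the Campanato/hole-filling iteration for almost-minimizers hinges, at some stage, on the orthogonality decomposition $\Dir(u,B_r)=\Dir(h,B_r)+\Dir(u-h,B_r)$ in order to transfer the decay of the $\Dir$-minimizer $h$ to $u$ across scales; for $\A_Q$-valued maps there is no linear structure and no such decomposition, so $\Dir(u,B_{\theta r})$ cannot be compared to $\Dir(h_r,B_{\theta r})$ the way you would for Sobolev functions.

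The paper avoids both issues (Proposition \ref{Prop Holder}, Theorem \ref{reg_Holder}) by (i) invoking the harmonic extension estimate \cite[Proposition 3.10]{DLS11a}, $\Dir(f,B_r)\leq C(m)\,r\,\Dir(u,\partial B_r)$ with $C(m)<(m-2)^{-1}$ for $m\geq 3$, which controls the minimizer's bulk energy by the \emph{boundary} tangential Dirichlet energy without ever invoking interior decay or excess decomposition, and (ii) keeping the trace term from the Poincar\'e inequality and deriving the differential inequality $(m-2+\gamma)\phi(r)\leq r\,\phi'(r)$ for the combined quantity $\phi(r)=\Dir(u,B_r)+\Lambda\int_{B_r}|u|^2$. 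Integrating this ODE directly yields the Morrey decay $\Dir(u,B_r)\leq A\,r^{m-2+\gamma}$ and hence H\"older continuity via the Campanato--Morrey estimate (Proposition \ref{CM}), with $\alpha=\gamma/2$ universal because $C(m)$ is. To repair your argument you would need to replace Step 3's iteration with the differential inequality technique, keeping both bulk and boundary terms.
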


The statement of the next theorem requires the definition of regular and singular points of a Jacobi $Q$-field.

\begin{definition}[Regular and singular set] \label{sing_set}
Let $N \in \Gamma_{Q}^{1,2}(\N \Omega)$ be $\Jac$-minimizing. A point $p \in \Omega$ is \emph{regular} for $N$ (and we write $p \in \reg(N)$) if there exists a neighborhood $B$ of $p$ in $\Omega$ and $Q$ classical Jacobi fields $N^{\ell} \colon B \to \R^d$ such that 
\[
N(x) = \sum_{\ell=1}^{Q} \llbracket N^{\ell}(x) \rrbracket \hspace{1cm} \forall \, x \in B
\]
and either $N^{\ell} \equiv N^{\ell'}$ or $N^{\ell}(x) \neq N^{\ell'}(x)$ for all $x \in B$, for any $\ell, \ell' \in \{1,\dots,Q\}$. The \emph{singular set} of $N$ is defined by
\[
\sing(N) := \Omega \setminus \reg(N).
\]  
\end{definition}

\begin{theorem}[Estimate of the singular set] \label{sing:thm}
Let $N$ be a $Q$-valued Jacobi field in $\Omega \subset \Sigma^m$. Then, the singular set $\sing(N)$ is relatively closed in $\Omega$. Furthermore, if $m=2$, then $\sing(N)$ is at most countable; if $m \geq 3$, then the Hausdorff dimension $\dim_{\Ha}\sing(N)$ does not exceed $m-2$.
\end{theorem}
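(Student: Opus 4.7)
That $\sing(N)$ is relatively closed in $\Omega$ is immediate from Definition \ref{sing_set}: a regular point comes with an entire ball of regular points, so $\reg(N)$ is open. For the dimension estimate, the strategy is to mimic Almgren's blow-up and Federer dimension reduction scheme for $\Dir$-minimizing $Q$-valued functions, as carried out in \cite{DLS11a}, treating $\Jac$ as a controlled lower-order perturbation of the Dirichlet energy.

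\textbf{Almgren's frequency for $\Jac$-minimizers.} For $p \in \Omega$, I would introduce
\[
D_p(r) := \int_{B_r(p)} \sum_{\ell} |\nabla^\perp N^\ell|^2 \, \dHa^m, \qquad H_p(r) := \int_{\partial B_r(p)} \sum_{\ell} |N^\ell|^2 \, \dHa^{m-1},
\]
and the frequency $I_p(r):=rD_p(r)/H_p(r)$. Writing the inner and outer variation identities for $\Jac$ (obtained by comparing $N$ with $N(x+t\phi(x))$ and with $(1+t\psi)N$ respectively), the curvature terms $|A\cdot N^\ell|^2$ and $\mathcal{R}(N^\ell,N^\ell)$ are of order $|N|^2$, so they produce errors bounded by $C r \cdot (D_p(r)+H_p(r))$. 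This yields the almost-monotonicity inequality that $r\mapsto e^{Cr}I_p(r)$ is non-decreasing on sufficiently small balls, in particular $I_p(0^+)$ exists.

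\textbf{Blow-up to a $\Dir$-minimizer.} Fix $p_0\in\sing(N)$; working in a normal chart with $p_0=0$, consider the rescalings $N_\rho(y):=H_0(\rho)^{-1/2}\,N(\rho y)$, which satisfy $\int_{\partial B_1}\sum_\ell|N_\rho^\ell|^2=1$. Under this rescaling the gradient term in $\Jac$ is scale invariant while the curvature terms acquire an extra $\rho^2$ factor; combined with almost-monotonicity, the Dirichlet energies of $N_\rho$ stay uniformly bounded on every ball. By the compactness of $Q$-valued Sobolev maps (\cite{DLS11a}) one extracts a subsequential limit $N_0\in W^{1,2}_{loc}(\R^m,\A_Q(\R^d))$. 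A comparison argument, in which competitors for $N_0$ are lifted to competitors for $N_\rho$ and the vanishing curvature error is absorbed, shows that $N_0$ is $\Dir$-minimizing on every bounded set; furthermore $N_0$ has constant frequency $\alpha_0:=I_0(0^+)$, hence is $\alpha_0$-homogeneous by the characterization of $\Dir$-minimizers of constant frequency (\cite{DLS11a}).

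\textbf{Federer reduction.} The key step is to show that $p_0\in\sing(N)$ forces $0\in\sing(N_0)$: otherwise the $\Dir$-minimizing blow-up would split locally as a superposition of $Q$ harmonic sheets that are pairwise either identical or disjoint, and a quantitative version of this splitting, together with the smallness of the Jacobi perturbation on $N_\rho$ for $\rho\ll 1$, would produce (via an implicit-function-type argument) a similar decomposition of $N$ itself in a neighborhood of $p_0$, contradicting singularity. Once this is settled, the assignment $p\mapsto$ (tangent maps at $p$) satisfies the hypotheses of the abstract Federer reduction (as formulated in \cite{DLS11a}): the singular set is closed, invariant under the blow-up rescaling, and every tangent map at a singular point is singular and homogeneous. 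Federer's argument then gives $\dim_{\Ha}\sing(N)\leq m-2$ for $m\geq 3$; the base case $m=2$ gives that every homogeneous $\Dir$-minimizer in the plane has an isolated singularity, which upgrades the bound to countability of $\sing(N)$ when $m=2$. The main obstacles are: (i) establishing the precise almost-monotonicity of $I_p$ with error controlled by the $C^2$ geometry of $\Sigma$ and $\M$; (ii) upgrading $\Gamma$-convergence of the rescaled functionals to actual $\Dir$-minimality of $N_0$, which requires a minimality-preserving comparison that uniformly absorbs the curvature perturbation; and (iii) the quantitative conversion of a splitting of $N_0$ into a splitting of $N$, since no Euler–Lagrange PDE is available to bootstrap regularity as in the classical single-valued setting.
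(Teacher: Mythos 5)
Your proposal captures the right broad strategy — Almgren frequency, almost monotonicity, blow-up to a $\Dir$-minimizer, and Federer-type dimension reduction — and this is indeed the skeleton of the paper's proof. However, there are two genuine structural gaps that keep the argument from closing.

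First, you propose to prove dimension reduction by showing that $p_0\in\sing(N)$ forces $0\in\sing(N_0)$ for the blow-up $N_0$, via a ``quantitative conversion of a splitting of $N_0$ into a splitting of $N$'' (your own flagged obstacle (iii)). This step is genuinely hard precisely because there is no Euler--Lagrange PDE to bootstrap; the paper deliberately \emph{avoids} it. What the paper establishes instead (Theorem \ref{blow-up:thm}) is only that, after normalizing $\bm{\eta}\circ N\equiv 0$ (Lemma \ref{zero_average}), every tangent map $\mathscr{N}_p$ at a top-multiplicity singular point is a non-trivial, homogeneous, $\Dir$-minimizer with $\bm{\eta}\circ\mathscr{N}_p\equiv 0$ and $\|\mathscr{N}_p\|_{L^2(B_1)}=1$. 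For the $m\geq3$ case the paper then argues by contradiction on the top stratum $\sing_Q(N)$ using the upper semi-continuity of the Hausdorff pre-measure $\Ha^s_\infty$ under Hausdorff convergence of compact sets: positive $\Ha^s$-measure ($s>m-2$) of $\sing_Q(N)$ would propagate to $\Ha^s(\mathrm{D}_Q(\mathscr{N}_p))>0$, which by the $\Dir$-minimizing theory forces $\mathscr{N}_p=Q\llbracket\zeta\rrbracket$, and then $\bm{\eta}\circ\mathscr{N}_p\equiv0$ forces $\zeta\equiv0$, contradicting non-triviality. This requires only non-triviality and zero average of the blow-up, not its singularity; you should replace your ``implicit-function-type'' step with this measure-theoretic one, which is both what works and what the paper does.

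Second, your proposal is missing the induction on $Q$, which is essential to pass from the top-multiplicity stratum to the full singular set. The paper first proves (Proposition \ref{sing_Q:thm}) the dimension estimate for $\mathrm{D}_Q(N)=\sing_Q(N)$ (or else $N=Q\llbracket\zeta\rrbracket$ with $\zeta$ a Jacobi field), then covers $\Omega\setminus\mathrm{D}_Q(N)$ by balls on which $N$ splits into two $\Jac$-minimizing pieces of strictly lower multiplicity with disjoint supports, and concludes by induction. Without this reduction, the Federer argument only controls the $Q$-point stratum, not $\sing(N)$. Finally, your $m=2$ case is too vague: the paper shows points of $\sing_Q(N)$ are isolated in $\mathrm{D}_Q(N)$ by blowing up at an accumulation point, producing a homogeneous $\Dir$-minimizer with a zero on $\mathbb{S}^1$, then blowing up that map at an off-origin zero to get a cylindrical (one-variable) $\Dir$-minimizer, which must be affine, hence (using $\bm{\eta}=0$) trivial, giving a contradiction — and then openness-and-closedness of $\reg_Q(N)$ in a connected $\Omega$.
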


\begin{remark}
Following the approach of \cite{DLMSV16}, we expect to be able to improve Theorem \ref{sing:thm} to show, for $m \geq 3$, that $\sing(N)$ is countably $(m-2)$-rectifiable.
\end{remark}

Theorems \ref{cond_ex}, \ref{qual_reg_Holder} and \ref{sing:thm} have a counterpart in Almgren's theory of $\Dir$-minimizing multiple valued functions (cf. Theorems \ref{ex_reg} and \ref{est_sing} below). The existence result for Jacobi $Q$-field is naturally more difficult than its $\Dir$-minimizing counterpart, because in general the space of $Q$-valued $W^{1,2}$ sections of $\N\Sigma$ with bounded Jacobi energy is not weakly compact. Therefore, the proof of Theorem \ref{cond_ex} requires a suitable extension result (cf. Corollary \ref{Extension}) for multiple valued Sobolev functions defined on the boundary of an open subset of $\Sigma$ to a tubular neighborhood, which eventually allows one to exploit the strict stability condition in order to gain the desired compactness. In turn, such an extension theorem is obtained as a corollary of a multivalued version of the celebrated Luckhaus' Lemma, cf. Proposition \ref{Luckhaus}. The proof of Theorem \ref{qual_reg_Holder} is obtained from the H\"older regularity of $\Dir$-minimizing $Q$-valued functions by means of a perturbation argument. Finally, the estimate of the Hausdorff dimension of the singular set of a $\Jac$-minimizer, Theorem \ref{sing:thm}, relies on its $\Dir$-minimizing counterpart once we have shown that the tangent maps of a Jacobi $Q$-field at a collapsed singular point are non-trivial homogeneous $\Dir$-minimizing functions, see Theorem \ref{blow-up:thm}. In turn, the proof of the Blow-up Theorem \ref{blow-up:thm} is based on a delicate asymptotic analysis of an Almgren's type frequency function, which is shown to be almost monotone and bounded at every collapsed point. This is done by providing fairly general first variation integral identities satisfied by the $\Jac$-minimizers.

Let us also remark that Theorem \ref{blow-up:thm} does not guarantee that tangent maps to a Jacobi $Q$-field at a collapsed singularity are unique. Similarly to what happens for tangent cones to area minimizing currents (and for several other problems in Geometric Analysis), different blow-up sequences may converge to different limit profiles. Whether this phenomenon can actually occur or not is an open problem. On the other hand, if the dimension of the base manifold is $m = \dim \Sigma = 2$, then we are able to show that the limit profile must be a \emph{unique} non-trivial Dirichlet minimizer. Indeed, we have the following theorem.
\begin{theorem}[Uniqueness of the tangent map at collapsed singularities] \label{uniqueness_tg_map}
Let $m = \dim\Sigma =2$, and let $N$ be a $Q$-valued Jacobi field in $\Omega \subset \Sigma^2$. Let $p$ be a collapsed singular point, that is, assume that $N(p) = Q \llbracket \mathrm{v} \rrbracket$ for some $\mathrm{v} \in T_{p}^{\perp}\Sigma \subset T_{p}\M$ but there exists no neighborhood $U$ of $p$ such that $\left.N\right|_{U} \equiv Q \llbracket \zeta \rrbracket$ for some single-valued section $\zeta$. Then, there exists a \emph{unique} tangent map $\mathscr{N}_p$ to $N$ at $p$. $\mathscr{N}_p$ is a non-trivial homogeneous $\Dir$-minimizer $\mathscr{N}_p \colon T_p\Sigma \to \A_Q(T_p^\perp\Sigma)$.
\end{theorem}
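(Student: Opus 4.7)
The plan is to combine the blow-up analysis underlying Theorem \ref{blow-up:thm} with the rigidity of two-dimensional homogeneous $\Dir$-minimizing multiple valued functions, closing the gap via a Simon-type uniqueness argument.

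From Theorem \ref{blow-up:thm} one already knows that, along any sequence of radii $r_{k} \downarrow 0$, a subsequence of the normalized rescalings $N_{p,r_{k}}$ converges in $L^{2}$ and locally uniformly to a non-trivial $\alpha$-homogeneous $\Dir$-minimizer $\mathscr{N}\colon T_{p}\Sigma \to \A_{Q}(T_{p}^{\perp}\Sigma)$, where $\alpha>0$ is the limit at $r=0$ of the almost-monotone Almgren frequency function $I_{N,p}(r)$ constructed in that proof. Because this limit depends only on $N$ and $p$, every tangent map shares the same frequency $\alpha$ and the same $L^{2}(B_{1})$-normalization, so the statement reduces to uniqueness of the subsequential limit. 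In particular, the set $\mathcal{T}_{\alpha}$ of admissible tangent profiles sits inside a compact, finite-dimensional family.

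The core of the argument is a classification-plus-decay scheme, made accessible by the low dimension $m=2$. First, I would classify all $\alpha$-homogeneous $\Dir$-minimizers on $\R^{2}$: homogeneity reduces the problem to an eigenvalue equation for the spherical Dirichlet energy on $S^{1}$, and in dimension two that equation is an ODE system whose multivalued solutions are described explicitly by the Almgren holomorphic-type profiles $z\mapsto z^{p/q}$. The resulting set $\mathcal{T}_{\alpha}$ is a compact analytic manifold on which only the rotation group $SO(2)$ acts non-trivially by base rotation, and a spectral gap separates $\mathcal{T}_{\alpha}$ from all competing eigenspaces. Second, I would sharpen the almost-monotonicity of $I_{N,p}$ used to prove Theorem \ref{blow-up:thm} into an energy gap estimate: exploiting this spectral gap, the derivative $I_{N,p}'(r)$ controls, up to integrable error terms, the squared $L^{2}(S^{1})$-distance from the rescaled angular trace of $N_{p,r}$ to $\mathcal{T}_{\alpha}$. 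Combined with a two-dimensional epiperimetric-type inequality comparing the Dirichlet energy of $N_{p,r}$ with that of the $\alpha$-homogeneous extension of its boundary trace, this produces a differential inequality whose integration yields
\[
\int_{0}^{1} r^{-1}\, \bigl\| \partial_{r} N_{p,r}\bigr\|_{L^{2}(S^{1})}\, dr < \infty,
\]
and hence the Cauchy property of $\{N_{p,r}\}_{r\downarrow 0}$ in $L^{2}$, which forces uniqueness of the tangent map.

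The main obstacle is ruling out a slow drift of $N_{p,r}$ along the $SO(2)$-orbit inside $\mathcal{T}_{\alpha}$, a phenomenon that in general dimension can produce non-unique tangent maps even in the presence of a spectral gap. In dimension $m=2$ the orbit is only one-dimensional, and the drift can be excluded by pairing the radial monotonicity with a tangential first-variation identity obtained by rotating the base domain; $\Jac$-minimizers satisfy such an identity up to lower-order perturbations (the same kind already used to derive the frequency monotonicity for Theorem \ref{blow-up:thm}), and the resulting two-by-two system of differential inequalities for the radial defect and the angular drift closes the uniqueness argument.
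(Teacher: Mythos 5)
Your proposal follows the Simon-type route (classify homogeneous tangent cones, prove a spectral gap, establish an epiperimetric inequality, control drift along the symmetry orbit), while the paper proceeds more directly and never classifies the possible tangent maps nor states an epiperimetric inequality. The paper's Section~\ref{chap:improved_dim2} establishes a power-law decay $\abs{{\bf I}(r) - I_0} \leq C r^{\beta}$ of the frequency function itself (Proposition~\ref{prop:decays}). This is achieved by comparing $N$ on $\B_{r}(p)$ against the harmonic extension of its boundary trace, where the boundary trace is decomposed into irreducible $W^{1,2}(\Sf^{1},\A_{Q_\ell})$ pieces via \cite[Proposition~1.5]{DLS11a}; the Fourier coefficients of the unrolled traces $\gamma_{\ell}$ then yield the algebraic inequality $(2\mu+\beta)n \leq n^{2}/Q_\ell + \mu(\mu+\beta)Q_\ell$ for all $n \in \Na$, $Q_\ell \leq Q$, and $\beta$ small. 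This algebraic gap is the two-dimensional spectral gap you invoke, but absorbed into a single comparison argument rather than isolated as a separate lemma. Once the frequency decays like $r^{\beta}$, the paper deduces the decay of ${\bf H}(r)/r^{2I_0+1}$ and ${\bf D}(r)/r^{2I_0}$ and then, via a direct dyadic estimate on $\int_{\Sf^{1}} \G(u_r, u_{r/2})^{2}$ using the first-variation identities, shows the angular traces form an $L^{2}$-Cauchy sequence. The uniform H\"older bound \eqref{unif_Holder} upgrades this to locally uniform convergence.

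Two concrete gaps separate your plan from a complete proof. First, you assume an epiperimetric-type inequality ``comparing the Dirichlet energy of $N_{p,r}$ with that of the $\alpha$-homogeneous extension of its boundary trace'' without giving a route to prove it; in the multivalued setting this is a substantial lemma in its own right, and in fact the paper deliberately sidesteps it by using the \emph{harmonic} (not homogeneous) extension together with the Fourier identity \eqref{stima_harmonic_competitor}, which is easier to establish. Second, your concern about slow drift along the $SO(2)$-orbit inside $\mathcal{T}_{\alpha}$ is, for the rate you are aiming at, a non-issue: once the defect decays like $r^{\beta}$ for some $\beta>0$, the total angular drift is controlled by a geometric series and uniqueness follows automatically, as the paper's dyadic computation \eqref{unico_profilo_finale} shows. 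The extra ``tangential first-variation identity obtained by rotating the base domain'' you propose would only be needed to rule out a logarithmic drift, which cannot occur once a power-law decay is in hand. Also note that the quantity $\partial_{r}N_{p,r}$ you integrate is not a well-defined object for $\A_{Q}$-valued maps; the paper instead works with $\frac{d}{dt}\bigl(t^{-I_0}N^{\ell}(\psi_{t}(w))\bigr)$ for a.e.\ $w \in \Sf^{1}$ and a measurable selection, which is the correct formulation.
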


The key to prove Theorem \ref{uniqueness_tg_map} is to show that, in dimension $m=2$, the rate of convergence of the frequency function at a collapsed singularity to its limit is a small power of the radius. In turn, this is achieved by exploiting one more time the variation formulae satisfied by $N$.

This note is organized as follows. In Section \ref{sec:prel} we fix the terminology and notation that will be used throughout the paper and we summarize the main results of the theory of multiple valued functions. Section \ref{sec:2nd_var} contains the derivation of the second variation formula generated by a $Q$-valued section of $\N\Sigma$ which leads to the definition of the $\Jac$ functional. In section \ref{sec:Jac_min} we investigate the first elementary properties of the $\Jac$ functional, we show that it is lower semi-continuous with respect to $W^{1,2}$ weak convergence (cf. Proposition \ref{lsc}) and we study the strict stability condition mentioned in the statement of Theorem \ref{cond_ex} (cf. Lemma \ref{str_stab_eq}). Section \ref{sec:ex} contains the proof of Theorem \ref{cond_ex}. The proof of Theorem \ref{qual_reg_Holder} (and actually of a quantitative version of it including an estimate of the $\alpha$-H\"older seminorm, cf. Theorem \ref{reg_Holder}) is contained in Section \ref{section:reg}. In Section \ref{sec:frequency} we prove the properties of the frequency function which are needed to carry on the blow-up scheme, which is instead the content of Section \ref{sec:blow-up}. Theorem \ref{sing:thm} is finally proved in Section \ref{sec:sing}. Last, Section \ref{chap:improved_dim2} contains the uniqueness of tangent maps in dimension $2$.

\subsection*{Acknowledgements} The author is warmly thankful to Camillo De Lellis for suggesting him to study this problem, and for his precious guidance and support; and to Guido De Philippis, Francesco Ghiraldin, and Luca Spolaor for several useful discussions.

The research of S.S. has been supported by the ERC grant agreement RAM (Regularity for Area Minimizing currents), ERC 306247. 

\section{Notation and preliminaries} \label{sec:prel}

\subsection{The geometric setting}

We start immediately specifying the geometric environment and fixing the notation that will be used throughout the paper.
\begin{ipotesi} \label{assumptions}
We will consider:
\begin{itemize}
\item[(M)] a closed (i.e. compact with empty boundary) Riemannian manifold $\M$ of dimension $m+k$ and class $C^{3,\beta}$ for some $\beta \in \left( 0,1 \right)$;

\item[(S)] a compact oriented \emph{minimal} submanifold $\Sigma$ of the ambient manifold $\M$ of dimension $\dim(\Sigma) = m$ and class $C^{3,\beta}$.
\end{itemize}

Without loss of generality, we will also regard $\M$ as an isometrically embedded submanifold of some Euclidean space $\R^d$. We will let $n := d - m$ and $K := d - (m+k)$ be the codimensions of $\Sigma$ and $\M$ in $\R^d$ respectively.
\end{ipotesi}

Let $\Sigma^{m} \hookrightarrow \M^{m+k} \subset \R^{d}$ be as in Assumption \ref{assumptions}. The Euclidean scalar product in $\R^{d}$ is denoted $\langle \cdot, \cdot \rangle$. The metric on $\M$ and $\Sigma$ is induced by the flat metric in $\R^{d}$: therefore, the same symbol will also denote the scalar product between tangent vectors to $\M$ or to $\Sigma$.

The tangent space to $\M$ at a point $z$ will be denoted $T_{z}\M$. The maps $\mathbf{p}^{\M}_{z} \colon \R^d \to T_{z}\M$ and $\mathbf{p}^{\M \perp}_{z} \colon \R^d \to T_{z}^{\perp}\M$ denote orthogonal projections of $\R^d$ onto the tangent space to $\M$ at $z$ and its orthogonal complement in $\R^d$ respectively. If $x \in \Sigma$, the tangent space $T_{x}\M$ can be decomposed into the direct sum
\[
T_{x}\M = T_{x}\Sigma \oplus T_{x}^{\perp}\Sigma,
\]  
where $T_{x}^{\perp}\Sigma$ is the orthogonal complement of $T_{x}\Sigma$ in $T_{x}\M$. At each point $x \in \Sigma$, we define orthogonal projections $\p_{x} \colon T_{x}\M \to T_{x}\Sigma$ and $\p_{x}^{\perp} \colon T_{x}\M \to T_{x}^{\perp}\Sigma$.

This decomposition at the level of the tangent spaces induces an orthogonal decomposition at the level of the tangent bundle, namely
\[
\T \M = \T \Sigma \oplus \N \Sigma,
\]
where $\N \Sigma$ denotes the normal bundle of $\Sigma$ in $\M$.

If $f \colon \Sigma \to \R^q$ is a $C^{1}$ map and $\xi$ is a vector field tangent to $\Sigma$, the symbol $D_{\xi}f$ will denote the directional derivative of $f$ along $\xi$, that is 
\[
D_{\xi}f(x) := \left. \frac{d}{dt} (f \circ \gamma)\right|_{t=0}
\]
whenever $\gamma = \gamma(t)$ is a $C^{1}$ curve on $\Sigma$ with $\gamma(0) = x$ and $\dot{\gamma}(0) = \xi(x)$. The differential of $f$ at $x \in \Sigma$ will be denoted $Df(x)$: we recall that this is the linear operator $Df(x) \colon T_{x}\Sigma \to \R^{q}$ such that $Df(x) \cdot \xi(x) = D_{\xi}f(x)$ for any tangent vector field $\xi$. The notation $Df|_{x}$ will sometimes be used in place of $Df(x)$. Moreover, the derivative along $\xi$ of a scalar function $f \colon \Sigma \to \R$ will be sometimes simply denoted by $\xi(f)$.

The symbol $\nabla$, instead, will identify the Levi-Civita connection on $\M$. If $\xi$ and $X$ are tangent vector fields to $\Sigma$, then for every $x \in \Sigma$ we have
\[
\nabla_{\xi}X(x) = \p_{x} \cdot \nabla_{\xi}X(x) + \p_{x}^{\perp} \cdot \nabla_{\xi}X(x) =: \nabla^{\Sigma}_{\xi}X(x) + A_{x}\left(\xi(x),X(x)\right),
\]
where $\nabla^{\Sigma}$ is the Levi-Civita connection on $\Sigma$ and $A$ is the 2-covariant tensor with values in $\N \Sigma$ defined by $A_{x}(X,Y) := \mathbf{p}_{x}^{\perp} \cdot \nabla_{X}Y$ for any $x \in \Sigma$, for any $X, Y \in T_{x}\Sigma$. $A$ is called the \emph{second fundamental form} of the embedding $\Sigma \hookrightarrow \M$ by some authors (cf. \cite[Section 7]{Simon83}, where the tensor is denoted $B$, or \cite[Chapter 8]{Lee97}, where the author uses the notation ${\rm II}$) and we will use the same terminology, although in the literature in differential geometry (above all when working with embedded hypersurfaces, that is in case the codimension of the submanifold is $k=1$) it is sometimes more customary to call $A$ ``shape operator'' and to use ``second fundamental form'' for scalar products $h(X,Y) = \langle A(X,Y), \eta \rangle$ with a fixed normal vector field $\eta$ (cf. \cite[Chapter 6, Section 2]{doC92}).

Observe that, since we have assumed $\Sigma$ to be minimal in $\M$, the mean curvature $H := \tr(A)$ is everywhere vanishing on $\Sigma$.

The curvature endomorphism of the ambient manifold $\M$ is denoted by $R$: we recall that this is a tensor field on $\M$ of type $(3,1)$, whose action on vector fields is defined by
\[
R(X,Y)Z := \nabla_{X}\nabla_{Y}Z - \nabla_{Y}\nabla_{X}Z - \nabla_{\left[X,Y\right]}Z,
\]
where $\left[X,Y\right]$ is the Lie bracket of the vector fields $X$ and $Y$.

Recall also that the Riemann tensor can be defined by setting
\[
\Rm(X,Y,Z,W) := \langle R(X,Y)Z, W \rangle
\]
for any choice of the vector fields $X,Y,Z,W$, and that the Ricci tensor is the trace of the curvature endomorphism with respect to its first and last indices, that is $\Ric(X,Y)$ is the trace of the linear map
\[
Z \mapsto R(Z,X)Y.
\]

Observe that $\Sigma$ has a natural structure of metric measure space: for any pair of points $x,y \in \Sigma$, $\mathbf{d}(x,y)$ will be their Riemannian geodesic distance, while measures and integrals will be computed with respect to the $m$-dimensional Hausdorff measure $\Ha^m$ defined in the ambient space $\R^{d}$ (note that the Hausdorff measure can be defined also intrinsically in terms of the distance ${\bf d}$: however, since $\Sigma$ is isometrically embedded in $\R^d$, the intrinsic $\Ha^m$ measure coincides with the restriction of the ``Euclidean one''). Boldface characters will always be used to denote quantities which are related to the Riemannian geodesic distance: for instance, if $x \in \Sigma$ and $r$ is a positive number, $\B_{r}(x)$ is the geodesic ball with center $x$ and radius $r$, namely the set of points $y \in \Sigma$ such that $\mathbf{d}(y,x) < r$. In the same fashion, if $U$ and $V$ are two subsets of $\Sigma$ we will set
\[
\ddist(U,V) := \inf\lbrace \mathbf{d}(x,y) \, \colon \, x \in U, y \in V \rbrace.
\] 

Finally, constants will be usually denoted by $C$. The precise value of $C$ may change from line to line throughout a computation. Moreover, we will write $C(a,b,\dots)$ or $C_{a,b,\dots}$ to specify that $C$ depends on previously introduced quantities $a,b,\dots$.  

\subsection{Multiple valued functions} \label{ssec:Q_func}

In this subsection, we briefly recall the relevant definitions and properties concerning $Q$-valued functions. First introduced by Almgren in his groundbreaking Big Regularity Paper \cite{Almgren00}, multiple valued functions have proved themselves to be a fundamental tool to tackle the problem of the interior regularity of area minimizing integral currents in codimension higher than one. The interested reader can see \cite{DLS11a} for a simple, complete and self-contained reference for Almgren's theory of multiple valued functions, \cite{DLS13a} for a nice presentation of their link with integral currents, and \cite{DL16, DeLellis2015} for a nice survey of the strategy adopted in \cite{DLS14, DLS13b, DLS13c} to revisit Almgren's program and obtain a much shorter proof of his celebrated partial regularity result for area minimizing currents in higher codimension. Other remarkable references where the theory of Dirichlet minimizing multiple valued functions plays a major role include the papers \cite{DLSS15a, DLSS15b, DLSS15c}, where the authors investigate the regularity of suitable classes of almost-minimizing two-dimensional integral currents. 

%In a paper in preparation \cite{DLMS}, C. De Lellis, A. Marchese and the author will take advantage of the regularity theory for $\Dir$-minimizers in order to study the regularity of area minimizing currents modulo $p$.

\subsubsection{The metric space of $Q$-points}

From now on, let $Q \geq 1$ be a fixed positive integer.
\begin{definition}[$Q$-points] \label{Q-points}
The space of $Q$-points in the Euclidean space $\R^{d}$ is denoted $\A_Q(\R^{d})$ and defined as follows:
\begin{equation}
\A_Q(\R^{d}) := \left\lbrace T = \sum_{\ell=1}^{Q} \llbracket p_{\ell} \rrbracket \, \colon \, p_{\ell} \in \R^{d} \mbox{ for every } \ell = 1,\dots,Q \right\rbrace,
\end{equation}
where $\llbracket p_{\ell} \rrbracket$ is the Dirac mass $\delta_{p_\ell}$ centered at the point $p_{\ell} \in \R^{d}$. Hence, every $Q$-point $T$ is in fact a purely atomic non-negative measure of mass $Q$ in $\R^d$.

For the sake of notational simplicity, we will sometimes write $\A_{Q}$ instead of $\A_{Q}(\R^{d})$ if there is no chance of ambiguity.
\end{definition}

The space $\A_{Q}(\R^{d})$ has a natural structure of complete separable metric space.
\begin{definition}\label{Distance}
If $T = \sum \llbracket p_{\ell} \rrbracket$ and $S = \sum \llbracket q_{\ell} \rrbracket$, then the distance between $T$ and $S$ is denoted $\G(T,S)$ and given by
\begin{equation}
\G(T,S)^{2} := \min_{\sigma \in \mathcal{P}_{Q}} \sum_{\ell=1}^{Q} |p_{\ell} - q_{\sigma(\ell)}|^{2},
\end{equation}
where $\mathcal{P}_{Q}$ is the group of permutations of $\{1, \dots, Q\}$.
\end{definition}

To any $T = \sum \llbracket p_{\ell} \llbracket \in \A_{Q}(\R^{d})$ we associate its center of mass $\bfeta(T) \in \R^{d}$, classically defined by:
\begin{equation} \label{eta}
\bfeta(T) := \frac{1}{Q} \sum_{\ell=1}^{Q} p_{\ell}.
\end{equation}

\subsubsection{$Q$-valued maps}

Given an open subset $\Omega \subset \Sigma$, continuous, Lipschitz, H\"older and measurable functions $u \colon \Omega \to \A_{Q}(\R^{d})$ can be straightforwardly defined taking advantage of the metric space structure of both the domain and the target. As for the spaces $L^{p}\left( \Omega, \A_{Q} \right)$, $1 \leq p \leq \infty$, they consist of those measurable maps $u \colon \Omega \to \A_{Q}(\R^{d})$ such that $\| u \|_{L^p} := \| \G(u,Q\llbracket 0 \rrbracket) \|_{L^p(\Omega)}$ is finite. We will systematically use the notation $|u| := \G(u, Q\llbracket 0 \rrbracket)$, so that
\[
\| u \|_{L^p}^{p} = \int_{\Omega} |u|^{p} \, \dHa^m
\]
for $1 \leq p < \infty$ and 
\[
\| u \|_{L^{\infty}} = \esssup_{\Omega} |u|.
\]
In spite of this notation, we remark here that, when $Q > 1$, $\A_{Q}(\R^d)$ is not a linear space: thus, in particular, the map $T \mapsto |T|$ is not a norm.

Any measurable $Q$-valued function can be thought as coming together with a measurable selection, as specified in the following proposition.
\begin{proposition}[Measurable selection, cf. {\cite[Proposition 0.4]{DLS11a}}] \label{meas_select}
Let $B \subset \Sigma$ be a $\Ha^{m}$-measurable set and $u \colon B \to \A_{Q}(\R^{d})$ be a measurable function. Then, there exist measurable functions $u_{1},\dots,u_{Q} \colon B \to \R^{d}$ such that
\begin{equation}
u(x) = \sum_{\ell=1}^{Q} \llbracket u_{\ell}(x) \rrbracket \hspace{0.3cm} \mbox{ for a.e. } x \in B.
\end{equation}
\end{proposition}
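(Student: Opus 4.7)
The plan is to proceed by induction on $Q$. The base case $Q=1$ is immediate, since $\A_{1}(\R^{d})$ is canonically isometric to $\R^{d}$ via $\llbracket p \rrbracket \mapsto p$, so $u$ itself is the desired selection.

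For the inductive step, assuming the statement holds for $Q-1$, the strategy is to extract one measurable single-valued branch $u_{Q}\colon B \to \R^{d}$ and then apply the inductive hypothesis to the residual $(Q-1)$-valued map
\[
v(x) := u(x) - \llbracket u_{Q}(x) \rrbracket \in \A_{Q-1}(\R^{d}).
\]
To produce $u_{Q}$ I would build a Borel selection $\pi\colon \A_{Q}(\R^{d}) \to \R^{d}$ that picks one atom out of each $Q$-point; a natural choice is the lexicographically smallest point in the support. The crucial measurability input is the following elementary fact: for every closed set $F \subset \R^{d}$, the set $\{T \in \A_{Q}(\R^{d}) : \mathrm{supp}(T) \cap F \neq \emptyset\}$ is closed with respect to $\G$. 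Indeed, if $T_{n} \to T$ in $\G$ and $p_{n} \in \mathrm{supp}(T_{n}) \cap F$, then up to relabeling the atoms $p_{n}$ converges (along a subsequence) to a point of $\mathrm{supp}(T) \cap F$. Since lex half-spaces in $\R^{d}$ can be written as countable unions/intersections of closed sets, preimages under $\pi$ of lex half-spaces are Borel, and thus $\pi$ is Borel measurable. Setting $u_{Q} := \pi \circ u$ yields a measurable selection.

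Next I would show that $v$, as defined above, is measurable into $\A_{Q-1}(\R^{d})$. This reduces to verifying the Borel measurability of the partial removal map
\[
\Phi \colon \{(T,p) \in \A_{Q}(\R^{d}) \times \R^{d} : p \in \mathrm{supp}(T)\} \longrightarrow \A_{Q-1}(\R^{d}), \qquad \Phi(T,p) := T - \llbracket p \rrbracket,
\]
which one can check is 1-Lipschitz in $(T,p)$ with respect to $\G \oplus |\cdot|$. Composing with $x \mapsto (u(x), u_{Q}(x))$ gives $v$ measurable, and the inductive hypothesis then produces measurable $u_{1}, \dots, u_{Q-1}$ realizing $v = \sum_{\ell=1}^{Q-1} \llbracket u_{\ell} \rrbracket$ a.e.\ on $B$. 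Combined with $u_{Q}$, this gives the required decomposition.

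The main obstacle in this plan is handling $Q$-points with repeated atoms: precisely there the lex selection is ambiguous among several copies of the same point in $\R^{d}$, and the residual operation must remove exactly one copy. Although the value of $\pi$ as an element of $\R^{d}$ is unambiguous, one must be careful that $\Phi$ is well-defined as a map into $\A_{Q-1}$ (the multiplicity of the removed atom decreases by one) and that the various preimages considered are indeed Borel. These are bookkeeping issues rather than substantive difficulties, but they dictate the choice of a particularly clean selector such as the lex minimum, whose level sets admit a transparent description in terms of the closed sets $\{T : \mathrm{supp}(T) \cap F \neq \emptyset\}$ identified above.
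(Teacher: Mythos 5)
Your plan is a genuinely different route from the one in \cite[Proposition 0.4]{DLS11a}. There the argument is also by induction on $Q$, but the inductive step does not try to peel off a single atom via a global selector; instead, one splits the domain into the set where $u(x)$ is a single point with multiplicity $Q$ (on which the selection is trivially $\bfeta\circ u$) and its complement, which is then covered by countably many measurable pieces on each of which $\spt(u(x))$ breaks into two spatially well-separated clusters of fixed cardinalities $Q_1+Q_2=Q$, $Q_i<Q$. On such a piece the cluster decomposition $T\mapsto(T_1,T_2)\in\A_{Q_1}\times\A_{Q_2}$ is \emph{continuous}, so measurability is automatic and one inducts on both factors. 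The whole point of the clustering device is to avoid precisely the delicate question your approach has to confront head-on: whether a canonical single-atom selector on all of $\A_Q(\R^d)$ is Borel.

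That question is where your argument has a genuine gap. You correctly observe that $\{T:\spt(T)\cap F\neq\emptyset\}$ is $\G$-closed for closed $F$, and that for a lexicographic lower set $H=\{p\le_{\mathrm{lex}}q\}$ one has $\pi^{-1}(H)=\{T:\spt(T)\cap H\neq\emptyset\}$, which is then $F_\sigma$ since $H$ is $F_\sigma$. But from ``preimages of lex half-spaces are Borel'' you cannot conclude ``$\pi$ is Borel'': for $d\ge 2$ the lex half-spaces do \emph{not} generate the Euclidean Borel $\sigma$-algebra of $\R^d$. For instance, every set in the $\sigma$-algebra they generate has the property that, outside countably many vertical hyperplanes $\{p_1=a\}$, it is a union of full such hyperplanes; so a set like $\{p_2\le 0\}\subset\R^2$ is not in it, let alone an arbitrary Euclidean ball. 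Concretely, $\pi$ is not continuous (take $Q=2$, $T_n=\llbracket(1/n,0)\rrbracket+\llbracket(0,1)\rrbracket\to\llbracket(0,0)\rrbracket+\llbracket(0,1)\rrbracket$: $\pi(T_n)=(0,1)$ jumps to $\pi(T)=(0,0)$), and establishing its Borel measurability requires a nontrivial input such as the Lusin--Souslin theorem applied to the continuous bijection from the Borel fundamental domain $\{p_1\le_{\mathrm{lex}}\cdots\le_{\mathrm{lex}}p_Q\}\subset(\R^d)^Q$ onto $\A_Q(\R^d)$. So the approach can be completed, but not by the elementary argument you give. (Separately, the claim that the removal map $\Phi(T,p)=T-\llbracket p\rrbracket$ is $1$-Lipschitz for $\G\oplus|\cdot|$ is false — with $Q=2$, $d=1$, $T=\llbracket-1\rrbracket+\llbracket 0\rrbracket$, $p=0$, $S=\llbracket 0\rrbracket+\llbracket 1\rrbracket$, $q=0$ one gets $\G(\Phi(T,p),\Phi(S,q))=2>\sqrt 2=\G(T,S)+|p-q|$ — but $\Phi$ is continuous on its closed domain, and that is all you actually need.)

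Finally, note that the obstacle you flag at the end — repeated atoms — is not really the issue: as you yourself observe, $\pi(T)$ is unambiguous as a point of $\R^d$, and $T-\llbracket\pi(T)\rrbracket$ is unambiguous as a measure. The real obstruction is the Borel measurability of the lex selector across $Q$-points whose ordering by the first few coordinates changes discontinuously, and it is exactly this that the clustering argument of \cite{DLS11a} is engineered to sidestep.
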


It is possible to introduce a notion of differentiability for multiple valued maps.
\begin{definition}[Differentiable $Q$-valued functions] \label{diff}
A map $u \colon \Omega \to \A_{Q}(\R^d)$ is said to be \emph{differentiable} at $x \in \Omega$ if there exist $Q$ linear maps $\lambda_{\ell} \colon T_{x}\Sigma \to \R^d$ satisfying:
\begin{itemize}
\item[$(i)$] $\G\left( u(\exp_{x}(\xi)), T_{x}u(\xi) \right) = o(|\xi|)$ as $|\xi| \to 0$ for any $\xi \in T_{x}\Sigma$, where $\exp$ is the exponential map on $\Sigma$ and 
\begin{equation}
T_{x}u(\xi) := \sum_{\ell=1}^{Q} \llbracket u_{\ell}(x) + \lambda_{\ell} \cdot \xi \rrbracket;
\end{equation}
\item[$(ii)$] $\lambda_{\ell} = \lambda_{\ell'} \mbox{ if } u_{\ell}(x) = u_{\ell'}(x)$.
\end{itemize}
\end{definition}
We will use the notation $Du_{\ell}(x)$ for $\lambda_{\ell}$, and formally set $Du(x) = \sum_{\ell} \llbracket Du_{\ell}(x) \rrbracket$: observe that one can regard $Du(x)$ as an element of $\A_{Q}(\R^{d \times m})$ as soon as a basis of $T_{x}\Sigma$ has been fixed. For any $\xi \in T_{x}\Sigma$, we define the directional derivative of $u$ along $\xi$ to be $D_{\xi}u(x) := \sum_{\ell} \llbracket Du_{\ell}(x) \cdot \xi \rrbracket$, and establish the notation $D_{\xi}u = \sum_{\ell} \llbracket D_{\xi}u_\ell \rrbracket$.

Differentiable functions enjoy a chain rule formula.
\begin{proposition}[Chain rules, cf. {\cite[Proposition 1.12]{DLS11a}}] \label{chain}
Let $u \colon \Omega \to \A_{Q}(\R^d)$ be differentiable at $x_0$.
\begin{itemize}
\item[$(i)$] Consider $\Phi \colon \tilde{\Omega} \to \Omega$ such that $\Phi(y_0) = x_0$, and assume that $\Phi$ is differentiable at $y_0$. Then, $u \circ \Phi$ is differentiable at $y_0$ and
\begin{equation} \label{chain:1}
D(u \circ \Phi)(y_0) = \sum_{\ell=1}^{Q} \llbracket Du_{\ell}(x_0) \cdot D\Phi(y_0) \rrbracket.
\end{equation}
\item[$(ii)$] Consider $\Psi \colon \Omega_{x} \times \R^{d}_{p} \to \R^{q}$ such that $\Psi$ is differentiable at the point $(x_0, u_{\ell}(x_0))$ for every $\ell$. Then, the map $\Psi(x, u) \colon x \in \Omega \mapsto \sum_{\ell=1}^{Q} \llbracket \Psi(x, u_{\ell}(x)) \rrbracket \in \A_{Q}(\R^{q})$ fulfills $(i)$ of Definition \ref{diff}. Moreover, if also $(ii)$ holds, then
\begin{equation} \label{chain:2}
D\Psi(x,u)(x_0) = \sum_{\ell=1}^{Q} \llbracket D_{x}\Psi(x_0, u_{\ell}(x_0)) + D_{p}\Psi(x_0, u_{\ell}(x_0)) \cdot Du_{\ell}(x_0) \rrbracket.
\end{equation}
\item[$(iii)$] Consider a map $F \colon (\R^d)^{Q} \to \R^{q}$ with the property that, for any choice of $Q$ points $(y_{1}, \dots, y_{Q}) \in (\R^d)^{Q}$, for any permutation $\sigma \in \mathcal{P}_{Q}$
\[
F(y_{1}, \dots, y_{Q}) = F(y_{\sigma(1)}, \dots, y_{\sigma(Q)}).
\]
Then, if $F$ is differentiable at $(u_{1}(x_0), \dots, u_{Q}(x_0))$ the composition $F \circ u$ \footnote{Observe that $F \circ u$ is a well defined function $\Omega \to \R^{q}$, because $F$ is, by hypothesis, a well defined map on the quotient $\A_{Q}(\R^d) = (\R^d)^{Q}/\mathcal{P}_{Q}$.} is differentiable at $x_0$ and
\begin{equation} \label{chain:3}
D(F \circ u)(x_0) = \sum_{\ell=1}^{Q} D_{y_\ell}F(u_{1}(x_0), \dots, u_{Q}(x_0)) \cdot Du_{\ell}(x_0).
\end{equation}
\end{itemize}
\end{proposition}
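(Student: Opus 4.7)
The overall strategy is that each of the three statements is a direct unwinding of Definition \ref{diff} combined with the chain rule / Fr\'echet differentiability for single-valued smooth maps. The distance $\G$ behaves well under triangle inequality and under any (consistent) labeling, so the main work is (a) choosing a local labeling compatible with the differentiability of $u$ at $x_0$, and (b) checking that the proposed differential $\lambda_\ell$ automatically satisfies condition $(ii)$ of Definition \ref{diff}.

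\textbf{Part (i).} For $\eta \in T_{y_0}\tilde\Omega$ small, set $\xi := \exp_{x_0}^{-1}(\Phi(\exp_{y_0}(\eta)))$, so that differentiability of $\Phi$ at $y_0$ gives $\xi = D\Phi(y_0)\cdot\eta + o(|\eta|)$. By the triangle inequality for $\G$,
\[
\G\bigl((u\circ\Phi)(\exp_{y_0}(\eta)),\, T_{y_0}(u\circ\Phi)(\eta)\bigr) \le \G\bigl(u(\exp_{x_0}(\xi)), T_{x_0}u(\xi)\bigr) + \G\bigl(T_{x_0}u(\xi), T_{y_0}(u\circ\Phi)(\eta)\bigr).
\]
The first summand is $o(|\xi|)=o(|\eta|)$ by Definition \ref{diff} applied to $u$. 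The second is bounded by $\bigl(\sum_\ell\|Du_\ell(x_0)\|^2\bigr)^{1/2}|\xi - D\Phi(y_0)\cdot\eta| = o(|\eta|)$. Condition $(ii)$ for $u\circ\Phi$ is immediate since $u_\ell(x_0)=u_{\ell'}(x_0)$ forces $Du_\ell(x_0)=Du_{\ell'}(x_0)$ and hence $Du_\ell(x_0)\cdot D\Phi(y_0) = Du_{\ell'}(x_0)\cdot D\Phi(y_0)$.

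\textbf{Part (ii).} Observe first that $v(x):=\sum_\ell\llbracket\Psi(x,u_\ell(x))\rrbracket$ is well-defined as a measure, independently of the chosen measurable selection. To prove the differentiability formula, use the characterization of Definition \ref{diff}: there exist a permutation $\sigma(\xi)\in\mathcal P_Q$ and a labeling $\tilde u_1(\xi),\dots,\tilde u_Q(\xi)$ of $u(\exp_{x_0}(\xi))$ with $\tilde u_{\sigma(\xi)(\ell)}(\xi) = u_\ell(x_0) + Du_\ell(x_0)\cdot\xi + r_\ell(\xi)$ and $\sum_\ell|r_\ell(\xi)|^2=o(|\xi|^2)$. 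Applying Fr\'echet differentiability of $\Psi$ at each point $(x_0,u_\ell(x_0))$ and expanding,
\[
\Psi(\exp_{x_0}(\xi),\tilde u_{\sigma(\xi)(\ell)}(\xi)) - \Psi(x_0,u_\ell(x_0)) = \bigl[D_x\Psi + D_p\Psi\cdot Du_\ell(x_0)\bigr]\cdot\xi + o(|\xi|),
\]
which, summing the squares over $\ell$, yields $\G(v(\exp_{x_0}(\xi)),T_{x_0}v(\xi))^2 = o(|\xi|^2)$. Condition $(ii)$ for $v$ is built into the formula: if $u_\ell(x_0)=u_{\ell'}(x_0)$ then $(ii)$ for $u$ gives $Du_\ell(x_0)=Du_{\ell'}(x_0)$ and the $\lambda_\ell$'s coincide.

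\textbf{Part (iii).} Since $F$ descends to $\A_Q(\R^d)$, $F\circ u$ is a single-valued map $\Omega\to\R^q$; differentiability is classical, and only the explicit formula requires justification. Using the same local labeling as in (ii), and invoking the classical chain rule for $F$ at $(u_1(x_0),\dots,u_Q(x_0))$,
\[
F(u(\exp_{x_0}(\xi))) = F(u(x_0)) + \sum_{\ell=1}^Q D_{y_\ell}F(u_1(x_0),\dots,u_Q(x_0))\cdot Du_\ell(x_0)\cdot\xi + o(|\xi|).
\]
The symmetry hypothesis on $F$ guarantees that $D_{y_{\sigma(\ell)}}F(y_{\sigma(1)},\dots,y_{\sigma(Q)}) = D_{y_\ell}F(y_1,\dots,y_Q)$, so the right-hand side is independent of the ordering of $u_\ell(x_0)$; combined with $(ii)$ of Definition \ref{diff}, this makes \eqref{chain:3} well-posed even at points where the $u_\ell(x_0)$ collide.

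The only subtle point throughout is that the labeling realizing the minimum in $\G$ may depend on $\xi$; the remedy, as above, is to absorb any permutation into a relabeling of the sum, exploiting the invariance of the formulas for $\lambda_\ell$ and for $D(F\circ u)$ under permutations whenever two $u_\ell(x_0)$'s coincide. I do not anticipate serious obstacles: the proposition is essentially a bookkeeping exercise, and the genuine content lies in the compatibility conditions enforced by $(ii)$ of Definition \ref{diff}.
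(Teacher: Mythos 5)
The paper does not prove this proposition: it is cited from \cite[Proposition 1.12]{DLS11a}, so there is no in-paper argument to compare against. Your proof is the natural one and matches the proof in that reference in spirit: fix a labeling of $u(\exp_{x_0}(\xi))$ compatible with the first-order Taylor expansion of $u$ at $x_0$, apply the classical chain rule slot by slot, and observe that the $\xi$-dependent optimizing permutation in $\G$ may be absorbed into a relabeling of the unordered sum. Parts (i) and (iii) are correct as written, and your treatment of the permutation subtlety is the right one.

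There is one genuine imprecision in part (ii), and it lands precisely on the point the proposition's wording is built to flag. You assert that condition $(ii)$ of Definition \ref{diff} for the composite $v:=\Psi(x,u)$ is ``built into the formula'' because $u_\ell(x_0)=u_{\ell'}(x_0)$ forces $Du_\ell(x_0)=Du_{\ell'}(x_0)$. That only covers collisions among the $u$-values. Condition $(ii)$ for $v$ requires $\lambda_\ell=\lambda_{\ell'}$ whenever $\Psi(x_0,u_\ell(x_0))=\Psi(x_0,u_{\ell'}(x_0))$, which can happen with $u_\ell(x_0)\neq u_{\ell'}(x_0)$ if $\Psi(x_0,\cdot)$ is not injective, and then the candidate differentials $D_{x}\Psi(x_0,u_\ell(x_0))+D_{p}\Psi(x_0,u_\ell(x_0))\cdot Du_\ell(x_0)$ have no reason to coincide. (Concretely: $Q=2$, $u(x_0)=\llbracket 1\rrbracket+\llbracket -1\rrbracket\in\A_{2}(\R)$, $\Psi(x,p)=p^2$, $Du_1(x_0)\neq -Du_2(x_0)$.) This is exactly why the statement says that $\Psi(x,u)$ ``fulfills $(i)$ of Definition \ref{diff}'' and then adds ``if also $(ii)$ holds.'' Your expansion correctly establishes $(i)$ and the value of the differential \emph{given} $(ii)$; you should drop the claim that $(ii)$ is automatic.
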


Rademacher's theorem extends to the $Q$-valued setting, as shown in \cite[Theorem 1.13]{DLS11a}: Lipschitz $Q$-valued functions are differentiable $\Ha^{m}$-almost everywhere in the sense of Definition \ref{diff}. Moreover, for a Lipschitz $Q$-valued function the decomposition result stated in Proposition \ref{meas_select} can be improved as follows.
\begin{proposition}[Lipschitz selection, cf. {\cite[Lemma 1.1]{DLS13a}}] \label{Lip-select}
Let $B \subset \Sigma$ be measurable, and assume $u \colon B \to \A_{Q}(\R^{d})$ is Lipschitz. Then, there are a countable partition of $B$ in measurable subsets $B_{i}$ ($i \in \mathbb{N}$) and Lipschitz functions $u_{i}^{\ell} \colon B_i \to \R^{d}$ ($\ell \in \{1,\dots,Q\}$) such that
\begin{itemize}
\item[$(a)$] $u|_{B_i} = \sum_{\ell=1}^{Q} \llbracket u_{i}^{\ell} \rrbracket$ for every $i \in \mathbb{N}$, and $\Lip(u_{i}^{\ell}) \leq \Lip(u)$ for every $i,\ell$;
\item[$(b)$] for every $i \in \mathbb{N}$ and $\ell,\ell' \in \{1,\dots,Q\}$, either $u_{i}^{\ell} \equiv u_{i}^{\ell'}$ or $u_{i}^{\ell}(x) \neq u_{i}^{\ell'}(x) \, \forall x \in B_i$;
\item[$(c)$] for every $i$ one has $Du(x) = \sum_{\ell=1}^{Q} \llbracket Du_{i}^{\ell}(x) \rrbracket$ for a.e. $x \in B_i$.
\end{itemize}
\end{proposition}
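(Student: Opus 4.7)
I would argue by strong induction on $Q$, with the case $Q=1$ being trivial (take $B_1 = B$, $u_1^1 = u$). For the inductive step, the key geometric input is a local splitting lemma: if at $x_0 \in B$ the distinct values of $u(x_0) = \sum_{\ell=1}^Q \llbracket p_\ell(x_0) \rrbracket$ admit a nontrivial partition into two clusters separated by Euclidean distance $\geq 2\delta > 0$, then on the geodesic ball $\B_r(x_0) \cap B$ with $r := \delta/(2\Lip(u))$ the map $u$ splits as $u = T_1 + T_2$, where $T_1, T_2$ are Lipschitz $\A_{Q_1}$- and $\A_{Q_2}$-valued maps with $Q_1, Q_2 < Q$ and $\spt(T_1(y)) \cap \spt(T_2(y)) = \emptyset$ for every $y \in \B_r(x_0) \cap B$. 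The splitting is forced by continuity, since the estimate $\G(u(y), u(x_0)) \leq \Lip(u)\cdot \mathbf{d}(y,x_0) < \delta/2$ prevents any branch from crossing the annulus of width $\delta$ separating the two clusters; moreover $\Lip(T_j) \leq \Lip(u)$ because the optimal permutation in $\G(u(y),u(y'))$ is forced to match points within the same cluster, so $\G(u(y),u(y'))^2 = \G(T_1(y),T_1(y'))^2 + \G(T_2(y),T_2(y'))^2$.

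Given this, the plan is as follows. First, isolate the totally collapsed set $B_0 := \{x \in B : u(x) = Q\llbracket \bfeta(u(x))\rrbracket\}$, which is measurable because the diameter map $x \mapsto \max_{\ell,\ell'}|p_\ell(x)-p_{\ell'}(x)|$ is continuous. On $B_0$ the decomposition $u_0^\ell := \bfeta\circ u$ for every $\ell$ works, since a direct Jensen estimate gives $\Lip(\bfeta\circ u) \leq \Lip(u)/\sqrt{Q} \leq \Lip(u)$. On the complement $B\setminus B_0$, use separability of $\Sigma$ to cover by countably many balls $\B_{r_j}(x_j)$ on each of which the splitting lemma produces maps $T_{1,j}, T_{2,j}$ of strictly lower multiplicity. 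Apply the inductive hypothesis to $T_{1,j}$ and $T_{2,j}$ separately to obtain countable measurable subpartitions of $\B_{r_j}(x_j)\cap B$ with single-valued Lipschitz branches satisfying $(a)$--$(c)$, take the common refinement, and concatenate the branches of $T_{1,j}$ and $T_{2,j}$ into $Q$ single-valued Lipschitz maps; the pointwise support disjointness guarantees that the concatenation is a bona fide decomposition of $u$ satisfying $(a)$. A standard disjointification across $j$ produces the global countable measurable partition of $B$.

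The main obstacle is preserving property $(b)$ on each final piece. Branches originating from different clusters $T_{1,j}, T_{2,j}$ are pointwise distinct by construction. For two branches coming from the same sub-decomposition, $(b)$ is inherited from the inductive hypothesis; if on a piece two such branches happen to coincide at one point but disagree at another, I further partition the piece into the (measurable, by continuity) coincidence set and its complement, and iterate. Since there are at most $\binom{Q}{2}$ pairs of indices to test, this refinement procedure terminates after finitely many steps on each piece. Finally, property $(c)$ follows from Rademacher's theorem in the $Q$-valued setting (\cite[Theorem 1.13]{DLS11a}) together with the uniqueness clause $(ii)$ of Definition \ref{diff}: on each $B_i$ the identity $u = \sum_\ell \llbracket u_i^\ell\rrbracket$ forces $Du(x) = \sum_\ell \llbracket Du_i^\ell(x)\rrbracket$ at every point of differentiability, hence a.e.
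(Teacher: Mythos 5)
Your proposal is correct and follows what is essentially the standard argument for this result; the paper itself does not prove Proposition~\ref{Lip-select} but defers to \cite[Lemma~1.1]{DLS13a}, and your induction-on-$Q$ scheme with the local splitting lemma (totally collapsed stratum handled by $\bfeta\circ u$, the rest covered by balls on which $u$ splits into lower-multiplicity clusters with pointwise-disjoint supports) is the same route taken there. Two small remarks. First, the radius $r=\delta/(2\Lip(u))$ is too generous by a factor: with clusters $2\delta$-separated at $x_0$ and $y,y'\in\B_r(x_0)$, one has $\G(u(y),u(y'))\le\delta$ while the residual separation at $y$ is also only $\ge\delta$, so the argument that the optimal matching must respect clusters requires strict inequality; taking $r=\delta/(4\Lip(u))$, say, restores the slack. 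Second, the extra refinement loop you append to preserve $(b)$ is vacuous: property $(b)$ is stable under restriction to measurable subsets, so it passes automatically to the common refinement of the two sub-partitions produced by the inductive hypothesis, and the cross-cluster branches are pointwise distinct by the support-disjointness of $T_{1}$ and $T_{2}$. Neither point affects the validity of the proof; the treatment of $(a)$ via the Pythagorean identity $\G(u(y),u(y'))^2=\G(T_1(y),T_1(y'))^2+\G(T_2(y),T_2(y'))^2$ and of $(c)$ via the uniqueness in Definition~\ref{diff} together with $Q$-valued Rademacher is sound.
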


\subsubsection{Push-forward through multiple valued functions of $C^1$ submanifolds}

A useful fact, which will indeed be the starting point of our analysis of multivalued normal variations of $\Sigma$ in $\M$, is that it is possible to push-forward $C^1$ submanifolds of the Euclidean space through $Q$-valued Lipschitz functions. Before giving the rigorous definition of a $Q$-valued push-forward, it will be useful to introduce some further notation. We will assume the reader to be familiar with the basic concepts and notions related to the theory of currents: standard references on this topic include the textbooks \cite{Simon83} and \cite{KP08}, the monograph \cite{GMS98} and the treatise \cite{Federer69}. The space of smooth and compactly supported differential $m$-forms in $\R^{d}$ will be denoted $\mathcal{D}^{m}(\R^{d})$, and $T(\omega)$ will be the action of the $m$-current $T$ on $\omega \in \mathcal{D}^{m}(\R^{d})$. If $T$ is a current, then $\partial T$ and $\mass(T)$ are its boundary and its mass respectively. If $B \subset \R^{d}$ is $m$-rectifiable with orientation $\vec{\xi}$ and multiplicity $\theta \in L^{1}(B,\Z)$, then the integer rectifiable current $T$ associated to the triple $\left( B, \vec{\xi}, \theta \right)$ will be denoted $T = \llbracket B, \vec{\xi}, \theta \rrbracket$. In particular, if $\Sigma \subset \R^{d}$ is an $m$-dimensional $C^1$ oriented submanifold with finite $\Ha^{m}$-measure and orientation $\vec{\xi} = \xi_1 \wedge \dots \wedge \xi_m$ \footnote{That is, $\vec{\xi}(x)$ is a continuous unit $m$-vector field on $\Sigma$ with $(\xi_i)_{i=1}^{m}$ an orthonormal frame of the tangent bundle $\T \Sigma$.}, and $B \subset \Sigma$ is a measurable subset, then we will simply write $\llbracket B \rrbracket$ instead of the more cumbersome $\llbracket B, \vec{\xi}, 1 \rrbracket$ to denote the current associated to $B$. We remark that the action of $\llbracket B \rrbracket$ on a form $\omega \in \mathcal{D}^{m}(\R^{d})$ is given by
\[
\llbracket B \rrbracket(\omega) := \int_{B} \langle \omega(x), \vec{\xi}(x) \rangle \, \dHa^m(x).
\]
In particular, the $m$-current $\llbracket \Sigma \rrbracket$ is obtained by integration of $m$-forms over $\Sigma$ in the usual sense of differential geometry: $\llbracket \Sigma \rrbracket(\omega) = \int_{\Sigma} \omega$. \footnote{Observe that this convention is coherent with the use of $\llbracket p \rrbracket$, $p \in \R^d$, to denote the Dirac delta $\delta_{p}$, considered as a $0$-dimensional current in $\R^d$.}
Since we will always deal with compact manifolds, we continue to assume that $\Sigma$ is compact, in order to avoid some technicalities which are instead necessary when dealing with the non-compact case (see \cite[Definition 1.2]{DLS13a}). 
\begin{definition}[$Q$-valued push-forward, cf. {\cite[Definition 1.3]{DLS13a}}] \label{Q_pf}
Let $\Sigma$ be as above, $B \subset \Sigma$ a measurable subset and $u \colon B \to \A_{Q}(\R^d)$ a Lipschitz map. Then, the \emph{push-forward} of $B$ through $u$ is the current $\mathbf{T}_{u} := \sum_{i,\ell} (u_{i}^{\ell})_{\sharp}\llbracket B_i \rrbracket$, where $B_i$ and $u_{i}^{\ell}$ are as in Proposition \ref{Lip-select}: that is,
\begin{equation} \label{Q_pf:eq}
\mathbf{T}_{u}(\omega) := \sum_{i \in \mathbb{N}} \sum_{\ell=1}^{Q} \int_{B_i} \left\langle \omega\left(u_{i}^{\ell}(x)\right), Du_{i}^{\ell}(x)_{\sharp}\vec{\xi}(x) \right\rangle \, \dHa^m(x) \hspace{0.5cm} \forall \, \omega \in \mathcal{D}^{m}(\R^d),
\end{equation}
where $Du_{i}^{\ell}(x)_{\sharp}\vec{\xi}(x) := D_{\xi_1}u_{i}^{\ell}(x) \wedge \dots \wedge D_{\xi_m}u_{i}^{\ell}(x)$ for a.e. $x \in B_i$.
\end{definition}

It is straightforward, using the properties of the Lipschitz decomposition outlined in Proposition \ref{Lip-select} and recalling the standard theory of rectifiable currents (cf. \cite[Section 27]{Simon83}) and the area formula (cf. \cite[Section 8]{Simon83}), to conclude the following proposition.
\begin{proposition}[Representation of the push-forward, cf. {\cite[Proposition 1.4]{DLS13a}}] \label{Q_pf:thm}
The definition of the action of $\mathbf{T}_{u}$ in \eqref{Q_pf:eq} does not depend on the chosen partition $B_i$, nor on the chosen decomposition $\{u_{i}^{\ell}\}$. If $u = \sum_{\ell} \llbracket u^{\ell} \rrbracket$, we are allowed to write
\begin{equation}\label{Q_pf:eq2}
\mathbf{T}_{u}(\omega) = \int_{B} \sum_{\ell=1}^{Q} \left\langle \omega\left(u^{\ell}(x)\right), Du^{\ell}(x)_{\sharp}\vec{\xi}(x) \right\rangle \, \dHa^m(x) \hspace{0.5cm} \forall \, \omega \in \mathcal{D}^{m}(\R^d).
\end{equation}
Thus, $\mathbf{T}_{u}$ is a well-defined integer rectifiable $m$-current in $\R^d$ given by $\mathbf{T}_{u} = \llbracket \mathrm{Im}(u), \vec{\tau}, \Theta \rrbracket$, where:
\begin{itemize}
\item[$(R1)$] $\mathrm{Im}(u) = \bigcup_{x \in B} \spt(u(x)) = \bigcup_{i \in \mathbb{N}} \bigcup_{\ell=1}^{Q} u_{i}^{\ell}(B_i)$ is an $m$-rectifiable set in $\R^d$;
\item[$(R2)$] $\vec{\tau}$ is a Borel unit $m$-vector field orienting $\mathrm{Im}(u)$; moreover, for $\Ha^m$-a.e. $p \in \mathrm{Im}(u)$, we have $Du_{i}^{\ell}(x)_{\sharp}\vec{\xi}(x) \neq 0$ for every $i,\ell,x$ such that $u_{i}^{\ell}(x) = p$ and
\begin{equation}\label{n_vect}
\frac{Du_{i}^{\ell}(x)_{\sharp}\vec{\xi}(x)}{|Du_{i}^{\ell}(x)_{\sharp}\vec{\xi}(x)|} = \pm \vec{\tau}(p);
\end{equation}
\item[$(R3)$] for $\Ha^m$-a.e. $p \in \mathrm{Im}(u)$, the (Borel) multiplicity function $\Theta$ equals
\begin{equation}\label{mult}
\Theta(p) = \sum_{i,\ell,x \, \colon \, u_{i}^{\ell}(x) = p} \left\langle \vec{\tau}(p), \frac{Du_{i}^{\ell}(x)_{\sharp}\vec{\xi}(x)}{|Du_{i}^{\ell}(x)_{\sharp}\vec{\xi}(x)|}\right\rangle.
\end{equation}
\end{itemize}
\end{proposition}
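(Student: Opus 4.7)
The plan is to prove the three assertions (well-definedness, compact representation, and rectifiable structure) in sequence, relying on Proposition \ref{Lip-select} (Lipschitz selection), Proposition \ref{meas_select} (measurable selection), the $Q$-valued Rademacher theorem and the classical area formula for Lipschitz maps between $C^1$ submanifolds.

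\textbf{Step 1 (independence of the decomposition).} Suppose $\{B_i, u_i^\ell\}_{i,\ell}$ and $\{B_j', v_j^\ell\}_{j,\ell}$ are two Lipschitz decompositions of $u|_B$ produced by Proposition \ref{Lip-select}. I would pass to the common refinement $\{B_i \cap B_j'\}$: on each nonempty piece $C := B_i \cap B_j'$ the identity $\sum_\ell \llbracket u_i^\ell(x) \rrbracket = u(x) = \sum_\ell \llbracket v_j^\ell(x) \rrbracket$ holds pointwise. Because of the sheet structure guaranteed by item (b) of Proposition \ref{Lip-select} (any two sheets either coincide or stay apart), on $C$ the families $\{u_i^\ell\}_\ell$ and $\{v_j^\ell\}_\ell$ agree as multi-sets, so a further countable Borel partition of $C$ reduces the comparison to pieces on which a single permutation $\sigma \in \mathcal{P}_Q$ relates the two selections. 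On each such piece, the sum in \eqref{Q_pf:eq} is invariant under reindexing $\ell \mapsto \sigma(\ell)$, so the contributions coincide. Summing over all pieces gives the independence.

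\textbf{Step 2 (representation formula \eqref{Q_pf:eq2}).} Starting from a global measurable selection $u = \sum_\ell \llbracket u^\ell \rrbracket$ (Proposition \ref{meas_select}), the Lipschitz decomposition locally refines each $u^\ell|_{B_i}$ into one of the $u_i^{\ell'}$; by the $Q$-valued Rademacher theorem (applied on each $B_i$ and reassembled via property (c) of Proposition \ref{Lip-select}) one obtains $D u(x) = \sum_\ell \llbracket Du^\ell(x) \rrbracket$ $\Ha^m$-a.e.\ on $B$. Substituting into \eqref{Q_pf:eq} and exchanging the order of the two countable sums (over $i$ and over $\ell$) gives \eqref{Q_pf:eq2}.

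\textbf{Step 3 (rectifiable structure (R1)--(R3)).} For each fixed $(i,\ell)$, the single-valued Lipschitz map $u_i^\ell \colon B_i \to \R^d$ pushes $\llbracket B_i \rrbracket$ forward to an integer rectifiable $m$-current supported on the rectifiable set $u_i^\ell(B_i)$, with orientation and multiplicity dictated by the area formula. The current $\mathbf{T}_u$ is the countable sum of these pieces, so it is rectifiable and supported on $\mathrm{Im}(u) = \bigcup_{i,\ell} u_i^\ell(B_i)$, proving (R1). Fix any Borel unit $m$-vector field $\vec{\tau}$ orienting $\mathrm{Im}(u)$. For $\Ha^m$-a.e.\ $p \in \mathrm{Im}(u)$ and every $(i,\ell,x)$ with $u_i^\ell(x) = p$, the area formula ensures that $x$ lies in the regularity set of $u_i^\ell$, so $Du_i^\ell(x)_\sharp \vec{\xi}(x)$ is a nonzero simple $m$-vector spanning $T_p\mathrm{Im}(u)$, yielding \eqref{n_vect}. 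Summing the signed contributions of the finitely many preimages of $p$ then reproduces the multiplicity $\Theta(p)$ via the area formula, which is precisely \eqref{mult}.

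\textbf{Main obstacle.} The delicate point is Step 1: on a piece $C = B_i \cap B_j'$ the correspondence between the two families of sheets need not be given by a single permutation of $\{1,\dots,Q\}$, since the collision set $\{x : u_i^\ell(x) = u_i^{\ell'}(x)\}$ for two sheets of one decomposition may differ from the analogous set for the other. Handling this requires a further countable Borel decomposition of $C$ into measurable pieces on which a single permutation governs the identification; once this bookkeeping is in place, everything reduces to the invariance of \eqref{Q_pf:eq} under reindexing. The remainder of the proof is a direct application of the area formula combined with the finite-to-one structure granted by the Lipschitz selection.
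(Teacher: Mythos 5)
Your proof is correct and follows the same route the paper cites (Lipschitz decomposition, the single-valued area formula, and the standard theory of rectifiable currents); the paper itself gives no details beyond this citation, treating it as a direct consequence of [DLS13a, Proposition~1.4]. One remark on the ``main obstacle'' you flag in Step~1: the combinatorial bookkeeping can be bypassed entirely by observing that, at each point $x$ of approximate differentiability, the integrand $\sum_{\ell}\langle\omega(u_i^\ell(x)),Du_i^\ell(x)_\sharp\vec\xi(x)\rangle$ is a $\mathcal{P}_Q$-invariant function of the unordered $Q$-tuple $\sum_\ell\llbracket(u_i^\ell(x),Du_i^\ell(x))\rrbracket\in\A_Q(\R^d\times\R^{d\times m})$, which by item $(c)$ of Proposition~\ref{Lip-select} and the $Q$-valued Rademacher theorem coincides $\Ha^m$-a.e.\ with the intrinsic first-order jet $(u(x),Du(x))$ of $u$, independently of the chosen decomposition.
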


\begin{remark}
The definition of a $Q$-valued push-forward can be extended to more general objects than $C^{1}$ submanifolds of the Euclidean space. Already in \cite{DLS13a} it is indeed observed that, using standard methods in measure theory, it is possible to define a multiple valued push forward of Lipschitz manifolds. Furthermore, a simple application of the polyhedral approximation theorem \cite[Theorem 4.2.22]{Federer69} allows one to actually give a definition of the $Q$-valued push-forward of any $m$-dimensional flat chain with compact support in $\R^{d}$: the interested reader can refer to our note \cite{SS17a} for the details.
\end{remark}

The next proposition is the key tool to compute explicitly the mass of the current $\mathbf{T}_{u}$. Following standard notation, we will denote by $\mathbf{J}u^{\ell}(x)$ the Jacobian determinant of $Du^{\ell}$, i.e. the number
\begin{equation}\label{Jacobian}
\mathbf{J}u^{\ell}(x) := |Du^{\ell}(x)_{\sharp}\vec{\xi}(x)| = \sqrt{\det\left((Du^{\ell}(x))^{T} \cdot Du^{\ell}(x)\right)}.
\end{equation} 
\begin{proposition}[$Q$-valued area formula, cf. {\cite[Lemma 1.9]{DLS13a}}] \label{AF}
Let $B \subset \Sigma$ be as above, and $u = \sum_{\ell} \llbracket u^{\ell} \rrbracket$ a Lipschitz $Q$-function. Then, for every Borel function $h \colon \R^d \to \left[0,\infty\right)$, we have
\begin{equation}\label{AF:eq}
\int h(p) \, d\|\mathbf{T}_{u}\|(p) \leq \int_{B} \sum_{\ell=1}^{Q} h\left(u^{\ell}(x)\right) \mathbf{J}u^{\ell}(x) \, \dHa^m(x).
\end{equation}
Equality holds in \eqref{AF:eq} if there is a set $B' \subset B$ of full $\Ha^{m}$-measure for which
\begin{equation}\label{AF:condition}
\langle Du^{\ell}(x)_{\sharp}\vec{\xi}(x), Du^{h}(y)_{\sharp}\vec{\xi}(y) \rangle \geq 0 \hspace{0.5cm} \forall \, x,y \in B' \mbox{ and } \ell,h \mbox{ with } u^{\ell}(x) = u^{h}(y).
\end{equation}
\end{proposition}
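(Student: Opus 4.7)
The proof naturally falls out by combining the explicit representation of $\mathbf{T}_u$ given in Proposition \ref{Q_pf:thm} with the classical single-valued area formula applied piecewise to a Lipschitz selection of $u$. The plan is as follows. First, invoke Proposition \ref{Lip-select} to obtain a countable partition $\{B_i\}_{i\in\mathbb{N}}$ of $B$ and single-valued Lipschitz functions $u_i^\ell\colon B_i\to\R^d$ such that $u|_{B_i}=\sum_\ell\llbracket u_i^\ell\rrbracket$ and $Du(x)=\sum_\ell\llbracket Du_i^\ell(x)\rrbracket$ a.e. in $B_i$. The classical area formula applied to each $u_i^\ell$ gives
\[
\int_{B_i} h(u_i^\ell(x))\,\mathbf{J}u_i^\ell(x)\,\dHa^m(x)=\int_{\R^d} h(p)\,\#\{x\in B_i^\ast : u_i^\ell(x)=p\}\,\dHa^m(p),
\]
where $B_i^\ast:=\{x\in B_i:\mathbf{J}u_i^\ell(x)\neq0\}$. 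Summing over $i$ and $\ell$, and using that $\mathbf{J}u^\ell=0$ on $B_i\setminus B_i^\ast$ so these points do not contribute to the left-hand side, one obtains
\[
\int_B\sum_{\ell=1}^Q h(u^\ell(x))\,\mathbf{J}u^\ell(x)\,\dHa^m(x)=\int_{\R^d} h(p)\,M^\ast(p)\,\dHa^m(p),
\]
with $M^\ast(p):=\#\{(i,\ell,x): u_i^\ell(x)=p,\ \mathbf{J}u_i^\ell(x)\neq0\}$.

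To compare with $\int h\,d\|\mathbf{T}_u\|$, recall that by Proposition \ref{Q_pf:thm} the mass measure of $\mathbf{T}_u$ is $\|\mathbf{T}_u\|=|\Theta|\,\Ha^m\mres \mathrm{Im}(u)$, with $\Theta$ as in \eqref{mult}. By \eqref{n_vect}, each summand in \eqref{mult} is either $+1$ or $-1$, and summands are exactly indexed by triples $(i,\ell,x)$ with $u_i^\ell(x)=p$ and nonvanishing Jacobian. Hence the triangle inequality yields $|\Theta(p)|\leq M^\ast(p)$ for $\Ha^m$-a.e. $p\in\mathrm{Im}(u)$, which immediately gives \eqref{AF:eq}.

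For the equality case, suppose \eqref{AF:condition} holds on a set $B'\subset B$ of full $\Ha^m$-measure; one may assume (modulo discarding a further null set) that the decomposition $\{B_i, u_i^\ell\}$ is compatible with $B'$. Take any $p\in\mathrm{Im}(u)$ which is a regular value in the sense of the area formula, and consider two triples $(i,\ell,x)$ and $(j,h,y)$ with $u_i^\ell(x)=u_j^h(y)=p$ and both Jacobians nonzero. By \eqref{n_vect}, the two unit vectors $Du_i^\ell(x)_\sharp\vec{\xi}(x)/|Du_i^\ell(x)_\sharp\vec{\xi}(x)|$ and $Du_j^h(y)_\sharp\vec{\xi}(y)/|Du_j^h(y)_\sharp\vec{\xi}(y)|$ are each equal to $\pm\vec{\tau}(p)$; the hypothesis \eqref{AF:condition} forces their inner product to be nonnegative, hence they must share the same sign. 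Consequently all the contributions in \eqref{mult} have the same sign, and $|\Theta(p)|=M^\ast(p)$ for $\Ha^m$-a.e. $p$, which upgrades the inequality into an equality. The main subtlety to be handled carefully is the identification of $\Ha^m$-null sets that must be discarded—critical points of the various branches, points where distinct branches meet without transversality, and the boundary overlaps of the $B_i$—but in each case standard Lipschitz/Sard-type arguments as in the classical area formula reduce the situation to the decomposition above without affecting either side of \eqref{AF:eq}.
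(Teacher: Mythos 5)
The paper does not reproduce a proof of this proposition but cites it directly from De Lellis--Spadaro (the reference indicated in the statement), so there is no internal proof to compare against. Your argument is correct and follows the standard route that the cited reference also uses: decompose $u$ via Proposition \ref{Lip-select}, apply the single-valued area formula branch by branch to rewrite the right-hand side of \eqref{AF:eq} as $\int h(p)\,M^\ast(p)\,\dHa^m(p)$, and then compare with $\|\mathbf{T}_u\|=|\Theta|\,\Ha^m\mres\mathrm{Im}(u)$ using that each summand in \eqref{mult} is $\pm 1$ by \eqref{n_vect} (hence $|\Theta|\leq M^\ast$ by the triangle inequality), with equality precisely when all signs agree, which is exactly what \eqref{AF:condition} forces.
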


\subsubsection{$Q$-valued Sobolev functions and their properties}

Next, we study the Sobolev spaces $W^{1,p}\left(\Omega,\A_Q\right)$. The definition that we use here was proposed by C. De Lellis and E. Spadaro (cf. \cite[Definition 0.5 and Proposition 4.1]{DLS11a}), and allowed the authors to develop an alternative, intrinsic approach to the study of $Q$-valued Sobolev mappings minimizing a suitable generalization of the Dirichlet energy ($\Dir$-minimizing multiple valued maps), which does not rely on Almgren's embedding of the space $\A_{Q}(\R^d)$ in a larger Euclidean space (cf. \cite{Almgren00} and \cite[Chapter 2]{DLS11a}). Such an approach is close in spirit to the general theory of Sobolev maps taking values in abstract metric spaces and started in the works of Ambrosio \cite{Ambrosio90} and Reshetnyak \cite{Res1, Res2, Res3}.

\begin{definition}[Sobolev $Q$-valued functions] \label{Sobolev}
A measurable function $u \colon \Omega \to \A_{Q}(\R^{d})$ is in the Sobolev class $W^{1,p}$, $1 \leq p \leq \infty$ if and only if there exists a non-negative function $\psi \in L^{p}(\Omega)$ such that, for every Lipschitz function $\phi \colon \A_{Q}(\R^{d}) \to \R$, the following two properties hold:
\begin{itemize}
\item[$(i)$]  $\phi \circ u \in W^{1,p}(\Omega)$ {\footnote{{Here, the Sobolev space $W^{1,p}(\Omega)$ is classically defined as the completion of $C^{1}(\Omega)$ with respect to the $W^{1,p}$-norm
\[
\| f \|_{W^{1,p}(\Omega)}^{p} := \int_{\Omega} \left( |f(x)|^{p} + |Df(x)|^{p} \right) \, \dHa^{m}(x)
\] 
for $1 \leq p < \infty$ and
\[
\| f \|_{W^{1,\infty}(\Omega)} := \esssup_{\Omega} \left( |f(x)| + |Df(x)| \right).
\]
}}};
\item[$(ii)$] $|D(\phi \circ u)(x)| \leq \Lip(\phi) \psi(x)$ for almost every $x \in \Omega$.
\end{itemize}
\end{definition}

We also recall (cf. \cite[Proposition 4.2]{DLS11a}) that if $u \in W^{1,p}\left(\Omega,\A_{Q}(\R^{d})\right)$ and $\xi$ is a tangent vector field defined in $\Omega$, there exists a non-negative function $g_{\xi} \in L^{p}(\Omega)$ with the following two properties:
\begin{itemize}
\item[$(i)$] $|D_{\xi} \G(u,T)| \leq g_{\xi}$ a.e. in $\Omega$ for all $T \in \A_{Q}$;
\item[$(ii)$] if $h_{\xi} \in L^{p}(\Omega)$ satisfies $|D_{\xi} \G(u,T)| \leq h_{\xi}$ for all $T \in \A_{Q}$, then $g_{\xi} \leq h_{\xi}$ a.e. 
\end{itemize}
Such a function is clearly unique (up to sets of $\Ha^{m}$-measure zero), and will be denoted by $|D_{\xi}u|$. Moreover, chosen a countable dense subset $\{T_{i}\}_{i=0}^{\infty} \subset \A_{Q}$, it satisfies
\begin{equation} \label{metric der}
|D_{\xi}u| = \sup_{i \in \mathbb{N}} |D_{\xi} \G(u,T_{i})|
\end{equation}
almost everywhere in $\Omega$.

As in the classical theory, Sobolev $Q$-valued maps can be approximated by Lipschitz maps. 
\begin{proposition}[Lipschitz approximation, cf. {\cite[Proposition 4.4]{DLS11a}}] \label{LipSob_app}
Let $u$ be a function in $W^{1,p}(\Omega, \A_Q)$. For every $\lambda > 0$, there exists a Lipschitz $Q$-function $u_{\lambda}$ such that $\Lip(u_{\lambda}) \leq C \lambda$ and 
\begin{equation} \label{LipSob_est}
\Ha^{m}\left(\{x \in \Omega \, \colon \, u_{\lambda}(x) \neq u(x)\}\right) \leq \frac{C}{\lambda^p} \int_{\Omega} |Du|^p \, \dHa^m,
\end{equation}
where the constant $C$ depends only on $Q$, $m$ and $\Omega$.
\end{proposition}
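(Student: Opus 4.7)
The plan is to mimic the classical maximal-function truncation argument that produces Lipschitz approximations of real-valued Sobolev functions, transferring it to the $Q$-valued setting by exploiting the characterization \eqref{metric der} of $|Du|$ through the scalar functions $\G(u,T_i)$ for $\{T_i\}_{i\in\Na}$ a countable dense subset of $\A_Q(\R^d)$.

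First, I would set $v := |Du| \in L^p(\Omega)$ and introduce the centered Hardy--Littlewood maximal function $\M v^p$ on $\Sigma$, which is of weak type $(1,1)$ since the compact Riemannian manifold $\Sigma$ is a doubling metric measure space. Define the ``good set''
\[
G_\lambda := \bigl\{ x\in\Omega \,:\, \M v^p(x) \leq \lambda^p \text{ and } x \text{ is a Lebesgue point of } u \bigr\}.
\]
The weak $(1,1)$ bound immediately yields $\Ha^m(\Omega\setminus G_\lambda) \leq C\lambda^{-p} \int_\Omega |Du|^p\, \dHa^m$ with $C = C(m,\Omega)$.

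Next, I claim that $u|_{G_\lambda}$ is Lipschitz with constant $C\lambda$. Fix $x,y\in G_\lambda$ with $\mathbf{d}(x,y)$ small enough that a geodesic ball containing both lies in $\Omega$. For each $T_i$ the scalar function $f_i := \G(u,T_i)$ belongs to $W^{1,p}(\Omega)$ with $|Df_i|\leq |Du|=v$ almost everywhere (this follows at once from Definition~\ref{Sobolev} applied to the $1$-Lipschitz map $T\mapsto \G(T,T_i)$). The classical pointwise inequality, valid at Lebesgue points of $f_i$ in a doubling-Poincaré metric measure space, gives
\[
|f_i(x)-f_i(y)| \leq C\,\mathbf{d}(x,y)\,\bigl( (\M v^p(x))^{1/p} + (\M v^p(y))^{1/p}\bigr) \leq C\lambda\,\mathbf{d}(x,y),
\]
with $C$ independent of $i$. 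Since almost every point of $\Omega$ is a Lebesgue point of every $f_i$ simultaneously and $\{T_i\}$ is dense in $\A_Q$, one has
\[
\G(u(x),u(y)) \;=\; \sup_{i\in\Na}\bigl|\G(u(x),T_i)-\G(u(y),T_i)\bigr| \;\leq\; C\lambda\,\mathbf{d}(x,y),
\]
which is the desired Lipschitz estimate on $G_\lambda$.

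Finally, I would extend $u|_{G_\lambda}$ to a Lipschitz $Q$-valued map $u_\lambda$ on all of $\Omega$ with $\Lip(u_\lambda)\leq C\lambda$. Since $\A_Q(\R^d)$ is not a linear space, McShane's theorem does not apply verbatim; however, Almgren's bi-Lipschitz embedding of $\A_Q(\R^d)$ into some Euclidean $\R^N$ together with a Lipschitz retraction of a neighborhood of its image onto the image itself (both constructed in \cite[Chapter 2]{DLS11a}) reduce the extension problem to the classical componentwise McShane extension in $\R^N$ followed by the retraction. The bi-Lipschitz constants and the Lipschitz constant of the retraction depend only on $Q$ and $d$, so the final $u_\lambda$ satisfies $\Lip(u_\lambda)\leq C(m,Q,\Omega)\lambda$, and on $G_\lambda$ (hence off a set of measure $\leq C\lambda^{-p}\int_\Omega|Du|^p$) we have $u_\lambda = u$, which is exactly \eqref{LipSob_est}.

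The main obstacle is the second step: one must guarantee that the single pointwise-maximal inequality one proves for each $f_i$ admits a choice of exceptional set which is $\Ha^m$-null \emph{simultaneously} for all $i$, and then genuinely passes to the supremum in $i$ to control $\G(u(x),u(y))$. This is where the countable density of $\{T_i\}$ and the intrinsic identity \eqref{metric der} are essential; once these are in hand, the rest of the argument is a faithful transcription of the classical Euclidean proof into the metric-valued framework.
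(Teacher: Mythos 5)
Your argument is correct and follows the same maximal-function truncation route as the cited result: truncate via the centered maximal function of $|Du|^p$ (the weak $(1,1)$ estimate on the compact manifold $\Sigma$ gives the measure bound), control $\G(u(x),u(y))$ on the good set by applying the classical pointwise Sobolev estimate to the scalar functions $\G(u,T_i)$ — valid because $|D\G(u,T_i)|\leq|Du|$ a.e.\ directly from Definition~\ref{Sobolev} — and then pass to the supremum over $i$ using \eqref{metric der} and the density of $\{T_i\}$; finally Lipschitz-extend off the good set. The one place you diverge from the cited source is the extension step: you route through Almgren's bi-Lipschitz embedding $\boldsymbol{\xi}$ and the retraction $\boldsymbol{\rho}$ (incidentally, $\boldsymbol{\rho}$ is a global Lipschitz retraction of all of $\R^{N}$ onto $\boldsymbol{\xi}(\A_Q)$, not merely of a tubular neighbourhood, so the ``neighbourhood plus retraction'' picture is not quite the right mental image), whereas Proposition~4.4 of \cite{DLS11a} sits in the \emph{intrinsic} part of that work and instead invokes the intrinsic Whitney-type Lipschitz extension theorem for $\A_Q$-valued maps (\cite[Theorem~1.7]{DLS11a}), which avoids the embedding altogether. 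Both routes are valid and yield the same dependence $C=C(m,Q,\Omega)$; the intrinsic extension is preferred in \cite{DLS11a} only to keep that chapter embedding-free, so this is a difference of exposition rather than of substance.
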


As a corollary, Proposition \ref{LipSob_app} allows to prove that Sobolev $Q$-valued maps are approximately differentiable almost everywhere.
\begin{corollary}[cf. {\cite[Corollary 2.7]{DLS11a}}]\label{Sob_ap_dif}
Let $u \in W^{1,p}(\Omega , \A_Q)$. Then, $u$ is \emph{approximately differentiable} $\Ha^m$-a.e. in $\Omega$: precisely, for $\Ha^m$-a.e. $x \in \Omega$ there exists a measurable set $\tilde{\Omega} \subset \Omega$ containing $x$ such that $\tilde{\Omega}$ has density $1$ at $x$ and $u|_{\tilde{\Omega}}$ is differentiable at $x$.
\end{corollary}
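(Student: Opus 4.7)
The plan is to reduce the approximate differentiability of the Sobolev $Q$-valued map $u$ to the (everywhere defined) differentiability of its Lipschitz approximations provided by Proposition \ref{LipSob_app}. This is the classical strategy one uses in the single-valued theory, and it adapts to the metric-valued setting because the two key tools, namely Rademacher's theorem for Lipschitz $Q$-functions and the Lebesgue density theorem, are both available.

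In detail, I would proceed as follows. First, for each positive integer $k$, I apply Proposition \ref{LipSob_app} with $\lambda = k$ to produce a Lipschitz map $u_k \colon \Omega \to \A_Q(\R^d)$ satisfying $\Lip(u_k) \leq Ck$ and
\[
\Ha^m\left(E_k\right) \leq \frac{C}{k^p}\int_\Omega |Du|^p\,\dHa^m, \qquad E_k := \{x \in \Omega \, \colon \, u(x) \neq u_k(x)\}.
\]
Setting $A_k := \Omega \setminus E_k$, one has $\Ha^m(\Omega \setminus A_k) \to 0$, so the set $F := \bigcup_k A_k$ has full measure in $\Omega$: indeed $\Omega \setminus F \subset \bigcap_k (\Omega \setminus A_k)$ has measure bounded by $\inf_k \Ha^m(\Omega \setminus A_k) = 0$.

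Second, by the $Q$-valued Rademacher theorem quoted right before Proposition \ref{Lip-select} (i.e.\ \cite[Theorem 1.13]{DLS11a}), each $u_k$ is differentiable in the sense of Definition \ref{diff} at every point of $\Omega$ outside a negligible set $N_k$. Setting $N := \bigcup_k N_k$, one still has $\Ha^m(N) = 0$. Finally, the Lebesgue density theorem yields a further null set $Z$ outside of which every point of $A_k$ is a density-$1$ point of $A_k$, for every $k$ simultaneously. Fix now any point $x \in F \setminus (N \cup Z)$, and choose an index $k$ such that $x \in A_k$. Then $A_k$ has density $1$ at $x$, $u_k$ is differentiable at $x$ in the sense of Definition \ref{diff}, and $u \equiv u_k$ on $A_k$. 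Taking $\tilde\Omega := A_k$, the restriction $u|_{\tilde\Omega}$ coincides with $u_k|_{\tilde\Omega}$, so the expansion
\[
\G\bigl(u(\exp_x\xi), T_x u_k(\xi)\bigr) = o(|\xi|) \quad \text{as } \xi \to 0, \ \exp_x\xi \in \tilde\Omega,
\]
holds trivially by the differentiability of $u_k$ at $x$. This is exactly the statement that $u|_{\tilde\Omega}$ is differentiable at $x$, and since $x$ was an arbitrary point in the full-measure set $F \setminus (N\cup Z)$, the proof is complete.

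The argument is essentially soft and there is no real obstacle; the only delicate point that must be verified carefully is that the density-$1$ condition on $\tilde\Omega = A_k$ together with the genuine differentiability of the Lipschitz extension $u_k$ at $x$ is enough to declare $u|_{\tilde\Omega}$ differentiable at $x$ in the sense of Definition \ref{diff}. This is a direct consequence of the equality $u = u_k$ on $\tilde\Omega$, so no metric-space subtleties actually enter.
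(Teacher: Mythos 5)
Your proof is correct and follows the same route as the cited proof of \cite[Corollary 2.7]{DLS11a}: take the Lipschitz approximations $u_k$ from Proposition~\ref{LipSob_app}, invoke the $Q$-valued Rademacher theorem to differentiate each $u_k$ a.e., and use the Lebesgue density theorem to pass the differentiability of $u_k$ to that of $u$ on the full-density coincidence set $A_k$. No gaps.
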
 

The next proposition explores the link between the metric derivative defined in \eqref{metric der} and the approximate differential of a $Q$-valued Sobolev function.
\begin{proposition}[cf. {\cite[Proposition 2.17]{DLS11a}}] \label{uniq_Dir}
Let $u$ be a map in $W^{1,2}\left( \Omega, \A_{Q}(\R^d) \right)$. Then, for any vector field $\xi$ defined in $\Omega$ and tangent to $\Sigma$ the metric derivative $|D_{\xi}u|$ defined in \eqref{metric der} satisfies
\begin{equation}
|D_{\xi}u|^{2} = \sum_{\ell=1}^{Q} |D_{\xi}u^\ell|^{2} \hspace{0.5cm} \Ha^{m}\mbox{- a.e. in }\Omega, 
\end{equation}
where $\sum_{\ell} |D_{\xi}u^\ell|^2 = \G(D_{\xi}u, Q\llbracket 0 \rrbracket)^{2}$ and $D_{\xi}u(x) \in \A_{Q}(\R^d)$ is the approximate directional derivative of $u$ along $\xi$ at the point $x \in \Omega$. In particular, we will set
\begin{equation} \label{Dir:density}
|Du|^{2}(x) := \sum_{i=1}^{m} |D_{\xi_i}u|^{2}(x) = \sum_{i=1}^{m} \sum_{\ell=1}^{Q} |D_{\xi_i}u^{\ell}|^{2}(x),
\end{equation}
with $\left( \xi_i \right)_{i=1}^{m}$ any orthonormal frame of $\T\Sigma$, at all points $x$ of approximate differentiability for $u$ in $\Omega$.
\end{proposition}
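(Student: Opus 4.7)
My plan is to reduce the statement to the Lipschitz case via the standard approximation and then carry out a direct pointwise computation; being local, the identity can be read in a coordinate chart on $\Sigma$, so $\xi$ may be treated as a constant Euclidean direction of unit length. First I would invoke the Lipschitz approximation Proposition \ref{LipSob_app}: for every $\lambda > 0$ there exists a Lipschitz $u_\lambda$ coinciding with $u$ outside a set of small measure. A direct consequence of Definition \ref{Sobolev} and Corollary \ref{Sob_ap_dif} is that at every density-one point of the coincidence set $\{u = u_\lambda\}$ both the metric derivative $|D_\xi u|$ and the approximate differential $Du$ agree with those of $u_\lambda$; sending $\lambda \to \infty$ covers $\Ha^m$-a.e. point of $\Omega$, thereby reducing the problem to proving the identity at every point of differentiability (in the sense of Definition \ref{diff}) of a Lipschitz $Q$-function.

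Now fix a Lipschitz $u$ and a point $x_0$ of differentiability. By Proposition \ref{Lip-select} we can locally decompose $u = \sum_{\ell=1}^{Q} \llbracket u^\ell \rrbracket$ into Lipschitz single-valued branches which are either pairwise identical or pairwise pointwise distinct. Consider an auxiliary $Q$-point $T = \sum_\ell \llbracket T^\ell \rrbracket$ with $\G(u(x_0), T) > 0$ and such that the identity permutation is the unique minimizer computing $\G(u(x_0), T)$. By continuity of $u$ the same matching remains optimal in a neighborhood of $x_0$, so $\G(u(y), T)^2 = \sum_\ell |u^\ell(y) - T^\ell|^2$ for $y$ nearby; differentiating along $\xi$ at $x_0$ and applying Cauchy--Schwarz yields $|D_\xi \G(u, T)(x_0)|^2 \leq \sum_\ell |D_\xi u^\ell(x_0)|^2$. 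Since such $T$'s are dense in $\A_Q$, the characterization \eqref{metric der}, applied to a countable dense family of admissible $T$'s and at points outside the union of the associated negligible exceptional sets, produces the inequality $|D_\xi u|(x_0)^2 \leq \sum_\ell |D_\xi u^\ell(x_0)|^2$. For the opposite inequality I would choose the $T^\ell$'s of the form $u^\ell(x_0) - \varepsilon v^\ell$, where $(v^1,\ldots,v^Q)$ maximizes the linear form $(w^1,\ldots,w^Q) \mapsto \sum_\ell \langle w^\ell, D_\xi u^\ell(x_0)\rangle$ on the unit sphere of $(\R^d)^Q$: this maximum equals exactly $(\sum_\ell |D_\xi u^\ell(x_0)|^2)^{1/2}$, and the same computation, in the limit $\varepsilon \downarrow 0$ taken through a countable sequence, gives the matching lower bound.

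The main technical obstacle is the handling of coinciding branches: if $u^\ell(x_0) = u^{\ell'}(x_0)$, several permutations realize the optimal matching and the function $y \mapsto \G(u(y), T)^2$ need not be smooth at $x_0$. Clause $(ii)$ of Definition \ref{diff} exactly resolves this: differentiability in the $Q$-valued sense forces $Du^\ell(x_0) = Du^{\ell'}(x_0)$ whenever the branches coincide at $x_0$, so every optimal permutation yields the same first-order expansion of $\G(u(\cdot), T)^2$ at $x_0$, and the directional derivative of $\G(u, T)$ there is well defined and given by the formula above. The same clause permits the vectors $v^\ell$ in the lower-bound construction to be chosen compatibly with the coincidence pattern of the branches, so the associated $T$ is a genuine $Q$-point and the computation goes through unchanged.
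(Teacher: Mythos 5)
The paper does not actually prove Proposition~\ref{uniq_Dir}; it states it and cites \cite[Proposition~2.17]{DLS11a}. Your outline reproduces the standard De~Lellis--Spadaro route: reduce to the Lipschitz case via Proposition~\ref{LipSob_app}, compute $D_\xi\G(u,T)$ pointwise at a differentiability point $x_0$ through the locally stable optimal matching, use Cauchy--Schwarz for the upper bound, and produce a tailored $T$ for the lower bound. The handling of coinciding branches via clause~$(ii)$ of Definition~\ref{diff} is the right key observation and is what makes the pointwise computation well-defined.

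There is one step you gloss over in the lower bound. The identity~\eqref{metric der} expresses $|D_\xi u|$ a.e.\ as a supremum over a \emph{fixed} countable dense family $\{T_i\}$, whereas the test point $T=\sum_{\ell}\llbracket u^{\ell}(x_0)-\varepsilon v^{\ell}\rrbracket$ you construct depends on $x_0$. To conclude $|D_\xi u|(x_0)\geq D_\xi\G(u,T)(x_0)$ you still need an extra argument, since the a.e.\ null set in \eqref{metric der} and in property $(i)$ of the metric derivative depends on $T$ and might, a priori, contain $x_0$. Two standard patches work: either show that $T'\mapsto D_\xi\G(u,T')(x_0)$ is continuous near $T$, so that the fixed dense family captures the value in the limit (this holds because for small perturbations of $T$ the optimal permutation is unchanged up to swaps of coinciding branches and the derivative is given by an explicit continuous formula); or bypass~\eqref{metric der} altogether by invoking the pointwise metric-derivative characterization $|D_\xi u|(x_0)=\lim_{t\to 0}|t|^{-1}\G\bigl(u(\exp_{x_0}(t\xi)),u(x_0)\bigr)$ valid for Lipschitz maps, which clause~$(i)$ of Definition~\ref{diff} evaluates directly as $\bigl(\sum_{\ell}|D_\xi u^{\ell}(x_0)|^{2}\bigr)^{1/2}$ once one checks that the optimal matching between $T_{x_0}u(t\xi)$ and $u(x_0)$ permutes only coinciding branches for small $|t|$. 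Either closes the gap; as written, the transfer from your $x_0$-dependent test point $T$ to the a.e.\ definition of $|D_\xi u|$ is not fully justified.
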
 

\begin{remark} \label{Dir:independence}
Observe that the definition in \eqref{Dir:density} is independent of the choice of the frame $\left( \xi_{i} \right)$, as in fact one has
\[
|Du|^{2}(x) = \sum_{\ell=1}^{Q} |Du^{\ell}(x)|^{2},
\]
where $|Du^{\ell}(x)|$ is the Hilbert-Schmidt norm of the linear map $Du^{\ell}(x) \colon T_{x}\Sigma \to \R^{d}$ at every point of approximate differentiability for $u$.
\end{remark}

The main consequence of the above proposition is that essentially all the conclusions of the usual Sobolev space theory for single-valued functions can be recovered in the multivalued setting modulo routine modifications of the usual arguments. Some of these conclusions will be useful in the coming sections, thus we will list them here, again referring the interested reader to \cite{DLS11a} for their proofs and other useful considerations. In what follows, $\Omega \subset \Sigma$ is an open set with Lipschitz boundary.

\begin{definition}[Trace of Sobolev $Q$-functions]\label{trace}
Let $u \in W^{1,p}\left( \Omega, \A_{Q}(\R^d) \right)$. A function $g$ belonging to $L^{p}\left( \partial \Omega, \A_{Q}(\R^d) \right)$ is said to be the \emph{trace} of $u$ at $\partial \Omega$ (and we write $g = u|_{\partial \Omega}$) if for any $T \in \A_{Q}$ the trace of the real-valued Sobolev function $\G\left( u,T \right)$ coincides with $\G\left( g,T \right)$.
\end{definition}

\begin{definition}[Weak convergence] \label{weak}
Let $\{u_{h}\}_{h=1}^{\infty}$ be a sequence of maps in $W^{1,p}(\Omega,\A_{Q})$. We say that $u_{h}$ \emph{converges weakly} to $u \in W^{1,p}(\Omega,\A_{Q})$ for $h \to \infty$, and we write $u_{h} \weak u$, if
\begin{itemize}
\item[$(i)$] $\lim_{h \to \infty} \int_{\Omega} \G(u_{h},u)^{p} \, \dHa^m = 0$;
\item[$(ii)$] there exists a constant $C$ such that $\sup_{h} \int_{\Omega} |Du_{h}|^{p} \, \dHa^m \leq C$.
\end{itemize}
\end{definition}

\begin{proposition}[Weak sequential closure, cf. {\cite[Proposition 2.10, Proposition 4.5]{DLS11a}}] \label{weak_clos}
Let $u \in W^{1,p}(\Omega,\A_{Q})$. Then, there is a unique function $g \in L^{p}(\partial \Omega,\A_{Q})$ such that $g = u|_{\partial \Omega}$ in the sense of Definition \ref{trace}. Moreover, the set
\[
W^{1,p}_{g}(\Omega,\A_{Q}) := \lbrace u \in W^{1,p}(\Omega,\A_{Q}) \, \colon \, u|_{\partial \Omega} = g \rbrace
\]
is sequentially closed with respect to the notion of weak convergence introduced in Definition \ref{weak}.
\end{proposition}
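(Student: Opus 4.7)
The plan is to establish the two assertions—existence and uniqueness of the trace, and weak sequential closure of the fixed-trace class—separately, in both cases reducing to the classical $W^{1,p}$ theory through the scalar observables $x \mapsto \G(u(x), T)$ for $T \in \A_{Q}(\R^{d})$. The key preliminary remark is that, since $S \mapsto \G(S, T)$ is $1$-Lipschitz on $\A_{Q}(\R^{d})$, Definition \ref{Sobolev} applied with $\phi = \G(\cdot, T)$ directly yields $v_{T} := \G(u, T) \in W^{1,p}(\Omega)$ and $|Dv_{T}| \leq \psi$ a.e.\ in $\Omega$, with a single $\psi \in L^{p}(\Omega)$ independent of $T$.

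For the existence and uniqueness of the trace, I would fix a countable dense set $\{T_{i}\}_{i \in \Na} \subset \A_{Q}(\R^{d})$ and denote by $\tilde{v}_{i} \in L^{p}(\partial \Omega)$ the classical Sobolev trace of $v_{T_{i}}$. The triangle inequality $|v_{T_{i}} - v_{T_{j}}| \leq \G(T_{i}, T_{j})$, valid a.e.\ in $\Omega$, transfers to the boundary by continuity of the scalar trace operator, so at $\Ha^{m-1}$-a.e.\ $x \in \partial \Omega$ the family $(\tilde{v}_{i}(x))_{i}$ behaves as a distance sequence from a single $Q$-point. Completeness of $\A_{Q}(\R^{d})$ together with density of $\{T_{i}\}$ then produces a unique $g(x) \in \A_{Q}(\R^{d})$ such that $\G(g(x), T_{i}) = \tilde{v}_{i}(x)$ for every $i$; Borel measurability of $g$ follows from that of the scalars $\tilde{v}_{i}$ and the continuity of $\G$. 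An approximation argument over dense $T_{i}$ shows $\G(g, T) = \G(u, T)|_{\partial \Omega}$ for every $T \in \A_{Q}(\R^{d})$, matching Definition \ref{trace}; uniqueness is immediate because two candidate traces would agree on the countable dense set, hence everywhere.

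For the weak sequential closure, suppose $u_{h} \in W^{1,p}_{g}(\Omega, \A_{Q})$ with $u_{h} \weak u$ in the sense of Definition \ref{weak}, and fix $T \in \A_{Q}(\R^{d})$. Set $v_{h} := \G(u_{h}, T)$ and $v := \G(u, T)$. The reverse triangle inequality $|v_{h} - v| \leq \G(u_{h}, u)$ together with condition $(i)$ of Definition \ref{weak} forces $v_{h} \to v$ in $L^{p}(\Omega)$, while $(ii)$ combined with the pointwise bound $|Dv_{h}| \leq |Du_{h}|$ coming from Definition \ref{Sobolev} provides a uniform $W^{1,p}$ bound on $v_{h}$. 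Extracting a subsequence we get $v_{h} \weak v$ weakly in $W^{1,p}(\Omega)$, and continuity of the classical scalar trace operator under weak convergence yields $v_{h}|_{\partial \Omega} \weak v|_{\partial \Omega}$ in $L^{p}(\partial \Omega)$. Since $v_{h}|_{\partial \Omega} = \G(g, T)$ is independent of $h$, we conclude $v|_{\partial \Omega} = \G(g, T)$ for every $T \in \A_{Q}(\R^{d})$, which by Definition \ref{trace} means exactly $u|_{\partial \Omega} = g$.

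The genuinely multivalued—and therefore the only delicate—step is the measurable assembly of the countably many scalar traces $\tilde{v}_{i}$ into a single $\A_{Q}$-valued trace $g$. One must verify simultaneously, on one single boundary set of full $\Ha^{m-1}$-measure, that the sequence $(\tilde{v}_{i}(x))_{i}$ is the distance sequence from some point of $\A_{Q}(\R^{d})$ (using completeness of $\A_{Q}$), that this point is uniquely determined (this is where density of $\{T_{i}\}$ is essential, via the Kuratowski-type embedding $S \mapsto (\G(S, T_{i}))_{i \in \Na}$), and that the resulting selection $x \mapsto g(x)$ is Borel measurable. Once this gluing is performed, everything else reduces to routine applications of classical scalar Sobolev trace theory.
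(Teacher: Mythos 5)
The paper itself records this proposition as a citation to \cite{DLS11a} and offers no proof, so the comparison here is against the intended De Lellis--Spadaro construction rather than a displayed argument. Your treatment of the weak sequential closure is correct and self-contained: reducing to the scalars $v_h := \G(u_h, T)$, condition $(i)$ of Definition \ref{weak} gives $v_h \to v := \G(u,T)$ in $L^p(\Omega)$, condition $(ii)$ together with $|Dv_h| \leq |Du_h|$ gives a uniform $W^{1,p}$ bound, and the weak--weak continuity of the (bounded linear) scalar trace operator then identifies $v|_{\partial\Omega}$ with the constant boundary value $\G(g,T)$; since $T$ is arbitrary this is exactly Definition \ref{trace}.

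The existence of the trace, however, has a genuine gap at precisely the step you flag as delicate. At a.e.\ $x \in \partial\Omega$ you derive the Lipschitz bound $|\tilde v_i(x) - \tilde v_j(x)| \leq \G(T_i,T_j)$ (and implicitly the reverse triangle bound $\tilde v_i(x) + \tilde v_j(x) \geq \G(T_i,T_j)$), and assert that completeness of $\A_Q(\R^d)$ and density of $\{T_i\}$ then produce a realizing point $g(x)$ with $\G(g(x),T_i) = \tilde v_i(x)$. This inference does not hold: those two families of inequalities are necessary but not sufficient for $(\tilde v_i(x))_i$ to be a distance sequence. Already in $\A_1(\R^2) = \R^2$ one can pick $T_1=(0,0)$, $T_2=(2,0)$, $T_3=(1,1)$ and $a_1=1$, $a_2=\sfrac{3}{2}$, $a_3=1$, which satisfy both constraints for every pair yet are realized by no point $S$ (the first two conditions pin $S$ to two points, at neither of which $\G(S,T_3)=1$). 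Completeness only says that Cauchy sequences in $\A_Q(\R^d)$ converge; it does not force an arbitrary sequence of scalars to lie in the image of the Kuratowski-type embedding $S \mapsto (\G(S,T_i))_i$, and density of $\{T_i\}$ only gives uniqueness once a realizing point is known to exist. To close the argument you need an input that actually uses the Sobolev structure: for instance, that a.e.\ $x \in \partial\Omega$ is simultaneously a Lebesgue point of all the $\tilde v_i$, so that along some $x_n \to x$ in $\Omega$ one has $\G(u(x_n),T_i) \to \tilde v_i(x)$ for every $i$, and that $u(x_n)$ is then a \emph{bounded} sequence in the \emph{proper} (not merely complete) metric space $\A_Q(\R^d)$, hence subconverges to a realizing $g(x)$. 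Alternatively, one may define the trace as the $L^p(\partial\Omega,\A_Q)$-limit of the traces of the Lipschitz approximations $u_\lambda$ of Proposition \ref{LipSob_app}, which avoids the scalar reassembly altogether and is closer in spirit to the construction in \cite{DLS11a}. Either repair requires strictly more than completeness plus density.
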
 

\begin{proposition}[Sobolev embeddings, cf. {\cite[Proposition 2.11, Proposition 4.6]{DLS11a}}] \label{embeddings}
The following embeddings hold:
\begin{itemize}
\item[$(i)$] if $p < m$, then $W^{1,p}(\Omega,\A_{Q}) \subset L^{q}(\Omega, \A_{Q})$ for every $q \in \left[1,p^{*}\right]$, $p^{*} := \frac{mp}{m-p}$, and the inclusion is compact when $q < p^{*}$;
\item[$(ii)$] if $p = m$, then $W^{1,p}(\Omega, \A_{Q}) \subset L^{q}(\Omega, \A_{Q})$ for all $q \in \left[1, \infty \right)$, with compact inclusion;
\item[$(iii)$] if $p > m$, then $W^{1,p}(\Omega, \A_{Q}) \subset C^{0,\alpha}(\Omega, \A_{Q})$ for all $\alpha \in \left[0, 1 - \frac{m}{p} \right]$, with compact inclusion if $\alpha < 1 - \frac{m}{p}$.
\end{itemize}
\end{proposition}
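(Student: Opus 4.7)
The plan is to reduce the $Q$-valued Sobolev embeddings to their classical counterparts by exploiting the metric characterization of $W^{1,p}(\Omega,\A_Q)$ given in Definition \ref{Sobolev}. The crucial observation is that for any fixed $T \in \A_Q$, the scalar function $\G(u,T) \in W^{1,p}(\Omega)$ with $|D\G(u,T)| \leq |Du|$ a.e.\ in $\Omega$, so we have access to the whole classical Sobolev machinery applied to the auxiliary functions $\G(u,\cdot)$. In particular, taking $T = Q\llbracket 0 \rrbracket$ one has $|u| = \G(u,Q\llbracket 0 \rrbracket) \in W^{1,p}(\Omega)$ with $\||u|\|_{W^{1,p}(\Omega)} \leq \|u\|_{W^{1,p}(\Omega,\A_Q)}$, and since by definition $\|u\|_{L^q(\Omega,\A_Q)} = \||u|\|_{L^q(\Omega)}$ the continuity of the embeddings in $(i)$ and $(ii)$ follows at once from the classical Sobolev--Rellich--Kondrachov theorem applied to the real-valued function $|u|$.

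To upgrade continuity to compactness, I would invoke Almgren's bi-Lipschitz embedding $\bfeta_Q^{*} \colon \A_Q(\R^d) \to \R^{N(Q,d)}$, together with a Lipschitz retraction $\rho^{*} \colon \R^{N(Q,d)} \to \bfeta_Q^{*}(\A_Q(\R^d))$ (cf.\ \cite{Almgren00} and \cite[Chapter 2]{DLS11a}). Given a sequence $\{u_h\}$ bounded in $W^{1,p}(\Omega,\A_Q)$, the compositions $v_h := \bfeta_Q^{*} \circ u_h$ form a bounded sequence in the classical vector-valued Sobolev space $W^{1,p}(\Omega,\R^{N(Q,d)})$ (here I use the chain rule for Lipschitz compositions with Sobolev $Q$-functions, cf.\ Definition \ref{Sobolev}). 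By the classical Rellich--Kondrachov theorem, a subsequence $v_{h_k}$ converges strongly in $L^q(\Omega,\R^{N(Q,d)})$ for the relevant range of $q$, and the limit lies in $\bfeta_Q^{*}(\A_Q(\R^d))$ a.e. Applying $(\bfeta_Q^{*})^{-1} \circ \rho^{*}$, which is Lipschitz, one deduces strong $L^q$ convergence of $u_{h_k}$ to a $Q$-valued limit, yielding compactness in the strict ranges indicated.

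For item $(iii)$, when $p > m$, the same composition argument together with classical Morrey's embedding $W^{1,p}(\Omega,\R^{N(Q,d)}) \hookrightarrow C^{0,1-m/p}(\Omega,\R^{N(Q,d)})$ gives H\"older continuity of $v = \bfeta_Q^{*}\circ u$, and then bi-Lipschitzness of $\bfeta_Q^{*}$ on its image transfers the estimate
\[
\G(u(x),u(y)) \leq C\,|v(x)-v(y)| \leq C'\,\|Du\|_{L^p(\Omega)}\,|x-y|^{1-m/p}
\]
to the $Q$-valued map $u$. Compactness for $\alpha < 1-m/p$ follows from the Arzel\`a--Ascoli theorem applied to $\{v_h\}$ and the same Lipschitz retraction trick used in the previous step.

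The main technical obstacle is not any single step in isolation but the necessity of having at one's disposal Almgren's extrinsic embedding $\bfeta_Q^{*}$ (or, alternatively, to mimic the compactness argument intrinsically using the Lipschitz approximation of Proposition \ref{LipSob_app} combined with a careful equi-integrability argument for $|Du_h|^p$). Once this tool is in place, every step reduces to a direct translation of the classical vector-valued theory, and no new ideas beyond those already developed in \cite{DLS11a} are required.
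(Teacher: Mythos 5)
Your argument is correct, but it takes a genuinely different route from the one the paper leans on. The statement is cited from \cite{DLS11a}, whose whole point --- as the paper itself stresses in \S\ref{ssec:Q_func} --- is an \emph{intrinsic} metric-space theory that ``does not rely on Almgren's embedding.'' You instead prove the continuity of $(i)$ and $(ii)$ intrinsically via the scalar function $\abs{u}=\G(u,Q\llbracket 0\rrbracket)\in W^{1,p}(\Omega)$, which is fine, but then switch to the extrinsic Almgren embedding $\bfeta_Q^{*}$ (and the retraction $\rho^{*}$) for compactness and for $(iii)$. This is perfectly valid: $\bfeta_Q^{*}\circ u_h$ is bounded in $W^{1,p}(\Omega,\R^{N})$ by the chain rule in Definition \ref{Sobolev}, classical Rellich--Kondrachov (resp.\ Morrey plus Arzel\`a--Ascoli) gives strong $L^q$ (resp.\ $C^{0,\alpha}$) convergence of a subsequence, and the limit lands a.e.\ in the closed set $\bfeta_Q^{*}(\A_Q(\R^d))$, so bi-Lipschitzness transports the convergence back to $\G$. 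What the extrinsic route buys you is brevity: every step is a literal translation of the classical vector-valued theory. What the intrinsic route of \cite{DLS11a} buys is independence of the large embedding dimension $N(Q,d)$ and of the nontrivial construction of $\bfeta_Q^{*}$ and $\rho^{*}$, keeping the argument self-contained within the metric framework that the rest of their (and this) paper is built on.

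One small correction to your closing remark: the natural intrinsic replacement is not ``Lipschitz approximation plus equi-integrability of $\abs{Du_h}^p$'' --- the Lipschitz approximation of Proposition \ref{LipSob_app} does not yield equi-integrability of the gradients and is not needed here. The cleaner intrinsic compactness argument runs as follows: fix a countable dense set $\{T_i\}\subset\A_Q$; each $\G(u_h,T_i)$ is bounded in $W^{1,p}(\Omega)$, so classical Rellich and a diagonal extraction give $\G(u_{h_k},T_i)\to w_i$ in $L^q$ and a.e.\ for every $i$; since $T\mapsto\G(u_{h_k}(x),T)$ is $1$-Lipschitz, a.e.\ convergence on the dense set upgrades to locally uniform convergence of these distance functions, which identifies a limit $u(x)\in\A_Q$ with $\G(u_{h_k}(x),u(x))\to 0$ a.e.; finally, $\G(u_{h_k},u)^q\le 2^{q-1}(\abs{u_{h_k}}^q+\abs{u}^q)$ is equi-integrable because $\abs{u_{h_k}}\to\abs{u}$ strongly in $L^q$, and Vitali's theorem concludes. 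This uses only the scalar Sobolev theory already in play and no extrinsic embedding.
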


\begin{proposition}[Poincaré inequality, cf. {\cite[Proposition 2.12, Proposition 4.9]{DLS11a}}] \label{P_ineq}
Let $\Omega$ be a connected open subset of $\Sigma$ with Lipschitz boundary, and let $p < m$. There exists a constant $C = C(p,m,d,Q,\Omega)$ with the following property: for every $u \in W^{1,p}\left( \Omega , \A_{Q}(\R^d) \right)$ there exists a point $\overline{u} \in \A_{Q}(\R^d)$ such that
\begin{equation} \label{P_ineq:eq}
\left( \int_{\Omega} \G(u, \overline{u})^{p^{*}} \, \dHa^m \right)^{\frac{1}{p^{*}}} \leq C \left( \int_{\Omega} |Du|^{p} \, \dHa^m \right)^{\frac{1}{p}}.
\end{equation}
\end{proposition}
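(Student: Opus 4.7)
My plan is to lift the inequality to the classical vector-valued Sobolev-Poincaré inequality via Almgren's bi-Lipschitz embedding $\boldsymbol{\xi} \colon \A_{Q}(\R^{d}) \hookrightarrow \R^{N}$ (see \cite{Almgren00} or \cite[Chapter 2]{DLS11a}), which comes with a Lipschitz retraction $\boldsymbol{\rho} \colon \R^{N} \to \boldsymbol{\xi}(\A_{Q}(\R^{d}))$ of the ambient space onto the image. The composition $v := \boldsymbol{\xi} \circ u$ lies in the classical Sobolev space $W^{1,p}(\Omega, \R^{N})$ with $|Dv| \leq |Du|$ almost everywhere (this follows from Definition \ref{Sobolev} applied componentwise), so applying the classical vector-valued Sobolev-Poincaré inequality to $v$ produces some $\overline{v} \in \R^{N}$ — for instance the mean of $v$ on $\Omega$ — such that
\begin{equation*}
\|v - \overline{v}\|_{L^{p^{*}}(\Omega)} \leq C_{1}\|Dv\|_{L^{p}(\Omega)} \leq C_{1}\|Du\|_{L^{p}(\Omega)}.
\end{equation*}

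Then I would project $\overline{v}$ back to the target by setting $\overline{u} := \boldsymbol{\xi}^{-1}(\boldsymbol{\rho}(\overline{v})) \in \A_{Q}(\R^{d})$. The bi-Lipschitz property of $\boldsymbol{\xi}$ and the triangle inequality give
\begin{equation*}
\|\G(u,\overline{u})\|_{L^{p^{*}}} \leq C \|v - \boldsymbol{\rho}(\overline{v})\|_{L^{p^{*}}} \leq C \|v - \overline{v}\|_{L^{p^{*}}} + C \|\overline{v} - \boldsymbol{\rho}(\overline{v})\|_{L^{p^{*}}}.
\end{equation*}
The last summand is constant in $x$ and is controlled via the pointwise identity $\boldsymbol{\rho}(v(x)) = v(x)$ together with the Lipschitz property of $\boldsymbol{\rho}$:
\begin{equation*}
|\overline{v} - \boldsymbol{\rho}(\overline{v})| \leq |\overline{v} - v(x)| + |\boldsymbol{\rho}(v(x)) - \boldsymbol{\rho}(\overline{v})| \leq (1 + \Lip(\boldsymbol{\rho}))\,|v(x) - \overline{v}|
\end{equation*}
for almost every $x \in \Omega$. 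Taking $L^{p^{*}}$-norms and dividing by $\Ha^{m}(\Omega)^{1/p^{*}}$ bounds it by a multiple of $\|v - \overline{v}\|_{L^{p^{*}}}$, and assembling these estimates will close the argument.

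The main obstacle here is external to the excerpt: the whole approach hinges on Almgren's extrinsic machinery, which the authors have deliberately avoided in favor of the intrinsic framework of \cite[Chapter 4]{DLS11a}. A purely intrinsic alternative (as in \cite[Proposition 4.9]{DLS11a}) would argue by contradiction: assuming a sequence $(u_{h})$ with $\|Du_{h}\|_{L^{p}} \to 0$ and $\inf_{T \in \A_{Q}}\|\G(u_{h}, T)\|_{L^{p^{*}}} = 1$, one picks near-optimal $T_{h}$, translates by $-\bfeta(T_{h})$ (an isometry of $\A_{Q}$ leaving the Dirichlet energy invariant), and uses Propositions \ref{weak_clos} and \ref{embeddings} to extract a convergent limit $u_{\infty}$. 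Since $\|Du_{\infty}\|_{L^{p}} = 0$ by lower semicontinuity, $u_{\infty}$ must be constant on the connected set $\Omega$, contradicting the normalization. The delicate point in that route is that the spread $\G(T_{h}, Q\llbracket 0\rrbracket)$ may still diverge, which would force a concentration-compactness step splitting $u_{h}$ into clusters and an induction on $Q$.
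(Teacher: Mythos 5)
The paper itself offers no proof of this proposition: it is stated as a recalled background fact and cited verbatim from De Lellis--Spadaro, so there is no in-paper argument to compare against. That said, your primary route -- lift via Almgren's bi-Lipschitz embedding $\boldsymbol{\xi}$ and Lipschitz retraction $\boldsymbol{\rho}$, apply the classical Sobolev--Poincar\'e inequality, then push back with $\overline{u} := \boldsymbol{\xi}^{-1}(\boldsymbol{\rho}(\overline{v}))$ -- is correct and complete. The chain $\G(u,\overline{u}) \le \Lip(\boldsymbol{\xi}^{-1})\,|v - \boldsymbol{\rho}(\overline{v})|$, the triangle inequality, and the pointwise bound on $|\overline{v} - \boldsymbol{\rho}(\overline{v})|$ (using $\boldsymbol{\rho}(v(x)) = v(x)$ and then integrating in $x$ and dividing by $\Ha^m(\Omega)^{1/p^*}$) close the estimate with a constant depending only on $p,m,d,Q,\Omega$, exactly as required. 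One very small imprecision: from Definition \ref{Sobolev} applied to the components of $\boldsymbol{\xi}$ you get $|D(\boldsymbol{\xi}\circ u)| \le C(N)\,|Du|$ rather than the clean $|Dv|\le|Du|$, but this extra dimensional constant is harmless; the sharp inequality $|Dv|\le|Du|$ does hold (cf. the characterization of the metric derivative in Proposition \ref{uniq_Dir} together with the $1$-Lipschitz property of $\boldsymbol{\xi}$), you just need to invoke it at the level of directional metric derivatives rather than componentwise.

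Your approach is genuinely different from the one in the cited reference, and you have correctly diagnosed why. The intrinsic proof of the version cited here (De Lellis--Spadaro, Proposition 4.9, which is the manifold analogue of their Proposition 2.12) proceeds by compactness and contradiction -- precisely the second route you sketch -- and the delicate point is exactly the one you flag: near-optimal translations $T_h$ may drift to infinity, forcing either a normalization trick or a splitting argument. Your extrinsic route sidesteps this entirely by transferring the problem to $\R^N$, where the mean is canonically available, at the cost of importing Almgren's embedding and retraction -- machinery this paper otherwise avoids in favor of the intrinsic framework. Both routes are valid; the extrinsic one is shorter once the embedding is available, while the intrinsic one is self-contained within the metric-space theory.
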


\begin{proposition}[Campanato-Morrey estimates, cf. {\cite[Proposition 2.14]{DLS11a}}] \label{CM}
Let $u$ be a $W^{1,2}(B_1,\A_{Q})$ function, with $B_1 = B_{1}(0) \subset \R^{m}$, and assume $\alpha \in \left(0, 1 \right]$ is such that
\[
\int_{B_{r}(y)} |Du|^{2} \leq A r^{m-2+2\alpha} \hspace{0.5cm} \mbox{ for every } y \in B_{1} \mbox{ and a.e. } r \in \left(0, 1 - |y| \right).
\]
Then, for every $0 < \delta < 1$ there is a constant $C = C(m,d,Q,\delta)$ such that
\begin{equation} \label{CM:eq}
\left[ u \right]_{C^{0,\alpha}(\overline{B}_{\delta})} := \sup_{x,y \in \overline{B}_{\delta}} \frac{\G\left(u(x), u(y)\right)}{|x-y|^{\alpha}} \leq C \sqrt{A}. 
\end{equation}
\end{proposition}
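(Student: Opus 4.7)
The strategy is the standard Campanato–Morrey iteration, adapted to the metric target $\A_Q(\R^d)$: combine the Poincaré inequality (Proposition \ref{P_ineq}) with a dyadic telescoping argument, exploiting only the triangle inequality for $\G$.

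\medskip

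\noindent\textbf{Step 1 (Scaled $L^2$-Poincaré on balls).} For every $B_r(y)\subset B_1$, the plan is to produce a point $\overline{u}_{y,r}\in\A_Q(\R^d)$ with
\[
\int_{B_r(y)}\G(u,\overline{u}_{y,r})^{2}\,\dHa^{m}\leq C\, r^{2}\int_{B_r(y)}|Du|^{2}\,\dHa^{m}\leq C A\, r^{\,m+2\alpha},
\]
where the last inequality uses the hypothesis. For $m\geq 3$ this follows from Proposition \ref{P_ineq} with $p=2$ (which gives an $L^{2^{*}}$ estimate) combined with Hölder's inequality, since $|B_r|^{1/2-1/2^{*}}\sim r$. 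For $m=2$ one applies Proposition \ref{P_ineq} with $p=1$ (so that $p^{*}=2$) and then Cauchy–Schwarz on the gradient side, $\|Du\|_{L^1(B_r)}\leq |B_r|^{1/2}\|Du\|_{L^2(B_r)}$. The case $m=1$ is trivial by Sobolev embedding.

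\medskip

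\noindent\textbf{Step 2 (Dyadic iteration).} Fix $y\in B_\delta$ and $0<r<1-\delta$, and set $r_k:=2^{-k}r$. The triangle inequality for $\G$ and Step 1 yield
\[
|B_{r_{k+1}}|\,\G(\overline{u}_{y,r_k},\overline{u}_{y,r_{k+1}})^{2}\leq 2\int_{B_{r_{k+1}}(y)}\G(u,\overline{u}_{y,r_k})^{2}+2\int_{B_{r_{k+1}}(y)}\G(u,\overline{u}_{y,r_{k+1}})^{2}\leq C A\, r_k^{\,m+2\alpha},
\]
hence $\G(\overline{u}_{y,r_k},\overline{u}_{y,r_{k+1}})\leq C\sqrt{A}\,r_k^{\alpha}$. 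Since $\alpha>0$, the series $\sum_k r_k^{\alpha}$ converges geometrically, so $\{\overline{u}_{y,r_k}\}$ is a Cauchy sequence in the complete metric space $\A_Q(\R^d)$. By applying Lebesgue's differentiation theorem to the scalar Sobolev functions $x\mapsto\G(u(x),T)$ for $T$ ranging in a countable dense subset of $\A_Q(\R^d)$, one identifies the limit with $u(y)$ for $\Ha^{m}$-a.e.\ $y$. Summing the telescoping estimate gives
\[
\G(u(y),\overline{u}_{y,r})\leq C\sqrt{A}\,r^{\alpha}\qquad\text{for a.e. }y\in B_\delta,\ r\in(0,1-\delta).
\]

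\medskip

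\noindent\textbf{Step 3 (Hölder estimate and continuous representative).} Given $y,z\in B_\delta$ with $\rho:=|y-z|$ small, apply the same triangle-inequality trick on the overlap $B_{\rho/2}((y+z)/2)\subset B_{2\rho}(y)\cap B_{2\rho}(z)$ to get
\[
\G(\overline{u}_{y,2\rho},\overline{u}_{z,2\rho})\leq C\sqrt{A}\,\rho^{\alpha}.
\]
Combining with Step 2 and the triangle inequality,
\[
\G(u(y),u(z))\leq \G(u(y),\overline{u}_{y,2\rho})+\G(\overline{u}_{y,2\rho},\overline{u}_{z,2\rho})+\G(\overline{u}_{z,2\rho},u(z))\leq C\sqrt{A}\,\rho^{\alpha},
\]
for $\Ha^{m}$-a.e.\ $y,z\in B_\delta$. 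Redefining $u$ on a null set (using that the Cauchy property from Step 2 actually defines a continuous extension pointwise) promotes this to a pointwise bound on $\overline{B}_\delta$, yielding \eqref{CM:eq}.

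\medskip

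\noindent\textbf{Main obstacle.} There is no deep difficulty: the whole proof is a transcription of the classical Campanato–Morrey argument, and $\A_Q(\R^d)$ behaves like a Banach space for our purposes because $\G$ is a metric satisfying the triangle inequality and the target is complete. The only mildly technical points are the choice of the Poincaré exponent when $m\leq 2$ (handled in Step 1) and the passage from an a.e.\ estimate to one valid on $\overline{B}_\delta$ via the continuous representative.
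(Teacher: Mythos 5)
Your proof is correct and, as far as can be determined, follows the same route as the cited reference \cite{DLS11a}: it is the standard Campanato--Morrey iteration transplanted to the metric target $\A_Q(\R^d)$, with the $Q$-valued Poincar\'e inequality (Proposition \ref{P_ineq}) as the sole ingredient, the triangle inequality for $\G$ playing the role of linearity, and the completeness of $\A_Q$ providing the limit of the dyadic sequence of means. The treatment of the exponent when $m=2$ (taking $p=1$ in the Poincar\'e inequality so that $p^*=2$), the identification of the limit of the means with $u(y)$ via Lebesgue points, the overlap argument for distinct centers, and the passage from an a.e.\ bound to a pointwise one via the continuous representative are all handled correctly.
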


\subsubsection{The Dirichlet energy. $\Dir$-minimizers}

A simple corollary of Proposition \ref{uniq_Dir} and Remark \ref{Dir:independence} is that the \emph{Dirichlet energy} of a map $u \in W^{1,2}\left(\Omega,\A_{Q}(\R^{d})\right)$ can be defined in a unique way by setting
\begin{equation} \label{Dirichlet}
\Dir(u,\Omega) := \int_{\Omega} \sum_{i = 1}^{m} |D_{\xi_{i}} u|^{2} \, \dHa^{m} = \int_{\Omega} \sum_{i=1}^{m} \sum_{\ell=1}^{Q} |D_{\xi_i}u^\ell|^{2} \, \dHa^m,
\end{equation}
for any choice of a (local) orthonormal frame $\left( \xi_{1},\dots,\xi_m \right)$ of the tangent bundle of $\Sigma$.

Another interesting quantity that can be defined in our setting, the importance of which will become apparent in the sequel, is the \emph{Dirichlet energy of a tangent vector field} to the manifold $\M$.
\begin{definition}[Dirichlet energy of a tangent $Q$-field]
Let $\Omega$ be an open subset of $\Sigma \hookrightarrow \M$ as above. Let $u \in W^{1,2}\left( \Omega, \A_{Q}(\R^d) \right)$ be a Sobolev $Q$-valued tangent vector field to $\M$: that is, assume that $\spt(u(x)) \subset T_{x}\M$ for $\Ha^m$-a.e. $x \in \Omega$.
Then, for any point $x$ of approximate differentiability for $u$ in $\Omega$, and for any tangent vector field $\xi$, we set
\begin{equation} \label{app_cov}
\nabla_{\xi}u(x) := \sum_{\ell=1}^{Q} \llbracket \mathbf{p}^{\M}_{x} \cdot D_{\xi}u^{\ell}(x) \rrbracket.
\end{equation}
The Dirichlet energy of the vector field $u$ in $\Omega$ is thus given by
\begin{equation} \label{M-Dir}
\Dir^{\T\M}(u,\Omega) := \int_{\Omega} \sum_{i=1}^{m} |\nabla_{\xi_i}u|^{2} \, \dHa^{m}
\end{equation}
for any orthonormal frame $\left( \xi_1, \dots, \xi_m \right)$ of $\T\Sigma$.
\end{definition}

\begin{remark}
Observe that, when $u$ is Lipschitz continuous and $u|_{B_i} = \sum_{\ell=1}^{Q} \llbracket u_{i}^{\ell} \rrbracket$ is a local Lipschitz selection of $u$ as in Proposition \ref{Lip-select}, one has 
\[
|\nabla_{\xi}u(x)|^{2} = \sum_{\ell=1}^{Q} |\nabla_{\xi}u_{i}^{\ell}(x)|^{2} \hspace{0.5cm} \mbox{ for } \Ha^m-\mbox{a.e. } x \in B_i, \mbox{ for all vector fields } \xi,
\]
where the $\nabla$ on the right-hand side has to be intended as the classical covariant derivative (which can be extended to Lipschitz maps by means of Rademacher's theorem). 

The functional $\Dir^{\T\M}$ defined in \eqref{M-Dir} is the ``right'' geometric quantity to consider when dealing with tangent vector fields, since it does not involve any geometric structure external to the manifold $\M$. In particular, it does not depend on the isometric embedding of the Riemannian manifold $\M$ in the Euclidean space $\R^d$.
\end{remark}

As already mentioned before, a theory concerning existence and regularity properties of minimizers of the Dirichlet energy in $W^{1,2}$ (the so called $\Dir$-minimizers) has been extensively studied by Almgren in \cite{Almgren00} and revisited by De Lellis and Spadaro in \cite{DLS11a}. The theory can be summarized in three main theorems.
\begin{theorem}[Existence and H\"older regularity, cf. {\cite[Theorems 0.8 and 0.9]{DLS11a}}] \label{ex_reg}
Let $\Omega \subset \R^m$ be a bounded open subset with Lipschitz boundary. Let $g \in W^{1,2}(\Omega,\A_{Q})$. Then, there exists a function $u \in W^{1,2}(\Omega, \A_{Q})$ minimizing the Dirichlet energy \eqref{Dirichlet} among all $W^{1,2}$ $Q$-valued functions $v$ such that $v|_{\partial\Omega} = g|_{\partial\Omega}$. Furthermore, any $\Dir$-minimizer $u$ is in $C^{0,\alpha}(\Omega',\A_{Q})$ for every $\Omega' \Subset \Omega$, for some exponent $\alpha = \alpha(m,Q)$. 
\end{theorem}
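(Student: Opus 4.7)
I would apply the direct method of the calculus of variations to a minimizing sequence $\{u_k\} \subset W^{1,2}_g(\Omega, \A_Q)$. Since all the $u_k$ share the trace $g|_{\partial\Omega}$, a Poincaré-type inequality applied to the real-valued function $\G(u_k, g)$ (which lies in $W^{1,2}_0(\Omega)$ and whose weak derivative is controlled by $|Du_k|+|Dg|$) yields a uniform $W^{1,2}$ bound on $u_k$. Up to a subsequence, $u_k \rightharpoonup u$ in the sense of Definition \ref{weak}; the weak sequential closure of $W^{1,2}_g$ (Proposition \ref{weak_clos}) forces $u \in W^{1,2}_g$. The Dirichlet energy is weakly lower semicontinuous, as one sees by representing $|Du|^2$ as the pointwise supremum over a countable dense family $\{T_i\} \subset \A_Q$ of the quantities $|D \G(u, T_i)|^2$ (cf.\ \eqref{metric der} and Proposition \ref{uniq_Dir}); hence $\Dir(u,\Omega) \leq \liminf_k \Dir(u_k, \Omega)$ and $u$ is a minimizer.

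\textbf{Hölder regularity: strategy.} The target is a Campanato-type energy decay
\begin{equation*}
\Dir(u, B_r(y)) \leq C\, r^{m-2+2\alpha} \qquad \text{for all balls } B_r(y) \Subset \Omega' \Subset \Omega,
\end{equation*}
from which Proposition \ref{CM} yields $u \in C^{0,\alpha}_{\mathrm{loc}}(\Omega, \A_Q)$. I would attack the decay by a compactness-contradiction scheme: if it failed on a sequence of balls $B_{r_k}(y_k)$, an affine rescaling together with a translation by $\bfeta \circ u$ would produce $\Dir$-minimizers $v_k$ on $B_1$ with unit Dirichlet energy but no decay at any fixed smaller scale. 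Extract a weak limit $v$ (using the coercivity bound from the first paragraph). The proof then hinges on two ingredients: weak closure of $\Dir$-minimizers (so that $v$ itself minimizes on $B_1$) and a uniform one-step decay $\Dir(v, B_\tau) \leq \tau^{m-2+2\alpha}\Dir(v, B_1)$ valid for \emph{some} universal $\tau \in (0,1)$ and \emph{every} $\Dir$-minimizer $v$; the latter contradicts the non-decay of $v_k$.

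\textbf{One-step decay and main obstacle.} The one-step decay for a minimizer $v$ can be obtained by a dichotomy on a good boundary sphere $\partial B_\rho$. If the $Q$ sheets of $v|_{\partial B_\rho}$ are well separated (in the sense of the Lipschitz selection Proposition \ref{Lip-select} applied to the trace), one splits into $Q$ disjoint single-valued pieces and compares $v$ with the harmonic extension of each piece, producing the optimal $\tau^m$ harmonic decay. Otherwise the trace is "collapsed" and one subtracts $\bfeta \circ v$ to exploit the translational invariance of the energy and iterate. The chief technical obstacle is the weak closure of $\Dir$-minimizers: given $v_k \rightharpoonup v$ and an arbitrary competitor $w$ for $v$, one must manufacture competitors $w_k$ for $v_k$ with $\Dir(w_k, B_1) \to \Dir(w, B_1)$ by interpolating $w$ in the interior with the boundary traces $v_k|_{\partial B_1}$ across a thin annulus, at a Dirichlet cost vanishing as $k \to \infty$. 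This Luckhaus-type interpolation is the technical crux of the regularity proof, and is precisely the mechanism that upgrades weak convergence of the maps to convergence of the energies.
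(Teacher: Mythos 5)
This theorem is \emph{cited} from De Lellis--Spadaro \cite{DLS11a}; the present paper gives no proof of its own, so the comparison must be against the proof in the cited reference (which the paper implicitly relies on, and in fact reproduces the key estimate as equation \eqref{Key Est} via \cite[Proposition 3.10]{DLS11a}).

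Your existence argument is correct and matches the source: direct method, Poincar\'e control via $\G(u_k,g)$, weak compactness, weak closure of $W^{1,2}_g$ (Proposition \ref{weak_clos}), and lower semicontinuity of $\Dir$ through the representation \eqref{metric der}.

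For H\"older regularity, however, you take a genuinely different route from \cite{DLS11a}, and in fact a route that undercuts itself. De Lellis--Spadaro do \emph{not} argue by compactness--contradiction. They prove directly, for any $\Dir$-minimizer $u$, the sharp comparison
\[
\Dir(u,B_r)\ \le\ C(m)\,r\,\Dir\bigl(u,\partial B_r\bigr)
\qquad\text{with } C(m)<\tfrac{1}{m-2}\ (m\ge3),\ C(2)=Q,
\]
by comparing $u$ with a competitor extending $u|_{\partial B_r}$: for $m\ge 3$ the $0$-homogeneous extension; for $m=2$ the harmonic extension after unrolling the irreducible pieces of the trace (precisely the construction reproduced in Lemma \ref{hom_ext} of the present paper). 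This gives the differential inequality $(m-2+\gamma)\,\Dir(u,B_r)\le r\,\Dir'(u,B_r)$, which integrates to the Campanato decay, and Proposition \ref{CM} finishes. No compactness, no blow-up, no Luckhaus interpolation enters at this stage.

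Your proposal both (i) invokes a compactness--contradiction, which requires the nontrivial weak closure of $\Dir$-minimizers via Luckhaus-type interpolation, and (ii) claims a universal one-step decay $\Dir(v,B_\tau)\le\tau^{m-2+2\alpha}\Dir(v,B_1)$ valid for \emph{every} minimizer. But if (ii) is available, the whole compactness machinery in (i) is superfluous: iterating the universal one-step decay across dyadic scales directly yields the Campanato estimate. So the proposal is either circular (you prove in passing exactly what would finish the proof on its own) or, if you intend the one-step decay to hold only under an extra smallness hypothesis that compactness removes, that hypothesis is never stated. Additionally, your ``dichotomy'' on whether the boundary sheets are separated or collapsed does not match the structure actually used in \cite{DLS11a}: the irreducible decomposition of $u|_{\partial B_\rho}$ does \emph{not} split into sheets with disjoint graphs (an irreducible $k$-valued piece genuinely winds), and the crucial sharp constant $C(m)<(m-2)^{-1}$ --- which is what makes $\alpha>0$ --- comes from a Fourier/homogeneity computation, not from a separation/collapse case split. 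A naive collapse case (subtracting $\bfeta\circ v$ and iterating) loses the quantitative gain unless one controls how many iterations are needed, and your sketch does not address that.

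In short: existence is fine; for regularity, replace the compactness--contradiction scaffolding by the direct comparison with the explicit extension of the trace, prove the sharp constant in the resulting energy inequality (distinguishing $m\ge3$ from $m=2$), and integrate the differential inequality. That is both the approach of \cite{DLS11a} and the template the present paper follows in Proposition \ref{Prop Holder} for the perturbed ($\Jac$) functional.
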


The statement of the other two results requires the definition of regular and singular points of a $\Dir$-minimizer $u$.
\begin{definition}[Regular and singular points of a $\Dir$-minimizing map] \label{reg_sing}
A $Q$-valued $\Dir$-minimizer $u$ is regular at a point $x \in \Omega$ if there exist a neighborhood $B$ of $x$ in $\Omega$ and $Q$ harmonic functions $u_{\ell} \colon B \to \R^d$ such that
\[
u(y) = \sum_{\ell=1}^{Q} \llbracket u_{\ell}(y) \rrbracket \hspace{0.5cm} \mbox{ for almost every } y \in B
\]
and either $u_{\ell}(y) \neq u_{h}(y)$ for every $y \in B$ or $u_{\ell} \equiv u_{h}$. We will write $x \in \reg(u)$ if $x$ is a regular point. The complement of $\reg(u)$ in $\Omega$ is the \emph{singular set}, and will be denoted $\sing(u)$.
\end{definition}

\begin{theorem}[Estimate of the singular set, cf. {\cite[Theorem 0.11]{DLS11a}}] \label{est_sing}
Let $u$ be a $\Dir$-minimizer. Then, the Hausdorff dimension of $\sing(u)$ is at most $m - 2$. If $m = 2$, then $\sing(u)$ is at most countable.
\end{theorem}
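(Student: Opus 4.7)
The plan is to follow Almgren's and De Lellis--Spadaro's strategy, organized around the frequency function, a blow-up analysis at singularities, and Federer-type dimension reduction, with a special argument for $m=2$.

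First, I would set up Almgren's frequency function. Fix a $\Dir$-minimizer $u$ and a point $x \in \Omega$. Writing $u_x(y) := u(y) \ominus Q \llbracket \bfeta \circ u(x) \rrbracket$ (formally, the translation that sends the center of mass of $u(x)$ to the origin, well-defined by the selection results and the chain rule), define
\[
D(x,r) := \int_{B_r(x)} |Du|^2 \, \dHa^m, \qquad H(x,r) := \int_{\partial B_r(x)} |u_x|^2 \, \dHa^{m-1}, \qquad I(x,r) := \frac{r\,D(x,r)}{H(x,r)}.
\]
By testing minimality against \emph{outer} variations $u_t := \sum_\ell \llbracket u^\ell + t\,\psi(x)(u^\ell - \bfeta\circ u(x)) \rrbracket$ and \emph{inner} variations $u_t(x) := u(x + t\,\phi(x))$ (both admissible by Proposition \ref{chain}), one derives two first variation identities which, combined, yield that $r \mapsto I(x,r)$ is nondecreasing. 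This is the single most delicate calculation in the scheme.

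Next, I would do a blow-up at a singular point $x$. Let $\alpha(x) := \lim_{r \downarrow 0} I(x,r)$, which is finite by monotonicity. Setting $r_j \downarrow 0$ and rescaling
\[
u_{x,r_j}(y) := \frac{r_j^{(m-2)/2}}{\sqrt{H(x,r_j)}} \, u_x(x+r_j y),
\]
the sequence $\{u_{x,r_j}\}$ is bounded in $W^{1,2}(B_1, \A_Q)$; by Proposition \ref{weak_clos} and the compact Sobolev embeddings of Proposition \ref{embeddings}, a subsequence converges strongly in $L^2$ and weakly in $W^{1,2}$ to a nontrivial limit $v$. A standard argument (using monotonicity to rule out the drop of Dirichlet energy in the limit) promotes this to $W^{1,2}$-strong convergence, so $v$ is itself a $\Dir$-minimizer; monotonicity of $I$ forces $v$ to be $\alpha(x)$-homogeneous. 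Moreover the Campanato--Morrey estimate (Proposition \ref{CM}) combined with $\alpha \geq $ const $> 0$ gives $v(0) = Q\llbracket 0 \rrbracket$, i.e. the singularity persists in the limit.

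With the blow-up machinery in hand I would run Federer's dimension reduction. Let $S := \sing(u)$. Arguing by contradiction, assume $\Ha^s(S) > 0$ for some $s > m-2$. By standard density/covering arguments plus the homogeneity of tangent maps, at a point $x$ where $\Ha^s$-density of $S$ is positive one can find a tangent map $v \colon \R^m \to \A_Q$ whose singular set $\sing(v)$ has $\Ha^s$-positive measure and is a cone (since $v$ is homogeneous). If $\sing(v)$ contains a line $L$, then translating along $L$ preserves the minimality, so $v$ splits as $v = v' \times L^\perp$ with $v'$ a $\Dir$-minimizer on $\R^{m-1}$ of singular set of codimension $\leq s - 1$; iterating reduces to a $\Dir$-minimizer on $\R^k$ with $k < m - 1$ having singular set at the origin of dimension at least $s - m + k > -2 + k$, contradicting the base cases. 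The extraction of a translation-invariance direction from the cone structure uses the fact (proved via a second application of the frequency blow-up) that the singular set of a homogeneous $\Dir$-minimizer is itself conical. The hard part here is the multi-sheeted bookkeeping: one has to track the splitting between ``collapsed'' singularities (where all sheets agree) and ``genuinely $Q$-valued'' singularities, and show that the dimension reduction does not break down at their interface.

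Finally, the base case $m=2$. Here the key is a finer analysis of $2$-dimensional homogeneous $\Dir$-minimizers: any such map on $\R^2$ can be written, in complex notation $z = r e^{i\theta}$, as a $Q$-valued linear combination of monomials $z^{\alpha}$ with $\alpha$ of the form $k/Q$ for a positive integer $k$, via a ``$Q$-th root'' construction. In particular the frequency $\alpha(x)$ of any interior singular point is bounded away from $0$ by $1/Q$, and an $\e$-regularity statement together with the monotonicity of $I(x,\cdot)$ gives that singular points are isolated: if $x_j \to x$ were a sequence of distinct singularities, Campanato--Morrey together with uniform frequency bounds would yield a tangent map with non-isolated singularity, contradicting the planar classification. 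Hence $\sing(u)$ is discrete and thus at most countable. The main obstacle throughout the whole scheme is the derivation and monotonicity of the frequency $I(x,r)$ in the presence of multiple sheets --- all subsequent steps are conceptually standard consequences once monotonicity and compactness of blow-ups are in place.
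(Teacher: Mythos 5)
Theorem \ref{est_sing} is quoted in the paper as Theorem 0.11 of \cite{DLS11a}; the paper contains no proof of it, only its statement as background for the analogous Jacobi-field result, so the right comparison is against the De Lellis--Spadaro source (and against the paper's own adaptation of that scheme in Sections \ref{sec:frequency}--\ref{sec:sing}). With that understood, your sketch correctly reconstructs the Almgren / De Lellis--Spadaro strategy: frequency function after centering at $\bfeta\circ u$, inner and outer variations giving monotonicity, compactness and homogeneity of blow-ups, Federer dimension reduction, and the classification of planar homogeneous $\Dir$-minimizers for $m=2$.

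Two caveats are worth making explicit. First, you describe the ``multi-sheeted bookkeeping'' between collapsed and genuinely $Q$-valued singularities as a complication to be managed at the end, but it is in fact the load-bearing mechanism: the dimension reduction and the planar isolation argument are carried out \emph{only} on the top-multiplicity stratum $\{x : u(x) = Q\llbracket\bfeta\circ u(x)\rrbracket\}$, where the normalization forces the tangent map to vanish at $0$ and where $Q$-points of the rescalings converge to $Q$-points of the limit. Persistence of a lower-multiplicity singularity under blow-up is false in general, so the rest of $\sing(u)$ is reached by a genuine induction on $Q$ after locally decomposing $u$ into pieces of strictly smaller cardinality; without that induction the argument does not close. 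Second, the $m=2$ countability in Theorem \ref{est_sing} is strictly weaker than the isolated-points statement of Theorem \ref{improved}; the argument you give actually proves the stronger conclusion (on the top stratum, then inductively), which is consistent with \cite{DLS11a} but collapses a distinction the source keeps. Beyond these, your outline is sound.
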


\begin{theorem}[Improved estimate of the singular set for $m=2$, cf. {\cite[Theorem 0.12]{DLS11a}}] \label{improved}
Let $u$ be $\Dir$-minimizing, and $m=2$. Then, the singular set $\sing(u)$ consists of isolated points.
\end{theorem}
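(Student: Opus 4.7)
The strategy is to argue by contradiction, combining a blow-up analysis at accumulating singular points with an explicit classification of homogeneous $\Dir$-minimizers in two dimensions. Suppose the conclusion fails: then there exist $x_0 \in \sing(u)$ and a sequence of distinct singular points $x_n \to x_0$. Set $r_n := |x_n - x_0| \downarrow 0$ and, after translating $u$ by $\bfeta \circ u(x_0)$ so as to assume $\bfeta \circ u(x_0) = Q\llbracket 0 \rrbracket$, consider the rescalings $u_n(y) := \rho_n^{-1} u(x_0 + r_n y)$ with $\rho_n > 0$ normalized so that $\Dir(u_n, B_1) = 1$. By the monotonicity of Almgren's frequency function $I_{u,x_0}(r)$ and the compactness theory for $\Dir$-minimizers developed in \cite{DLS11a}, up to a subsequence $u_n$ converges strongly in $W^{1,2}_{loc}(\R^2, \A_Q(\R^d))$ to a nontrivial homogeneous $\Dir$-minimizer $u^\star \colon \R^2 \to \A_Q(\R^d)$ of degree $\alpha_0 := I_{u,x_0}(0^+) > 0$.

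The first key step is the classification of such tangent maps on $\R^2$. Writing $u^\star(r,\theta) = r^{\alpha_0} v(\theta)$ in polar coordinates, the minimality condition, together with Proposition \ref{Lip-select} applied on small arcs of $\Sf^1$ avoiding collision points, forces each local single-valued selection $w$ to solve the ODE $w'' + \alpha_0^2 w = 0$; hence every sheet is of the form $\theta \mapsto a \cos(\alpha_0 \theta) + b \sin(\alpha_0 \theta)$ for some $a,b \in \R^d$. The $Q$-valued periodicity condition then forces $\alpha_0 \in \tfrac{1}{Q}\Na$, and a careful inspection based on global minimality (not just stationarity) shows that, up to an orthogonal change of frame in $\R^d$, $u^\star$ is a superposition of branched-cover profiles modelled on $z \mapsto z^{k/Q}$ on complex two-planes of $\R^d$. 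In particular any such $u^\star$ is regular on $\R^2 \setminus \{0\}$: the $Q$ sheets either collapse to a single harmonic function or are pairwise distinct at every $y \neq 0$, so $\sing(u^\star) \subset \{0\}$.

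The second key step is the persistence of singularities under blow-up. The points $y_n := (x_n - x_0)/r_n$ lie on $\partial B_1$, so up to a further subsequence $y_n \to y^\star \in \partial B_1$, and by construction each $y_n$ is singular for $u_n$. The local $C^{0,\alpha}$ estimates of Theorem \ref{ex_reg} combined with the strong $W^{1,2}_{loc}$ convergence give $u_n \to u^\star$ locally uniformly on $\R^2 \setminus \{0\}$. If $y^\star$ were a regular point of $u^\star$, Definition \ref{reg_sing} would produce a ball $B_\rho(y^\star)$ on which $u^\star$ decomposes as $Q$ harmonic sheets that either coincide identically or are pairwise separated by a positive gap. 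Uniform $C^0$ closeness would then let us apply Proposition \ref{Lip-select} and classical single-valued elliptic regularity to transfer such a decomposition to $u_n$ on $B_{\rho/2}(y^\star)$ for all large $n$, contradicting $y_n \in \sing(u_n)$. Therefore $y^\star \in \sing(u^\star)$, and since $y^\star \neq 0$ this contradicts $\sing(u^\star) \subset \{0\}$.

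The principal obstacle is the classification in the first step. The sheet-by-sheet ODE analysis on $\Sf^1$ is elementary, but showing that the listed branched profiles exhaust all global homogeneous $\Dir$-minimizers, and that no exotic configurations arise from unexpected collisions between sheets on $\Sf^1$ or from suboptimal phase alignments, requires the full strength of the frequency-function monotonicity and the comparison arguments of \cite{DLS11a}. Once the classification is in hand, the blow-up/persistence argument above is essentially formal.
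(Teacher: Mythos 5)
The persistence-of-singularity step is where the argument breaks, not the classification. You claim that $y^\star \in \reg(u^\star)$ would force $u_n$ to be regular near $y_n$ for large $n$ via uniform $C^0$ closeness, but this transfer is only valid when the $Q$ sheets of $u^\star$ are pairwise distinct on $B_\rho(y^\star)$. Definition \ref{reg_sing} also permits some sheets to coincide identically at a regular point, and this happens at every $y^\star \neq 0$ when the tangent map has the form $u^\star = m\llbracket v \rrbracket$ with $v$ a $q$-valued branched cover, $mq = Q$, $m \geq 2$. There, uniform $C^0$ closeness only forces $u_n$ near $y_n$ to split into clusters of multiplicity $m < Q$; it is compatible with $u_n$ having a collapsed branch singularity at $y_n$ inside one of those clusters, because a branch point whose amplitude tends to $0$ along the sequence leaves no trace in the $C^0$ limit, which records only the cluster average. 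So the conclusion $y^\star \in \sing(u^\star)$ does not follow.

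What genuinely passes to the limit under uniform convergence is the set of multiplicity-$Q$ collapsed points, not the full singular set: if $u_n(y_n) = Q\llbracket 0\rrbracket$ then $u^\star(y^\star) = Q\llbracket 0\rrbracket$, and one rules this out for $y^\star \neq 0$ by a secondary blow-up and the trivial classification of one-dimensional $\Dir$-minimizers. That argument only shows the top-multiplicity stratum is isolated (this is exactly what Proposition \ref{sing_Q:thm}, Case 1 does, and it is why Theorem \ref{sing:thm} only obtains \emph{countable} rather than \emph{isolated} in $m=2$). Upgrading to the full statement of Theorem \ref{improved} requires ruling out accumulation, at a collapsed point $x_0$, of singularities coming from lower strata whose separation scale vanishes faster than $r_n$; in \cite{DLS11a} this is handled by uniqueness of the tangent map together with a quantitative frequency-decay rate, an asymptotic expansion of $u$ around its unique tangent map, a splitting-before-blow-up argument, and an induction on $Q$ --- none of which your proposal supplies. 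As a secondary point, the admissible frequencies lie in $\bigcup_{1\le k\le Q}\tfrac{1}{k}\Na$, not in $\tfrac{1}{Q}\Na$.
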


\begin{remark}
It is worth observing that here we have only discussed those results in the theory of $\Dir$-minimizing multiple valued functions which will be useful for our purposes at a later stage of this paper, and therefore our summary is far from being complete. Among the results that we have not included in the above presentation, we mention the paper \cite{Hir14}, concerned with the problem of extending the H\"older regularity in Theorem \ref{ex_reg} up to the boundary of $\Omega$, and the recent result \cite{DLMSV16}, where the authors prove that if $u$ is $\Dir$-minimizing then $\sing(u)$ is actually countably $(m-2)$-rectifiable (and hence $\Ha^{m-2}$ $\sigma$-finite), thus extending to general $Q$ a previous result obtained for $Q=2$ by Krummel and Wickramasekera in \cite{KW13} and considerably improving Almgren's original theory.
\end{remark}

\section{$Q$-valued second variation of the area functional} \label{sec:2nd_var}

Let $\M$ and $\Sigma$ be as in Assumption \ref{assumptions}. The goal of this section is to define the admissible $Q$-valued normal variations of $\Sigma$ in $\M$ and to compute the associated second variation functional. In what follows, we will denote by $\A_{Q}(\M)$ the space of $Q$-points $T = \sum_{\ell} \llbracket p_\ell \rrbracket \in \A_{Q}(\R^d)$ with each $p_\ell$ in $\M$.

\begin{definition} \label{Var_field}
An \emph{admissible} variational $Q$-field of $\Sigma$ in $\M$ is a Lipschitz map
\[
N := \sum_{\ell=1}^{Q} \llbracket N^\ell \rrbracket \colon \Sigma \to \A_{Q}(\R^d)
\]
satisfying the following assumptions:
\begin{itemize}
\item[$(H1)$] $N^{\ell}(x) \in T_{x}^{\perp}\Sigma \subset T_{x}\M$ for every $\ell \in \{1,\dots,Q\}$, for every $x \in \Sigma$;
\item[$(H2)$] $N^\ell$ vanishes in a neighborhood of $\partial \Sigma$ for every $\ell \in \{1,\dots,Q\}$.
\end{itemize}
\end{definition}

\begin{definition} \label{Variations}
Given an admissible variational $Q$-field $N$, the one-parameter family of $Q$-valued deformations of $\Sigma$ in $\M$ induced by $N$ is the map
\[
F \colon \Sigma \times \left( -\delta, \delta \right) \to \A_{Q}(\M)
\]
defined by
\begin{equation}
F(x,t) := \sum_{\ell=1}^{Q} \llbracket \exp_{x}(tN^{\ell}(x)) \rrbracket,
\end{equation}
where $\exp$ denotes the exponential map on $\M$. 
\end{definition}

Observe that, for any given $N$ as in Definition \ref{Var_field}, the induced one-parameter family of $Q$-valued deformations is always well defined for a positive $\delta$ which depends on the $L^{\infty}$ norm of $N$ and on the injectivity radius of $\M$. Note, furthermore, that $F(x,0) = Q \llbracket x \rrbracket$ for every $x \in \Sigma$, and that $F(x,t) = Q \llbracket x \rrbracket$ for all $t$ if $x \in \partial \Sigma$.

If $F$ is an admissible one-parameter family of $Q$-valued deformations, we will often write $F_{t}(x)$ instead of $F(x,t)$. Moreover, we will set $F_{t}^{\ell}(x) := \exp_{x}(t N^{\ell}(x))$.

In what follows, we will always assume to have fixed an orthonormal frame $\left( \xi_{1},\dots,\xi_{m} \right)$ of the tangent bundle $\T\Sigma$, so that $\vec{\xi} = \xi_1 \wedge \dots \wedge \xi_m$ is a continuous simple unit $m$-vector field orienting $\Sigma$. Given any admissible variational $Q$-field $N$, we can now apply the results of the previous section, and consider the push-forward of $\Sigma$ through the family $F_t$ induced by $N$. An immediate consequence of Proposition \ref{Q_pf:thm} is that the resulting object is a one-parameter family of integer rectifiable $m$-currents, denoted $\Sigma_{t} := \mathbf{T}_{F_t} = (F_{t})_{\sharp}\llbracket \Sigma \rrbracket$ with $\spt(\Sigma_{t}) \subset \M$. From \eqref{Q_pf:eq2}, we have also the explicit representation formula
\begin{equation} \label{Var_curr}
\Sigma_{t}(\omega) = \int_{\Sigma} \sum_{\ell=1}^{Q} \left\langle \omega \left( F_{t}^{\ell}(x) \right), DF_{t}^{\ell}(x)_{\sharp} \vec{\xi}(x) \right\rangle \, \dHa^{m}(x) \hspace{0.5cm} \forall \, \omega \in \mathcal{D}^{m}(\R^{d}).
\end{equation}

We will denote $\mu(t)$ the \emph{mass} $\mass(\Sigma_t)$ of the current $\Sigma_{t}$.

\begin{definition} \label{hi_ord_var}
Let $\Sigma \subset \M$, and let $N$ be an admissible variational $Q$-field. For any integer $j \geq 1$, the $j$\textsuperscript{th} order \emph{variation} of $\Sigma$ generated by $N$ is the quantity
\begin{equation} \label{hi_ord_var:eq}
\delta^{j}\llbracket \Sigma \rrbracket(N) := \left.\frac{d^{j}\mu}{dt^{j}}\right|_{t=0}.
\end{equation}
$\delta^{1}\llbracket \Sigma \rrbracket$ is usually denoted $\delta\llbracket \Sigma \rrbracket$, and called first variation. $\delta^{2}\llbracket \Sigma \rrbracket$ is called second variation.
\end{definition}

For every $j$, $\delta^{j}\llbracket \Sigma \rrbracket$ is a functional defined on the space of admissible variational $Q$-fields. In the following theorem we show that the first variation functional $\delta\llbracket \Sigma \rrbracket$ is identically zero under the assumption that $\Sigma$ is minimal in $\M$. Furthermore, and more importantly for our purposes, we provide an explicit representation formula for $\delta^{2}\llbracket \Sigma \rrbracket$.

\begin{theorem} \label{thm:2nd_var_repr}
Let $\Sigma \hookrightarrow \M$ be as in Assumption \ref{assumptions}. If $N$ is an admissible variational $Q$-field of $\Sigma$ in $\M$, then 
\begin{equation} \label{eq:1st_var}
\delta\llbracket \Sigma \rrbracket(N) = 0,
\end{equation}
and
\begin{equation} \label{eq:2nd_var_repr}
\delta^{2}\llbracket \Sigma \rrbracket(N) = \Dir^{\T\M}(N,\Sigma) - 2 \int_{\Sigma} \sum_{\ell=1}^{Q} |A \cdot N^\ell|^{2} \, \dHa^m - \int_{\Sigma} \sum_{\ell=1}^{Q} \Ri(N^\ell,N^\ell) \, \dHa^{m}, 
\end{equation}
where
\begin{equation}
|A \cdot N^{\ell}|^{2} := \sum_{i,j=1}^{m} |\langle A(\xi_i,\xi_j), N^{\ell} \rangle|^{2}
\end{equation}
and 
\begin{equation}
\Ri(N^\ell,N^\ell) := \sum_{i=1}^{m} \langle R(N^{\ell}, \xi_i)\xi_i, N^{\ell} \rangle.
\end{equation}
\end{theorem}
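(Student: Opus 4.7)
My plan is to reduce the $Q$-valued second variation to a sum of $Q$ classical single-valued second variations, one for each sheet $N^\ell$, by showing that for $t$ near $0$ the mass of the push-forward current is exactly the sum of the Jacobians of the single-valued exponentiated maps $F_t^\ell(x) = \exp_x(tN^\ell(x))$. Concretely, I would first combine the representation formula \eqref{Q_pf:eq2} with the $Q$-valued area formula in Proposition \ref{AF} applied to $h\equiv 1$, which immediately yields
\[
\mu(t) = \mathbf{M}(\Sigma_t) \le \int_\Sigma \sum_{\ell=1}^Q \mathbf{J}F_t^\ell(x)\, \dHa^m(x).
\]
To obtain the opposite inequality for small $t$, I would verify the equality condition \eqref{AF:condition}: since $N$ is Lipschitz and compactly supported in $\Sigma$, each $F_t^\ell$ is a bi-Lipschitz perturbation of the identity uniformly in $\ell$, and consequently $DF_t^\ell(x)_\sharp \vec\xi(x) = \vec\xi(x)+O(t)$ (transported via $d\exp$) in the tangent bundle of $\M$. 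So whenever two pushed orientations meet at a common image point their inner product is positive for $|t|$ small, giving the sign condition on each piece $B_i$ of a Lipschitz decomposition of $N$ as in Proposition \ref{Lip-select}.

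Once the identity $\mu(t)=\int_\Sigma\sum_\ell\mathbf{J}F_t^\ell\,\dHa^m$ is established on an interval $(-\delta,\delta)$, I would differentiate twice under the integral sign, which is justified because for each fixed $x$ the map $t\mapsto\mathbf{J}F_t^\ell(x)$ is smooth with first two derivatives bounded in terms of $\|N\|_{\mathrm{Lip}}$, uniformly on $\Sigma$. This gives
\[
\delta\llbracket\Sigma\rrbracket(N) = \sum_{\ell=1}^Q \int_\Sigma \partial_t|_{t=0}\mathbf{J}F_t^\ell\,\dHa^m, \qquad \delta^2\llbracket\Sigma\rrbracket(N) = \sum_{\ell=1}^Q \int_\Sigma \partial_t^2|_{t=0}\mathbf{J}F_t^\ell\,\dHa^m.
\]
Each summand is now a classical variational integral. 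Invoking the standard single-valued first and second variation formulae for normal deformations of a submanifold of a Riemannian ambient (as in \cite{Simon83}) and using that the mean curvature $H=\tr A$ vanishes since $\Sigma$ is minimal, the first variation summands are $-\int_\Sigma\langle H,N^\ell\rangle\,\dHa^m=0$, proving \eqref{eq:1st_var}, while the second variation summands equal $\int_\Sigma(|\nabla^\perp N^\ell|^2 - |A\cdot N^\ell|^2 - \Ri(N^\ell,N^\ell))\,\dHa^m$.

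To match the stated form of the second variation, I would then rewrite the intrinsic normal Dirichlet term in terms of $\Dir^{\T\M}$. For a field $N^\ell$ normal to $\Sigma$ along $\Sigma$, decomposing the ambient covariant derivative $\nabla^\M_\xi N^\ell$ into components tangent and normal to $\Sigma$, the tangential part is computed via $\langle\nabla^\M_\xi N^\ell,\xi_j\rangle = -\langle N^\ell, A(\xi,\xi_j)\rangle$ (since $N^\ell\perp\xi_j$ everywhere), which gives the pointwise Pythagorean identity
\[
\sum_{i=1}^m |\nabla^\M_{\xi_i}N^\ell|^2 = |\nabla^\perp N^\ell|^2 + |A\cdot N^\ell|^2.
\]
Since by definition $\nabla_\xi u = \mathbf{p}^\M D_\xi u$ coincides with the Levi-Civita covariant derivative on $\M$ when applied to vector fields tangent to $\M$, the definition \eqref{M-Dir} yields $\Dir^{\T\M}(N,\Sigma)=\int_\Sigma\sum_\ell(|\nabla^\perp N^\ell|^2+|A\cdot N^\ell|^2)\,\dHa^m$, and substituting into the sum of classical second variations produces exactly \eqref{eq:2nd_var_repr}.

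The only genuinely delicate step is the first one: ruling out cancellation in the $Q$-valued mass so that $\mu(t)$ equals the sum of single-sheet areas. After that, the result is a clean reduction to the classical theory plus a pointwise algebraic identity. For the technical issue that $N$ is only Lipschitz (so the classical second variation formula is often stated under higher regularity), one can either justify the formula directly on each Lipschitz piece $B_i$ by approximation of $N_i^\ell$ in $W^{2,\infty}$, or observe that the pointwise Taylor expansion of $\mathbf{J}F_t^\ell(x)$ at $t=0$ involves only first derivatives of $N^\ell$, so the formula extends to Lipschitz normal fields without change.
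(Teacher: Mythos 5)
Your proposal is correct, and the first half matches the paper's argument closely: both establish the representation $\mu(t)=\int_\Sigma\sum_\ell\mathbf{J}F_t^\ell\,\dHa^m$ for small $|t|$ via Proposition \ref{AF} and a check of the sign condition \eqref{AF:condition}. The paper does spell out the sign check a bit more carefully than you do --- it first observes that $F_t^\ell(x)=F_t^{\ell'}(y)$ forces $\mathbf{d}(x,y)\le\varepsilon$ when $|t|$ is small, and only then uses compactness of $\Sigma$ to conclude $\langle\vec\xi(x),\vec\xi(y)\rangle\ge\tfrac12$; your ``$DF_t^\ell(x)_\sharp\vec\xi(x)=\vec\xi(x)+O(t)$'' compresses this, and the missing intermediate step is precisely that the two base points must be close before their orientation $m$-vectors can be compared.

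Where you genuinely diverge is in the second half. The paper differentiates $\mathbf{J}F_t^\ell$ twice at $t=0$ from first principles: it computes $\partial_t\log\det\mathbf{g}^\ell$ via Jacobi's formula, treats $\partial_t$ as covariant differentiation along the variation field $V^\ell$, and arrives directly at the pointwise identity $\partial_{tt}J^\ell|_{t=0}=\sum_i|\nabla_{\xi_i}N^\ell|^2-2|A\cdot N^\ell|^2-\Ri(N^\ell,N^\ell)+\di_\Sigma Z^\ell$; the acceleration term $\di_\Sigma Z^\ell$ is then dispatched by splitting $\bfeta\circ Z$ into tangential and normal parts and using Stokes plus $H=0$. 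The output is already in the $\Dir^{\T\M}$ form, so no Pythagorean rewriting is needed in the paper's proof (that algebra is confined to Remark \ref{rmk:1-valued} and Lemma \ref{lem:equivalence}, which go the other direction). You instead cite the classical second variation formula to get the $\Dir^{\N\Sigma}$ form $\int(|\nabla^\perp N^\ell|^2-|A\cdot N^\ell|^2-\Ri)$ and then convert via $\sum_i|\nabla^\M_{\xi_i}N^\ell|^2=|\nabla^\perp N^\ell|^2+|A\cdot N^\ell|^2$. Both routes are sound; yours is shorter if one accepts the classical formula as known, while the paper's is self-contained and handles the Lipschitz regularity of $N^\ell$ internally (by noting that the final integrand depends only on $DN^\ell$, $N^\ell$ and the geometry, exactly the point you also raise at the end). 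One caveat worth making explicit in your write-up: when you invoke the classical formula you are tacitly using that the discarded boundary/divergence term (the analogue of $\int\di_\Sigma Z^\ell$) vanishes for compactly supported Lipschitz $N^\ell$, which is the Stokes argument the paper carries out.
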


\begin{remark} \label{intrinsic}
Observe that formula \eqref{eq:2nd_var_repr} makes sense because the quantity on the right-hand side does not depend on the particular selection chosen for $N$, nor on the orthonormal frame chosen for the tangent bundle $\T \Sigma$.
 
The first addendum in the sum is the Dirichlet energy of the multivalued vector field $N$ on the manifold $\M$ as defined in \eqref{M-Dir}.

The second term in the sum can as well be given an intrinsic formulation, once we observe that $|A \cdot N^{\ell}|$ is the Hilbert-Schmidt norm of the symmetric bilinear form $A \cdot N^\ell \colon \T \Sigma \times \T \Sigma \to \R$ defined by $A \cdot N^\ell (\xi, \eta) := \langle A(\xi,\eta),N^\ell \rangle$.

Regarding the third term, the symmetry properties of the Riemann tensor allow to write
\[
\langle R(N^{\ell}, \xi_i)\xi_i, N^{\ell} \rangle = \langle R(\xi_i, N^{\ell}) N^{\ell}, \xi_i \rangle = \langle \p \cdot R(\xi_i,N^\ell)N^\ell, \xi_i \rangle,
\]
which in turn implies that $\Ri(N^{\ell}, N^{\ell})$ coincides with the trace of the endomorphism
\[
\xi \mapsto \p \cdot R(\xi,N^\ell)N^\ell
\]
of the tangent bundle $\T \Sigma$. In other words, this term is a partial Ricci curvature in the direction of the vector field $N^\ell$. 
\end{remark}

\begin{proof}[Proof of Theorem \ref{thm:2nd_var_repr}]
Let $N$ be an admissible variational $Q$-field of $\Sigma$ in $\M$, and let $F = F(x,t)$ denote the induced one-parameter family of $Q$-valued deformations. The proof of the representation formulae \eqref{eq:1st_var} and \eqref{eq:2nd_var_repr} will be obtained by direct computation.

The starting point is the $Q$-valued area formula, Proposition \ref{AF}, which yields an explicit formula for the function $\mu(t)$. Indeed, we may write
\begin{equation} \label{Q-area}
\mu(t) = \int_{\Sigma} \sum_{\ell=1}^{Q} \mathbf{J}F_{t}^{\ell}(x) \, \dHa^{m}(x),
\end{equation}
\emph{provided} condition \eqref{AF:condition} is satisfied: that is, provided there is a set $B \subset \Sigma$ of full measure for which
\begin{equation} \label{Condition}
\langle DF_{t}^{\ell}(x)_{\sharp}\vec{\xi}(x),DF_{t}^{\ell'}(y)_{\sharp}\vec{\xi}(y) \rangle \geq 0 \hspace{0.4cm} \forall \, x,y \in B \mbox{ and }\ell,\ell' \mbox{ with } F_{t}^{\ell}(x) = F_{t}^{\ell'}(y).
\end{equation} 
Now, it is not difficult to show that in fact condition \eqref{Condition} holds with $B = \Sigma$: to see this, first observe that since $\Sigma$ is compact there exists a number $\varepsilon > 0$ such that $\langle \vec{\xi}(x),\vec{\xi}(y) \rangle \geq \frac{1}{2}$ for all points $x,y \in \Sigma$ such that $\mathbf{d}(x,y) \leq \varepsilon$. On the other hand, the very definition of $F$ implies that for any $x \in \Sigma$ one may write $F_{t}^{\ell}(x) = x + t N^{\ell}(x) + o(t)$ for $t \to 0$. Therefore, if $|t|$ is chosen small enough, depending on $\Sigma$, $\varepsilon$ and on the $L^{\infty}$ norm of $N$ in $\Sigma$, the condition $F_{t}^{\ell}(x) = F_{t}^{\ell'}(y)$ implies $\mathbf{d}(x,y) \leq \varepsilon$ and consequently the condition $\langle \vec{\xi}(x),\vec{\xi}(y) \rangle \geq \frac{1}{2}$. But now, since $DF_{t}^{\ell}(x) = \mathrm{Id} + t DN^{\ell}(x) + o(t)$, we easily infer that $\langle DF_{t}^{\ell}(x)_{\sharp}\vec{\xi}(x),DF_{t}^{\ell'}(y)_{\sharp}\vec{\xi}(y) \rangle \geq \frac{1}{4}$ for all $x,y \in \Sigma$ and $\ell,\ell'$ with $F_{t}^{\ell}(x) = F_{t}^{\ell'}(y)$ provided $|t| \leq \delta_0$ for some $\delta_0 = \delta_0\left(\Sigma, \varepsilon, \|N\|_{L^{\infty}(\Sigma)},\Lip(N) \right)$.

Thus, we can work on each component $F^\ell$ of the decomposition of $F$ separately: in the end, we will just apply \eqref{Q-area} to obtain the desired variation formulae. Moreover, since the coming arguments are local, we will assume in what follows that the frame $\{ \xi_{i} \}_{i=1}^{m}$ is $C^{2}$ and that the selection $N = \sum_{\ell} \llbracket N^{\ell} \rrbracket$ is Lipschitz in a neighborhood of any given point $x$. 

With that being said, let us now consider a fixed value of $\ell \in \{1,\dots,Q\}$ and introduce the following quantities. For any $x$ point of differentiability for $N$ in $\Sigma$, let $Z^{\ell}(x) := \partial_{tt} F^{\ell}(x,0)$ denote the initial acceleration of the $\ell^{{\rm th}}$ sheet at the point $x$, so that the second order Taylor expansion of $F^{\ell}(x, \cdot)$ around $t = 0$ is   
\[
F^{\ell}(x,t) = x + t N^{\ell}(x) + \frac{1}{2} t^{2} Z^{\ell}(x) + o(t^{2})
\]
in a suitable $\delta$-neighborhood of $t = 0$. Then, for any $i \in \{1,\dots,m\}$, define
\begin{equation}
e^{\ell}_{i} = e^{\ell}_{i}(x,t) := D_{\xi_i}F_{t}^{\ell}(x) = \xi_{i}(x) + t D_{\xi_i}N^{\ell}(x) + \frac{1}{2} t^{2} D_{\xi_{i}}Z^{\ell}(x) + o(t^2)
\end{equation}
and
\begin{equation}
V^{\ell} = V^{\ell}(x,t) := \partial_{t}F^{\ell}(x,t).
\end{equation}
Observe that $e_{i}^{\ell}$ and $V^{\ell}$ are tangent vector fields to $\M$. 

Next, for $i,j \in \{1,\dots,m\}$ denote
\begin{equation}
{\bf g}^{\ell}_{ij} = {\bf g}^{\ell}_{ij}(x,t) := \langle e^{\ell}_{i}(x,t),e^{\ell}_{j}(x,t) \rangle
\end{equation}
and 
\begin{equation}
{\bf g}^{\ell} = {\bf g}^{\ell}(x,t) := \det({\bf g}^{\ell}_{ij}(x,t)).
\end{equation}

Using the above notation, we readily see that the Jacobian determinant $\mathbf{J}F_{t}^{\ell}$ can be written as follows:
\begin{equation}
J^{\ell} = J^{\ell}(x,t) := \mathbf{J}F_{t}^{\ell}(x) = \sqrt{{\bf g}^{\ell}(x,t)},
\end{equation}
so that, finally, the mass of the push-forwarded current is given by
\begin{equation}
\mu(t) = \sum_{\ell=1}^{Q} \mu^{\ell}(t),
\end{equation}
where 
\begin{equation}
\mu^{\ell}(t) := \int_{\Sigma} J^{\ell}(x,t) \, \dHa^{m}(x).
\end{equation}
Thus, we conclude that the first and second variation of $\Sigma$ under the deformation generated by $N$ can be represented in the following way:
\begin{equation} \label{1V}
\delta \llbracket \Sigma \rrbracket(N) = \sum_{\ell=1}^{Q} \left.\frac{d\mu^{\ell}}{dt}\right|_{t=0}
\end{equation}
and
\begin{equation} \label{2V}
\delta^{2} \llbracket \Sigma \rrbracket(N) = \sum_{\ell=1}^{Q} \left.\frac{d^{2}\mu^{\ell}}{dt^{2}}\right|_{t=0}.
\end{equation}

In what follows, in order to simplify the notation, we will drop the superscript $\ell$ when carrying on the computation.

One has:
\begin{equation}
\left.\frac{d\mu}{dt}\right|_{t=0} = \int_{\Sigma} \partial_{t}J(x,0) \, \dHa^{m}(x).
\end{equation}

Now, since
\[
\partial_{t}J = \frac{1}{2J} \partial_{t}{\bf g},
\]
and since ${\bf g}_{ij} = \delta_{ij}$ at time $t=0$, easy computations show that
\begin{equation} \label{derivative}
\partial_{t}J\big|_{t=0} = \frac{1}{2} \sum_{i=1}^{m} \partial_{t}{\bf g}_{ii}\big|_{t=0} = \sum_{i=1}^{m} \langle e_i,\partial_{t}e_i \rangle\big|_{t=0}\,,
\end{equation}
and thus
\[
\delta \llbracket \Sigma \rrbracket(N) = \int_{\Sigma} \sum_{\ell=1}^{Q} \sum_{i=1}^{m} \langle \xi_i,D_{\xi_i}N^{\ell} \rangle \, \dHa^m = \int_{\Sigma} \sum_{\ell=1}^{Q} \sum_{i=1}^{m} \langle \xi_{i}, \nabla_{\xi_i}N^{\ell} \rangle \, \dHa^m = \int_{\Sigma} \sum_{\ell=1}^{Q} \di_{\Sigma}(N^{\ell}) \, \dHa^m.
\]

In particular, recalling the definition of the map $\bfeta$ in \eqref{eta}, we deduce from the linearity of the divergence operator that
\begin{equation} \label{first_var}
\delta\llbracket \Sigma \rrbracket(N) = Q \int_{\Sigma} \di_{\Sigma}(\bfeta \circ N) \, \dHa^{m},
\end{equation}
where $\bfeta \circ N \colon \Sigma \to \R^{d}$, the ``average'' of the sheets of the vector field $N$, is a classical single-valued Lipschitz map. Note that if $N$ is single-valued then $\bfeta \circ N \equiv N$, and we recover the usual formulation of the first variation formula in terms of the divergence of the variational vector field. Observe now that the average $\bfeta \circ N$ vanishes in a neighborhood of $\partial\Sigma$ and satisfies $\bfeta \circ N(x) \in T_{x}^{\perp}\Sigma \subset T_{x}\M$ for every $x \in \Sigma$. Hence, for every $i \in \{1,\dots,m\}$ the scalar product $\langle \xi_{i}, \bfeta \circ N \rangle$ is everywhere vanishing, and we have that $\langle \xi_i,\nabla_{\xi_i} (\bfeta \circ N) \rangle = - \langle \nabla_{\xi_i}\xi_i, \bfeta \circ N \rangle = - \langle A(\xi_i,\xi_i), \bfeta \circ N \rangle$. Therefore, recalling the definition of the mean curvature vector $H$ as the trace of the second fundamental form, one can also write
\begin{equation}
\delta \llbracket \Sigma \rrbracket(N) = Q \int_{\Sigma} \sum_{i=1}^{m} \langle \xi_{i}, \nabla_{\xi_i} (\bfeta \circ N) \rangle \, \dHa^{m} =- Q \int_{\Sigma} \langle H, \bfeta \circ N \rangle \, \dHa^m = 0
\end{equation}
because $\Sigma$ is minimal in $\M$. This proves \eqref{eq:1st_var}.

Next, we go further and we compute the second variation of the mass. We first write, for every $t$ and for every $x \in \Sigma$ of differentiability for the variational field:
\[
\partial_{t}J = \frac{1}{2\sqrt{{\bf g}}} \partial_{t}{\bf g} = \frac{1}{2} J \frac{1}{{\bf g}} \partial_{t}{\bf g} = \frac{1}{2} J \partial_{t} \left( \log({\bf g}) \right) = \frac{1}{2} J {\bf g}^{ij} \partial_{t} {\bf g}_{ji},
\] 
where in the last identity we have used Jacobi's formula
\[
\partial_{t} \log \det A(t) = \tr \left( A(t)^{-1} \cdot \partial_{t}A(t) \right)
\]
for any invertible matrix $A(t)$ with positive determinant. Moreover, $\left( {\bf g}^{ij} \right)$ is the inverse matrix of $\left( {\bf g}_{ij} \right)$, and Einstein's convention on the summation of repeated indices has been used. Now, since
\[
\partial_{t}{\bf g}_{ji} = \partial_{t} \left( \langle e_{j},e_{i} \rangle \right) = \langle \partial_{t} e_{j},e_{i} \rangle + \langle e_{j}, \partial_{t} e_{i} \rangle,
\]
and using the fact that the matrix $\left( {\bf g}^{ij} \right)$ is symmetric, we can conclude the following identity:
\[
\partial_{t} J = J {\bf g}^{ij} \langle e_{i},\partial_{t}e_j \rangle.
\]

In turn, this produces:
\begin{equation} \label{sec_var}
\partial_{tt}J = \underbrace{\left(\partial_{t}J\right){\bf g}^{ij}\langle e_{i},\partial_{t}e_j \rangle}_{=: I} 
               + \underbrace{J \left(\partial_{t}{\bf g}^{ij}\right) \langle e_{i},\partial_{t}e_j \rangle}_{=: II} 
               + \underbrace{J {\bf g}^{ij} \partial_{t}\left( \langle e_{i},\partial_{t}e_j \rangle \right)}_{=: III}.
\end{equation}

Now, we evaluate equation \eqref{sec_var} at time $t = 0$. Regarding the first term in the sum, we use \eqref{derivative}, the orthonormality condition ${\bf g}^{ij}\big|_{t=0} = \delta^{ij}$ and the fact that $e_{i}\big|_{t=0} = \xi_i$, $\partial_{t}e_{i}\big|_{t=0} = D_{\xi_i}N$ (here, of course, we are writing $N$ instead of $N^\ell$) to conclude
\begin{equation}
I\big|_{t=0} = \left( \sum_{i=1}^m \langle \xi_{i},\nabla_{\xi_i}N \rangle \right)^{2}.
\end{equation}
Since $N = N^{\ell}$ is Lipschitz, and since $\langle \xi_{i}, N \rangle \equiv 0$, we have $\langle \xi_{i}, \nabla_{\xi_i}N \rangle = - \langle A(\xi_i, \xi_i), N \rangle$, and thus
\begin{equation} \label{sec_var1}
I\big|_{t=0} = \left( \langle H,N \rangle \right)^{2} = 0
\end{equation}
due to the minimality of $\Sigma$.

In order to derive a formula for $II\big|_{t=0}$, we first differentiate the identity
\[
{\bf g}^{ij}{\bf g}_{jh} = \delta^{i}_{h}
\]
to obtain that
\[
\partial_{t}{\bf g}^{ij} = - {\bf g}^{ik} \left( \partial_{t}{\bf g}_{kh} \right) {\bf g}^{hj},
\]
whence
\begin{equation}
\partial_{t}{\bf g}^{ij}\big|_{t=0} = - \partial_{t}{\bf g}_{ij}\big|_{t=0} = - \left( \langle \nabla_{\xi_i}N,\xi_j \rangle + \langle \xi_i,\nabla_{\xi_j}N \rangle \right).
\end{equation}
Since $\langle \nabla_{\xi_i}N,\xi_j \rangle = - \langle A\left(\xi_i,\xi_j\right),N \rangle$, the symmetry of the second fundamental form implies 
\begin{equation}
\partial_{t}{\bf g}^{ij}\big|_{t=0} = 2 \langle A\left( \xi_i,\xi_j \right),N \rangle.
\end{equation}
Again, since
\[
\langle e_{i},\partial_{t}e_j \rangle\big|_{t=0} = \langle \xi_i, \nabla_{\xi_j}N \rangle = - \langle A\left( \xi_i, \xi_j \right),N \rangle,
\]
we can finally obtain
\begin{equation} \label{sec_var2}
II\big|_{t=0} = - 2 \sum_{i,j=1}^{m} |\langle A(\xi_i,\xi_j),N \rangle|^{2}.
\end{equation}

Finally, we compute $III\big|_{t=0}$. The simplest way to do it is to regard the operator $\partial_t$ as the covariant derivative along the vector field $V = V^{\ell}$. One therefore has:
\[
\begin{split}
\partial_{t}\left( \langle e_i,\partial_{t}e_j \rangle \right) &= V \langle e_i, \nabla_{V} e_{j} \rangle \\
&= \langle \nabla_{V} e_i, \nabla_{V} e_j \rangle + \langle e_{i}, \nabla_{V} \nabla_{V} e_j \rangle \\
&= \langle \nabla_{e_i} V, \nabla_{e_j} V \rangle + \langle e_i, \nabla_{V} \nabla_{e_j} V \rangle,
\end{split}
\]
where in the last identity we have used the fact that the vector fields $e_i$ and $V$ commute, and, of course, that the Riemannian connection on $\M$ is torsion-free. Now, using again that $\left[ V, e_j \right] = 0$ and the definition of the curvature tensor $R$, we may write
\[
\nabla_{V} \nabla_{e_j} V = \nabla_{e_j} \nabla_{V} V + R(V,e_j)V,
\]
so that, finally, the evaluation of $\partial_{t}\left( \langle e_{i}, \partial_{t}e_{j} \rangle \right)$ at time $t = 0$ yields
\[
\partial_{t}\left( \langle e_{i}, \partial_{t}e_{j} \rangle \right)\big|_{t=0} = \langle 
\nabla_{\xi_{i}} N, \nabla_{\xi_{j}} N \rangle + \langle \xi_{i}, \nabla_{\xi_{j}} Z \rangle + \langle \xi_{i}, R(N,\xi_{j})N \rangle,
\]
with $Z = Z^{\ell}$. Since $J\big|_{t=0} = 1$ and ${\bf g}^{ij}\big|_{t=0} = \delta^{ij}$, we conclude the following identity:
\begin{equation} \label{sec_var3}
III\big|_{t=0} = \sum_{i=1}^m |\nabla_{\xi_i}N|^2 + \di_{\Sigma}Z - \sum_{i=1}^{m} \langle R(N,\xi_i)\xi_i,N \rangle.
\end{equation}

Observe that, in deriving formula \eqref{sec_var3}, we have used that $\langle R(X,Y)U, W \rangle = - \langle R(X,Y)W, U \rangle$ for any choice of $X,Y,U,W$ vector fields on $\M$.

We have now all the tools to conclude: from the $Q$-valued area formula \eqref{Q-area} it follows that
\[
\left.\frac{d^{2}\mu}{dt^2}\right|_{t=0} = \int_{\Sigma} \sum_{\ell=1}^{Q} \partial_{tt}J^{\ell}(x,0) \, \dHa^{m}(x),
\]
thus it suffices to plug equations \eqref{sec_var1}, \eqref{sec_var2}, \eqref{sec_var3} in \eqref{sec_var} to get
\begin{equation} \label{SVF}
\begin{split}
\delta^{2}\llbracket \Sigma \rrbracket(N) = &\int_{\Sigma} \sum_{\ell=1}^{Q} \left( \sum_{i=1}^{m} |\nabla_{\xi_i}N^{\ell}|^{2} - 2 \sum_{i,j=1}^{m} |\langle A(\xi_i,\xi_j), N^{\ell}\rangle|^{2} - \sum_{i=1}^{m} \langle R(N^{\ell},\xi_i)\xi_i, N^{\ell} \rangle \right) \dHa^{m} \\
&+ Q \int_{\Sigma} \di_{\Sigma}(\bfeta \circ Z) \, \dHa^{m},
\end{split}
\end{equation}
where $Z := \sum_{\ell} \llbracket Z^{\ell} \rrbracket$. Now, we decompose
\begin{equation}
\bfeta \circ Z = \p \cdot (\bfeta \circ Z) + \p^{\perp} \cdot (\bfeta \circ Z) = (\bfeta \circ Z)^{\top} + (\bfeta \circ Z)^{\perp},
\end{equation}
and we see that, since $\langle \xi_i, (\bfeta \circ Z)^{\perp} \rangle = 0$ for all $i$,
\begin{equation} \label{null_tang}
\di_{\Sigma}((\bfeta \circ Z)^{\perp}) = \sum_{i=1}^{m} \langle \xi_i, \nabla_{\xi_i}(\bfeta \circ Z)^{\perp} \rangle = - \sum_{i=1}^{m} \langle A(\xi_i,\xi_i), \bfeta \circ Z \rangle = - \langle H, \bfeta \circ Z \rangle = 0.
\end{equation}
On the other hand, Stokes' theorem and the fact that $N$ is vanishing in a neighborhood of $\partial \Sigma$ readily imply that
\begin{equation} \label{null_ort}
\int_{\Sigma} \di_{\Sigma}((\bfeta \circ Z)^{\top}) \, \dHa^{m} = 0,
\end{equation}
and thus the last addendum on the right-hand side of \eqref{SVF} vanishes. This completes the proof of formula \eqref{eq:2nd_var_repr}.
\end{proof}

We note now that the quantity appearing on the right-hand side of formula \eqref{SVF} is in fact well defined for any $Q$-valued vector field tangent to $\M$ and belonging to the class $W^{1,2}\left( \Sigma, \A_{Q}(\R^{d}) \right)$. This motivates the following definitions.

\begin{definition}[$W^{1,2}$ sections of the normal bundle] \label{sections}
Let $\Sigma \hookrightarrow \M$ be as above, and let $\Omega \subset \Sigma$ be open. We define the class of $W^{1,2}$ \emph{sections} of the normal bundle of $\Omega$ in $\M$, denoted $\Gamma_{Q}^{1,2}(\N \Omega)$, as follows: 
\begin{equation} \label{sections:eq}
\Gamma_{Q}^{1,2}(\N \Omega) := \left\lbrace N \in W^{1,2}\left(\Omega,\A_{Q}(\R^d)\right) \, \colon \, \spt(N(x)) \subset T_{x}^{\perp}\Sigma \subset T_{x}\M \mbox{ for } \Ha^{m}\mbox{-a.e. } x \in \Omega \right\rbrace.
\end{equation}
\end{definition}

\begin{definition}[Jacobi functional]\label{Jacobi}
For a section $N \in \Gamma_{Q}^{1,2}(\N \Omega)$, the \emph{Jacobi functional}, or \emph{stability functional}, is defined by:
\begin{equation} \label{Jacobi:eq}
\Jac(N,\Omega) := \Dir^{\T\M}(N,\Omega) - 2 \int_{\Omega} \sum_{\ell=1}^{Q} |A \cdot N^\ell|^{2} \, \dHa^m - \int_{\Omega} \sum_{\ell=1}^{Q} \Ri(N^\ell,N^\ell) \, \dHa^{m}.
\end{equation}
\end{definition}

Our first observation is that the classical theory of the Jacobi normal operator can be recovered within the above framework by simply setting $Q = 1$.

\begin{remark} \label{rmk:1-valued}
Consider the classical single-valued setting, corresponding to $Q = 1$, let $\Omega = \Sigma$ and recall that
\[
\Dir^{\T\M}(N,\Sigma) = \int_{\Sigma} \sum_{i=1}^{m} |\nabla_{\xi_i}N|^{2} \, \dHa^m
\] 
for any orthonormal frame $(\xi_1, \dots, \xi_m)$ of $\T \Sigma$. Assume also that $N$ is Lipschitz continuous for convenience. Let $(\nu_1, \dots, \nu_k)$ be local sections of the normal bundle $\N \Sigma$ of $\Sigma$ in $\M$ such that, at each point $x \in \Sigma$, the system $\left( (\xi_{j}(x))_{j=1}^{m}, (\nu_{\alpha}(x))_{\alpha=1}^{k} \right)$ is an orthonormal basis of $T_{x}\M$. Then, for every point of differentiability for $N$ and for every $i = 1,\dots,m$ we have:
\[
|\nabla_{\xi_i}N|^2 = \sum_{j=1}^{m} |\langle \nabla_{\xi_i}N, \xi_j \rangle|^{2} + \sum_{\alpha=1}^{k} |\langle \nabla_{\xi_i}N, \nu_{\alpha} \rangle|^{2}.
\]
Now, the usual considerations about the orthogonality of $N$ and $\xi_j$ imply that $\langle \nabla_{\xi_i}N, \xi_j \rangle = - \langle A(\xi_i, \xi_j), N \rangle$. We therefore obtain that
\[
\int_{\Sigma} \left( \sum_{i=1}^{m} |\nabla_{\xi_i}N|^{2} - \sum_{i,j=1}^{m} |\langle A(\xi_i,\xi_j), N\rangle|^{2} \right) \, \dHa^m = \int_{\Sigma} \sum_{i=1}^{m} \sum_{\alpha=1}^{k} |\langle \nabla_{\xi_i}N, \nu_{\alpha} \rangle|^{2} \, \dHa^m,
\]
and finally conclude the identity
\begin{equation} \label{cl_Jac}
\Jac(N,\Sigma) = \int_{\Sigma} \left( \sum_{i=1}^{m} \sum_{\alpha=1}^{k} |\langle \nabla_{\xi_i}N, \nu_{\alpha} \rangle|^{2} - \sum_{i,j=1}^{m} |\langle A(\xi_i,\xi_j), N\rangle|^{2} - \sum_{i=1}^{m} \langle R(N,\xi_i)\xi_i, N\rangle \right) \, \dHa^m.
\end{equation}

It is immediately seen that the Euler-Lagrange operator associated to the second variation functional \eqref{cl_Jac} is the linear elliptic operator $\mathcal{L}$ defined on the space of sections of $\N \Sigma$ and given by
\begin{equation}
\mathcal{L} := - \Delta_{\Sigma}^{\perp} - \mathscr{A} - \mathscr{R},
\end{equation}
where $\Delta_{\Sigma}^{\perp}$ is the Laplacian on the normal bundle of $\Sigma$, $\mathscr{A}$ is Simons' operator, defined by
\begin{equation}
\mathscr{A}(N) := \sum_{i,j=1}^{m} \langle A(\xi_i,\xi_j),N \rangle A(\xi_i,\xi_j),
\end{equation}
and $\mathscr{R}$ is given by
\begin{equation}
\mathscr{R}(N) := \sum_{i=1}^{m} \mathbf{p}^{\perp} \cdot R(N,\xi_i)\xi_i.
\end{equation}
As already anticipated in the Introduction, the operator $\mathcal{L}$ is classically called \emph{Jacobi normal operator}, and the solutions of the differential equation $\mathcal{L}(N) = 0$ (that is, the normal vector fields that are in its kernel) are known in the literature as \emph{Jacobi fields}. The importance of studying the second variation operator of minimal submanifolds into Riemannian manifolds is well justified by the arguments given earlier on in this section: in the single valued case $Q = 1$, the Jacobi operator $\mathcal{L}$ carries the information about the stability properties of the submanifold itself, when it is thought of as a stationary point for the $m$-dimensional volume. In particular, non-trivial Jacobi fields vanishing on $\partial \Sigma$ are, when they exist, the infinitesimal normal deformations of $\Sigma$ which preserve the volume up to second order. From a functional analytic point of view, $\mathcal{L}$ is a second-order strongly elliptic operator. When diagonalized on the space of sections of $\N\Sigma$ vanishing on $\partial\Sigma$ with respect to the standard inner product, it exhibits distinct, real eigenvalues $\{ \lambda_{h} \}_{h=1}^{\infty}$ (counted with multiplicities) such that
\[
\lambda_{1} < \lambda_{2} < \dots < \lambda_{h} < \dots \to + \infty.
\]   
Moreover, the dimension of each eigenspace is finite. The sum of the dimensions of the eigenspaces corresponding to negative eigenvalues is called the \emph{Morse index} of $\Sigma$: it accounts for the number of independent infinitesimal normal deformations of $\Sigma$ which do decrease the volume at second order. If $\lambda = 0$ is an eigenvalue, then the dimension of ${\rm ker}(\mathcal{L})$ is called \emph{nullity}. We recall that $\Sigma$ is called \emph{stable} if its Morse index is $0$, and \emph{strictly stable} if there exist no non-trivial Jacobi fields vanishing at the boundary, i.e. if ${\rm index}(\Sigma) + {\rm nullity}(\Sigma) = 0$.

A systematic study of the Jacobi normal operator was initiated by J. Simons in \cite{Simons68}. One of Simons' main results was to prove that if $\M = \Sf^{m+1}$ and $\Sigma^{m}$ is a closed minimal hypersurface immersed in $\Sf^{m+1}$ which is not totally geodesic then the first eigenvalue of the operator $\mathcal{L}$ satisfies the upper bound $\lambda_{1} \leq -m$. As a consequence of this, he was able to show that no non-trivial stable minimal hypercones exist in $\R^{m+1}$ for $m \leq 6$. In turn, this led to the proof of the Bernstein conjecture, stating that the only entire solutions $u \colon \R^{m} \to \R$ of the minimal surface equation are linear, for every $m \leq 7$. The result is sharp, as the Bernstein conjecture was proved to be false for $m > 7$ by E. Bombieri, E. De Giorgi and E. Giusti in \cite{BDGG}.
\end{remark}

The considerations leading to formula \eqref{cl_Jac} can be repeated in the $Q$-valued setting, thus showing that the Definition \ref{Jacobi} of the Jacobi functional agrees with the one given in formula \eqref{Jac0}. This equivalence is recorded in Lemma \ref{lem:equivalence} below. We first need a definition.

\begin{definition}[Normal Dirichlet energy of a section] \label{def:normal_dirichlet}
Let $N \in \Gamma_{Q}^{1,2}(\N\Omega)$. For any point $x \in \Omega$ where $N$ is approximately differentiable, and for any tangent vector field $\xi$, set
\begin{equation}
\nabla^{\perp}_{\xi}N(x) := \sum_{\ell=1}^{Q} \llbracket \p_{x}^{\Sigma\perp} \cdot D_{\xi}N^{\ell}(x) \rrbracket, 
\end{equation}
where $\p_{x}^{\Sigma\perp} = \p_{x}^{\perp} \circ \p_{x}^{\M}$ is the orthogonal projection of $\R^d$ onto $T_{x}^{\perp}\Sigma$. Then, the normal Dirichlet energy of $N$ in $\Omega$ is the quantity
\begin{equation} \label{Normal_Dir}
\Dir^{\N\Sigma}(N,\Omega) := \int_{\Omega} \sum_{i=1}^{m} |\nabla^{\perp}_{\xi_i}N|^{2} \, \dHa^m,
\end{equation}
for any choice of a (local) orthonormal frame $\{ \xi_{i} \}_{i=1}^{m}$ of $\T\Sigma$.
\end{definition}

\begin{lemma}[Equivalence of the definitions of the $\Jac$ functional] \label{lem:equivalence}
For any $N = \sum_{\ell} \llbracket N^{\ell} \rrbracket \in \Gamma_{Q}^{1,2}(\N\Omega)$ it holds
\begin{equation} \label{clean_Q_Jac:eq}
\begin{split}
&\Jac(N,\Omega) = \Dir^{\N\Sigma}(N,\Omega) - \int_{\Omega} \sum_{\ell=1}^{Q} |A \cdot N^{\ell} |^{2} \, \dHa^m - \int_{\Omega} \sum_{\ell=1}^{Q} \Ri(N^\ell,N^\ell) \, \dHa^m \\
&= \int_{\Omega} \sum_{\ell=1}^{Q} \left( \sum_{i=1}^{m} \sum_{\alpha=1}^{k} |\langle D_{\xi_i}N^{\ell}, \nu_{\alpha} \rangle|^{2} - \sum_{i,j=1}^{m} |\langle A(\xi_i,\xi_j), N^{\ell} \rangle|^{2} - \sum_{i=1}^{m} \langle R(\xi_i,N^\ell)N^\ell, \xi_i \rangle \right)\dHa^m\, ,
\end{split} 
\end{equation}
where $\{ \xi_{i} \}_{i=1}^{m}$ and $\{ \nu_{\alpha} \}_{\alpha=1}^{k}$ are (local) orthonormal frames of $\T\Sigma$ and $\N\Sigma$ respectively.
\end{lemma}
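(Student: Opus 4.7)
The plan is to reduce the equivalence to a pointwise algebraic identity valid at $\Ha^m$-a.e.\ point of approximate differentiability of $N$. By Corollary~\ref{Sob_ap_dif}, at a.e.\ $x\in\Omega$ the section $N$ is approximately differentiable, so each sheet $N^\ell$ has a well-defined approximate differential $DN^\ell(x)$ taking values in $\R^d$; by Proposition~\ref{uniq_Dir} and Remark~\ref{Dir:independence} the density $\sum_{i,\ell}|\nabla_{\xi_i}N^\ell|^2$ appearing in $\Dir^{\T\M}(N,\Omega)$ is intrinsic. Hence it suffices to prove, at every such point $x$ and for every sheet $\ell$, the identity
\begin{equation}\label{plan:key}
\sum_{i=1}^m |\nabla_{\xi_i}N^\ell|^2 \;=\; |A\cdot N^\ell|^2 \;+\; \sum_{i=1}^m |\nabla^\perp_{\xi_i}N^\ell|^2,
\end{equation}
since integrating \eqref{plan:key} over $\Omega$ and summing in $\ell$ converts the first expression of $\Jac$ into the second one (the $-2$ coefficient on the $|A\cdot N^\ell|^2$ term absorbs the extra $+|A\cdot N^\ell|^2$ from $\Dir^{\T\M}-\Dir^{\N\Sigma}$).

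First I would fix an orthonormal frame $(\xi_1,\dots,\xi_m)$ of $\T\Sigma$ and a local orthonormal frame $(\nu_1,\dots,\nu_k)$ of $\N\Sigma$, so that $((\xi_j),(\nu_\alpha))$ forms an orthonormal basis of $T_x\M$. Since $N^\ell(x)\in T_x^\perp\Sigma\subset T_x\M$, the vector $\nabla_{\xi_i}N^\ell(x)=\p^\M_x\cdot D_{\xi_i}N^\ell(x)$ lies in $T_x\M$, so it decomposes as
\begin{equation*}
\nabla_{\xi_i}N^\ell \;=\; \sum_{j=1}^m \langle \nabla_{\xi_i}N^\ell,\xi_j\rangle\,\xi_j \;+\; \sum_{\alpha=1}^k \langle \nabla_{\xi_i}N^\ell,\nu_\alpha\rangle\,\nu_\alpha,
\end{equation*}
and the normal part is precisely $\nabla^\perp_{\xi_i}N^\ell=\p_x^{\Sigma\perp}\cdot D_{\xi_i}N^\ell$, so that
\begin{equation*}
|\nabla_{\xi_i}N^\ell|^2 \;=\; \sum_{j=1}^m |\langle \nabla_{\xi_i}N^\ell,\xi_j\rangle|^2 \;+\; \sum_{\alpha=1}^k |\langle \nabla_{\xi_i}N^\ell,\nu_\alpha\rangle|^2.
\end{equation*}
Next I would exploit the orthogonality $\langle N^\ell,\xi_j\rangle\equiv 0$ on $\Omega$: differentiating along $\xi_i$ (using the chain rule in Proposition~\ref{chain}, applied on a Lipschitz selection on which $\langle N^\ell,\xi_j\rangle$ vanishes identically, as produced by Proposition~\ref{Lip-select} and Proposition~\ref{LipSob_app}), one gets
\begin{equation*}
\langle \nabla_{\xi_i}N^\ell,\xi_j\rangle \;=\; -\langle N^\ell, \nabla_{\xi_i}\xi_j\rangle \;=\; -\langle N^\ell, A(\xi_i,\xi_j)\rangle,
\end{equation*}
where in the last step I used that $N^\ell$ is normal to $\Sigma$, so only the $\N\Sigma$-component $A(\xi_i,\xi_j)$ of $\nabla_{\xi_i}\xi_j$ survives the inner product. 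Squaring and summing over $i,j$ yields $\sum_{i,j}|\langle \nabla_{\xi_i}N^\ell,\xi_j\rangle|^2=|A\cdot N^\ell|^2$, which together with the previous display proves \eqref{plan:key} and simultaneously identifies
\begin{equation*}
\sum_{i=1}^m|\nabla^\perp_{\xi_i}N^\ell|^2 \;=\; \sum_{i=1}^m\sum_{\alpha=1}^k |\langle \nabla_{\xi_i}N^\ell,\nu_\alpha\rangle|^2 \;=\; \sum_{i=1}^m\sum_{\alpha=1}^k |\langle D_{\xi_i}N^\ell,\nu_\alpha\rangle|^2,
\end{equation*}
the last equality holding because each $\nu_\alpha\in T\M$, so the projection $\p^\M$ hidden in $\nabla_{\xi_i}N^\ell$ is immaterial. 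Finally, the rewriting of the curvature term $\Ri(N^\ell,N^\ell)=\sum_i\langle R(N^\ell,\xi_i)\xi_i,N^\ell\rangle=\sum_i\langle R(\xi_i,N^\ell)N^\ell,\xi_i\rangle$ is immediate from the standard symmetries of the Riemann tensor, as noted in Remark~\ref{intrinsic}.

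The routine but only slightly delicate point is the justification of the chain rule step $\langle\nabla_{\xi_i}N^\ell,\xi_j\rangle=-\langle N^\ell,A(\xi_i,\xi_j)\rangle$ for a merely $W^{1,2}$ section. This is handled by invoking Proposition~\ref{Lip-select} (on a Lipschitz approximation, Proposition~\ref{LipSob_app}, if needed) to obtain a pointwise a.e.\ single-valued Lipschitz representative on each piece $B_i$ of a measurable partition of $\Omega$, differentiating the identity $\langle N^\ell,\xi_j\rangle\equiv 0$ on $B_i$ by Rademacher's theorem, and verifying that the resulting identity is independent of the chosen Lipschitz decomposition (hence genuinely attached to the approximate differential of $N$). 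After that, integrating \eqref{plan:key} and regrouping produces both remaining forms of $\Jac(N,\Omega)$, which is the content of \eqref{clean_Q_Jac:eq}.
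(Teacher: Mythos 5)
Your proposal is correct and follows essentially the same route as the paper's own argument: the key pointwise identity $\sum_i|\nabla_{\xi_i}N^\ell|^2=|A\cdot N^\ell|^2+\sum_i|\nabla^\perp_{\xi_i}N^\ell|^2$, obtained by decomposing $\nabla_{\xi_i}N^\ell$ along the frames $(\xi_j),(\nu_\alpha)$ and using $\langle\nabla_{\xi_i}N^\ell,\xi_j\rangle=-\langle A(\xi_i,\xi_j),N^\ell\rangle$, is exactly the computation in Remark~\ref{rmk:1-valued}, and the extension to $W^{1,2}$ sections via Lipschitz selection and approximation (Propositions~\ref{Lip-select} and~\ref{LipSob_app}) is the same reduction the paper invokes.
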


\begin{proof}
It is a straightforward consequence of the arguments contained in Remark \ref{rmk:1-valued} combined with the Lipschitz approximation theorem, Proposition \ref{LipSob_app} (cf. also \cite[Lemma 4.5]{DLS14}) and the Lipschitz selection property in Proposition \ref{Lip-select}.
\end{proof}

On the other hand, unlike the single-valued case, the lack of linear structure of $\Gamma_{Q}^{1,2}(\N\Omega)$ in the multivalued case $Q > 1$ does not allow one to associate an operator to the Jacobi functional, nor to characterize multiple valued Jacobi fields as the solutions of a certain (Euler-Lagrange) PDE. Nonetheless, the variational structure of the problem suggests that the minimizers of the Jacobi functional for a given boundary datum have the right to be considered the multivalued counterpart of the classical Jacobi fields. This justifies the definition of Jacobi $Q$-fields given in Definition \ref{Jac_min}. In the next section we explore the first elementary properties of multiple valued Jacobi fields.

\section{Jacobi $Q$-fields} \label{sec:Jac_min}

The goal of this section is to provide the two fundamental tools which will be used in Section \ref{sec:ex} to address the question of the existence of Jacobi $Q$-fields $N$ in $\Omega$ with prescribed boundary value $\left. N \right|_{\partial \Omega} = \left. g \right|_{\partial \Omega}$ for some fixed $g \in \Gamma^{1,2}_{Q}(\N\Omega)$, and ultimately to prove Theorem \ref{cond_ex}. These tools are encoded in Proposition \ref{lsc} and Lemma \ref{str_stab_eq} below. The proof of Theorem \ref{cond_ex} will be obtained by direct methods in the Calculus of Variations, and therefore it is natural to analyze the properties of \emph{lower semi-continuity} and \emph{compactness} of the stability functional. The proof of the weak lower semi-continuity is rather simple, and it is the content of the following proposition.

\begin{proposition} \label{lsc}
The Jacobi functional is lower semi-continuous with respect to the weak topology of $\Gamma_{Q}^{1,2}(\N \Omega)$.
\end{proposition}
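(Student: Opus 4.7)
The plan is to decompose the Jacobi functional into the sum of the standard $Q$-valued Dirichlet energy $\Dir(\cdot,\Omega)$ and a remainder which is the integral over $\Omega$ of a pointwise quadratic form in $N$ with bounded coefficients. Since by Definition \ref{weak} weak convergence $N_k \weak N$ in $\Gamma_{Q}^{1,2}(\N\Omega)$ already entails strong $L^2$ convergence $N_k \to N$ together with $\sup_k \Dir(N_k,\Omega) < \infty$, the remainder will be continuous under such convergence, while the leading $\Dir$ term is weakly lower semi-continuous by the standard semi-continuity result for $Q$-valued Sobolev maps in \cite{DLS11a}.

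The decomposition is set up as follows. Since $N^\ell(x) \in T_x^\perp\Sigma$ for a.e.\ $x \in \Omega$, differentiating the pointwise orthogonality identities $\langle N^\ell, \xi_j \rangle \equiv 0$ and $\langle N^\ell, \tau_\beta \rangle \equiv 0$ along $\xi_i$ (where $\{\xi_j\}_{j=1}^m$ is a local orthonormal frame of $\T\Sigma$ and $\{\tau_\beta\}_{\beta=1}^K$ a local orthonormal frame of $T^\perp\M$) yields
\[
\langle D_{\xi_i} N^\ell, \xi_j \rangle = -\langle N^\ell, D_{\xi_i}\xi_j \rangle, \qquad \langle D_{\xi_i} N^\ell, \tau_\beta \rangle = -\langle N^\ell, D_{\xi_i}\tau_\beta \rangle,
\]
each of which is a bounded linear expression in $N^\ell$ thanks to the $C^{3,\beta}$ regularity and the compactness of $\Sigma$ and $\M$. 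Orthogonal decomposition in $\R^d$ according to $\R^d = T_x\Sigma \oplus T_x^\perp\Sigma \oplus T_x^\perp\M$ then gives the pointwise identity
\[
|D_{\xi_i}N^\ell|^2 = |\nabla^\perp_{\xi_i}N^\ell|^2 + \mathcal{Q}_i(N^\ell, x),
\]
with $\mathcal{Q}_i(\cdot, x)$ a quadratic form in $N^\ell$ with bounded coefficients. Summing in $i$ and $\ell$, integrating over $\Omega$, and substituting into Lemma \ref{lem:equivalence} one obtains
\[
\Jac(N, \Omega) = \Dir(N, \Omega) + \mathcal{F}(N),
\]
where
\[
\mathcal{F}(N) := -\int_\Omega \Bigl( \sum_{i,\ell} \mathcal{Q}_i(N^\ell, x) + \sum_{\ell} |A\cdot N^\ell|^2 + \sum_{\ell} \Ri(N^\ell, N^\ell) \Bigr) \, d\mathcal{H}^m
\]
is the integral of a symmetric sum of pointwise quadratic forms in the slices of $N$ with bounded coefficients.

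The conclusion follows from two observations. First, any integrand of the form $G(x, P) = \sum_\ell g(x, P^\ell)$ with $g(x, \cdot)$ a quadratic form with uniformly bounded coefficients satisfies the pointwise estimate $|G(x, P) - G(x, R)| \leq C(|P| + |R|)\,\G(P, R)$ (obtained by choosing the permutation realizing $\G$ on the left-hand side); Cauchy--Schwarz then yields
\[
|\mathcal{F}(N_k) - \mathcal{F}(N)| \leq C \bigl( \|N_k\|_{L^2(\Omega)} + \|N\|_{L^2(\Omega)} \bigr) \, \| \G(N_k, N) \|_{L^2(\Omega)} \longrightarrow 0,
\]
so $\mathcal{F}$ is continuous with respect to strong $L^2$ convergence. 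Second, $\Dir(\cdot, \Omega)$ is weakly lower semi-continuous in the sense of Definition \ref{weak} by the theory of \cite{DLS11a}. Combining the two yields $\liminf_k \Jac(N_k, \Omega) \geq \liminf_k \Dir(N_k, \Omega) + \lim_k \mathcal{F}(N_k) \geq \Dir(N, \Omega) + \mathcal{F}(N) = \Jac(N, \Omega)$. The only non-routine ingredient is the pointwise comparison between $|D_{\xi_i}N^\ell|^2$ and $|\nabla^\perp_{\xi_i}N^\ell|^2$ that isolates the genuine Dirichlet energy from the zeroth-order terms; once this reduction is made, the statement is immediate from the existing semi-continuity of the $Q$-valued Dirichlet energy, so no genuine obstacle arises.
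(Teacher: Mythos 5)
Your proof is correct and follows essentially the same approach as the paper: you decompose $\Jac(\cdot,\Omega) = \Dir(\cdot,\Omega) + \mathcal{F}$ with $\mathcal{F}$ a zeroth-order quadratic functional (this is exactly the paper's $-\mathcal{B}_{\Omega}$ from \eqref{pert}--\eqref{B_funct}, once one unwinds your $\mathcal{Q}_i$ in terms of $A$ and $\overline{A}$), then invoke weak lower semi-continuity of the $Q$-valued Dirichlet energy plus strong $L^2$-continuity of the remainder. The only difference is that you obtain the $L^2$-continuity of $\mathcal{F}$ via the pointwise Lipschitz estimate and Cauchy--Schwarz, whereas the paper extracts an a.e.\ convergent subsequence of the integrand and appeals to standard integration theory; both are valid and roughly equally short.
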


Before coming to the proof, it will be useful to make some further considerations about the structure of the Jacobi functional, in order to simplify the notation and to express it as a perturbation of the Dirichlet functional $\Dir(u, \Omega)$.

\begin{remark}
Given any $Q$-valued Lipschitz map $u = \sum_{\ell} \llbracket u^{\ell} \rrbracket$ satisfying $u^{\ell}(x) \in T_{x}^{\perp}\Sigma \subset T_{x}\M$ for all $x \in \Omega$, the orthogonal decomposition
\[
D_{\xi}u^{\ell}(x) = \mathbf{p}^{\M}_{x} \cdot D_{\xi}u^{\ell}(x) + \mathbf{p}^{\M \perp}_{x} \cdot D_{\xi}u^{\ell}(x) = \nabla_{\xi}u^{\ell}(x) + \overline{A}_{x}\left(\xi(x), u^{\ell}(x) \right) 
\]
holds for any tangent vector field $\xi$ at any point $x$ of differentiability for $u$, hence $\Ha^m$-a.e. in $\Omega$. Here, $\overline{A}$ denotes the second fundamental form of the embedding $\M \hookrightarrow \R^d$. Hence, at any such point we may write
\[
|\nabla_{\xi}u^{\ell}|^{2} = |D_{\xi}u^{\ell}|^{2} - |\overline{A}(\xi,u^\ell)|^{2}.
\]
These considerations are extended in the obvious way to all sections $u \in \Gamma_{Q}^{1,2}(\N \Omega)$ at all points of approximate differentiability. Ultimately, we will write
\begin{equation} \label{pert}
\Jac(u, \Omega) = \Dir(u, \Omega) - \mathcal{B}_{\Omega}(u),
\end{equation}
where $\mathcal{B}_{\Omega}$ is the functional on $\Gamma_{Q}^{1,2}(\N \Omega)$ defined by
\begin{equation} \label{B_funct}
\mathcal{B}_{\Omega}(u) := \int_{\Omega} \sum_{\ell=1}^{Q} \left( \sum_{i=1}^{m} |\overline{A}(\xi_i,u^{\ell})|^{2} + 2 \sum_{i,j=1}^{m} |\langle A(\xi_i,\xi_j), u^{\ell} \rangle|^{2} + \sum_{i=1}^{m} \langle R(u^{\ell}, \xi_i)\xi_i, u^{\ell} \rangle \right) \, \dHa^m. 
\end{equation}
Observe that $\mathcal{B}_{\Omega}$ satisfies an estimate of the form
\begin{equation} \label{B_est}
|\mathcal{B}_{\Omega}(u)| \leq C_{0} \| u \|_{L^2}^{2}\,,
\end{equation}
where $C_{0}$ is a \emph{geometric} constant, depending on $\mathbf{A}, \mathbf{\overline{A}}, \mathbf{R}$, where
\begin{align} \label{sup_norms_geometric_A}
\mathbf{A} &= \| A \|_{L^\infty} := \max_{x \in \Sigma} \max\left\lbrace \abs{A_{x}(X, Y)} \, \colon \, X,Y \in \Sf^{m-1} \subset T_{x}\Sigma \right\rbrace \,, \\ \label{sup_norms_geometric_Abar}
\mathbf{\overline{A}} &= \| \overline{A} \|_{L^\infty} := \max_{x \in \Sigma} \max\left\lbrace \abs{\overline{A}_{x}(X,Y)} \, \colon \, X,Y \in \Sf^{m+k-1} \subset T_{x}\M \right\rbrace \,, \\ \label{sup_norms_geometric_R}
\mathbf{R} &= \| R \|_{L^\infty} := \max_{x \in \Sigma} \max\left\lbrace \abs{\p_{x}^{\Sigma \perp} \cdot R_{x}(X,Y)Z} \, \colon \, X \in T_{x}^{\perp}\Sigma, \quad Y,Z \in T_{x}\Sigma, \quad \abs{X} = \abs{Y} = \abs{Z} = 1 \right\rbrace\,.
\end{align}
\end{remark}

\begin{proof}[Proof of Proposition \ref{lsc}]
Consider $Q$-valued sections $u_{h}, u \in \Gamma_{Q}^{1,2}(\N \Omega)$ and assume that $u_{h} \weak u$ weakly in $W^{1,2}(\Omega, \A_{Q})$ as in Definition \ref{weak}. Now, use \eqref{pert} in order to write
\[
\Jac(u_{h},\Omega) = \Dir(u_{h},\Omega) - \mathcal{B}_{\Omega}(u_h).
\]
The weak lower semi-continuity of the Dirichlet energy was proved by De Lellis and Spadaro in \cite[Proof of Theorem 0.8, p.30]{DLS11a}. On the other hand, the condition
\[
\lim_{h \to \infty} \int_{\Omega} \G\left(u_{h},u\right)^{2} \, \dHa^m = 0
\]
is enough to show that in fact
\begin{equation} \label{conv_err}
\lim_{h \to \infty} \mathcal{B}_{\Omega}(u_h) = \mathcal{B}_{\Omega}(u).
\end{equation}
To see this, just write \eqref{conv_err} as
\begin{equation} \label{conv_err2}
\lim_{h} \int_{\Omega} b_h \, \dHa^m = \int_{\Omega} b \, \dHa^m,
\end{equation}
with
\[
b_{h}(x) = \sum_{\ell=1}^{Q} \left( \sum_{i=1}^{m} |\overline{A}(\xi_i,u_{h}^{\ell})|^{2} + 2 \sum_{i,j=1}^{m} |\langle A(\xi_i,\xi_j), u_{h}^{\ell} \rangle|^{2} + \sum_{i=1}^{m} \langle R(u_{h}^{\ell}, \xi_i)\xi_i, u_{h}^{\ell} \rangle \right) 
\]
and
\[
b(x) = \sum_{\ell=1}^{Q} \left( \sum_{i=1}^{m} |\overline{A}(\xi_i,u^{\ell})|^{2} + 2 \sum_{i,j=1}^{m} |\langle A(\xi_i,\xi_j), u^{\ell} \rangle|^{2} + \sum_{i=1}^{m} \langle R(u^{\ell}, \xi_i)\xi_i, u^{\ell} \rangle \right),
\]
and observe that the strong convergence $u_h \to u$ in $L^{2}(\Omega, \A_Q)$ suffices to prove that along a subsequence (not relabeled) $b_{h}(x) \to b(x)$ pointwise $\Ha^m$-a.e. in $\Omega$. Equation \eqref{conv_err2} then follows by standard techniques in integration theory.
\end{proof}

As for compactness, the situation is much more involved. Indeed, as already anticipated, the existence of a solution of the minimum problem for the Jacobi functional with any boundary datum $g$ is strictly related with showing that $N_{0} \equiv Q \llbracket 0 \rrbracket$ is in fact the \emph{only} minimizer under $Q \llbracket 0 \rrbracket$ boundary conditions. 

\begin{remark} \label{Zero BD}
If $N \in \Gamma_{Q}^{1,2}(\N \Omega)$ is a Jacobi $Q$-field with $N|_{\partial \Omega} = Q \llbracket 0 \rrbracket$, then $\Jac(N,\Omega) = 0$. This is a consequence of the homogeneity properties of the functional: in this case, indeed, for any $t \in \R$ the $Q$-field $tN := \sum_{\ell} \llbracket tN^{\ell} \rrbracket$ is a suitable competitor, and
\[
\Jac(tN,\Omega) = t^{2} \Jac(N,\Omega).
\]
Hence, were $\Jac(N,\Omega) < 0$ \footnote{Observe that if $N|_{\partial\Omega} = Q \llbracket 0 \rrbracket$, then the null $Q$-field $N_{0} \equiv Q \llbracket 0 \rrbracket$ is a competitor, whence $\Jac(N,\Omega) \leq \Jac(N_{0},\Omega) = 0$ if $N$ is a minimizer.}, one would obtain that $\lim_{t \to \infty} \Jac(tN,\Omega) = -\infty$, thus contradicting the definition of $N$.

We are then able to conclude that if the minimum problem for the Jacobi functional with $Q \llbracket 0 \rrbracket$ boundary data does admit a solution, then for any minimizer $N$ one has $\Jac(N,\Omega) = 0$. In particular, $N_{0} \equiv Q \llbracket 0 \rrbracket$ is a minimizer.
\end{remark} 

The condition that $N_{0} \equiv Q \llbracket 0 \rrbracket$ is the only minimizer for its boundary value will be referred to as \emph{strict stability condition}, as it characterizes the strictly stable submanifolds in the $Q = 1$ case. In the following lemma we provide an equivalent condition to the strict stability.

\begin{lemma} \label{str_stab_eq}
There exists a unique solution $N_{0} \equiv Q \llbracket 0 \rrbracket$ of the problem
\[
\min \left\lbrace \Jac(u,\Omega) \, \colon \, u \in \Gamma_{Q}^{1,2}(\N \Omega) \mbox{ such that } u|_{\partial \Omega} = Q \llbracket 0 \rrbracket \right\rbrace
\]
if and only if there exists a positive constant $c = c(\Omega)$ such that
\begin{equation} \label{Strict Stability}
Jac(u,\Omega) \geq c \int_{\Omega} |u|^{2} \dHa^m,
\end{equation}
for all $u \in \Gamma_{Q}^{1,2}(\N \Omega)$ with $u|_{\partial \Omega} = Q \llbracket 0 \rrbracket$.
\end{lemma}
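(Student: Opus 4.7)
My plan is to treat the two implications separately. The \emph{if} direction is essentially immediate: assuming \eqref{Strict Stability} holds, $N_0 \equiv Q\llbracket 0 \rrbracket$ is an admissible competitor with $\Jac(N_0, \Omega) = 0$, and \eqref{Strict Stability} forces every admissible $u$ to satisfy $\Jac(u, \Omega) \geq c\|u\|_{L^2}^2 \geq 0 = \Jac(N_0, \Omega)$; so $N_0$ is a minimizer, and if $u$ is any other minimizer then Remark \ref{Zero BD} gives $\Jac(u, \Omega) = 0$, whence \eqref{Strict Stability} forces $\|u\|_{L^2} = 0$ and $u \equiv N_0$.

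For the converse I plan to argue by contradiction using the direct method. If \eqref{Strict Stability} fails, then for each $n \in \Na$ I can pick an admissible $u_n$ with $\Jac(u_n, \Omega) < \tfrac{1}{n}\|u_n\|_{L^2}^2$; note $u_n \not\equiv N_0$ (otherwise both sides would vanish). Exploiting the quadratic homogeneity of both $\Jac$ and the $L^2$-norm, I renormalize by $v_n := \sum_\ell \llbracket u_n^\ell/\|u_n\|_{L^2}\rrbracket$, obtaining $v_n \in \Gamma^{1,2}_Q(\N\Omega)$ with $v_n|_{\partial\Omega} = Q\llbracket 0\rrbracket$, $\|v_n\|_{L^2} = 1$, and $\Jac(v_n, \Omega) < 1/n$. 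The key structural decomposition \eqref{pert} combined with the geometric estimate \eqref{B_est} then yields
\[
\Dir(v_n, \Omega) = \Jac(v_n, \Omega) + \mathcal{B}_\Omega(v_n) < \tfrac{1}{n} + C_0,
\]
providing the crucial $W^{1,2}$ bound.

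From this point I use the compact Sobolev embedding $W^{1,2}(\Omega, \A_Q) \hookrightarrow L^2(\Omega, \A_Q)$ (Proposition \ref{embeddings}) to extract a subsequence converging strongly in $L^2$ to some $v$; combined with the $W^{1,2}$-bound this yields $v_n \weak v$ weakly in $W^{1,2}$ in the sense of Definition \ref{weak}, and Proposition \ref{weak_clos} together with the a.e.\ closedness of the pointwise fiber condition $\spt(v_n(x)) \subset T_x^\perp\Sigma$ ensure $v \in \Gamma^{1,2}_Q(\N\Omega)$ with $v|_{\partial\Omega} = Q\llbracket 0\rrbracket$. Strong $L^2$ convergence preserves the normalization, so $\|v\|_{L^2} = 1$ and in particular $v \not\equiv N_0$. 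Invoking the weak lower semi-continuity (Proposition \ref{lsc}) gives
\[
\Jac(v, \Omega) \leq \liminf_{n \to \infty} \Jac(v_n, \Omega) \leq 0,
\]
but the hypothesis that $N_0$ is a minimizer gives $\Jac(v, \Omega) \geq \Jac(N_0, \Omega) = 0$. Hence $\Jac(v, \Omega) = 0$ and $v$ is itself a minimizer distinct from $N_0$, contradicting uniqueness.

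The delicate point — and the one I expect to require the most care — is the renormalization: without rescaling the $u_n$ there is no guarantee the weak limit carries any information (it might well coincide with $N_0$), and the upper bound on $\Jac(v_n)$ alone would not produce any $W^{1,2}$ estimate, since $\Jac$ is indefinite. The renormalization plus the perturbative identity \eqref{pert} neatly handle both issues at once, turning the failure of the quantitative inequality into the existence of a non-trivial second minimizer.
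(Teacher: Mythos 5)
Your proof is correct and follows essentially the same route as the paper: the \emph{if} direction is identical, and for the converse the paper also argues by contradiction with a normalized minimizing sequence $\|u_h\|_{L^2}=1$, $\Jac(u_h,\Omega)\le 1/h$, derives the $W^{1,2}$ bound from \eqref{B_est}, passes to a weak limit via Propositions \ref{embeddings} and \ref{weak_clos}, and invokes Proposition \ref{lsc} to contradict uniqueness. Your only cosmetic departure is to pick arbitrary $u_n$ and rescale afterwards rather than picking them already normalized, which changes nothing substantive.
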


\begin{remark}
If $Q = 1$ and $\Sigma$ is strictly stable, then the largest positive constant $c(\Omega)$ such that \eqref{Strict Stability} holds for every $W^{1,2}$ section $u$ of $\N\Omega$ with $u|_{\partial\Omega} = 0$ is the first eigenvalue $\lambda_{1}$ of the Jacobi normal operator $\mathcal{L}$.
\end{remark}

\begin{proof}
Assume first that \eqref{Strict Stability} holds. Then, the Jacobi functional is non-negative on the space of $W^{1,2}$ sections of $\N\Omega$ with vanishing trace at the boundary. It is then clear that $N_{0} \equiv Q \llbracket 0 \rrbracket$ is a minimizer. Moreover, it is the only one. Indeed, if $u$ is a minimizer, then $\Jac(u,\Omega) = 0$, and therefore \eqref{Strict Stability} forces
\[
\int_{\Omega} \G(u,Q\llbracket 0 \rrbracket)^{2} \dHa^{m} = 0.
\]

For the converse, assume that the minimum problem for the Jacobi functional with vanishing boundary condition admits $N_{0} \equiv Q \llbracket 0 \rrbracket$ as the only solution. In particular, this implies that $\Jac(u,\Omega) \geq 0$ for all sections $u \in \Gamma_{Q}^{1,2}(\N \Omega)$ such that $u|_{\partial \Omega} = Q \llbracket 0 \rrbracket$, and in fact that $\Jac(u,\Omega) > 0$ for all such sections except the trivial one $N_{0}$. Now, assume by contradiction that \eqref{Strict Stability} does not hold. Then, for any $h \in \mathbb{N}$ there is a competitor $u_{h} \in \Gamma_{Q}^{1,2}(\N \Omega)$ such that $u_{h}|_{\partial \Omega} = Q \llbracket 0 \rrbracket$, $\| u_{h} \|_{L^{2}} = 1$ and
\[
\Jac(u_{h},\Omega) \leq \frac{1}{h}.
\]
In particular, as a consequence of \eqref{B_est}, we conclude that
\begin{equation}
\int_{\Omega} |Du_{h}|^{2} \, \dHa^{m} \leq C.
\end{equation}
By the compact embedding theorem for $Q$-valued maps, Proposition \ref{embeddings}, and by Proposition \ref{weak_clos}, we infer that there exist a subsequence $\{ u_{h'} \}$ of $\{ u_{h} \}$ and a section $u_{\infty} \in \Gamma_{Q}^{1,2}(\N \Omega)$, $u_{\infty}|_{\partial \Omega} = Q \llbracket 0 \rrbracket$, such that
\[
\lim_{h' \to \infty}\int_{\Omega} \G(u_{h'},u_{\infty})^{2} \, \dHa^{m} = 0,
\]
that is $u_{h'} \weak u_{\infty}$ weakly in $W^{1,2}$. Then, from the semi-continuity of the Jacobi functional follows:
\[
0 \leq \Jac(u_{\infty},\Omega) \leq \liminf_{h' \to \infty} \Jac(u_{h'},\Omega) = 0.
\]
Hence, $\Jac(u_{\infty},\Omega) = 0$, and thus $u_{\infty}$ is a minimizer. By hypothesis, $u_{\infty} \equiv Q \llbracket 0 \rrbracket$, which contradicts $\| u_{\infty} \|_{L^2} = 1$.
\end{proof}

\section{Existence of Jacobi $Q$-fields: the proof of Theorem \ref{cond_ex}} \label{sec:ex}

\subsection{$Q$-valued Luckhaus Lemma and the extension theorem} \label{sec:lemma_luckhaus}

The first step to prove existence of Jacobi $Q$-fields is to derive an extension theorem for $Q$-valued $W^{1,2}$ maps. Such a theorem will be obtained as a simple corollary of the following result, which is interesting per se.
\begin{proposition} \label{Luckhaus}
Let $\N$ be a $d$-dimensional closed Riemannian manifold of class $C^{2}$. Assume $0 < \lambda < 1$ and $f^{1},f^{2} \colon \N \to \A_{Q}(\R^{q})$ are two maps in $W^{1,2}$. Then, there exist a constant $C = C(\N, d, q, Q)$ and a map $h \in W^{1,2}\left( \N \times [0,\lambda] , \A_{Q}(\R^q) \right)$ such that
\begin{equation}\label{interp}
h(\cdot,0) \equiv f^{1} \hspace{0.2cm} \mbox{ and } \hspace{0.2cm} h(\cdot, \lambda) \equiv f^{2} \hspace{0.2cm} \mbox{ in }\N,
\end{equation}
satisfying
\begin{equation} \label{interp_L2}
\int_{\N \times \left[0,\lambda\right]} |h|^{2} \leq C \lambda \int_{\N} \left( |f^{1}|^{2} + |f^{2}|^{2} \right),
\end{equation}
\begin{equation}\label{interp_est}
\int_{\N \times [0,\lambda]} |Dh|^{2} \leq C \lambda \int_{\N} \left( |Df^{1}|^{2} + |Df^{2}|^{2} \right) + \frac{C}{\lambda} \int_{\N} \G(f^{1},f^{2})^{2}.
\end{equation}
\end{proposition}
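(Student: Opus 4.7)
My plan is to reduce the construction to the classical case of linear interpolation between Euclidean-valued Sobolev maps, via Almgren's bi-Lipschitz embedding $\xi \colon \A_Q(\R^q) \to \R^{N}$, $N = N(Q,q)$, together with its associated Lipschitz retraction $\rho \colon \R^N \to \xi(\A_Q(\R^q))$ (cf. \cite[Chapter 2]{DLS11a} or \cite{Almgren00}). I will use that $\xi$ is bi-Lipschitz onto its image with $\xi(Q \llbracket 0 \rrbracket) = 0$ (so that $|T| := \G(T, Q\llbracket 0 \rrbracket)$ is comparable to $|\xi(T)|$), and that $\rho|_{\xi(\A_Q(\R^q))}$ is the identity. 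The idea is to interpolate linearly in the Euclidean ambient space and then project back via $\rho$.

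Concretely, I will set $\tilde f^i := \xi \circ f^i \in W^{1,2}(\mathcal{N}, \R^N)$, define the affine interpolation
\begin{equation*}
\tilde h(x,t) := \left(1 - \frac{t}{\lambda}\right) \tilde f^1(x) + \frac{t}{\lambda}\, \tilde f^2(x), \qquad (x,t) \in \mathcal{N} \times [0, \lambda],
\end{equation*}
and then let $h := \xi^{-1} \circ \rho \circ \tilde h$. Since $\tilde f^i(x) \in \xi(\A_Q(\R^q))$ and $\rho$ restricts to the identity there, one has $h(\cdot, 0) \equiv f^1$ and $h(\cdot, \lambda) \equiv f^2$, which is exactly \eqref{interp}. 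That $h \in W^{1,2}(\mathcal{N} \times [0,\lambda], \A_Q(\R^q))$ in the sense of Definition \ref{Sobolev} will follow by observing that, for every Lipschitz $\phi \colon \A_Q(\R^q) \to \R$, the composition $\phi \circ \xi^{-1} \circ \rho \colon \R^N \to \R$ is Lipschitz with constant controlled by a $C(Q,q)$ multiple of $\mathrm{Lip}(\phi)$; hence $\phi \circ h = (\phi \circ \xi^{-1} \circ \rho) \circ \tilde h$ lies in $W^{1,2}(\mathcal{N} \times [0,\lambda])$ with $|D(\phi \circ h)| \leq C(Q,q)\, \mathrm{Lip}(\phi)\, |D\tilde h|$ almost everywhere, so one can take $\psi := C(Q,q)\, |D\tilde h|$ in Definition \ref{Sobolev}.

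The estimates \eqref{interp_L2} and \eqref{interp_est} will then reduce to one-line computations for the smooth affine interpolation $\tilde h$. By convexity of $|\cdot|^2$,
\begin{equation*}
|\tilde h(x,t)|^2 \leq \left(1 - \tfrac{t}{\lambda}\right)|\tilde f^1(x)|^2 + \tfrac{t}{\lambda}|\tilde f^2(x)|^2, \qquad |D_x \tilde h(x,t)|^2 \leq \left(1 - \tfrac{t}{\lambda}\right)|D\tilde f^1(x)|^2 + \tfrac{t}{\lambda}|D\tilde f^2(x)|^2,
\end{equation*}
while $|\partial_t \tilde h(x,t)|^2 = \lambda^{-2}|\tilde f^1(x) - \tilde f^2(x)|^2$. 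Integration over $\mathcal{N} \times [0,\lambda]$ and Fubini immediately deliver both \eqref{interp_L2} and \eqref{interp_est} for $\tilde h$, and transferring them to $h$ only costs a multiplicative constant thanks to the bi-Lipschitz bounds $\G(\xi^{-1}(v), \xi^{-1}(w)) \leq C|v-w|$ and $|\xi(T) - \xi(S)| \leq C\, \G(T,S)$, which also convert $|\tilde f^1 - \tilde f^2|^2$ into $\G(f^1, f^2)^2$ in the right-hand side of \eqref{interp_est}.

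The only genuine obstacle is the non-linearity of the target $\A_Q(\R^q)$, which prevents a naive pointwise affine interpolation between the two maps. The embedding--retraction device $(\xi, \rho)$ resolves this completely, reducing the statement to a textbook one-slab Sobolev interpolation in $\R^N$; the remainder is a routine application of the chain rule for Sobolev compositions with Lipschitz maps, so no further delicate step is expected.
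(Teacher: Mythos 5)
Your proposal is correct but follows a genuinely different route from the paper's. The paper proves Proposition \ref{Luckhaus} by a purely \emph{intrinsic} skeleton argument: it fixes a Lipschitz cubic decomposition $(\K_m, \sigma)$ of $\N$ with cells of side $\sim \lambda$, forms the prism complex $\overline{\K} = \K_m \times [0,\lambda]$, and builds $h$ inductively over the vertical $j$-skeletons, filling each vertical $j$-cell by the extension Lemma \ref{hom_ext} (a zero-degree homogeneous extension when $j \geq 3$, and a harmonic extension constructed via irreducible decomposition and Fourier series when $j=2$, since the homogeneous extension is not in $W^{1,2}$ in the plane). This is faithful to the De Lellis--Spadaro framework, which -- as the paper emphasizes in $\S$\ref{ssec:Q_func} -- deliberately avoids Almgren's embedding. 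Your proof instead pushes everything into $\R^N$ via Almgren's bi-Lipschitz map $\boldsymbol{\xi}$ and the associated Lipschitz retraction $\boldsymbol{\rho}$ onto $\boldsymbol{\xi}(\A_Q(\R^q))$, interpolates affinely in the ambient space, and projects back. The computations are sound (after the harmless normalization $\boldsymbol{\xi}(Q\llbracket 0 \rrbracket) = 0$, obtained by translating $\boldsymbol{\xi}$ and $\boldsymbol{\rho}$), the $W^{1,2}$ membership of $h$ in the sense of Definition \ref{Sobolev} follows exactly as you argue, and the constants degrade only by $\Lip(\boldsymbol{\xi})$, $\Lip(\boldsymbol{\xi}^{-1})$, $\Lip(\boldsymbol{\rho})$, all of the admissible form $C(q,Q)$. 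The comparison is this: your argument is much shorter and bypasses both the cubic decomposition and the delicate planar case of Lemma \ref{hom_ext}, but it buys this brevity by invoking the deep extrinsic machinery -- the existence of $\boldsymbol{\xi}$ and, crucially, the fact that $\boldsymbol{\xi}(\A_Q)$ is a \emph{global} Lipschitz retract of $\R^N$ (unlike, say, a compact target manifold, for which the nearest-point retraction is only defined in a tubular neighborhood and the naive linear-interpolation-plus-projection argument fails, which is precisely why the original Luckhaus lemma is hard). The paper's skeleton construction is self-contained and intrinsic; yours is a clean reduction to the slab geometry, exploiting a special structural feature of the multi-valued target. Both are valid; they simply occupy different places on the elegance-versus-self-containedness spectrum.
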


Such a result adapts to the $Q$-valued setting a classical result by S. Luckhaus, concerning the extension of a Sobolev map defined on the boundary of an annulus $\partial(B_{1} \setminus B_{1-\lambda})$ in its interior with control on the $L^{2}$ norm of the gradient of the extension map (for the precise statement and the proof, see the original paper \cite[Lemma 1]{Luckhaus88}, or the nice presentations given by L. Simon in \cite[Section 2.12.2]{Simon96} or by R. Moser in \cite[Lemma 4.4]{Moser05}).

A version of this result in the $Q$-valued setting was given by C. De Lellis in \cite{DeLellis2013}, where the author interpolates between two functions defined on flat cubes, and by J. Hirsch in \cite[Lemma C.1]{Hir16} in the original Luckhaus setting of functions defined on the boundary of an annulus. The proof of the interpolation between maps defined over a closed Riemannian manifold presented here is based on De Lellis' result and on a decomposition of the manifold that is bi-Lipschitz to a $d$-dimensional cubic complex, following ideas already contained in \cite{White88} and \cite{Hang05}. We will make an extensive use of the following Lemma, which provides the elementary step in the construction of the interpolation.
\begin{lemma} \label{hom_ext}  
There is a constant $C$ depending only on $j$ and $Q$ with the following property. Assume that $0 < \lambda \leq 1$, $L = [0,\lambda]^{j} + z$ is a $j$-dimensional cube of side length $\lambda$, and $\varphi \in W^{1,2}\left(\partial L,\A_{Q}(\R^q)\right)$. Then, there is an extension $\psi$ of $\varphi$ to $L$ such that
\begin{equation} \label{hom_ext_L2}
\int_{L} |\psi|^{2} \, \dHa^j \leq C \lambda \int_{\partial L} |\varphi|^{2} \, \dHa^{j-1}
\end{equation}
and
\begin{equation} \label{hom_ext_dir}
\Dir(\psi,L) \leq C \lambda \Dir(\varphi,\partial L).
\end{equation}
\end{lemma}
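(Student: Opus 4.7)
The plan is to rescale to the unit cube and then take $\psi$ to be the $0$-homogeneous (conical) extension of $\varphi$ from $\partial L$ to $L$, centered at the barycenter. First I would check that both inequalities are invariant under the affine change of variables $\Phi(y) := \lambda y + z$: indeed $\Dir$ scales as $\lambda^{j-2}$ on the $j$-dimensional body and as $\lambda^{j-3}$ on the $(j-1)$-dimensional boundary, so the explicit factor $\lambda$ on the right-hand sides exactly absorbs the discrepancy. Hence we may assume $L = [0,1]^j$ and $\lambda = 1$.

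Let $c := (\tfrac12,\ldots,\tfrac12)$. For each $x \in L \setminus \{c\}$ let $p(x) \in \partial L$ denote the point where the ray from $c$ through $x$ first hits the boundary, and set $\psi(x) := \varphi(p(x))$; this is a measurable $\A_Q$-valued map, $0$-homogeneous around $c$ and equal to $\varphi$ on $\partial L$. To verify the bounds, I would decompose $L$ into the $2j$ closed pyramids over its faces with common apex $c$ and, on the pyramid over a face $F$, use the parameterization $(y,t) \in F \times [0,1] \mapsto c + t(y-c)$, whose Jacobian is proportional to $t^{j-1}$. Since $\psi(c + t(y-c)) = \varphi(y)$, Fubini immediately gives
\[
\int_L |\psi|^2 \, dx \;\leq\; C(j) \int_0^1 t^{j-1}\,dt \int_{\partial L} |\varphi|^2 \, \dHa^{j-1},
\]
which is \eqref{hom_ext_L2}. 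For the Dirichlet estimate, the chain rule applied to $\psi(c+t(y-c)) = \varphi(y)$ yields $|D\psi|(c+t(y-c)) \leq C\, t^{-1}|D_F\varphi|(y)$; combining with the $t^{j-1}$ Jacobian produces the radial integral $\int_0^1 t^{j-3}\,dt$, which is finite for $j \geq 3$ and gives \eqref{hom_ext_dir}.

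The main obstacle is the borderline case $j = 2$, where the radial integral above diverges logarithmically at the apex. I would handle it through Almgren's bi-Lipschitz embedding $\boldsymbol{\xi}\colon \A_Q(\R^q) \hookrightarrow \R^{N(Q,q)}$ onto a Lipschitz retract with retraction $\boldsymbol{\rho}$: extend $\boldsymbol{\xi}\circ\varphi$ harmonically to $L \subset \R^2$ (classical trace theory gives $\|\mathrm{ext}\|_{H^1(L)}^2 \leq C\,\|\boldsymbol{\xi}\circ\varphi\|_{H^{1/2}(\partial L)}^2 \leq C\,\|\varphi\|_{W^{1,2}(\partial L)}^2$), and post-compose with $\boldsymbol{\rho}$ to return to $\A_Q$; the required $L^2$ control comes from the maximum principle component-wise. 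For $j = 1$ the statement is essentially vacuous, since $\partial L$ is a pair of points and $W^{1,2}(\partial L,\A_Q)$ carries no Dirichlet structure; if needed, one takes for $\psi$ the constant-speed geodesic in $(\A_Q,\G)$ between the two boundary values, whose existence is immediate from $\A_Q = (\R^q)^Q / \mathcal{P}_Q$. Thus the substantive content of the lemma lies in dimension $j=2$, while all higher dimensions follow directly from the conical construction.
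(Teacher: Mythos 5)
For $j \geq 3$ your argument is the paper's: both pass to the zero-degree homogeneous extension and compute $\int_0^1 t^{j-3}\,dt$. For $j = 2$ you take a genuinely different route. The paper remains within the intrinsic framework it adopts throughout: it decomposes $\varphi$ on $\Sf^1$ into irreducible pieces $\varphi_\ell$ via \cite[Proposition 1.5]{DLS11a}, harmonically extends each single-valued selection $\gamma_\ell$ to the disk, and ``rolls'' the extensions back through the $k_\ell$-th root map; the estimates then fall out of the Fourier series of $\gamma_\ell$ and the conformal invariance of the Dirichlet energy, and yield the explicit constants $C = \tfrac12$ in \eqref{hom_ext_L2} and $C = Q$ in \eqref{hom_ext_dir}. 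Your alternative — Almgren's bi-Lipschitz embedding $\boldsymbol{\xi}$, classical harmonic extension in $\R^N$, then retraction — is a workable strategy closer to Almgren's original machinery and avoids the irreducible-decomposition lemma, but it trades the explicit constants for the unquantified Lipschitz constants of $\boldsymbol{\xi}$ and $\boldsymbol{\rho}$, and it departs from the embedding-free setup the paper deliberately maintains.

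Three details in your $j = 2$ sketch need repair before the proof closes. \textbf{(1)} The trace bound you quote is the inhomogeneous one; after rescaling it gives $\Dir(\psi,L) \leq C\,\lambda\,\Dir(\varphi,\partial L) + C\lambda^{-1}\int_{\partial L}|\varphi|^2$, and the $L^2$ term scales \emph{against} you, so \eqref{hom_ext_dir} does not follow. You need the homogeneous estimate $\Dir(h,\bbD) \leq \Dir(g,\Sf^1)$, which holds since $\Dir(h,\bbD) = 2\pi\sum_n |n|\,|\hat{g}_n|^2 \leq 2\pi\sum_n n^2|\hat{g}_n|^2 = \Dir(g,\Sf^1)$. \textbf{(2)} The maximum principle gives $L^\infty$ control, not the $L^2$ bound required in \eqref{hom_ext_L2}; the correct tool is $\|h\|_{L^2(\bbD)}^2 = 2\pi\sum_n |\hat{g}_n|^2/(2|n|+2) \leq \tfrac12\|g\|_{L^2(\Sf^1)}^2$, applied to $h - \boldsymbol{\xi}(Q\llbracket 0\rrbracket)$. \textbf{(3)} The retraction $\boldsymbol{\rho}$ lands in $\boldsymbol{\xi}(\A_Q)\subset\R^N$, not in $\A_Q$; the extension is really $\boldsymbol{\xi}^{-1}\circ\boldsymbol{\rho}\circ\mathrm{ext}$, and you should record that $\boldsymbol{\xi}^{-1}$ is Lipschitz on $\boldsymbol{\xi}(\A_Q)$ to transfer the bounds across. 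Finally, drop the $j=1$ aside: there $\Dir(\varphi,\partial L)=0$, so \eqref{hom_ext_dir} would force $\psi$ constant — impossible for generic boundary data — and indeed the lemma is only stated, and only applied in the Luckhaus construction, for $j \geq 2$.
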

\begin{proof}
First observe that, since the inequalities \eqref{hom_ext_L2} and \eqref{hom_ext_dir} are invariant with respect to translations and dilations, it suffices to prove the lemma when $L = \left[0,1\right]^{j}$. Moreover, since $L$ is bi-Lipschitz equivalent to the unit ball, it is enough to show the claim for $L = \overline{B}_{1} \subset \R^j$.

For reasons that will soon become clear, the proof when working in dimension $j=2$ is different with respect to the one in the higher dimensional case ($j \geq 3$): for this reason, we will distinguish between these two scenarios.

\textit{The higher dimensional case ($j \geq 3$)}. This is the easiest situation: indeed, it suffices to define $\psi$ as the zero-degree homogeneous extension of $\varphi$. That is, if $\varphi = \sum_{\ell} \llbracket \varphi^{\ell} \rrbracket$ on $\partial B_{1}$, then one just sets
\begin{equation}
\psi(x) := \sum_{\ell=1}^{Q} \left\llbracket \varphi^{\ell}\left(\frac{x}{|x|}\right) \right\rrbracket \hspace{0.5cm} \mbox{ for } x \in B_{1}\setminus \{0\}.
\end{equation}
A simple computation in radial coordinates shows that both estimates \eqref{hom_ext_L2} and \eqref{hom_ext_dir} hold with $C = \max\{j^{-1}, (j-2)^{-1}\} = (j-2)^{-1}$. Observe that this proof breaks down when $j = 2$, because the zero-degree homogeneous extension of $\varphi$ has infinite Dirichlet energy in two dimensions. 

\textit{The planar case ($j=2$)}. For this proof, it will be useful to introduce a suitable notation. We identify $\R^2$ with the complex plane $\C$, and the unit ball $B_{1} \subset \R^2$ with the disk $\bbD$, as usual defined as
\[
\bbD := \{ z \in \C \, \colon \, |z| < 1 \} = \{ re^{i\theta} \, \colon \, 0 \leq r < 1, \theta \in \R \}.
\]
The boundary of $\bbD$ is the unit circle ${\mathbb S}^{1}$, described by
\[
{\mathbb S}^{1} := \{ z \in \C \, \colon \, |z| = 1 \} = \{ e^{i\theta} \, \colon \, \theta \in \R \}.
\]
Consider now a function $\varphi \in W^{1,2}({\mathbb S}^{1}, \A_{Q})$. By \cite[Proposition 1.5]{DLS11a}, there exists a decomposition $\varphi = \sum_{\ell=1}^{P} \llbracket \varphi_{\ell} \rrbracket$, where each $\varphi_{\ell}$ is an \emph{irreducible} map in $W^{1,2}(\mathbb{S}^{1}, \A_{k_\ell})$. This means that $\sum_{\ell=1}^{P} k_\ell = Q$, and furthermore that for every $\ell = 1,\dots,P$ there exists a $W^{1,2}$ map $\gamma_{\ell} \colon \mathbb{S}^{1} \to \R^q$ such that
\begin{equation}
\varphi_\ell(x) = \sum_{z^{k_\ell} = x} \llbracket \gamma_{\ell}(z) \rrbracket,
\end{equation}
and with $\gamma_{\ell}(z_{1}) \neq \gamma_{\ell}(z_{2})$ if $z_{1} \neq z_{2}$ are two distinct $k_{\ell}$\textsuperscript{th} roots of $x$. In other words, if we identify the point $x = e^{i \theta} \in {\mathbb S}^{1}$ with the phase $\theta \in \left[0, 2 \pi\right)$ we have that
\begin{equation}
\varphi_{\ell}(\theta) = \sum_{m=0}^{k_\ell - 1} \left\llbracket \gamma_{\ell}\left(\frac{\theta + 2 \pi m}{k_\ell}\right) \right\rrbracket,
\end{equation}
with
\[
\gamma_{\ell}\left( \frac{\theta + 2\pi m}{k_{\ell}}\right) \neq \gamma_{\ell}\left( \frac{\theta + 2 \pi \tilde{m}}{k_{\ell}}\right) \quad \mbox{for } m \neq \tilde{m}.
\]

The idea, now, is to consider the \emph{harmonic extension} to the disk $\bbD$ of the function $\gamma_\ell$: if
\begin{equation}
\gamma_{\ell}(\theta) = \frac{a_{\ell,0}}{2} + \sum_{n=1}^{\infty} \left( a_{\ell,n} \cos(n\theta) + b_{\ell,n} \sin(n\theta) \right)
\end{equation}
is the Fourier series of $\gamma_\ell$, we let 
\begin{equation}
\zeta_{\ell}(r,\theta) = \frac{a_{\ell,0}}{2} + \sum_{n=1}^{\infty} r^{n} \left( a_{\ell,n} \cos(n\theta) + b_{\ell,n} \sin(n\theta) \right)
\end{equation}
denote its harmonic extension to the whole disk. Then, for each $\ell = 1,\dots,P$, we consider the $k_\ell$-valued function $\psi_\ell$ obtained ``rolling'' back the $\zeta_\ell$, that is
\begin{equation}
\psi_\ell(x) := \sum_{z^{k_\ell}=x} \llbracket \zeta_\ell(z) \rrbracket
\end{equation}
for $x \in \bbD$, and, finally, we set $\psi := \sum_{\ell=1}^{P} \llbracket \psi_\ell \rrbracket$. We claim now that $\psi$ is a $W^{1,2}(\bbD,\A_Q)$ extension of $\varphi$ satisfying the estimates \eqref{hom_ext_L2} and \eqref{hom_ext_dir}. To see this, fix $\ell \in \{1,\dots,P\}$ and define the following subsets of the unit disk,
\[
\mathcal{D}_m := \left\lbrace r e^{i \theta} \, \colon \, 0 < r < 1, \frac{2 \pi m}{k_\ell} < \theta < \frac{2 \pi (m+1)}{k_\ell} \right\rbrace
\]
for $m = 0,\dots,k_\ell-1$, and
\[
\mathcal{C} := \lbrace r e^{i\theta} \, \colon \, 0 < r < 1, \theta \neq 0 \rbrace.
\]
One immediately sees that $\psi_{\ell}|_{\mathcal{C}} = \sum_{m=0}^{k_\ell - 1} \llbracket \zeta_{\ell} \circ \sigma_{m} \rrbracket$, where $\sigma_m \colon \mathcal{C} \to \mathcal{D}_m$ are the $k_\ell$ determinations of the $k_\ell$\textsuperscript{th} root, that is
\[
\sigma_m(r e^{i\theta}) = r^{\frac{1}{k_\ell}} e^{i \left( \frac{\theta + 2 \pi m}{k_\ell} \right)}.
\]
Similarly, if the arcs $\mathcal{S}_{m}$ are defined by
\[
\mathcal{S}_{m} := \left\lbrace e^{i \theta} \, \colon \, \frac{2 \pi m}{k_\ell} < \theta < \frac{2 \pi (m+1)}{k_\ell} \right\rbrace,
\]
we have that $\varphi_{\ell}|_{\mathbb{S}^{1} \setminus \{1\}} = \sum_{m=0}^{k_\ell - 1} \llbracket \gamma_{\ell} \circ \tau_m \rrbracket$, where $\tau_m \colon \mathbb{S}^{1} \setminus \{1\} \to \mathcal{S}_m$ is given by $\tau_{m} := \sigma_{m}|_{\mathbb{S}^1}$.
Thus, we can immediately compute
\begin{equation}\label{L2_bdry}
\begin{split}
\int_{\mathbb{S}^{1}} |\varphi_\ell|^{2} &= \sum_{m=0}^{k_\ell - 1} \int_{\mathbb{S}^{1}} |\gamma_\ell \circ \tau_m|^{2} \\
&= k_{\ell} \sum_{m=0}^{k_\ell - 1} \int_{\mathcal{S}_m} |\gamma_\ell|^{2} \\
&= k_{\ell} \int_{\mathbb{S}^{1}} |\gamma_\ell|^{2} = k_{\ell} \pi \left( \frac{|a_{\ell,0}|^{2}}{2} + \sum_{n=1}^{\infty} (|a_{\ell,n}|^{2} + |b_{\ell,n}|^{2}) \right)
\end{split}
\end{equation}
by Plancherel's theorem. On the other hand, we can use polar coordinates to compute the integral of the extension $\psi_\ell$ to the disk and see that
\begin{equation}
\begin{split}
\int_{\mathbb{D}} |\psi_\ell|^{2} &= \sum_{m=0}^{k_\ell - 1} \int_{\mathcal{C}} |\zeta_\ell \circ \sigma_m|^{2} \\
&= \sum_{m=0}^{k_\ell - 1} \int_{0}^{1} \left( \int_{0}^{2\pi} \left|\zeta_\ell\left(\rho^{\frac{1}{k_\ell}}, \frac{\alpha + 2 \pi m}{k_\ell}\right)\right|^{2} \, d\alpha \right) \rho \, d\rho \\
&= k_{\ell}^{2} \sum_{m=0}^{k_\ell - 1} \int_{0}^{1} \left( \int_{\frac{2 \pi m}{k_\ell}}^{\frac{2 \pi (m+1)}{k_\ell}} \left|\zeta_\ell(r,\theta)\right|^{2} \, d\theta \right) r^{2k_\ell - 1} \, dr \\
&= k_{\ell}^{2} \int_{0}^{1} \left( \int_{0}^{2\pi} |\zeta_\ell(r,\theta)|^{2} \, d\theta \right) r^{2k_\ell - 1} \, dr \\
&= k_{\ell}^{2} \pi \int_{0}^{1} \left( \frac{|a_{\ell,0}|^{2}}{2} + \sum_{n=1}^{\infty} r^{2n}(|a_{\ell,n}|^{2} + |b_{\ell,n}|^{2}) \right) r^{2k_\ell - 1} \, dr \\
&\overset{\eqref{L2_bdry}}{\leq} k_{\ell} \left( \int_{\mathbb{S}^{1}} |\varphi_\ell|^{2} \right) \left( \int_{0}^{1} r^{2k_\ell - 1} \, dr \right) \\
&= \frac{1}{2} \int_{\mathbb{S}^{1}} |\varphi_\ell|^{2}.
\end{split}
\end{equation}
Summing over $\ell \in \{1,\dots,P\}$, we finally conclude that
\begin{equation}
\int_{\bbD} |\psi|^{2} \leq \frac{1}{2} \int_{\mathbb{S}^{1}} |\varphi|^{2},
\end{equation}
that is, \eqref{hom_ext_L2} holds with $C = \frac{1}{2}$. Concerning \eqref{hom_ext_dir}, we exploit the invariance of the Dirichlet energy under conformal mappings in order to infer that, for any $\ell = 1,\dots,P$,
\begin{equation} \label{Dir1}
\Dir(\psi_\ell, \mathcal{C}) = \sum_{m=0}^{k_\ell - 1} \Dir(\zeta_\ell \circ \sigma_m, \mathcal{C}) = \sum_{m=0}^{k_\ell - 1} \Dir(\zeta_\ell, \mathcal{D}_m) = \int_{\bbD} |D\zeta_\ell|^{2}.
\end{equation}
Now, by a simple computation on planar harmonic functions, it is easy to see that
\begin{equation} \label{Dir2}
\int_{\bbD} |D\zeta_\ell|^{2} \leq \int_{\mathbb{S}^{1}} |\partial_{\theta}\gamma_\ell|^{2},
\end{equation}
where $\partial_{\theta}$ is the tangential derivative along the circle. On the other hand, for every $\ell = 1,\dots,P$,
\begin{equation} \label{Dir3}
\begin{split}
\Dir(\varphi_\ell,\mathbb{S}^{1}) &= \sum_{m=0}^{k_\ell - 1} \int_{\mathbb{S}^{1}} |\partial_{\theta}(\gamma_\ell \circ \tau_m)|^{2} \\
&= \sum_{m=0}^{k_\ell - 1} \int_{\mathbb{S}^{1}} \frac{1}{k_\ell^2} |\partial_{\theta}\gamma_\ell \circ \tau_m|^{2} \\
&= \sum_{m=0}^{k_\ell - 1} \int_{\mathcal{S}_m} \frac{1}{k_\ell} |\partial_{\theta}\gamma_\ell|^{2} \\
&= \frac{1}{k_\ell} \int_{\mathbb{S}^1} |\partial_{\theta}\gamma_\ell|. 
\end{split}
\end{equation}
Finally, summing on $\ell$, the above arguments produce
\begin{equation}
\Dir(\psi,\bbD) \overset{\eqref{Dir1}}{=} \sum_{\ell=1}^{P} \int_{\bbD} |D\zeta_\ell|^{2} \overset{\eqref{Dir2}}{\leq} \sum_{\ell=1}^{P} \int_{\mathbb{S}^1} |\partial_{\theta}\gamma_\ell|^{2} \overset{\eqref{Dir3}}{=} \sum_{\ell=1}^{P} k_{\ell} \Dir(\varphi_\ell, \mathbb{S}^1) \leq Q \Dir(\varphi, \mathbb{S}^1),
\end{equation}
whence \eqref{hom_ext_dir} holds with $C = Q$.
\end{proof}

\begin{proof}[Proof of Proposition \ref{Luckhaus}]
Without loss of generality, we assume that $\N$ is an embedded submanifold of some Euclidean space $\R^N$. We shall divide the proof into steps.

\textit{Step 1.} We first consider a Lipschitz cubic decomposition of the manifold $\N$, that is a pair $\left( \K,\sigma \right)$, where $\K$ is a $d$-dimensional cubic complex, and $\sigma \colon |\K| \to \N$ is a bi-Lipschitz map, $|\K|$ denoting the union of all cells of $\K$. Without loss of generality, we may assume that each cell in $\K$ has unit $d$-dimensional volume. Set $m := \lfloor \frac{1}{\lambda} \rfloor + 1$. Using that $\left[ 0, 1 \right] = \bigcup_{i=1}^{m} \left[ \frac{i-1}{m}, \frac{i}{m} \right]$, we can divide each cell in $\K$ into $m^{d}$ smaller $d$-dimensional cubes, whose side length is at most $\lambda$. We will denote the resulting cubic complex by $\K_{m}$, and regard $\sigma$ as a bi-Lipschitz map $\sigma \colon |\K_{m}| \to \N$: observe that if $L$ is any cell in $\K_{m}$ then the image $\sigma(L)$ is a domain in $\N$ with diameter (computed with respect to the geodesic distance on $\N$) ${\bf diam}(\sigma(L)) \leq \sqrt{d} \, \Lip(\sigma) \lambda$.

For each $j \in \{0,1,\dots,d\}$, $\K_{m}^{j}$ will denote the $j$-skeleton of the complex $\K_{m}$, that is the family of all $j$-dimensional faces, and $|\K_{m}^j|$ will be their union. \\

\textit{Step 2.} Let now $\eta = \eta(\N) > 0$ be so small that the set $\U = \U_{2\eta}(\N) := \lbrace x \in \R^N \colon \dist(x,\N) < 2\eta \rbrace$ is a tubular neighborhood of $\N$, with (unique) differentiable nearest point projection $\Pi \colon \U_{2\eta}(\N) \to \N$. For $i = 1,2$, we extend $f^i$ to a map $F^i \colon \U \to \A_{Q}(\R^q)$ by setting $F^i := f^i \circ \Pi$. One has that
\begin{equation} \label{Retr0}
\int_{\U} |F^i|^{2} \leq c_1 \int_{\N} |f^i|^{2},
\end{equation}
\begin{equation} \label{Retr1}
\int_{\U} |DF^i|^{2} \leq c_{1} \int_{\N} |Df^i|^{2},
\end{equation}
and
\begin{equation}\label{Retr2}
\int_{\U} \G(F^{1},F^{2})^{2} \leq c_{1} \int_{\N} \G(f^1,f^2)^{2},
\end{equation}
where the constant $c_{1}$ depends only on the retraction $\Pi$ (and, thus, on the dimensions $d$ and $N$ and on the width $\eta$ of the tubular neighborhood).

Furthermore, for every $z \in |\K_{m}|$ and for every vector $v \in B^{N}_{\eta}$, we define $\sigma_{v}(z) := \Pi\left( \sigma(z) + v \right)$. Assume that $\eta$ is so small that all $\sigma_{v}$'s are bi-Lipschitz maps $|\K_{m}| \to \N$, and set $f^{i}_{v} := f^{i} \circ \sigma_{v}$, that is $f^{i}_{v}(z) = F^{i}\left( \sigma(z) + v \right)$. By Fubini's theorem, for all $j = 1,\dots,d$ and for a.e. $v \in B^{N}_{\eta}$ one has that $f^{i}_{v} \in W^{1,2} \left( F , \A_{Q}(\R^q) \right)$ for all faces $F \in \K_{m}^j$.

Consider now any non-negative function $\alpha \in L^{1}(\U)$. It is easily seen that there exists a constant $c_2 = c_2(\N,d,N,\eta)$ such that for any $j = 0,\dots,d$ and for every $\theta \in \left( 0,1 \right)$ 
\begin{equation}\label{skeleton}
\int_{|\K_{m}^{j}|} \alpha \left( \sigma(z) + v \right) \, \dHa^{j}(z) \leq c_2 \theta^{-1} \lambda^{j-d} \int_{\U} \alpha
\end{equation}
for all $v \in B^{N}_{\eta}$ with the exception of a set $E$ of $\mathcal{L}^{N}$-measure $|E| \leq \theta |B^{N}_{\eta}|$.
To prove this, first note that
\begin{equation}
\int_{|\K_{m}^j|} \alpha \left( \sigma(z) + v \right) \, \dHa^{j}(z) \leq \frac{1}{\theta |B^{N}_{\eta}|} \int_{B^{N}_{\eta}} \left( \int_{|\K_{m}^j|} \alpha \left( \sigma(z) + v \right) \, \dHa^{j}(z) \right) \, dv
\end{equation}
for all $v \in B^{N}_{\eta} \setminus E$, $|E| \leq \theta |B^{N}_{\eta}|$. Then, conclude by estimating:
\begin{equation}
\begin{split}
\int_{B^{N}_{\eta}} \left( \int_{|\K_{m}^j|} \alpha \left( \sigma(z) + v \right) \, \dHa^{j}(z) \right) \, dv &= \int_{|\K_{m}^{j}|} \left( \int_{B^{N}_{\eta}} \alpha \left( \sigma(z) + v \right) \, dv \right) \, \dHa^{j}(z)\\
&= \int_{|\K_{m}^{j}|} \left( \int_{B^{N}_{\eta}(\sigma(z))} \alpha(w) \, dw \right) \, \dHa^{j}(z) \\
&\leq \Ha^{j}(|K_{m}^{j}|) \int_{\U} \alpha \\
&\leq C m^{d} \lambda^{j} \int_{\U} \alpha \\
&\leq C \lambda^{j - d} \int_{\U} \alpha,
\end{split}
\end{equation}
where the constant $C$ appearing in the last line depends only on the number of cells in the original cubic complex $\K$ and on the dimension $d$.\\

Now, it suffices to apply \eqref{skeleton} with $\alpha = |F^i|^{2}$, $\alpha = |DF^i|^2$ and $\alpha = \G(F^1,F^2)^2$, and, say, $\theta = \frac{1}{2}$, and to plug in equations \eqref{Retr0}, \eqref{Retr1} and \eqref{Retr2} to finally show the following: there exists $v \in B^{N}_{\eta}$ such that for all $j \in \{1,\dots,d\}$ the following inequalities
\begin{equation} \label{skel_est0}
\int_{|\K_{m}^j|} \left( |f^{1}_{v}|^{2} + |f^{2}_{v}|^{2} \right) \leq C \lambda^{j-d} \int_{\N} \left( |f^{1}|^{2} + |f^{2}|^{2} \right),
\end{equation}
and 
\begin{equation} \label{skel_est1}
\int_{|\K_{m}^j|} \left( |Df^{1}_{v}|^{2} + |Df^{2}_{v}|^{2} + \G(f^{1}_{v},f^{2}_{v})^{2} \right) \leq C \lambda^{j-d} \int_{\N} \left( |Df^{1}|^{2} + |Df^{2}|^{2} + \G(f^{1},f^{2})^{2} \right)
\end{equation}
hold true with a constant $C = C(c_{1}, c_{2}, \Lip(\sigma))$. Furthermore, for $j=0$:
\begin{equation} \label{skel_est0b}
\sum_{z \in |\K_{m}^0|} \left( |f^{1}_{v}|^{2}(z) + |f^{2}_{v}|^{2}(z) \right) \leq C \lambda^{-d} \int_{\N} \left( |f^{1}|^{2} + |f^{2}|^{2} \right),
\end{equation}
\begin{equation} \label{skel_est2}
\sum_{z \in |\K_{m}^0|} \G\left(f^{1}_{v}(z),f^{2}_{v}(z)\right)^{2} \leq C \lambda^{-d} \int_{\N} \G\left(f^{1},f^{2}\right)^{2}.
\end{equation}

From now on, we will then assume to have fixed a $v \in B^{N}_{\eta}$ such that the corresponding maps $f^{i}_{v} \colon |\K_{m}| \to \A_{Q}(\R^q)$ satisfy equations \eqref{skel_est0}, \eqref{skel_est1}, \eqref{skel_est0b}, \eqref{skel_est2} and the following condition: for every $j \geq 1$, for each $\tau \in \K_{m}^j$ and for all $\gamma \in \K_{m}^{j-1}$ with $\gamma \subset \tau$, the restrictions $f^{i}_{v}|_{\tau}$ and $f^{i}_{v}|_{\gamma}$ are all $W^{1,2}$, and moreover the trace of $f^{i}_{v}|_{\tau}$ at $\gamma$ is precisely $f^{i}_{v}|_{\gamma}$. \\

\textit{Step 3.} Consider now the $(d+1)$-dimensional cubic complex $\overline{\K} := \K_{m} \times [0,\lambda]$ whose $(d+1)$-dimensional cells are cubes of the form $L \times [0,\lambda]$ for some $L \in \K_{m}^d$. A face $\tau \in \overline{\K}^{j}$, $j < d+1$, is said to be \emph{horizontal} if it is contained in $\K_{m} \times \{0\}$ (lower horizontal) or $\K_{m} \times \{\lambda\}$ (upper horizontal), \emph{vertical} otherwise. The collection of $j$-dimensional faces of $\overline{\K}$ is hence given by
\begin{equation}
\overline{\K}^{j} = \mathscr{L}^{j} \cup \mathscr{U}^{j} \cup \mathscr{V}^{j},
\end{equation}
where $\mathscr{L}^{j}$, $\mathscr{U}^{j}$ and $\mathscr{V}^{j}$ are the lower horizontal, upper horizontal and vertical $j$-dimensional faces respectively. Observe that $\mathscr{V}^{0} = \emptyset$, $\mathscr{L}^{0}$ consists of points $(z,0)$, while $\mathscr{U}^{0}$ consists of points $(z,\lambda)$ with $z \in \K_{m}^{0}$; note, furthermore, that all $(d+1)$-dimensional cells are vertical.

We are now in the position to define a map $\overline{h} \colon |\overline{\K}| \to \A_{Q}(\R^q)$. First of all, we set $\overline{h}|_{\beta} \equiv f^{1}_{v}|_{\beta}$ if $\beta$ is a lower horizontal face, and $\overline{h}|_{\tau} \equiv f^{2}_{v}|_{\tau}$ if $\tau$ is an upper horizontal face. Consider next any vertical segment $\gamma \in \mathscr{V}^{1}$. Its two endpoints are given by $(z,0)$ and $(z,\lambda)$ for some $z \in \K_{m}^0$. Now, if $f^{1}_{v}(z) = \sum_{\ell} \llbracket (f^{1}_{v})_{\ell}(z) \rrbracket$ and $f^{2}_{v}(z) = \sum_{\ell} \llbracket (f^{2}_{v})_{\ell}(z) \rrbracket$ are ordered in such a way that $\G\left( f^{1}_{v}(z), f^{2}_{v}(z) \right)^{2} = \sum_{\ell} |(f^{1}_{v})_{\ell}(z) - (f^{2}_{v})_{\ell}(z)|^{2}$, then a natural extension is obtained by setting
\begin{equation}
\overline{h}(z,\theta) := \sum_{\ell=1}^{Q} \left\llbracket (f^{1}_{v})_{\ell}(z) + \frac{\theta}{\lambda} \left( (f^{2}_{v})_{\ell}(z) - (f^{1}_{v})_{\ell}(z) \right) \right\rrbracket,
\end{equation}
for all $\theta \in [0,\lambda]$. In this way, we obtain the bounds
\begin{equation}
\int_{\gamma} |\overline{h}|^{2} \leq 2 \lambda \left( |f^{1}_{v}|^{2}(z) + |f^{2}_{v}|^{2}(z) \right)
\end{equation}
and
\begin{equation}
\int_{\gamma} |D\overline{h}|^{2} \leq \lambda^{-1} \G\left(f^{1}_{v}(z),f^{2}_{v}(z)\right)^{2}.
\end{equation}
If we carry on this procedure for all vertical segments, we obtain a well defined $Q$-valued map $\overline{h}$ on all the vertical $1$-skeleton $\mathscr{V}^{1}$, which, thanks to \eqref{skel_est0b} and \eqref{skel_est2}, satisfies
\begin{equation}
\int_{|\mathscr{V}^{1}|} |\overline{h}|^{2} \leq C \lambda^{1-d} \int_{\N} \left( |f^1|^{2} + |f^2|^{2} \right)
\end{equation}
and
\begin{equation}
\int_{|\mathscr{V}^{1}|} |D\overline{h}|^{2} \leq C \lambda^{-1-d} \int_{\N} \G\left(f^{1},f^{2}\right)^{2}.
\end{equation}

\begin{figure}[h]
\centering
\includegraphics[width=\textwidth]{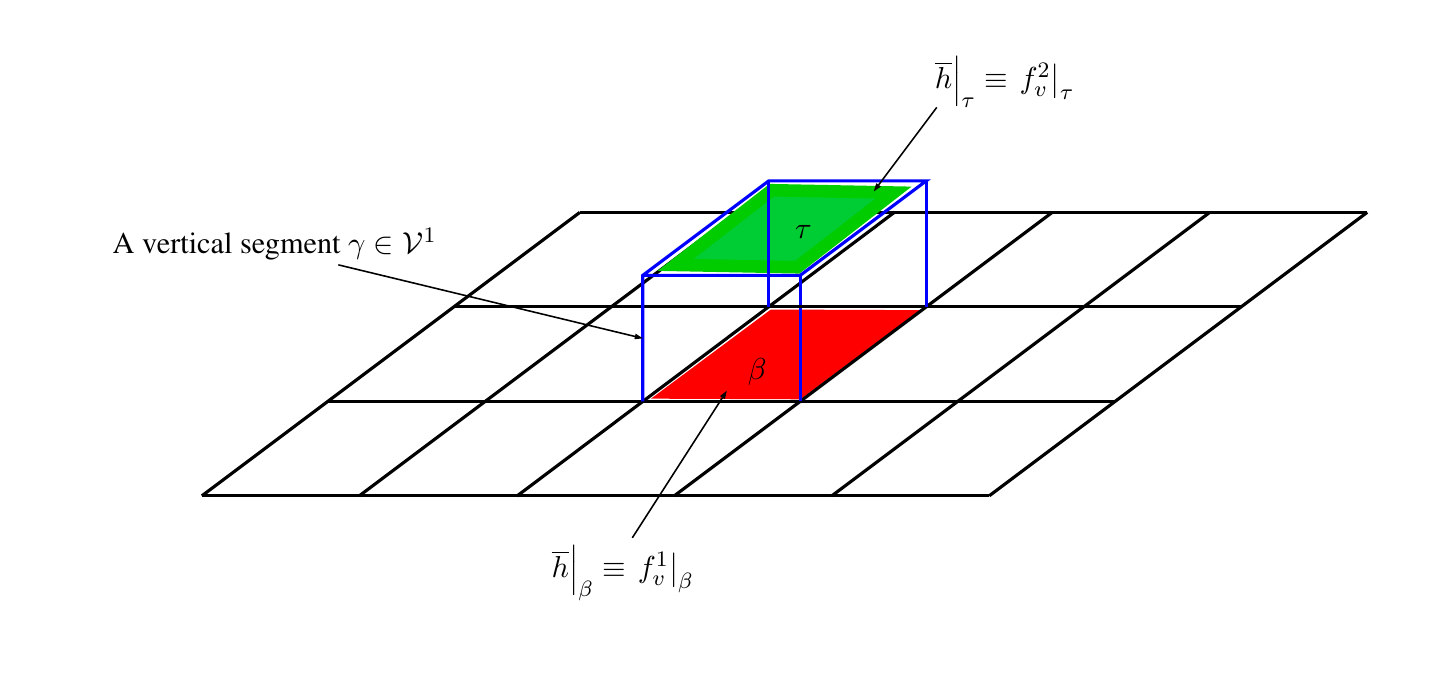}
\caption{The cubic complex $\overline{\K}$ and the first step in the construction of $\overline{h}$.} \label{fig_luckhaus}
\end{figure}

Pick next a vertical $2$-dimensional face $\tau$. Its boundary consists of two horizontal segments $\beta \in \mathscr{L}^{1}$ and $\delta \in \mathscr{U}^{1}$, and two vertical segments joining the points $(z,0)$, $(w,0)$ to the points $(z,\lambda)$, $(w,\lambda)$ respectively. Using our assumptions on $v$, we can conclude that $\overline{h}|_{\partial \tau}$ is in $W^{1,2}$, whence Lemma \ref{hom_ext} yields an extension of $\overline{h}$ to $\tau$ with estimates
\begin{equation}
\int_{\tau} |\overline{h}|^{2} \leq C \lambda \left( \int_{\beta} |f^{1}_{v}|^{2} + \int_{\delta} |f^{2}_{v}|^{2} \right) + C \lambda^2 \left( (|f^{1}_{v}|^{2} + |f^{2}_{v}|^{2})(z) + (|f^{1}_{v}|^{2} + |f^{2}_{v}|^{2})(w) \right)
\end{equation}
and
\begin{equation}
\int_{\tau} |D\overline{h}|^{2} \leq C \lambda \left( \int_{\beta} |Df^{1}_{v}|^{2} + \int_{\delta} |Df^{2}_{v}|^{2} \right) + C \left( \G\left(f^{1}_{v}(z),f^{2}_{v}(z)\right)^{2} + \G\left(f^{1}_{v}(w),f^{2}_{v}(w)\right)^{2}\right).
\end{equation}
Summing over the $2$-skeleton $\mathscr{V}^{2}$, from the estimates \eqref{skel_est0} and \eqref{skel_est0b} we deduce
\begin{equation}
\int_{|\mathscr{V}^{2}|} |\overline{h}|^{2} \leq C \lambda^{2-d} \int_{\N} \left( |f^{1}|^{2} + |f^{2}|^{2} \right),
\end{equation}
whereas \eqref{skel_est1} and \eqref{skel_est2} imply
\begin{equation}
\int_{|\mathscr{V}^{2}|} |D\overline{h}|^{2} \leq C \lambda^{2-d} \int_{\N} \left( |Df^{1}|^{2} + |Df^{2}|^{2} \right) + C \lambda^{-d} \int_{\N} \G\left(f^{1},f^{2}\right)^{2}.
\end{equation}

We then proceed inductively over $\mathscr{V}^{j}$, iteratively applying Lemma \ref{hom_ext} and using the inequalities \eqref{skel_est0} to \eqref{skel_est2} at each step. At the final iteration, namely for $j = d+1$, we construct a map $\overline{h}$ which is $W^{1,2}$ on each $(d+1)$-dimensional cell $L \times [0,\lambda]$, coinciding with $f^{1}_{v}$ on $L \times \{0\}$ and with $f^{2}_{v}$ on $L \times \{\lambda\}$. Furthermore, if two cells $H = L_{1} \times [0,\lambda]$ and $K = L_{2} \times [0,\lambda]$ have a common face $S \in \mathscr{V}^{d}$, the traces of $\overline{h}|_{H}$ and $\overline{h}|_{K}$ at $S$ coincide. Thus, we can regard $\overline{h}$ as a $W^{1,2}$ map defined on the whole cubic complex $\overline{\K}$. Moreover, since $|\overline{\K}| = \bigcup \mathscr{V}^{d+1} = |\mathscr{V}^{d+1}|$, the inductive step provides the following estimates:
\begin{equation}
\int_{|\overline{\K}|} |\overline{h}|^{2} \leq C \lambda \int_{\N} \left( |f^{1}|^{2} + |f^{2}|^{2} \right),
\end{equation}
\begin{equation}
\int_{|\overline{\K}|} |D\overline{h}|^{2} \leq C \lambda \int_{\N} \left( |Df^{1}|^{2} + |Df^{2}|^{2} \right) + \frac{C}{\lambda} \int_{\N} \G\left(f^{1},f^{2}\right)^{2}.
\end{equation}

\textit{Step 4.} Finally, we simply define a map $h \in W^{1,2}\left( \N \times [0,\lambda] , \A_{Q}(\R^q) \right)$ by setting
\begin{equation}
h(x,\theta) := \overline{h}\left( \sigma_{v}^{-1}(x), \theta \right).
\end{equation}
It is immediate to check that such a map indeed satisfies \eqref{interp}, \eqref{interp_L2} and \eqref{interp_est} in the statement.
\end{proof}

\begin{corollary} \label{Extension}
Let $\Sigma^{m} \hookrightarrow \R^{d}$ be a regular compact submanifold, and let $\lambda_{0} := {\rm inj}(\Sigma) > 0$ be the injectivity radius of $\Sigma$. Then, for any $0 < \lambda < \lambda_{0}$, for any $\mathcal{V} \subsetneq \Sigma$ connected, open subset with $C^{2}$ boundary and such that 
\[
\ddist(x, \partial \Sigma) \geq \lambda \quad \mbox{for every }x \in \partial \mathcal{V},
\]
and for any $\tilde{g}_{0} \in W^{1,2}(\partial\mathcal{V}, \A_{Q}(\R^d))$ there exist an open set $\mathcal{V}_{\lambda} \subset \Sigma$ with $\mathcal{V} \Subset \mathcal{V}_{\lambda}$, $\ddist(\mathcal{V}, \partial \mathcal{V}_{\lambda}) \leq \lambda$, and a map $\overline{g}_{\lambda} \in W^{1,2}\left(\mathcal{V}_{\lambda} \setminus \overline{\mathcal{V}},\A_{Q}(\R^{d})\right)$ satisfying:
\begin{equation}
\overline{g}_{\lambda}|_{\partial \mathcal{V}} = \tilde{g}_{0} \hspace{0.3cm} \mbox{ and } \hspace{0.3cm} \overline{g}_{\lambda}|_{\partial \mathcal{V}_{\lambda}} = Q \llbracket 0 \rrbracket,
\end{equation}
\begin{equation}
\int_{\mathcal{V}_{\lambda} \setminus \overline{\mathcal{V}}} |\overline{g}_{\lambda}|^{2} \, \dHa^m \leq C \lambda \int_{\partial \mathcal{V}} |\tilde{g}_{0}|^{2} \, \dHa^{m-1},
\end{equation}
\begin{equation}
\Dir(\overline{g}_{\lambda},\mathcal{V}_{\lambda} \setminus \overline{\mathcal{V}}) \leq C \lambda \Dir(\tilde{g}_{0},\partial \mathcal{V}) + \frac{C}{\lambda} \int_{\partial \mathcal{V}} |\tilde{g}_{0}|^{2} \, \dHa^{m-1},
\end{equation}
for a constant $C = C(\mathcal{V},m,d,Q)$.
\end{corollary}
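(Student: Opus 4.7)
The plan is to apply Proposition \ref{Luckhaus} on the closed $(m-1)$-dimensional $C^2$ Riemannian manifold $\N := \partial\mathcal{V}$ with $f^{1} := \tilde{g}_0$ and $f^{2} := Q\llbracket 0 \rrbracket$, then transplant the resulting interpolation onto a one-sided tubular collar of $\partial\mathcal{V}$ in $\Sigma$ via the normal exponential map. Since $f^{2}$ is identically $Q\llbracket 0 \rrbracket$, we have $|f^{2}| \equiv 0$, $|Df^{2}| \equiv 0$ and $\G(f^{1},f^{2}) = |\tilde{g}_0|$. Hence, assuming without loss of generality $\lambda < 1$ (otherwise, since the constant in the statement is allowed to depend on $\mathcal{V}$, we may reduce $\lambda$ to any fixed smaller threshold and absorb the factor into $C$), Proposition \ref{Luckhaus} provides a map $h \in W^{1,2}\bigl( \partial\mathcal{V} \times [0,\lambda], \A_{Q}(\R^d) \bigr)$ with $h(\cdot,0) \equiv \tilde{g}_0$, $h(\cdot,\lambda) \equiv Q\llbracket 0 \rrbracket$, and satisfying
\[
\int_{\partial\mathcal{V} \times [0,\lambda]} |h|^{2} \leq C\lambda \int_{\partial\mathcal{V}} |\tilde{g}_0|^{2}, \qquad \int_{\partial\mathcal{V} \times [0,\lambda]} |Dh|^{2} \leq C\lambda \Dir(\tilde{g}_0,\partial\mathcal{V}) + \frac{C}{\lambda}\int_{\partial\mathcal{V}} |\tilde{g}_0|^{2}.
\]

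Next, I would introduce the collar parametrization $\Phi \colon \partial\mathcal{V} \times [0,\lambda] \to \Sigma$ defined by $\Phi(y,t) := \exp^{\Sigma}_{y}(t\,\nu(y))$, where $\nu(y) \in T_{y}\Sigma$ is the outward unit conormal to $\partial\mathcal{V}$ in $\Sigma$ and $\exp^{\Sigma}$ is the intrinsic exponential map of $\Sigma$. Because $\partial\mathcal{V}$ is $C^{2}$, the second fundamental form of $\partial\mathcal{V}$ in $\Sigma$ is bounded; together with $\lambda < \lambda_{0} = {\rm inj}(\Sigma)$ and the hypothesis ${\bf dist}(x,\partial\Sigma) \geq \lambda$ for $x \in \partial\mathcal{V}$, this guarantees (for $\lambda$ smaller than some threshold $\lambda_{1} = \lambda_{1}(\mathcal{V})$, which we may again assume without loss of generality) that $\Phi$ is a $C^{1}$ bi-Lipschitz diffeomorphism onto its image, with Lipschitz constants for $\Phi$ and $\Phi^{-1}$ bounded by a constant $L = L(\mathcal{V})$ independent of $\lambda$, and with Jacobian $\mathbf{J}\Phi$ uniformly comparable to $1$. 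Setting $\mathcal{V}_{\lambda} := \mathcal{V} \cup \Phi\bigl(\partial\mathcal{V} \times [0,\lambda]\bigr)$ and $\overline{g}_{\lambda} := h \circ \Phi^{-1}$ on $\mathcal{V}_{\lambda} \setminus \overline{\mathcal{V}}$, we obtain an open set with $\mathcal{V} \Subset \mathcal{V}_{\lambda}$, ${\bf dist}(\mathcal{V},\partial\mathcal{V}_{\lambda}) \leq \lambda$, a map $\overline{g}_{\lambda} \in W^{1,2}\bigl(\mathcal{V}_{\lambda} \setminus \overline{\mathcal{V}}, \A_{Q}(\R^d)\bigr)$ with $\overline{g}_{\lambda}|_{\partial\mathcal{V}} = \tilde{g}_0$ and $\overline{g}_{\lambda}|_{\partial\mathcal{V}_{\lambda}} = Q\llbracket 0 \rrbracket$. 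The chain rule for multiple valued Sobolev functions (Proposition \ref{chain}, extended through the Lipschitz approximation Proposition \ref{LipSob_app}) together with the area formula and the uniform bi-Lipschitz bounds on $\Phi$ yield
\[
\int_{\mathcal{V}_{\lambda}\setminus\overline{\mathcal{V}}} |\overline{g}_{\lambda}|^{2} \leq C(\mathcal{V}) \int_{\partial\mathcal{V} \times [0,\lambda]} |h|^{2}, \qquad \Dir(\overline{g}_{\lambda}, \mathcal{V}_{\lambda}\setminus\overline{\mathcal{V}}) \leq C(\mathcal{V}) \int_{\partial\mathcal{V} \times [0,\lambda]} |Dh|^{2},
\]
and the required estimates follow by combining these with the inequalities from Proposition \ref{Luckhaus} displayed above.

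The main technical point is the uniformity of the bi-Lipschitz constants of the collar map $\Phi$ as $\lambda \to 0$, which in particular requires that $\Phi$ actually maps into $\Sigma$ (not out of it through $\partial\Sigma$); this is precisely what the hypothesis ${\bf dist}(x,\partial\Sigma) \geq \lambda$ ensures, while the $C^{2}$ regularity of $\partial\mathcal{V}$ guarantees that the orthogonality of the rays $t \mapsto \Phi(y,t)$ to $\partial\mathcal{V}$ translates the product metric on $\partial\mathcal{V} \times [0,\lambda]$ to a metric on $\mathcal{V}_{\lambda} \setminus \overline{\mathcal{V}}$ which is a bounded perturbation of the induced one. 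A secondary, purely bookkeeping step is to reconcile the range $0 < \lambda < 1$ in Proposition \ref{Luckhaus} with the injectivity-radius bound $\lambda < \lambda_{0}$: since both thresholds are uniform in the data and the target constant $C$ is allowed to depend on $\mathcal{V}$, we may simply replace $\lambda_{0}$ by $\min\{\lambda_{0}, \lambda_{1}, 1\}$, losing nothing of substance.
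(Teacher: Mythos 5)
Your proposal is correct and follows essentially the same route as the paper: apply Proposition \ref{Luckhaus} on $\N = \partial\mathcal{V}$ with $f^{1} = \tilde{g}_0$ and $f^{2} = Q\llbracket 0\rrbracket$, then transplant the interpolant $h$ to the outer collar of $\partial\mathcal{V}$ via the normal exponential parametrization $(\pi,\theta) \mapsto \exp_{\pi}(\theta\,\nu(\pi))$. You are in fact somewhat more explicit than the paper about the uniform bi-Lipschitz bounds on the collar map and the reconciliation of the two smallness thresholds on $\lambda$, but the decomposition and the key lemma invoked are identical.
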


\begin{proof}
Let $\mathcal{V}$ and $\tilde{g}_{0}$ be as in the statement. Then, by the very definition of injectivity radius, for any $0 < \lambda < \lambda_{0}$ the exponential map, restricted to $\partial \mathcal{V}$, is injective in a ball of radius $\lambda$ around the zero section of the normal bundle of $\partial \mathcal{V}$ in $\Sigma$. In turn, this allows one to define, for any such $\lambda$, a $\lambda$-tubular neighborhood $\U_{\lambda}$ of $\partial \mathcal{V}$ in $\Sigma$ by setting
\begin{equation}
\U_{\lambda} := \lbrace \exp_{\pi}\left( \theta \eta(\pi) \right) \, \colon \, \pi \in \partial \mathcal{V}, \, |\theta| < \lambda \rbrace,
\end{equation}
where for every point $\pi \in \partial \mathcal{V}$ we have denoted $\eta(\pi) \in T_{\pi}\Sigma$ the unit outer co-normal vector to $\partial \mathcal{V}$ at $\pi$.

Note that it is well defined a differentiable parametrization $x \in \U_{\lambda} \mapsto \left( \pi(x),\theta(x) \right) \in \partial \mathcal{V} \times \left( -\lambda,\lambda \right)$ such that $\exp_{\pi(x)} \left( \theta(x) \eta(\pi(x)) \right) = x$ for all $x \in \U_{\lambda}$.
 
Next, the \emph{positive} and \emph{negative} $\lambda$-tubular neighborhoods of $\partial \mathcal{V}$ in $\Sigma$ are respectively defined by
\begin{equation}
\U_{\lambda}^{+} := \lbrace \exp_{\pi}\left( \theta \eta(\pi) \right) \, \colon \, \pi \in \partial \mathcal{V}, \, 0 < \theta < \lambda \rbrace,
\end{equation}
\begin{equation}
\U_{\lambda}^{-} := \lbrace \exp_{\pi}\left( \theta \eta(\pi) \right) \, \colon \, \pi \in \partial \mathcal{V}, \, - \lambda < \theta < 0 \rbrace.
\end{equation}

We set $\mathcal{V}_{\lambda} := \mathcal{V} \cup \U_{\lambda}$. The claimed result is then simply obtained by applying Proposition \ref{Luckhaus} with $\N = \partial \mathcal{V}$, $f^{1} = \tilde{g}_{0}$, $f^{2} = Q \llbracket 0 \rrbracket$ and setting $\overline{g}_{\lambda}(x) := h\left( \pi(x),\theta(x) \right)$ for $x \in \mathcal{V}_{\lambda} \setminus \overline{\mathcal{V}} = \U_{\lambda}^{+}$.
\end{proof}

\subsection{The compactness theorem}
Back to our original setting, we let $\Omega$ be an open and connected subset of $\Sigma \hookrightarrow \M$ in which we wish to solve the minimum problem for the $\Jac$ functional. We will assume $C^{2}$ regularity for $\partial \Omega$. Let $\lambda_{0} := {\rm inj}(\Sigma)$. For $0 < \lambda < \lambda_{0}$, set $\mathcal{V} := \lbrace x \in \Omega \, \colon \, \ddist(x, \partial\Omega) > \lambda \rbrace$, so that $\Omega$ coincides with the set $\mathcal{V}_{\lambda} = \mathcal{V} \cup \U_{\lambda}$ which was obtained in the proof of Corollary \ref{Extension}. Using the same notations introduced in the proof of Corollary \ref{Extension}, we parametrize $\U_{\lambda}$ with coordinates $\left( \pi, \theta \right) \in \partial\mathcal{V} \times \left( -\lambda, \lambda \right)$. 

Let us now define $\Phi_{\lambda} \colon \mathcal{V} \to \Omega$ to be the diffeomorphism given by:
\begin{equation}
\Phi_{\lambda}(x) :=
\begin{cases}
\exp_{\pi(x)}\left( \varphi_{\lambda}(\theta(x)) \eta(\pi(x)) \right) &\mbox{ if } x \in \U_{\lambda}^{-} \\
x &\mbox{ otherwise },
\end{cases}
\end{equation}
where $\varphi_{\lambda}$ is any monotone increasing diffeomorphism $\varphi_{\lambda} \colon \left( -\lambda,0 \right) \to \left( -\lambda,\lambda \right)$ such that $\varphi_{\lambda}(\theta) =
 \theta$ for $\theta \in \left( -\lambda, -\frac{\lambda}{2} \right)$. From this moment on, we will assume that such a family of diffeomorphisms $\varphi_{\lambda}$ has been fixed, and satisfies a bound of the form
\begin{equation} \label{bound}
c^{-1} \leq |\varphi_{\lambda}'| \leq c
\end{equation}
for a positive constant $c$ which does not depend on $\lambda$.

Furthermore, if $u = \sum_{\ell} \llbracket u_{\ell} \rrbracket$ is any map in $W^{1,2}\left( \Omega,\A_{Q}(\R^d) \right)$, we set:
\begin{equation} \label{orth_proj}
u^{\perp}(x) := \sum_{\ell=1}^{Q} \llbracket \p_{x}^{\Sigma\perp} \cdot u_{\ell}(x) \rrbracket,
\end{equation}
where $\p^{\Sigma\perp}$ is the normal bundle projection defined in Definition \ref{def:normal_dirichlet}. Observe that $u^{\perp} \in \Gamma_{Q}^{1,2}(\N \Omega)$. The following Lemma yields a useful formula to relate the Dirchlet energy of $u$ with the Dirichlet energy of $u^{\perp}$. 
\begin{lemma} \label{Projection}
For every $\varepsilon > 0$ there exists a positive constant $C_{\varepsilon}$ such that the following estimate holds true:
\begin{equation} \label{proj}
\Dir(u^{\perp},\Omega) \leq (1 + \varepsilon) \Dir(u,\Omega) + C_{\varepsilon} \int_{\Omega} |u|^{2} \, \dHa^{m}.
\end{equation}
\end{lemma}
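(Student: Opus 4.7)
The plan is to reduce to a pointwise estimate for the approximate differential $Du^\perp$ and then integrate. Using Proposition \ref{LipSob_app}, I would approximate $u$ by Lipschitz $Q$-valued maps and apply Proposition \ref{Lip-select} to obtain, locally on measurable pieces $B_i$, a decomposition $u = \sum_{\ell} \llbracket u_{\ell} \rrbracket$ with $\ell$-wise Lipschitz selections. For such selections, the chain rule (Proposition \ref{chain}) applied to the composition of $u_\ell$ with the smooth $d\times d$ matrix-valued map $x \mapsto \p_x^{\Sigma\perp}$ (smoothness here comes from $\Sigma$ being $C^{3,\beta}$) yields, for a.e.\ $x \in B_i$ and every tangent vector $\xi$,
\begin{equation*}
D_\xi (\p_x^{\Sigma\perp} \cdot u_\ell)(x) = (D_\xi \p^{\Sigma\perp})(x) \cdot u_\ell(x) + \p_x^{\Sigma\perp} \cdot D_\xi u_\ell(x).
\end{equation*}

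From here, the key pointwise estimate follows from the elementary inequality $|a+b|^2 \le (1+\varepsilon)|a|^2 + (1+\varepsilon^{-1})|b|^2$. Since $\p^{\Sigma\perp}$ is an orthogonal projection at each point, one has $|\p_x^{\Sigma\perp} \cdot D_\xi u_\ell(x)|^2 \le |D_\xi u_\ell(x)|^2$, while $M := \sup_{x \in \Sigma} \|D \p^{\Sigma\perp}(x)\|$ is finite by the regularity of $\Sigma$ in $\R^d$, so $|(D_\xi\p^{\Sigma\perp})(x)\cdot u_\ell(x)|^2 \le M^2 |\xi|^2 |u_\ell(x)|^2$. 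Applying this to each sheet, summing over $\ell$, then over $i=1,\dots,m$ with an orthonormal frame $(\xi_i)$, and using Proposition \ref{uniq_Dir} to identify $|Du|^2 = \sum_\ell |Du_\ell|^2$ and $|u^\perp|^2 \le |u|^2$, one gets the pointwise bound
\begin{equation*}
|Du^\perp|^2(x) \le (1+\varepsilon)\,|Du|^2(x) + C_\varepsilon\,|u|^2(x) \qquad \Ha^m\text{-a.e. } x \in \Omega,
\end{equation*}
with $C_\varepsilon = m(1+\varepsilon^{-1})M^2$. Integrating over $\Omega$ and summing over the countable Lipschitz pieces $B_i$ of Proposition \ref{Lip-select} yields \eqref{proj}. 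Finally, the passage from the Lipschitz to the Sobolev setting is routine: by Proposition \ref{LipSob_app} one produces a sequence $u_\lambda$ of Lipschitz approximants converging to $u$ strongly in $W^{1,2}$, for which the corresponding $u_\lambda^\perp$ converge to $u^\perp$ strongly in $W^{1,2}$ as well (the linear action of $\p^{\Sigma\perp}$ being bounded and smooth), and the inequality passes to the limit. There is no serious obstacle here; the only point to handle carefully is the bookkeeping of constants when summing over the (at most countably many) Lipschitz pieces and verifying that $C_\varepsilon$ depends only on $\varepsilon$, $m$, and the geometry of $\Sigma$, not on $u$.
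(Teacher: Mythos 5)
Your proof is correct and follows essentially the same route as the paper: differentiate the composition $x\mapsto\mathbf{p}_x^{\Sigma\perp}\cdot u_\ell(x)$ via the chain rule, absorb the cross term with a Young-type inequality, and reduce from Sobolev to Lipschitz maps by selection (Proposition~\ref{Lip-select}) and approximation (Proposition~\ref{LipSob_app}). The only cosmetic difference is that you apply the inequality $|a+b|^2\le(1+\varepsilon)|a|^2+(1+\varepsilon^{-1})|b|^2$ pointwise before integrating, whereas the paper first expands the square, integrates, and then applies Young's inequality to the resulting cross term $\int|v||Dv|$ — these are the same step in different order.
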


\begin{proof}
Write $\mathbf{p}^{\Sigma \perp}(x,v) := \mathbf{p}^{\Sigma \perp}_{x} \cdot v$ for $x \in \Omega$, $v \in \R^d$. If $v = v(x)$ is a (single valued) Lipschitz map defined in $\Omega$, then for any tangent vector field $\xi$ one has
\[
D_{\xi}\mathbf{p}^{\Sigma \perp}(x,v(x)) = \partial_{x}\mathbf{p}^{\Sigma \perp}(x,v(x)) \cdot \xi(x) + \partial_{v}\mathbf{p}^{\Sigma \perp}(x,v(x)) \cdot D_{\xi}v(x).
\] 

Since, for fixed $x$, the map $v \in \R^{d} \mapsto \mathbf{p}^{\Sigma \perp}(x,v) \in T_{x}^{\perp}\Sigma$ is linear with Lipschitz constant not larger than $1$, we conclude that for any $v \colon \Omega \to \R^{d}$ Lipschitz one has
\[
\Dir(v^{\perp},\Omega) \leq \Dir(v,\Omega) + C \int_{\Omega} |v|^{2} \, \dHa^m + C \int_{\Omega} |v||Dv| \, \dHa^m,
\]
where $C$ is a constant depending on $\max_{\overline{\Omega} \times \mathbb{S}^{d-1}} |\partial_{x}\mathbf{p}^{\Sigma\perp}|$.

Formula \eqref{proj} is then a consequence of Young's inequality. The formula is then extended to Lipschitz $Q$-valued maps via decomposition into $Q$ Lipschitz functions (Proposition \ref{Lip-select}), and finally to Sobolev $Q$-maps via approximation (Proposition \ref{LipSob_app}).
\end{proof}

We are now ready to state and prove the proposition that will provide the key towards Theorem \ref{cond_ex}.
\begin{proposition}
Let $\Omega \subset \Sigma$ be open, connected with $C^{2}$ boundary. Assume the strict stability condition \eqref{Strict Stability} holds for every $u \in \Gamma_{Q}^{1,2}(\N \Omega)$ such that $u|_{\partial \Omega} = Q \llbracket 0 \rrbracket$. Then, if $g \in \Gamma_{Q}^{1,2}(\N\Omega)$ has boundary trace $g_{0} := g|_{\partial\Omega} \in W^{1,2}(\partial\Omega, \A_{Q}(\R^d))$, the following estimate
\begin{equation} \label{Str_bound_stab}
\Jac(N, \Omega) \geq c(\Omega) \int_{\Omega} |N|^{2} \, \dHa^m - C(\Omega,g_{0})
\end{equation}
holds true for any $N \in \Gamma_{Q}^{1,2}(\N \Omega)$ such that $N|_{\partial \Omega} = g_{0}$.
\end{proposition}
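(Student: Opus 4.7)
The plan is to use the strict stability inequality of Lemma \ref{str_stab_eq} in a comparison argument: given $N$ with trace $g_{0}$, I would build a modified section $\tilde{N}^{\perp} \in \Gamma_{Q}^{1,2}(\N\Omega)$ with zero boundary trace, estimate its $L^{2}$ and $\Jac$ energies in terms of those of $N$ up to an error depending only on $g_{0}$, and then apply \eqref{Strict Stability} to $\tilde{N}^{\perp}$.

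The construction goes as follows. Fix a small $\lambda \in (0,\lambda_{0})$ and recall the inner region $\mathcal{V}$, the annuli $\mathcal{U}_{\lambda}^{\pm}$, and the diffeomorphism $\Phi_{\lambda}\colon \mathcal{V}\to \Omega$. The trace $\tilde{g}_{0} := g_{0}\circ \Phi_{\lambda}|_{\partial\mathcal{V}} \in W^{1,2}(\partial\mathcal{V},\A_{Q})$ depends on $g_{0}$ alone, so I would apply Corollary \ref{Extension} to obtain $\overline{g}_{\lambda}$ on $\mathcal{U}_{\lambda}^{+}$ satisfying $\overline{g}_{\lambda}|_{\partial\mathcal{V}} = \tilde{g}_{0}$, $\overline{g}_{\lambda}|_{\partial\Omega} = Q\llbracket 0 \rrbracket$, and $\int_{\mathcal{U}_{\lambda}^{+}}|\overline{g}_{\lambda}|^{2} + \Dir(\overline{g}_{\lambda},\mathcal{U}_{\lambda}^{+}) \leq C(\Omega,g_{0})$. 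Then I would glue to define
\begin{equation*}
\tilde{N}(y) := \begin{cases} N(\Phi_{\lambda}(y)) & \text{if } y \in \mathcal{V}, \\ \overline{g}_{\lambda}(y) & \text{if } y \in \mathcal{U}_{\lambda}^{+}, \end{cases}
\end{equation*}
which lies in $W^{1,2}(\Omega,\A_{Q})$ because both traces on $\partial\mathcal{V}$ coincide with $\tilde{g}_{0}$. Its normal projection $\tilde{N}^{\perp}$, defined via \eqref{orth_proj}, is then in $\Gamma_{Q}^{1,2}(\N\Omega)$ and vanishes on $\partial\Omega$, so \eqref{Strict Stability} gives $\Jac(\tilde{N}^{\perp},\Omega) \geq c_{0} \int_{\Omega}|\tilde{N}^{\perp}|^{2}$.

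Next I would prove two estimates. For the lower bound on $\int_{\Omega}|\tilde{N}^{\perp}|^{2}$: on $\mathcal{V} \setminus \mathcal{U}_{\lambda}^{-}$, $\Phi_{\lambda}$ is the identity so $\tilde{N}^{\perp} = N$; on $\mathcal{U}_{\lambda}^{-}$, the closeness of the normal spaces at $y$ and $\Phi_{\lambda}(y)$ gives $|\tilde{N}^{\perp}(y)|^{2} \geq (1-O(\lambda))|N(\Phi_{\lambda}(y))|^{2}$ pointwise, and the bound \eqref{bound} together with the change of variables $z = \Phi_{\lambda}(y)$ yields $\int_{\mathcal{U}_{\lambda}^{-}}|\tilde{N}^{\perp}|^{2} \geq c_{1}\int_{\mathcal{U}_{\lambda}}|N|^{2}$. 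Hence $\int_{\Omega}|\tilde{N}^{\perp}|^{2} \geq c'\int_{\Omega}|N|^{2}$. For the upper bound on $\Jac(\tilde{N}^{\perp},\Omega)$: writing $\Jac = \Dir^{\N\Sigma} - (\Jac - \Dir^{\N\Sigma})$ and using \eqref{B_est} to control the lower order part, on $\mathcal{V}\setminus \mathcal{U}_{\lambda}^{-}$ the two functionals agree; on $\mathcal{U}_{\lambda}^{+}$ the bounds from Corollary \ref{Extension} give a contribution $\leq C(\Omega,g_{0})$; and on $\mathcal{U}_{\lambda}^{-}$, Lemma \ref{Projection} passes from $\Dir(\tilde{N}^{\perp})$ to $\Dir(\tilde{N}) = \Dir(N\circ \Phi_{\lambda})$, which by change of variables and the bounds on $D\Phi_{\lambda}$ is controlled by $\kappa\, \Dir(N,\mathcal{U}_{\lambda}) + C\int_{\mathcal{U}_{\lambda}}|N|^{2}$ for some constant $\kappa$. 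Combining everything and applying strict stability,
\begin{equation*}
c_{0}c' \int_{\Omega}|N|^{2} \leq c_{0}\int_{\Omega}|\tilde{N}^{\perp}|^{2} \leq \Jac(\tilde{N}^{\perp},\Omega) \leq \kappa' \Jac(N,\Omega) + C\int_{\Omega}|N|^{2} + C(\Omega,g_{0}),
\end{equation*}
so that after absorbing the $\int_{\Omega}|N|^{2}$ term into the left-hand side we obtain \eqref{Str_bound_stab}.

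The main obstacle is ensuring that the absorption step at the end succeeds, i.e.\ that the multiplicative constant $C$ arising from the change-of-variables on $\mathcal{U}_{\lambda}^{-}$ is strictly smaller than $c_{0}c'$. Naïvely $\Phi_{\lambda}$ stretches the inner $\lambda$-annulus onto a $2\lambda$-wide tube, giving a stretch factor $\approx 2$ independent of $\lambda$; to drive the relevant constants close enough to $1$, one either exploits the freedom in the construction of $\Phi_{\lambda}$ by taking the outer annulus $\mathcal{U}_{\lambda}^{+}$ much narrower than the inner annulus (so the stretch factor of $\varphi_{\lambda}$ tends to $1$), or chooses $\lambda$ small and uses Poincaré-type control of $\int_{\mathcal{U}_{\lambda}}|N|^{2}$ against the full $L^{2}$ norm. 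Either way, the extra parameter in the geometric construction is what makes the final absorption possible.
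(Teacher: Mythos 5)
Your argument reproduces the paper's proof essentially step for step: the same pulled-back map $N\circ\Phi_\lambda$ on $\mathcal{V}$, the same Luckhaus extension $\overline{g}_\lambda$ from Corollary~\ref{Extension} on $\U_\lambda^+$, the same normal-bundle projection to manufacture a zero-trace competitor, and the same application of the strict stability inequality followed by change-of-variables and projection estimates. The paper merely organizes the final bookkeeping region by region (equations~\eqref{sbs2}--\eqref{sbs4}) instead of in one global inequality as you do, which is a cosmetic difference.

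The absorption point you raise at the end is indeed the delicate step, and the paper itself disposes of it tersely (``after possibly having redefined $c(\Omega)$''). Of your two proposed remedies, only the first is viable: shaping $\Phi_\lambda$ so that the radial stretch on $\U_\lambda^-$ is close to $1$ by narrowing the outer strip is the right idea, but note that it departs from the paper's symmetric setup in which $\U_\lambda^+$ and $\U_\lambda^-$ both have width $\lambda$, so you would have to restate Corollary~\ref{Extension} for a one-sided collar of width $\mu\lambda$ (harmless, but worth saying explicitly). The second remedy does not work: $N$ is an \emph{arbitrary} element of $\Gamma_{Q}^{1,2}(\N\Omega)$ with the prescribed trace, so there is no Poincar\'e-type estimate controlling $\int_{\U_\lambda}|N|^{2}$ by a small multiple of $\int_{\Omega}|N|^{2}$ uniformly over competitors --- for any fixed $\lambda$ one can build an admissible $N$ whose $L^{2}$ mass is almost entirely concentrated in $\U_\lambda$.
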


\begin{proof}
Fix $\lambda < \lambda_{0}$ to be chosen, and let $\mathcal{V} \Subset \Omega$ be such that $\Omega = \mathcal{V}_{\lambda}$ as above. For any $N \in \Gamma_{Q}^{1,2}(\N \Omega)$ such that $N|_{\partial \Omega} = g_{0}$, consider the map $\tilde{N} := N \circ \Phi_{\lambda} \in W^{1,2}(\mathcal{V}, \A_{Q}(\R^d))$, and observe that $\tilde{N}|_{\partial\mathcal{V}} = \tilde{g}_{0}$, where $\tilde{g}_{0}(\pi) = g_{0}(\exp_{\pi}(\lambda \eta(\pi)))$ for $\pi \in \partial\mathcal{V}$.

Now, apply Corollary \ref{Extension} with this choice of $\mathcal{V}$, $\tilde{g}_{0}$ and $\lambda$ in order to extend $\tilde{N}$ to the map $u \in W^{1,2}(\Omega, \A_{Q}(\R^d))$ given by
\begin{equation}
u(x) :=
\begin{cases}
\tilde{N}(x) &\mbox{ if } x \in \mathcal{V} \\
\overline{g}_{\lambda}(x) &\mbox{ if } x \in \Omega \setminus \mathcal{V} = \U_{\lambda}^{+},
\end{cases}
\end{equation}
Observe that the normal bundle projection $u^{\perp}$ satisfies $u^{\perp} \in \Gamma_{Q}^{1,2}(\N \Omega)$ and the boundary condition $u^{\perp}|_{\partial \Omega} = Q \llbracket 0 \rrbracket$. From the hypothesis, we are therefore able to conclude that
\begin{equation}\label{sbs1}
\Jac(u^{\perp},\Omega) \geq c(\Omega) \int_{\Omega} |u^{\perp}|^{2} \, \dHa^m.
\end{equation}

Now, note that $u^{\perp} \equiv N$ in $\Omega \setminus \U_{\lambda}$. Combining this observation with \eqref{sbs1}, we trivially deduce
\begin{equation}\label{sbs2}
\Jac\left(N,\Omega \setminus \U_{\lambda}\right) + \Jac\left(u^{\perp},\U_{\lambda}\right) \geq c(\Omega) \left( \int_{\Omega \setminus \U_{\lambda}} |N|^2 \, \dHa^m + \int_{\U_{\lambda}} |u^{\perp}|^{2} \, \dHa^m \right).
\end{equation} 

In order to prove our result, we then clearly have to provide suitable estimates for $\Jac\left(u^{\perp}, \U_{\lambda}\right)$ and $\int_{\U_{\lambda}} |u^{\perp}|^{2}$.

We observe first that
\begin{equation} \label{discard}
\int_{\U_{\lambda}} |u^{\perp}|^{2} = \int_{\U_{\lambda}^{-}} |u^{\perp}|^{2} + \int_{\U_{\lambda}^{+}} |u^{\perp}|^{2} \geq \int_{\U_{\lambda}^{-}} |u^{\perp}|^{2}.
\end{equation}
Recall that
\[
u^{\perp}|_{\U_{\lambda}^{-}} = \sum_{\ell=1}^{Q} \llbracket \p^{\Sigma\perp} \cdot (N_{\ell} \circ \Phi_{\lambda}) \rrbracket = \sum_{\ell=1}^{Q} \left\llbracket \sum_{\beta=1}^{k} \langle N_{\ell} \circ \Phi_{\lambda}, \nu_{\beta} \rangle \nu_{\beta} \right\rrbracket,
\]
whence
\begin{equation}
\int_{\U_{\lambda}^{-}} |u^{\perp}|^{2} = \int_{\U_{\lambda}^{-}} \sum_{\ell=1}^{Q} \sum_{\beta=1}^{k} |\langle N_{\ell}\left( \Phi_{\lambda}(x) \right), \nu_{\beta}(x) \rangle|^{2} \, \dHa^{m}(x).
\end{equation}
Now, changing variable $y = \Phi_{\lambda}(x)$, integrating along geodesics and using \eqref{bound} one easily proves that from this follows
\begin{equation} \label{error1}
\int_{\U_{\lambda}^{-}} |u^{\perp}|^{2} \geq C \left( \int_{\U_{\lambda}} |N|^{2} \, \dHa^m - \mathcal{E}_{\lambda}^{(1)} \right),
\end{equation}
where the error term $\mathcal{E}_{\lambda}^{(1)}$ satisfies the estimate
\begin{equation}\label{error1:bd}
|\mathcal{E}_{\lambda}^{(1)}| \leq \frac{1}{2} \int_{\U_{\lambda}} |N|^{2} \, \dHa^m,
\end{equation}
\emph{provided} $\lambda$ satisfies some suitable smallness conditions which are \emph{not} depending on $N$. Combining \eqref{sbs2}, \eqref{discard}, \eqref{error1} and \eqref{error1:bd}, we conclude that for suitably small $\lambda$
\begin{equation}\label{sbs3}
\Jac\left(N, \Omega \setminus \U_{\lambda}\right) + \Jac\left( u^{\perp}, \U_{\lambda} \right) \geq c(\Omega) \int_{\Omega} |N|^{2} \, \dHa^m,
\end{equation}
up to possibly changing the value of $c(\Omega)$.

Now, we work on $\Jac\left( u^{\perp}, \U_{\lambda} \right)$. As before, decompose
\begin{equation}
\Jac\left( u^{\perp}, \U_{\lambda} \right) = \Jac\left( u^{\perp}, \U_{\lambda}^{-} \right) + \Jac\left( u^{\perp}, \U_{\lambda}^{+} \right).
\end{equation}
Concerning the first addendum, one shows that
\begin{equation}\label{error2}
\Jac\left( u^{\perp}, \U_{\lambda}^{-} \right) \leq C \Jac\left( N, \U_{\lambda} \right) + \mathcal{E}_{\lambda}^{(2)},
\end{equation}
where the error $\mathcal{E}_{\lambda}^{(2)}$ satisfies
\begin{equation}\label{error2:bd}
|\mathcal{E}_{\lambda}^{(2)}| \leq \varepsilon \left( \Dir\left( N, \U_{\lambda} \right) + \int_{\U_{\lambda}} |N|^{2} \, \dHa^m \right)
\end{equation}
for any choice of $\varepsilon > 0$, provided $\lambda$ is smaller than some $\lambda_{*}$ depending on $\varepsilon$ and on the geometry of the problem, but, again, not on the map $N$. In particular, this allows to absorb the error term and conclude, under the previously considered smallness assumptions on $\lambda$, that
\begin{equation}\label{sbs4}
\Jac\left( N, \Omega \right) \geq c(\Omega) \left( \int_{\Omega} |N|^{2} \, \dHa^m - \Jac\left( u^{\perp}, \U_{\lambda}^{+} \right) \right)
\end{equation}
after possibly having redefined $c(\Omega)$.

Now we are able to conclude: following the same strategy as before, it is not difficult to estimate
\begin{equation}
|\Jac\left( u^{\perp}, \U_{\lambda}^{+} \right)| \leq C \left( \Dir\left( \overline{g}_{\lambda}, \U_{\lambda}^{+} \right) + \int_{\U_{\lambda}^{+}} |\overline{g}_{\lambda}|^{2} \, \dHa^m \right),
\end{equation}
where $\lambda$ is small, but fixed, and does not depend on $N$. Our result, equation \eqref{Str_bound_stab}, is then an immediate consequence of Corollary \ref{Extension} and of the definition of $\tilde{g}_{0}$.
\end{proof}

We are now ready to prove the Conditional Existence Theorem \ref{cond_ex}.

\begin{proof}[Proof of Theorem \ref{cond_ex}]
The proof is an application of the direct methods in the Calculus of Variations. Fix any $g \in \Gamma_{Q}^{1,2}(\N\Omega)$ with boundary trace $g_{0} := g|_{\partial\Omega} \in W^{1,2}(\partial\Omega, \A_{Q}(\R^d))$. Then, the inequality \eqref{Str_bound_stab} implies that for any $N \in \Gamma_{Q}^{1,2}(\N \Omega)$ with $N|_{\partial \Omega} = g_{0}$ one has
\[
\Jac(N,\Omega) \geq -C(\Omega,g_{0}),
\]
thus the Jacobi functional is bounded from below in the class of competitors.

Set
\[
\Lambda := \inf \lbrace \Jac(N,\Omega) \, \colon \, N \in \Gamma_{Q}^{1,2}(\N \Omega), N|_{\partial \Omega} = g_{0} \rbrace > -\infty,
\]
and consider a minimizing sequence $\{N_{h}\}_{h=1}^{\infty} \subset \Gamma_{Q}^{1,2}(\N \Omega)$, $N_{h}|_{\partial \Omega} = g_{0}$, $\lim_{h \to \infty} \Jac(N_h,\Omega) = \Lambda$. Then, for $h \geq h_{0}$ sufficiently large, one has
\[
\Jac(N_{h},\Omega) \leq \Lambda+1,
\]
from which we deduce
\[
\Dir(N_{h},\Omega) \leq C \int_{\Omega} |N_{h}|^{2} \, \dHa^m + |\Lambda| + 1.
\]

On the other hand, \eqref{Str_bound_stab} immediately implies that
\[
\int_{\Omega} |N_{h}|^{2} \, \dHa^m \leq C(|\Lambda|,\Omega,g_{0}).
\]

Putting all together, we conclude that
\begin{equation}
\Dir(N_{h},\Omega) + \int_{\Omega} |N_{h}|^{2} \, \dHa^m \leq C,
\end{equation}
where $C$ is a constant depending only on $|\Lambda|$, $\Omega$, $g_{0}$ and the geometry of the embeddings $\Sigma \hookrightarrow \M \hookrightarrow \R^d$. Hence, up to extracting a subsequence, $N_{h}$ converges weakly in $W^{1,2}$, strongly in $L^2$, to a map $\overline{N} \in \Gamma_{Q}^{1,2}(\N \Omega)$ with $\overline{N}|_{\partial \Omega} = g_{0}$. The lower semi-continuity of the Jacobi functional with respect to weak convergence, Proposition \ref{lsc}, allows us to conclude that $\overline{N}$ is the desired minimizer. 
\end{proof}

\section{H\"older regularity of Jacobi $Q$-fields} \label{section:reg}

In this section, we present a proof of the following quantitative version of Theorem \ref{qual_reg_Holder}. As usual, $\Omega$ is an open subset of the compact $m$-dimensional manifold $\Sigma$ minimally embedded in $\M$.
\begin{theorem}[H\"older regularity of Jacobi multi-fields] \label{reg_Holder}
There exist universal constants $\alpha = \alpha(m,Q) \in \left( 0, 1 \right)$ and $\Lambda = \Lambda(m,Q) > 0$ and a radius $0 < r_{0} = r_{0}(m,Q) < {\rm inj}(\Sigma)$ with the following property. If $N \in \Gamma_{Q}^{1,2}(\N\Omega)$ is $\Jac$-minimizing, then for every $0 < \theta < 1$ there exists a constant $C = C(m,d,Q,\Sigma,\theta)$ such that
\begin{equation} \label{reg_Holder:est}
\begin{split}
\left[ N \right]_{C^{0,\alpha}(\overline{\B}_{\theta r}(p))} :&= \sup_{x_{1}, x_{2} \in \overline{\B}_{\theta r}(p)} \frac{\G(N(x_1), N(x_2))}{\mathbf{d}(x_1,x_2)^{\alpha}} \\
&\leq C \left( r^{2-m-2\alpha} \left( \Dir(N,\B_{r}(p)) + \Lambda \int_{\B_{r}(p)} |N|^{2} \, \dHa^{m} \right) \right)^{\sfrac{1}{2}}
\end{split}
\end{equation}
for every $p \in \Omega$ and for every $r \leq \min\lbrace r_{0}, \ddist(p, \partial\Omega)\rbrace$. In particular, $N \in C^{0,\alpha}_{loc}(\Omega, \A_{Q}(\R^d))$.
\end{theorem}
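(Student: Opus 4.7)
The strategy is a perturbation-of-the-Dirichlet-problem argument: I compare the $\Jac$-minimizer $N$ to a $\Dir$-minimizer on small balls and iterate a Campanato-type decay. Fix $p\in\Omega$; the local nature of the statement allows me to work in a geodesic ball $\B_\rho(q)$ with $q\in\B_{\theta r}(p)$ and $\rho$ small, and to pass to normal coordinates at $q$, so that $\B_\rho(q)$ is identified with a Euclidean ball up to errors of order $\rho^{2}$ in the metric and connection (harmless for the scheme). The key algebraic input is the decomposition \eqref{pert}, $\Jac(u,\Omega)=\Dir(u,\Omega)-\mathcal{B}_\Omega(u)$, together with the $L^2$-bound $|\mathcal{B}_\Omega(u)|\leq C_0\|u\|_{L^2(\Omega)}^2$ from \eqref{B_est}, which says that $\Jac$-minimizers are $\Dir$-almost-minimizers with a lower-order $L^2$ defect.

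Next, I would let $v\in\Gamma_Q^{1,2}(\N\B_\rho(q))$ be a $\Dir$-minimizer with $v|_{\partial\B_\rho(q)}=N|_{\partial\B_\rho(q)}$, whose existence is guaranteed (in the normal-bundle version, using Lemma \ref{Projection} to project a Dir-minimizer in $\A_Q(\R^d)$ onto $\N\Sigma$ at a controlled $L^2$-cost) by Theorem \ref{ex_reg}. Using $v$ as a competitor for $N$ in the Jacobi problem and $N$ as a competitor for $v$ in the Dirichlet problem yields
\begin{equation*}
0\leq \Dir(N,\B_\rho(q))-\Dir(v,\B_\rho(q))=[\Jac(N,\B_\rho)-\Jac(v,\B_\rho)]+[\mathcal{B}(N)-\mathcal{B}(v)]\leq C_0\bigl(\|N\|_{L^2(\B_\rho)}^2+\|v\|_{L^2(\B_\rho)}^2\bigr).
\end{equation*}
A Poincaré inequality applied to the real-valued function $\G(v,N)$, which vanishes on $\partial\B_\rho(q)$, combined with the triangle inequality $\|v\|_{L^2}\leq \|N\|_{L^2}+\|\G(v,N)\|_{L^2}$, gives $\|v\|_{L^2(\B_\rho)}^2\leq C\bigl(\|N\|_{L^2(\B_\rho)}^2+\rho^2\Dir(N,\B_\rho)\bigr)$, hence the almost-minimality estimate
\begin{equation*}
\Dir(N,\B_\rho(q))\leq \Dir(v,\B_\rho(q))+C\bigl(\|N\|_{L^2(\B_\rho(q))}^2+\rho^2\Dir(N,\B_\rho(q))\bigr).
\end{equation*}

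The remaining ingredient is the Campanato decay for the comparison map $v$: since $v$ is $\Dir$-minimizing, Theorem \ref{ex_reg} gives $v\in C^{0,\alpha_0}$ for a universal $\alpha_0=\alpha_0(m,Q)\in(0,1)$, which by Morrey's embedding translates into $\Dir(v,\B_s(q))\leq C_1(s/\rho)^{m-2+2\alpha_0}\Dir(v,\B_\rho(q))$ for every $0<s<\rho$. Inserting this into the almost-minimality estimate and using harmonic-replacement on $\B_s(q)$ to bound $\Dir(N,\B_s)-\Dir(v,\B_s)$ by the analogous difference on $\B_\rho$, I obtain
\begin{equation*}
\Dir(N,\B_s(q))\leq C_1(s/\rho)^{m-2+2\alpha_0}\Dir(N,\B_\rho(q))+C\|N\|_{L^2(\B_\rho(q))}^2+C\rho^2\Dir(N,\B_\rho(q)).
\end{equation*}
A standard iteration (pick $s=\theta_0\rho$ with $\theta_0$ small enough to absorb the comparison term, then fix $r_0$ small enough to absorb the $\rho^2\Dir$ term) yields, for some $\alpha=\alpha(m,Q)<\alpha_0$ and every $\B_s(q)\subset \B_{r/2}(p)$,
\begin{equation*}
\Dir(N,\B_s(q))\leq C\,s^{m-2+2\alpha}\Bigl(r^{2-m-2\alpha}\bigl(\Dir(N,\B_r(p))+\Lambda\|N\|_{L^2(\B_r(p))}^2\bigr)\Bigr).
\end{equation*}
Proposition \ref{CM} applied to $N$ on $\B_{\theta r}(p)$ then delivers exactly the bound \eqref{reg_Holder:est}.

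The principal technical obstacle is the $L^2$-control of the $\Dir$-replacement $v$ and, relatedly, the absence of subtraction in $\A_Q(\R^d)$: one cannot form $v-N$, so the classical Poincaré argument must be replaced by estimates on the scalar quantity $\G(v,N)$ (which does vanish on $\partial\B_\rho(q)$) and on the single-valued average $\bfeta\circ v-\bfeta\circ N$. A second subtlety is that the $\Dir$-minimizer produced by Theorem \ref{ex_reg} is a priori $\A_Q(\R^d)$-valued rather than a section of $\N\Sigma$; I would handle this by orthogonally projecting onto $\N\Sigma$ and absorbing the projection error via Lemma \ref{Projection}, at the cost of augmenting the $\Lambda\|N\|_{L^2}^2$ term in \eqref{reg_Holder:est}. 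Once these two points are executed carefully — ensuring the iteration constants remain universal — the rest of the argument is routine.
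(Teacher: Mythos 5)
Your overall strategy — perturbing the Dirichlet problem, comparing $N$ to a $\Dir$-replacement $v$, and then transferring the decay of $v$ to $N$ — is the same idea that drives the paper's proof, and the preliminary steps are sound: the decomposition $\Jac=\Dir-\mathcal{B}$ with $|\mathcal{B}|\lesssim\|\cdot\|_{L^2}^2$ from \eqref{pert}--\eqref{B_est}, the cross-comparison
\[
0\leq \Dir(N,\B_\rho)-\Dir(v,\B_\rho)\leq C_0\bigl(\|N\|_{L^2(\B_\rho)}^2+\|v\|_{L^2(\B_\rho)}^2\bigr),
\]
the Poincar\'e control of $\|v\|_{L^2}$ via $\G(v,N)$, and the normal-bundle projection handled by Lemma \ref{Projection}. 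All of this is essentially what the paper does in the proof of Proposition \ref{Prop Holder}.

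The gap is in the iteration step. You assert that ``harmonic-replacement on $\B_s(q)$'' lets you ``bound $\Dir(N,\B_s)-\Dir(v,\B_s)$ by the analogous difference on $\B_\rho$,'' i.e.\ you want
\[
\Dir(N,\B_s)-\Dir(v,\B_s)\ \leq\ \Dir(N,\B_\rho)-\Dir(v,\B_\rho)\,.
\]
This is equivalent to $\Dir(v,\B_\rho\setminus\B_s)\leq \Dir(N,\B_\rho\setminus\B_s)$, and there is no reason for that to hold: $v$ is the $\Dir$-minimizer on the \emph{whole} ball $\B_\rho$, and it may well concentrate more energy in the annulus than $N$ does. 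In the scalar setting one rescues this via orthogonality — $\Dir(u,\B_\rho)-\Dir(v,\B_\rho)=\int_{\B_\rho}|D(u-v)|^2$ whenever $v$ is harmonic with $u-v=0$ on $\partial\B_\rho$, and then $\int_{\B_s}|D(u-v)|^2\leq\int_{\B_\rho}|D(u-v)|^2$ is trivial — but this Pythagorean identity has no analogue in $\A_Q(\R^d)$, which is not a linear space. The obvious substitutes (a fresh $\Dir$-replacement $w$ on $\B_s$ with $w|_{\partial\B_s}=N|_{\partial\B_s}$, or an attempted gluing of $v$ and $N$ across $\partial\B_s$) do not close the loop either: $w$ has different boundary data from $v|_{\partial\B_s}$, and the gluing would require a Luckhaus-type interpolation that introduces new error terms which you would then have to absorb, substantially changing the architecture of the argument. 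So the inequality you iterate is not established.

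The paper sidesteps this entirely: instead of a two-scale Campanato comparison, it uses the boundary energy estimate \cite[Proposition 3.10]{DLS11a} for $\Dir$-minimizers, $\Dir(f,B_r)\leq C(m)\,r\,\Dir(f,\partial B_r)$ with $C(m)<(m-2)^{-1}$, combined with the Poincar\'e inequality of Corollary \ref{cor Q-Poincare}, to obtain the single-radius differential inequality \eqref{Diff Ineq 1} for the quantity $\phi(r)=\Dir(u,B_r)+\Lambda\|u\|_{L^2(B_r)}^2$. Integrating this ODE yields the power decay \eqref{Dir_decay}, and then Campanato--Morrey (Proposition \ref{CM}) finishes. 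This is genuinely cleaner in the multivalued setting precisely because it never needs to compare $N$ and $v$ on an interior ball, only on a single ball through its boundary. If you want to rescue your version, the most direct fix is to replace the unjustified two-scale transfer by this boundary-energy step. As a minor remark, the passage from H\"older regularity of $v$ to the energy decay $\Dir(v,\B_s)\leq C(s/\rho)^{m-2+2\alpha_0}\Dir(v,\B_\rho)$ is not ``Morrey's embedding'' (which goes the other way); it is the Caccioppoli inequality applied to $v$, combined with the H\"older modulus — true but worth phrasing correctly since it is itself a nontrivial use of $\Dir$-minimality.
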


The proof of Theorem \ref{reg_Holder} is a fairly easy consequence of Proposition \ref{Prop Holder} below. Before stating it, we need to introduce some further notation. 

Let us fix a point $p \in \Omega$, and a radius $r < \min\lbrace {\rm inj}(\Sigma), \ddist(p, \partial\Omega)\rbrace$, in such a way that the exponential map $\exp_{p}$ defines a diffeomorphism
\[
\exp_{p} \colon B_{r}(0) \subset T_{p}\Sigma \to \mathbf{B}_{r}(p) \subset \Omega.
\] 

Denote by $y = (y^{1},\dots,y^{m})$ coordinates in $T_{p}\Sigma$ corresponding to the choice of an orthonormal basis $(e_{1},\dots,e_{m})$, and set $u := N \circ \exp_{p}$. Observe that for any $y \in B_{r}$ the differential $d(\exp_{p})|_{y}$ realizes a linear isomorphism between $T_{p}\Sigma$ and $T_{\exp_{p}(y)}\Sigma$. Fix an orthonormal frame $(\xi_{1},\dots,\xi_{m})$ of the tangent bundle $\T \Sigma|_{{\bf B}_{r}(p)}$ extending $(e_{1},\dots,e_{m})$ (i.e. such that $\xi_{i}|_{p} = e_{i}$ for $i = 1,\dots,m$), and define, for $y \in B_{r}$,
\begin{equation}
\varepsilon_{i}(y) := \left( d(\exp_{p})|_{y} \right)^{-1} \cdot \xi_{i}(\exp_{p}(y)).
\end{equation}
Then, an elementary computation shows that
\begin{equation}
\int_{\mathbf{B}_{r}(p)} |N(x)|^{2} \, \dHa^{m}(x) = \int_{B_{r}} |u(y)|^{2} \mathbf{J}\exp_{p}(y) \, {\rm d}y
\end{equation}
and
\begin{equation}
\Dir(N,\mathbf{B}_{r}(p)) = \int_{B_{r}} \sum_{i=1}^{m} |D_{\varepsilon_{i}}u(y)|^{2} \mathbf{J}\exp_{p}(y) \, {\rm d}y, 
\end{equation}
where $\mathbf{J}\exp_{p}$ is the Jacobian determinant of the exponential map. From this it is immediate to deduce that the following asymptotic behaviors are satisfied for $r \to 0$ uniformly in $p$:
\begin{equation} \label{ab1}
\int_{\B_{r}(p)} |N(x)|^{2} \, \dHa^{m}(x) = (1 + O(r)) \int_{B_r} |u(y)|^{2} \, {\rm d}y,
\end{equation}
\begin{equation} \label{ab2}
\Dir(N, \B_{r}(p)) = (1 + O(r))\int_{B_r} \sum_{i=1}^{m} |D_{e_i}u(y)|^{2} \, {\rm d}y = (1+O(r)) \Dir(u,B_r).
\end{equation}

We can now state the key result from which we will conclude the H\"older regularity of Jacobi $Q$-fields.

\begin{proposition} \label{Prop Holder}
There exist a universal positive constant $\Lambda = \Lambda(m,Q)$ and a radius $0 < r_0 = r_0(m,Q) < {\rm inj}(\Sigma)$ with the following property. Let $N \in \Gamma_{Q}^{1,2}(\N \Omega)$ be $\Jac$-minimizing and $p \in \Omega$. Then, for a.e. radius $r \leq \min\{r_{0}, \ddist(p, \partial \Omega)\}$ one has
\begin{equation} \label{Diff Ineq 1}
\Dir(u,B_r) + \Lambda \int_{B_r} |u|^{2} \, {\rm d}y \leq C(m) r \left( \Dir(u, \partial B_r) + \Lambda \int_{\partial B_r} |u|^{2} \, \dHa^{m-1} \right),  
\end{equation} 
where $u := N \circ \exp_{p}|_{B_r} \in W^{1,2}\left(B_r , \A_{Q}(\R^{d}) \right)$ and $C(m) < (m-2)^{-1}$ when $m \geq 3$.
\end{proposition}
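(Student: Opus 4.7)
The strategy is to exploit $\Jac$-minimality against a suitable extension of the boundary trace $u|_{\partial B_r}$, using that $\Jac = \Dir - \mathcal{B}$ where $\mathcal{B}$ is a lower-order perturbation controlled via \eqref{B_est} by the $L^2$-norm. Thus the reverse-Poincar\'e estimate \eqref{Diff Ineq 1} will arise essentially from the classical one for $\Dir$-minimizers (obtained by comparison with the zero-homogeneous extension of the trace), modulo error terms that are absorbable by taking the constant $\Lambda$ large and the radius $r_0$ small.

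First, I identify $\B_r(p)$ with the Euclidean ball $B_r \subset T_p\Sigma \cong \R^m$ via $\exp_p$, keeping track of the asymptotic relations \eqref{ab1}-\eqref{ab2} that compare intrinsic and Euclidean integrals up to a multiplicative factor $1 + O(r)$. By Fubini, for a.e.~$r$ the restriction $u|_{\partial B_r}$ belongs to $W^{1,2}(\partial B_r, \A_Q(\R^d))$; I then define the zero-homogeneous extension $v(y) := u(r\,y/|y|)$ on $B_r$ (for $m \geq 3$; for $m=2$ I would use the irreducible harmonic extension constructed in the proof of Lemma~\ref{hom_ext}). A direct computation in polar coordinates, exactly as in the single-valued case, gives
\[
\Dir(v, B_r) = \tfrac{r}{m-2}\, \Dir_\tau(u, \partial B_r), \qquad \|v\|_{L^2(B_r)}^2 = \tfrac{r}{m}\, \|u\|_{L^2(\partial B_r)}^2,
\]
where $\Dir_\tau$ denotes the tangential Dirichlet energy on the sphere, which is bounded by $\Dir(u, \partial B_r)$ and in fact strictly smaller whenever $\partial_\rho u \not\equiv 0$ on $\partial B_r$.

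Since $v$ is $\R^d$-valued but need not take values in the normal bundle when transported back to $\Sigma$, I would then project: setting $V := v \circ \exp_p^{-1}$ and $V^\perp \in \Gamma^{1,2}_Q(\N \B_r(p))$ as in \eqref{orth_proj}, Lemma~\ref{Projection} gives
\[
\Dir(V^\perp, \B_r(p)) \leq (1+\varepsilon) \Dir(V, \B_r(p)) + C_\varepsilon \|V\|_{L^2(\B_r(p))}^2
\]
for any $\varepsilon > 0$. Because $u$ is itself normal, $V^\perp|_{\partial \B_r(p)} = N|_{\partial \B_r(p)}$, so $V^\perp$ is an admissible competitor. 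Writing $\Jac = \Dir - \mathcal{B}$, using $|\mathcal{B}_{\B_r(p)}(\cdot)| \leq C_0 \|\cdot\|_{L^2(\B_r(p))}^2$ from \eqref{B_est}, and passing back to Euclidean integrals through \eqref{ab1}-\eqref{ab2}, the inequality $\Jac(N, \B_r(p)) \leq \Jac(V^\perp, \B_r(p))$ becomes
\[
\Dir(u, B_r) \leq (1+C\varepsilon)\,\Dir(v, B_r) + C\bigl(\|u\|_{L^2(B_r)}^2 + \|v\|_{L^2(B_r)}^2\bigr) + O(r)\Dir(u, B_r),
\]
where the last term is the perturbation introduced by the change of variables and can be absorbed into the left-hand side provided $r \leq r_0$ for $r_0$ small enough.

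To handle the bulk term $\|u\|_{L^2(B_r)}^2$ and obtain the form \eqref{Diff Ineq 1}, I would invoke the elementary radial Poincar\'e-trace inequality
\[
\|u\|_{L^2(B_r)}^2 \leq \tfrac{2r}{m}\|u\|_{L^2(\partial B_r)}^2 + \tfrac{r^2}{m-2}\Dir(u, B_r),
\]
which follows from $u(s\omega) = u(r\omega) - \int_s^r \partial_\rho u(\rho\omega)\,d\rho$ by Cauchy-Schwarz with the weight $\rho^{m-1}$; for $r$ small the last term is again absorbable. Adding $\Lambda \|u\|_{L^2(B_r)}^2$ to both sides and choosing $\Lambda = \Lambda(m,Q)$ sufficiently large so that the geometric constants $C_0$ and $C_\varepsilon$ (which do not depend on $r$) are dominated by $\Lambda/(m-2)$ finally produces \eqref{Diff Ineq 1}. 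The main obstacle I expect is keeping the leading constant $C(m)$ strictly below $(m-2)^{-1}$: each of the three perturbations---normal bundle projection, exponential change of variables, and the $\mathcal{B}$ correction---shifts the sharp factor $1/(m-2)$ of the zero-homogeneous extension, and the strict inequality must be retained by choosing $\varepsilon$, $r_0$ small and $\Lambda$ large in the correct order, exploiting the slack $\Dir_\tau(u, \partial B_r) \leq \Dir(u, \partial B_r)$ together with the factor $(1 + O(r))$ of Jacobian corrections.
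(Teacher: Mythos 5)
Your argument has a genuine gap at precisely the point you flag as the main obstacle: obtaining $C(m)<(m-2)^{-1}$ for $m\geq 3$. The zero-homogeneous extension $v(y)=u(r y/|y|)$ satisfies $\Dir(v,B_r)=\tfrac{r}{m-2}\,\Dir(u,\partial B_r)$ \emph{exactly}, and the slack you propose to exploit, $\Dir_\tau(u,\partial B_r)\le\Dir(u,\partial B_r)$, does not exist: the quantity $\Dir(u,\partial B_r)$ appearing in \eqref{Diff Ineq 1} is the Dirichlet energy of the trace $u|_{\partial B_r}$, which already consists only of tangential derivatives, so the two sides of your inequality coincide. Consequently the multiplicative corrections $(1+C\varepsilon)$ coming from Lemma~\ref{Projection} and from the Jacobian factors \eqref{ab1}--\eqref{ab2} push your leading constant \emph{above} $(m-2)^{-1}$, and the constant $\Lambda$ cannot save this since it multiplies the zeroth-order terms, not $\Dir(u,\partial B_r)$. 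This is fatal for the application: the proof of Theorem~\ref{reg_Holder} needs $\gamma:=C(m)^{-1}-(m-2)>0$ to extract a positive H\"older exponent $\alpha=\gamma/2$ from \eqref{Diff Ineq 1}.

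The paper avoids this by comparing $N$ not with the zero-homogeneous extension but with $h^\perp$, where $h=f\circ\exp_p^{-1}$ and $f$ is the $\Dir$-\emph{minimizer} in $B_r$ with trace $f|_{\partial B_r}=u|_{\partial B_r}$ (guaranteed by Theorem~\ref{ex_reg}). It then invokes \cite[Proposition~3.10]{DLS11a}, which gives $\Dir(f,B_r)\le C(m)\,r\,\Dir(u,\partial B_r)$ with $C(m)$ \emph{strictly} less than $(m-2)^{-1}$; this strict margin is precisely the budget that absorbs the factor $(1+\varepsilon_1)$. A second advantage of using $f$ is that $\Dir(f,B_r)\le\Dir(u,B_r)$, which has no analogue for $v$ and is used in the paper together with Corollary~\ref{cor Q-Poincare} to control the $\|f\|_{L^2(B_r)}^2$ term without introducing new boundary terms. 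Your proof can be repaired either by switching competitor to $f$ and citing \cite[Proposition~3.10]{DLS11a}, or by replacing the zero-homogeneous extension with the radially weighted extension $\rho\omega\mapsto(\rho/r)^{\alpha}\bigl(g(\omega)-\bar g\bigr)+\bar g$ (with $\bar g$ the spherical mean), which, combined with a Poincar\'e inequality on $\Sf^{m-1}$, does beat $(m-2)^{-1}$ --- that is in fact the mechanism underlying Proposition~3.10.
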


In order to prove Proposition \ref{Prop Holder}, we will need the following simple result on classical Sobolev functions in the Euclidean space.

\begin{lemma} \label{Poincare}
For every $\varepsilon > 0$ there exists a constant $C = C_{\varepsilon} > 0$ such that the inequality
\begin{equation}\label{Poincare:eq}
\int_{B_r} |g|^{2} \, {\rm d}y \leq \left( \frac{1}{m} + \varepsilon \right) r \int_{\partial B_r} |g|^{2} \, \dHa^{m-1} + C_{\varepsilon} r^{2} \int_{B_r} |Dg|^{2} \, {\rm d}y
\end{equation}
holds for any function $g \in W^{1,2}(B_r^{m})$.
\end{lemma}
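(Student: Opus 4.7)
The plan is to reduce the inequality to an identity produced by integration by parts against the radial vector field $X(y) = y$, and then use Young's inequality to absorb a cross term. Since both sides are continuous in $g$ with respect to the $W^{1,2}(B_r)$ norm, a standard density argument reduces the proof to the case $g \in C^{\infty}(\overline{B_r})$.

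For smooth $g$ I would start from $\operatorname{div}(|g|^{2} y) = m |g|^{2} + 2 g \langle y, \nabla g \rangle$, combined with the fact that on $\partial B_{r}$ the outer unit normal is $\nu(y) = y/r$, so that $\langle y, \nu \rangle = r$. The divergence theorem then yields the exact identity
\begin{equation*}
m \int_{B_{r}} |g|^{2}\, {\rm d}y \;=\; r \int_{\partial B_{r}} |g|^{2}\, \dHa^{m-1} \;-\; 2 \int_{B_{r}} g \, \langle y, \nabla g \rangle \, {\rm d}y.
\end{equation*}
The inequality $|\langle y, \nabla g \rangle| \leq r |\nabla g|$ on $B_{r}$, together with Young's inequality $2ab \leq \eta\, a^{2} + \eta^{-1} b^{2}$ (applied with $a = |g|$, $b = r |\nabla g|$) then gives, for every $\eta > 0$,
\begin{equation*}
(m - \eta) \int_{B_{r}} |g|^{2}\, {\rm d}y \;\leq\; r \int_{\partial B_{r}} |g|^{2}\, \dHa^{m-1} \;+\; \frac{r^{2}}{\eta} \int_{B_{r}} |\nabla g|^{2}\, {\rm d}y.
\end{equation*}

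Given $\varepsilon > 0$, I would then choose $\eta = \eta(\varepsilon, m) \in (0,m)$ so small that $\frac{1}{m-\eta} \leq \frac{1}{m} + \varepsilon$, divide the previous inequality by $m - \eta$, and set $C_{\varepsilon} := \frac{1}{\eta(m-\eta)}$. The resulting estimate is precisely \eqref{Poincare:eq}.

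The argument contains no real obstacle: the sharp constant $1/m$ in front of the boundary term appears naturally because $\operatorname{div}(y) = m$, and the cost of absorbing the cross term $2 g \langle y, \nabla g \rangle$ through Young's inequality is the $O(\varepsilon)$ loss in that constant plus the appearance of the term $C_{\varepsilon} r^{2} \int_{B_{r}} |\nabla g|^{2}$, with $C_{\varepsilon}$ blowing up as $\varepsilon \downarrow 0$. The fact that the statement is given for scalar-valued $g$ (and that the proof uses only $\nabla(|g|^{2}) = 2g \nabla g$) means the same proof applies verbatim also to $\R^{N}$-valued Sobolev functions, which is what is really needed in the application to Proposition \ref{Prop Holder}.
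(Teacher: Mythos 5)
Your proof is correct, but it takes a genuinely different route from the paper. The paper proves the lemma by a compactness--contradiction argument: after scaling to $r=1$, it assumes the inequality fails for some $\varepsilon_0$ along a normalized sequence $g_h$ with constant $C_\varepsilon = h \to \infty$, deduces $\int_{B_1}|Dg_h|^2 \to 0$, applies Rellich to extract a constant limit, and passes to the limit in the boundary term to reach the contradiction $1 > (\tfrac{1}{m}+\varepsilon_0)\,m$. Your approach instead integrates $\operatorname{div}(|g|^2 y) = m|g|^2 + 2g\langle y,\nabla g\rangle$ over $B_r$ and absorbs the cross term by Young's inequality. The two are logically equivalent in outcome, but yours has a concrete advantage: it is constructive and produces an explicit constant $C_\varepsilon = \tfrac{1}{\eta(m-\eta)}$ with $\eta$ chosen so that $\tfrac{1}{m-\eta} \le \tfrac{1}{m}+\varepsilon$ (roughly $C_\varepsilon \sim (m^3\varepsilon)^{-1}$), whereas the paper's compactness argument only yields existence of $C_\varepsilon$. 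Your method also makes the scaling behavior in $r$ transparent without a separate rescaling step, and, as you observe, carries over verbatim to $\R^N$-valued functions since it only uses $\nabla(|g|^2)=2g\nabla g$. The only point worth making fully precise in the density step is that the map $g \mapsto \int_{\partial B_r}|g|^2\,\dHa^{m-1}$ is continuous on $W^{1,2}(B_r)$ because the trace operator $W^{1,2}(B_r)\to L^2(\partial B_r)$ is bounded; with that noted, the argument is complete.
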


\begin{proof}
First observe that, by a simple scaling argument, it is enough to prove the lemma for $r = 1$. Assume the lemma is false: suppose, by contradiction, that there exists $\varepsilon_0 > 0$ such that for any $h \in \mathbb{N}$ there is $g_{h} \in W^{1,2}(B_1^{m})$, with $\| g_{h} \|_{L^2} = 1$, such that
\begin{equation} \label{contradiction}
1 >  \left( \frac{1}{m} + \varepsilon_0 \right) \int_{\partial B_1} |g_{h}|^{2} \, \dHa^{m-1} + h \int_{B_1} |Dg_{h}|^{2} \, {\rm d}y.
\end{equation}
The inequality \eqref{contradiction} readily implies that 
\begin{equation}
\lim_{h \to \infty} \int_{B_{1}} |Dg_{h}|^{2} \, {\rm d}y = 0,
\end{equation}
whence, by Rellich's compactness theorem, the sequence $g_{h}$ converges up to a subsequence (not relabeled) weakly in $W^{1,2}$, strongly in $L^2$, to a constant function $g \equiv c$. The condition $\| g \|_{L^2} = 1$ forces the constant to satisfy $|c|^{2} = \omega_{m}^{-1}$, where $\omega_{m}$ is the volume of the unit ball in $\R^{m}$. Hence, it suffices to pass to the limit the inequality
\begin{equation}
1 > \left( \frac{1}{m} + \varepsilon_{0} \right) \int_{\partial B_{1}} |g_{h}|^{2} \, \dHa^{m-1}
\end{equation}
to obtain the desired contradiction:
\begin{equation}
1 > \left( \frac{1}{m} + \varepsilon_{0} \right) m.
\end{equation}
\end{proof}

\begin{corollary} \label{cor Q-Poincare}
For every $\varepsilon > 0$ there exists a constant $C_{\varepsilon} > 0$ such that for any function $v \in W^{1,2}\left( B_r , \A_{Q}(\R^{d}) \right)$ one has:
\begin{equation} \label{Q-Poincare}
\int_{B_r} |v|^{2} \, {\rm d}y \leq \left( \frac{1}{m} + \varepsilon \right) r \int_{\partial B_r} |v|^{2} \, \dHa^{m-1} + C_{\varepsilon} r^{2} \Dir(v,B_r).
\end{equation}
\end{corollary}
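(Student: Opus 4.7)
The plan is to reduce the $Q$-valued Poincaré estimate \eqref{Q-Poincare} to its single-valued counterpart, Lemma \ref{Poincare}, by applying the latter to the scalar function $g(y) := |v(y)| = \G(v(y),Q\llbracket 0 \rrbracket)$.

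First I would note that, by Definition \ref{Sobolev} applied with the $1$-Lipschitz test map $\phi = \G(\,\cdot\,, Q\llbracket 0 \rrbracket) \colon \A_Q(\R^d) \to \R$, the composition $g := \phi \circ v$ belongs to $W^{1,2}(B_r)$. The next step is to compare the classical weak gradient of $g$ with the Dirichlet density of $v$. For each coordinate vector $e_i$, the characterization of the metric derivative $|D_{e_i}v|$ recalled between Definition \ref{Sobolev} and Proposition \ref{LipSob_app} gives the pointwise bound $|D_{e_i} g| = |D_{e_i}\G(v, Q\llbracket 0\rrbracket)| \leq |D_{e_i}v|$ a.e. in $B_r$. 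Combining with Proposition \ref{uniq_Dir} and formula \eqref{Dir:density}, this yields
\[
|Dg(y)|^{2} \;=\; \sum_{i=1}^{m} |D_{e_i}g(y)|^{2} \;\leq\; \sum_{i=1}^{m} |D_{e_i}v(y)|^{2} \;=\; \sum_{i=1}^{m}\sum_{\ell=1}^{Q} |D_{e_i}v^{\ell}(y)|^{2} \;=\; |Dv(y)|^{2}
\]
for $\Ha^{m}$-a.e. $y \in B_r$, and hence $\int_{B_r} |Dg|^{2}\,{\rm d}y \leq \Dir(v,B_r)$.

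With these two facts in hand, it suffices to apply the scalar inequality \eqref{Poincare:eq} to $g$, and to use $g^{2} = |v|^{2}$ on $B_r$ and on $\partial B_r$ (the boundary trace is preserved because $\phi$ is Lipschitz and $v$ admits a trace in the sense of Definition \ref{trace}) together with the gradient bound above. This gives \eqref{Q-Poincare} with exactly the same multiplicative constant $\tfrac{1}{m}+\varepsilon$ on the boundary term, and with $C_{\varepsilon}$ inherited verbatim from Lemma \ref{Poincare}.

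There is no genuine obstacle: the entire content of the corollary is the passage $v \mapsto |v|$, which is essentially free of charge since $\G(\,\cdot\,,Q\llbracket 0 \rrbracket)$ is $1$-Lipschitz on $\A_Q(\R^d)$; all the sharpness in the constant $\tfrac{1}{m}$ comes from the scalar lemma and is inherited automatically. The only minor point to double-check is that the boundary values of $g$ on $\partial B_r$ are $|v|_{\partial B_r}|$ as an element of $W^{1,2}(\partial B_r)$, which is immediate from the compatibility of the trace operator with composition with $1$-Lipschitz scalar maps.
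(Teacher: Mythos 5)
Your argument is exactly the paper's: apply the scalar Lemma~\ref{Poincare} to $g := |v| = \G(v, Q\llbracket 0\rrbracket)$, using that $g \in W^{1,2}(B_r)$, that $g|_{\partial B_r} = \left|v|_{\partial B_r}\right|$, and that $|\partial_j g| \le |\partial_j v|$ pointwise a.e.\ so that $\int |Dg|^2 \le \Dir(v,B_r)$. The reasoning is correct and coincides with the paper's proof, which states the same three facts in compressed form.
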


\begin{proof} 
Fix $\varepsilon > 0$ and $v \in W^{1,2}(B_{r} , \A_{Q}(\R^d))$, and apply Lemma \ref{Poincare} to the function $g = |v| = \G(v, Q \llbracket 0 \rrbracket) \in W^{1,2}(B_{r})$. The inequality \eqref{Q-Poincare} then follows immediately, because $g|_{\partial B_{r}} = \left| v|_{\partial B_{r}} \right|$ and $|\partial_{j} g| \leq |\partial_{j}v|$ for every $j = 1,\dots,m$.
\end{proof}
\,

\begin{proof}[Proof of Proposition \ref{Prop Holder}]
Let $N \in \Gamma_{Q}^{1,2}(\N \Omega)$ be $\Jac$-minimizing, and fix any point $p \in \Omega$. For every radius $r < \min\lbrace {\rm inj}(\Sigma), \ddist(p, \partial\Omega) \rbrace$ the exponential map $\exp_{p}$ maps the Euclidean ball $B_{r}(0) \subset T_{p}\Sigma$ diffeomorphically onto the geodesic ball $\B_{r}(p) \subset \Sigma$, and the composition $u := N \circ \exp_{p}$ is a $W^{1,2}$ $Q$-valued map defined in $B_{r}$.

Let now $f \in W^{1,2}\left(B_r,\A_{Q}(\R^{d})\right)$ be $\Dir$-minimizing in $B_r$ such that $f|_{\partial B_r} = u|_{\partial B_r}$ \footnote{Recall that the existence of such a map $f$ is guaranteed by Theorem \ref{ex_reg}.}, and set $h := f \circ \exp_{p}^{-1}$. Then, the normal bundle projection $h^{\perp} \in \Gamma_{Q}^{1,2}(\N \mathbf{B}_{r}(p))$ satisfies $h^{\perp}|_{\partial \mathbf{B}_{r}(p)} = N|_{\partial \mathbf{B}_{r}(p)}$ and is therefore a competitor for the Jacobi functional. Hence, using minimality, the definition of the Jacobi functional and \eqref{B_est}, we deduce:
\begin{equation}
\Jac(N,\mathbf{B}_{r}(p)) \leq \Jac(h^{\perp},\mathbf{B}_{r}(p)) \leq \Dir(h^{\perp},\mathbf{B}_{r}(p)) + C_{0} \int_{\B_{r}(p)} |h|^{2} \, \dHa^m,
\end{equation}
which in turn produces
\begin{equation}
\Dir(N, \B_{r}(p)) \leq \Dir(h^{\perp}, \B_{r}(p)) + C_{0} \left(\int_{\B_{r}(p)} |h|^{2} \, \dHa^m + \int_{\B_{r}(p)} |N|^{2} \, \dHa^m\right).
\end{equation}

Hence, combining Lemma \ref{Projection} with \eqref{ab1} and \eqref{ab2}, we can conclude that for any $\varepsilon_{1} \in \left( 0,1 \right)$ there exists a radius $0 < r_{\varepsilon_{1}} < {\rm inj}(\Sigma)$ such that the estimate
\begin{equation} \label{Key Est_0}
\Dir(u,B_r) \leq \left( 1 + \varepsilon_{1} \right) \Dir(f,B_r) + C_{\varepsilon_{1}} \left( \int_{B_r} |f|^2 \, {\rm d}y + \int_{B_r} |u|^{2} \, {\rm d}y \right),
\end{equation}
holds true whenever $r \leq r_{\varepsilon_{1}}$.

Now we apply \cite[Proposition 3.10]{DLS11a}: since $f$ is $\Dir$-minimizing in $B_r$, the estimate
\begin{equation} \label{Key Est}
\Dir(f,B_r) \leq C(m) r \Dir(u,\partial B_r)
\end{equation}
holds with constants $C(2) = Q$ and $C(m) < (m-2)^{-1}$ for $m \geq 3$ whenever $\Dir(u, \partial B_{r})$ is finite, and thus for a.e. $r$.
Combining \eqref{Key Est_0} with \eqref{Key Est}, we deduce that we can choose $\varepsilon_{1} = \varepsilon_{1}(m,Q)$ so small that the inequality
\begin{equation} \label{Reg1}
\Dir(u,B_r) \leq C(m) r \Dir(u,\partial B_r) +  C \left( \int_{B_r} |f|^2 \, {\rm d}y + \int_{B_r} |u|^{2} \, {\rm d}y \right)
\end{equation}
holds with, say, $C(2) = 2Q$ and again $C(m) < (m-2)^{-1}$ when $m \geq 3$ for a.e. $r \leq r_{m,Q}$.

Now, fix $\varepsilon > 0$ and apply the result of Corollary \ref{cor Q-Poincare} first with $v = f$ and then with $v = u$, and plug the resulting inequalities in \eqref{Reg1}. Using the fact that $f$ and $u$ have the same boundary value and that $\Dir(f,B_r) \leq \Dir(u,B_r)$, we obtain the following key inequality:
\begin{equation}
\Dir(u,B_r) \leq C(m) r \Dir(u,\partial B_r) + C \left( \frac{1}{m} + \varepsilon \right) r \int_{\partial B_r} |u|^{2} \dHa^{m-1} + C_{\varepsilon} r^{2} \Dir(u,B_r).
\end{equation}

This implies the following: for every $\Lambda > 0$ one has
\begin{equation}
\begin{split}
\Dir(u,B_r) + \Lambda \int_{B_r} |u|^2 \, {\rm d}y &\leq C(m) r \Dir(u,\partial B_r) \\
                                             & + \left( C + \Lambda \right) \left( \frac{1}{m} + \varepsilon \right) r \int_{\partial B_r} |u|^2 \, \dHa^{m-1} \\
                                             & + C_{\varepsilon,\Lambda} r^{2} \Dir(u,B_r).
\end{split}
\end{equation}

For a suitable choice of $\Lambda = \Lambda_{m,\varepsilon} \gg 1$ this yields:
\begin{equation}
\left( 1 - C_{m,\varepsilon} r^{2} \right) \Dir(u,B_r) + \Lambda \int_{B_r} |u|^2 \, {\rm d}y \leq  C(m) r \Dir(u,\partial B_r) + \Lambda \left( \frac{1}{m} + 2\varepsilon \right) r \int_{\partial B_r} |u|^2 \, \dHa^{m-1}.
\end{equation}

Finally, we divide the whole inequality by $1 - C_{m,\varepsilon} r^{2}$ and conclude that if $r$ is sufficiently small, say $r \leq r_{m,\varepsilon,Q}$ then the inequality
\begin{equation}
\Dir(u,B_r) + \Lambda \int_{B_r} |u|^2 \, {\rm d}y \leq C(m) r \Dir(u,\partial B_r) + \Lambda \left( \frac{1}{m} + 4\varepsilon \right) r \int_{\partial B_r} |u|^2 \, \dHa^{m-1}
\end{equation}
holds with a possible new choice of $C(m)$, say $C(2) = 4Q$ and still $C(m) < (m-2)^{-1}$ for $m \geq 3$. The conclusion immediately follows, by choosing $\varepsilon = \varepsilon(m, Q)$ in such a way that $\frac{1}{m} + 4 \varepsilon < 4Q$ when $m = 2$ and $\frac{1}{m} + 4 \varepsilon < \frac{1}{m-2}$ when $m \geq 3$.
\end{proof}

We have now all the ingredients that are needed to prove Theorem \ref{reg_Holder}: as announced at the beginning of the section, the proof can be easily achieved by combining our Proposition \ref{Prop Holder} with the Campanato-Morrey estimates \ref{CM}. 

\begin{proof}[Proof of Theorem \ref{reg_Holder}]
Let $r_{0}$ be the radius given in Proposition \ref{Prop Holder}. Fix any point $p \in \Omega$, and assume without loss of generality that $\B_{r_{0}}(p) \Subset \Omega$. Consider the corresponding exponential map $\exp_{p} \colon B_{r_{0}}(0) \subset T_{p}\Sigma \to \B_{r_{0}}(p) \subset \Sigma$, and set $u := N \circ \exp_{p}$. By Proposition \ref{Prop Holder}, for a.e. radius $r \leq r_0$ the inequality \eqref{Diff Ineq 1} is satisfied with universal constants $\Lambda$ and $C(m)$, with $C(m) < (m-2)^{-1}$ when $m \geq 3$. We set: 
\begin{equation}
\gamma(m) :=
\begin{cases}
C(m)^{-1} &\mbox{ if } m=2 \\
C(m)^{-1} - m + 2 &\mbox{ if } m \geq 3,
\end{cases}
\end{equation}
and we denote by $\phi = \phi(r)$ the absolutely continuous function
\begin{equation}
\phi(r) := \Dir(u,B_r) + \Lambda \int_{B_r} |u|^{2} \, {\rm d}y
\end{equation}
for $r \in \left(0, r_0\right]$. By \eqref{Diff Ineq 1}, $\phi$ satisfies the differential inequality
\begin{equation} \label{Diff Ineq 2}
(m - 2 + \gamma) \phi \leq r \phi'
\end{equation}
almost everywhere in the interval $\left( 0, r_{0} \right]$. Integrating \eqref{Diff Ineq 2} we obtain:
\begin{equation} \label{Dir_decay}
\Dir(u,B_r) \leq \phi(r) \leq \frac{\phi(r_0)}{r_{0}^{m-2+\gamma}} r^{m-2+\gamma} =: A r^{m-2+\gamma}.
\end{equation}
As a consequence of the Campanato - Morrey estimates, Proposition \ref{CM}, we conclude that $u$ is H\"older continuous with exponent $\alpha := \frac{\gamma}{2}$, with
\begin{equation} \label{Holder_sn}
[u]_{C^{0,\alpha}\left(\overline{B}_{\theta r_0}\right)} := \sup_{y_{1}, y_{2} \in \overline{B}_{\theta r_0}} \frac{\G\left( u(y_1), u(y_2)\right)}{|y_{1} - y_{2}|^{\alpha}} \leq C \sqrt{A},
\end{equation}
for any $0 < \theta < 1$ and for a constant $C = C(m,d,Q,\theta)$.

The estimate \eqref{reg_Holder:est} is an immediate consequence of \eqref{Holder_sn} and the properties of the exponential map.
\end{proof}

\section{First variation formulae and the analysis of the frequency function} \label{sec:frequency}

In this section we start the machinery that will eventually lead us, in Section \ref{sec:sing}, to the proof of Theorem \ref{sing:thm}, according to which a Jacobi $Q$-field $N$ is in fact the ``superposition'' of $Q$ classical Jacobi fields in a neighborhood of all points of the domain $\Omega$ \emph{with the exception} of those belonging to a \emph{singular set} of small Hausdorff dimension.

The first step towards this result consists of deriving some Euler-Lagrange conditions for $\Jac$-minimizing multivalued maps. Throughout the whole section, we will assume, as usual,  that $N$ is a $Q$-valued section of the normal bundle of $\Sigma$ in $\M$ defined in an open set $\Omega$, where it minimizes the Jacobi functional as specified in Definition \ref{Jac_min}.

\subsection{First variations}

Suppose that for some $\delta > 0$ we have a 1-parameter family $\lbrace N_{s} \rbrace_{s \in \left(-\delta, \delta \right)} \subset \Gamma_{Q}^{1,2}(\N\Omega)$ such that $N_{0} = N$ and $N_{s} \equiv N$ in a neighborhood of $\partial\Omega$ for all $s$. Then, the minimization property of $N$ implies that the map $s \mapsto \Jac(N_{s}, \Omega)$ takes its minimum at $s = 0$, and thus
\begin{equation}\label{EL}
\left.\frac{d}{ds} \Jac(N_{s}, \Omega) \right|_{s=0}= 0
\end{equation} 
whenever the derivative on the left exists. The family $\lbrace N_{s} \rbrace$ is called an (admissible) 1-parameter family of variations of $N$ in $\Omega$, and formula \eqref{EL} is the first variation formula corresponding to the given variation.

We will consider two natural types of variations in order to perturb the map $N$. The \emph{inner variations} are generated by right compositions with diffeomorphisms of the domain and by a suitable ``orthogonalization procedure''; the \emph{outer variations} correspond instead to ``left compositions''. The relevant definition is the following.
\begin{definition} \label{in_out_var}
Let $N = \sum_{\ell} \llbracket N^{\ell} \rrbracket \in \Gamma_{Q}^{1,2}(\N\Omega)$ be $\Jac$-minimizing in $\Omega$.
\begin{itemize}
\item[(OV)] Given $\psi \in C^{1}( \Omega \times \R^d, \R^d )$ such that $\spt(\psi) \subset \Omega' \times \R^d$ for some $\Omega' \Subset \Omega$ and $\psi(x,u) \in T_{x}^{\perp}\Sigma \subset T_{x}\M$ for all $(x,u) \in \Omega \times T_{x}^{\perp}\Sigma$, an admissible variation of $N$ in $\Omega$ can be defined by $N_{s}(x) := \sum_{\ell=1}^{Q} \llbracket N^{\ell}(x) + s \psi( x, N^{\ell}(x) ) \rrbracket$. Such a family is called outer variation (OV);

\item[(IV)] Given a $C^1$ vector field $X$ of $\T\Sigma$ compactly supported in $\Omega$, for $s$ sufficiently small the map $x \mapsto \Phi_{s}(x) := \exp_{x}\left( s X(x) \right)$ is a diffeomorphism of $\Omega$ which leaves $\partial \Omega$ fixed. As a consequence, the family $\lbrace N_{s} \rbrace$ defined by $N_{s} := \left( N \circ \Phi_{s} \right)^{\perp}$ is an admissible variation of $N$ in $\Omega$, which we call inner variation (IV).
\end{itemize}

\end{definition}

In the next proposition, we obtain an explicit formulation of \eqref{EL} in the case of outer variations induced by maps $\psi$ as above. Consistently with the notation introduced for multi-fields in Definition \ref{def:normal_dirichlet}, given $(x,u) \in \Omega \times \R^d$ we will denote by $\nabla^{\perp}\psi(x,u)$ the linear operator $T_{x}\Sigma \to T_{x}^{\perp}\Sigma$ obtained by projecting $D_{x}\psi(x,u)$ onto $T_{x}^{\perp}\Sigma$ at every $x$. Also, recall the definitions of $A \cdot u$, $u$ being a (single-valued) section of $\N\Omega$, and of the quadratic form $\Ri$. The symbol $\langle L \, \colon \, M \rangle$ will be used to denote the usual Hilbert-Schmidt scalar product of two matrices $L$ and $M$. 

\begin{proposition}[Outer variation formula] \label{OV}
Let $\psi$ be as in \textrm{(OV)} and such that
\begin{equation} \label{OV:cond}
|D_{u}\psi| \leq C < \infty \hspace{0.2cm} \mbox{ and } \hspace{0.2cm} |\psi| + |D_{x}\psi| \leq C(1 + |u|).
\end{equation}
Then, the first variation formula corresponding to the outer variation $N_{s}$ defined by $\psi$ is
\begin{equation}\label{OV:eq}
\int_{\Omega} \sum_{\ell=1}^{Q} \langle \nabla^{\perp}N^{\ell}(x) \, \colon \left( \nabla^{\perp}\psi(x,N^{\ell}(x)) + D_{u}\psi(x,N^{\ell}(x)) \cdot DN^{\ell}(x) \right) \rangle \, \dHa^m(x) = \mathcal{E}_{{\rm OV}}(\psi),
\end{equation}
where
\begin{equation} \label{OV:error}
\mathcal{E}_{{\rm OV}}(\psi) := \int_{\Omega} \sum_{\ell=1}^{Q} \left( \langle A \cdot N^{\ell}(x) \, \colon A \cdot \psi(x, N^{\ell}(x)) \rangle + \Ri(N^{\ell}(x), \psi(x, N^{\ell}(x))) \right) \, \dHa^m(x).
\end{equation}
\end{proposition}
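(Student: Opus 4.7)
The plan is to derive \eqref{OV:eq} by computing $\left.\frac{d}{ds}\Jac(N_s,\Omega)\right|_{s=0}$ directly and then invoking minimality to equate it to zero. First I would verify that the family $\{N_{s}\}$ is a legitimate $1$-parameter family of admissible variations in $\Gamma_{Q}^{1,2}(\N\Omega)$: for every $x\in\Omega$ and every $\ell$ we have $N^{\ell}(x)\in T_{x}^{\perp}\Sigma$ and $\psi(x,N^{\ell}(x))\in T_{x}^{\perp}\Sigma$ by hypothesis on $\psi$, hence $N_{s}^{\ell}(x)\in T_{x}^{\perp}\Sigma$ a.e.; moreover the Lipschitz-in-$u$ and linear-growth bounds \eqref{OV:cond} together with the chain rule (Proposition~\ref{chain}) ensure that $N_{s}\in W^{1,2}(\Omega,\A_{Q}(\R^{d}))$. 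Since $\spt(\psi)\subset\Omega'\times\R^{d}$ with $\Omega'\Subset\Omega$, one also has $N_{s}\equiv N$ in a neighborhood of $\partial\Omega$, so $N_{s}$ is an admissible competitor for the minimization problem and $s\mapsto\Jac(N_{s},\Omega)$ attains its minimum at $s=0$.

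Next I would use the equivalent expression for the Jacobi functional provided by Lemma~\ref{lem:equivalence}, namely
\[
\Jac(N_{s},\Omega)=\Dir^{\N\Sigma}(N_{s},\Omega)-\int_{\Omega}\sum_{\ell=1}^{Q}|A\cdot N_{s}^{\ell}|^{2}\,\dHa^{m}-\int_{\Omega}\sum_{\ell=1}^{Q}\Ri(N_{s}^{\ell},N_{s}^{\ell})\,\dHa^{m},
\]
and differentiate each of the three terms in $s$ under the integral sign. For the Dirichlet term, the chain rule for multiple valued functions (Proposition~\ref{chain}, applied sheet by sheet via the Lipschitz selection Proposition~\ref{Lip-select} combined with the density of Lipschitz maps in $W^{1,2}$ from Proposition~\ref{LipSob_app}) gives, at almost every point of approximate differentiability,
\[
DN_{s}^{\ell}(x)=DN^{\ell}(x)+s\bigl[D_{x}\psi(x,N^{\ell}(x))+D_{u}\psi(x,N^{\ell}(x))\cdot DN^{\ell}(x)\bigr],
\]
so that $\nabla^{\perp}N_{s}^{\ell}=\nabla^{\perp}N^{\ell}+s\,\mathbf{p}_{x}^{\Sigma\perp}\bigl[D_{x}\psi+D_{u}\psi\cdot DN^{\ell}\bigr]$. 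Expanding $|\nabla^{\perp}N_{s}^{\ell}|^{2}$ and differentiating at $s=0$ yields $2\langle\nabla^{\perp}N^{\ell}:\nabla^{\perp}\psi(x,N^{\ell})+D_{u}\psi(x,N^{\ell})\cdot DN^{\ell}\rangle$, where the outer projection $\mathbf{p}^{\Sigma\perp}$ on the $D_{u}\psi$ term is harmlessly dropped since $\nabla^{\perp}N^{\ell}$ already takes values in $T_{x}^{\perp}\Sigma$ and the Hilbert--Schmidt pairing only sees the normal part.

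For the curvature and Ricci terms, since the quantities $|A\cdot u|^{2}$ and $\Ri(u,u)$ (the latter symmetric in its two arguments by the symmetries of the Riemann tensor, in particular $\Rm(X,Y,Z,W)=\Rm(Z,W,X,Y)$) are quadratic in $u$, the $s$-derivative at $s=0$ produces $2\langle A\cdot N^{\ell}:A\cdot\psi(x,N^{\ell})\rangle$ and $2\Ri(N^{\ell},\psi(x,N^{\ell}))$ respectively. Collecting the three contributions, multiplying by $\frac{1}{2}$, and using the minimality condition $\left.\frac{d}{ds}\Jac(N_{s},\Omega)\right|_{s=0}=0$ produces exactly the identity \eqref{OV:eq} with $\mathcal{E}_{{\rm OV}}$ as in \eqref{OV:error}.

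The main technical obstacle I foresee is the rigorous justification of the differentiation under the integral sign, since $N$ is merely in $W^{1,2}$ and the pointwise computations above only hold at points of approximate differentiability. To this end I would apply dominated convergence: the growth assumptions \eqref{OV:cond} imply that, for $|s|\le s_{0}$ small, the integrands appearing in the incremental quotients are bounded pointwise by $C(1+|N|+|DN|)^{2}$, which is in $L^{1}(\Omega)$ by Sobolev regularity of $N$ and the compact support of $\psi$ in the $x$ variable; thus the limit can be passed inside. Together with the uniform pointwise convergence of the incremental quotients off a set of measure zero, this completes the argument. All the remaining manipulations are bookkeeping exploiting the Lipschitz selection property, which reduces sums over $\ell$ to the classical single-valued computation on each piece of a Borel partition.
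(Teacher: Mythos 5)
Your proof is correct and follows essentially the same route as the paper's (which is stated there in a single line): substitute the outer variation family into the equivalent expression \eqref{clean_Q_Jac:eq} for $\Jac$, differentiate in $s$ using the chain rule and the symmetry of $\Ri$, and justify passage of the derivative under the integral via dominated convergence using \eqref{OV:cond}. You simply supply the bookkeeping the paper elides — the explicit chain-rule expansion of $DN_s^{\ell}$, the observation that the outer projection $\mathbf{p}^{\Sigma\perp}$ on the $D_u\psi\cdot DN^{\ell}$ term is harmless against $\nabla^\perp N^\ell$, and the integrable dominating function $(1+|N|+|DN|)^2$.
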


\begin{proof}
The proof is straightforward: using \eqref{clean_Q_Jac:eq} with $N_s$ in place of $u$, it suffices to differentiate in $s$ and recall that $\Ri$ is a symmetric quadratic form on the normal bundle of $\Sigma$ in $\M$ (the hypotheses in \eqref{OV:cond} ensure the summability of the various integrands involved in the computation). 
\end{proof}

An explicit formula for \eqref{EL} in the case of inner variations induced by vector fields $X$ as in Definition \ref{in_out_var} is the content of the following proposition. Recall that $\overline{A}$ denotes the second fundamental form of the embedding $\M \hookrightarrow \R^{d}$.

\begin{proposition}[Inner variation formula] \label{IV}
Let $X$ be as in \textrm{(IV)}. Then, the first variation formula corresponding to the inner variation $N_{s}$ defined by the family $\Phi_{s}$ of diffeomorphisms induced by $X$ is
\begin{equation} \label{IV:eq}
- \int_{\Omega} |\nabla^{\perp}N|^{2} \di_{\Sigma}(X) \, \dHa^m + 2 \int_{\Omega} \sum_{\ell=1}^{Q} \langle \nabla^{\perp}N^{\ell} \, \colon \, \nabla^{\perp}N^\ell \cdot \nabla^{\Sigma}X \rangle \, \dHa^m = \mathcal{E}_{{\rm IV}}(X),
\end{equation}
where
\[
\mathcal{E}_{{\rm IV}}(X) = \mathcal{E}_{{\rm IV}}^{(1)}(X) + \mathcal{E}_{{\rm IV}}^{(2)}(X) + \mathcal{E}_{{\rm IV}}^{(3)}(X)
\]
is defined by
\begin{equation} \label{IV:error1}
\mathcal{E}_{{\rm IV}}^{(1)}(X) := 2 \int_{\Omega} \sum_{\ell=1}^{Q} \left( \tr_{\Sigma}(\langle \overline{A}(\cdot, N^{\ell}), \overline{A}(X, \nabla^{\perp}_{(\cdot)}N^{\ell}) \rangle) - \tr_{\Sigma}(\langle \overline{A}(X, N^\ell), \overline{A}(\cdot, \nabla^{\perp}_{(\cdot)}N^\ell) \rangle) \right) \, \dHa^{m},
\end{equation}
\begin{equation} \label{IV:error2}
\mathcal{E}_{{\rm IV}}^{(2)}(X) := 2 \int_{\Omega} \sum_{\ell=1}^{Q} \langle A \cdot N^{\ell} \, \colon \, A \cdot \nabla^{\perp}_{X}N^{\ell} \rangle \, \dHa^m,
\end{equation}
and
\begin{equation} \label{IV:error3}
\mathcal{E}_{{\rm IV}}^{(3)}(X) := 2 \int_{\Omega} \sum_{\ell=1}^{Q} \Ri(N^\ell, \nabla^{\perp}_{X}N^\ell) \, \dHa^m.
\end{equation}
\end{proposition}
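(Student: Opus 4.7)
The starting point is the first variation condition $\frac{d}{ds}\big|_{s=0}\Jac(N_s,\Omega)=0$, which holds because $N$ minimizes $\Jac$ and $\{N_s\}=\{(N\circ\Phi_s)^\perp\}$ is an admissible family. Using the local Lipschitz decomposition from Proposition~\ref{Lip-select} to write $N=\sum_\ell\llbracket N^\ell\rrbracket$, each integral in the representation \eqref{clean_Q_Jac:eq} of $\Jac$ splits into a sum over $\ell$ of integrals of single-valued expressions, so the entire argument reduces to a computation on a single (arbitrary) sheet and then summation over $\ell$.

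Since $\p_x^{\Sigma\perp}$ does not depend on $s$ and $\p_x^{\Sigma\perp}N^\ell(x)=N^\ell(x)$, the pointwise derivative at $s=0$ is
$$\dot N^\ell(x):=\tfrac{d}{ds}\big|_{0}\bigl(\p_x^{\Sigma\perp}\cdot N^\ell(\Phi_s(x))\bigr)=\p_x^{\Sigma\perp}\cdot D_{X(x)}N^\ell(x)=\nabla^\perp_X N^\ell(x).$$
Substituting $\dot N^\ell$ into the two algebraic terms in \eqref{clean_Q_Jac:eq}, which are quadratic in $N_s^\ell$ with coefficients depending only on $x$, produces exactly $\mathcal{E}_{\rm IV}^{(2)}(X)$ and $\mathcal{E}_{\rm IV}^{(3)}(X)$. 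Consequently, $\frac{d}{ds}\big|_{0}\Jac(N_s,\Omega)=0$ rearranges to
$$\tfrac{d}{ds}\big|_{0}\Dir^{\N\Sigma}(N_s,\Omega)=\mathcal{E}_{\rm IV}^{(2)}(X)+\mathcal{E}_{\rm IV}^{(3)}(X),$$
and the task reduces to showing that the left-hand side coincides with the LHS of \eqref{IV:eq} minus $\mathcal{E}_{\rm IV}^{(1)}(X)$.

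For this core step I would perform the change of variables $y=\Phi_s(x)$ in $\Dir^{\N\Sigma}(N_s,\Omega)=\int_\Omega\sum_i|\nabla^\perp_{\xi_i(x)}N_s^\ell(x)|^2\,d\Ha^m(x)$ and expand in $s$ using three first-order data: (a) $\det D\Phi_s(x)=1+s\,\di_\Sigma X(x)+O(s^2)$, which gives the term $-\int|\nabla^\perp N|^2\,\di_\Sigma X$; (b) the differential $D\Phi_s(x)|_{T_x\Sigma}=\mathrm{Id}+s\,\nabla^\Sigma X(x)+O(s^2)$, which, inserted into the quadratic integrand, produces $2\int\sum_\ell\langle\nabla^\perp N^\ell:\nabla^\perp N^\ell\cdot\nabla^\Sigma X\rangle$; and (c) the non-commutation between the assignment $x\mapsto\p_x^{\Sigma\perp}$ and the pullback under $\Phi_s$, which comes from applying the chain rule to $N_s^\ell(x)=\p_x^{\Sigma\perp}\cdot N^\ell(\Phi_s(x))$. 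The leading order of the contribution in (c) is governed by the second fundamental form $\overline{A}$ of $\M\hookrightarrow\R^d$, via the identity $D=\nabla+\overline{A}$ and the relation $\nabla^\perp N^\ell=\p^{\Sigma\perp}\cdot DN^\ell$ valid for a normal field; a direct bookkeeping shows that it produces precisely $-\mathcal{E}_{\rm IV}^{(1)}(X)$.

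The main technical difficulty lies in this last step: recovering the explicit antisymmetric form of $\mathcal{E}_{\rm IV}^{(1)}$ recorded in \eqref{IV:error1} requires either computing $D_\xi\p^{\Sigma\perp}$ in a frame adapted to $T\M$ and $T^\perp\M$, or, equivalently, recognising the cross-terms produced by the two appearances of $\overline{A}$ as exactly those which arise in a Gauss-type commutator $[\nabla_{\xi_i},\nabla_X]N^\ell$ applied to a normal field (the antisymmetry in $(\xi_i,X)$ in the two summands of \eqref{IV:error1} is the fingerprint of this commutator). Once these computations are carried out and summed over $\ell$, one reads off the identity \eqref{IV:eq} in the claimed form.
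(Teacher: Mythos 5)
Your proposal is correct and follows essentially the same strategy as the paper. In particular, the observation that $\dot N^\ell := \partial_s\big|_0 N_s^\ell = \p^{\Sigma\perp}\cdot D_X N^\ell = \nabla^\perp_X N^\ell$ immediately handles the two algebraic terms, recovering $\mathcal{E}_{\rm IV}^{(2)}$ and $\mathcal{E}_{\rm IV}^{(3)}$ exactly as the paper does, and your three sources of first-order contribution for $\Dir^{\N\Sigma}(N_s,\Omega)$ after the change of variables $y=\Phi_s(x)$ --- (a) the Jacobian $\mathbf{J}\Phi_s^{-1} = 1 - s\,\di_\Sigma X + O(s^2)$, (b) the pushed-forward frame $\zeta_i(s,y)$ with $\partial_s\zeta_i(0,y) = -[X,\xi_i]$, and (c) the variation of the projection $\p^{\Sigma\perp}$ tracked through the normal frame $(\nu_\alpha)$ --- are precisely the three $s$-dependencies the paper isolates in $g^\ell_{i\alpha}(s,y)\,\mathbf{J}\Phi_s^{-1}(y)$. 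The paper's computation of (c) is carried out by fixing a gauge $\nabla\xi_i|_y = \nabla\nu_\alpha|_y = 0$ and using $D_X\nu_\alpha = \overline{A}(X,\nu_\alpha)$ to extract the two $\overline{A}$-pairings whose antisymmetric combination is $\mathcal{E}_{\rm IV}^{(1)}$; you correctly anticipate both the role of $D = \nabla + \overline{A}$ and the antisymmetric structure of the result, though your appeal to a ``Gauss-type commutator'' is heuristic rather than the mechanism the paper actually uses (the paper obtains the antisymmetry by the elementary cancellation $\langle N^\ell, D_X\eta\rangle = -\langle D_X N^\ell,\eta\rangle$ applied to $\eta = D_{\xi_i}\nu_\alpha - \nabla^\perp_{\xi_i}\nu_\alpha$, which is orthogonal to $N^\ell$). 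Filling in that bookkeeping is routine and your plan leads to the stated formula.
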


\begin{proof}
Fix the vector field $X$, and consider the associated variation $\lbrace N_{s} \rbrace$, with $N_{s} = \sum_{\ell} \llbracket N_{s}^{\ell} \rrbracket$ defined by
\begin{equation} \label{Coord_repr}
N_{s}^{\ell}(x) = (N^{\ell} \circ \Phi_{s})^{\perp}(x) = \sum_{\beta = 1}^{k} \langle N^{\ell}(\Phi_{s}(x)), \nu_{\beta}(x) \rangle \nu_{\beta}(x),
\end{equation}
$\left( \nu_{\beta} \right)_{\beta=1}^{k}$ being a (local) orthonormal frame of $\N\Omega$. Recall that $\Phi_{0} = \mathrm{Id}_{\Omega}$ and that $\left.\frac{\partial}{\partial s} \Phi_{s}(x) \right|_{s=0} = X(x)$. Now, using \eqref{clean_Q_Jac:eq}, we write the first variation formula as 
\begin{equation} \label{IV_0}
\begin{split}
0 &= \left.\frac{d}{ds} \Jac(N_{s}, \Omega)\right|_{s=0} \\
   &= \underbrace{\frac{d}{ds}\bigg|_{s=0} \Dir^{\N\Sigma}(N_{s}, \Omega)}_{=: I_1} + \underbrace{\left( - \frac{d}{ds}\bigg|_{s=0} \int_{\Omega} \sum_{\ell=1}^{Q} | A \cdot N_{s}^{\ell} |^{2} \, \dHa^m \right)}_{=: I_2} + \underbrace{\left( - \frac{d}{ds}\bigg|_{s=0} \int_{\Omega} \sum_{\ell=1}^{Q} \Ri(N_{s}^{\ell}, N_{s}^{\ell}) \, \dHa^m\right)}_{=:I_3},
\end{split}
\end{equation}
and we will work on the three terms separately.

\textit{Step 1: computing $I_{1}$.} Write $I_{1} = \sum_{\ell} I_{1}^{\ell}$, where
\begin{equation} \label{I1}
I_{1}^{\ell} = \frac{d}{ds}\bigg|_{s=0} \int_{\Omega} \sum_{i=1}^{m} \sum_{\alpha=1}^{k} |\langle D_{\xi_i}N_{s}^{\ell}, \nu_{\alpha} \rangle|^{2} \, \dHa^m.
\end{equation}
Using the representation formula \eqref{Coord_repr}, one immediately computes
\begin{equation} \label{I1_2}
\begin{split}
\langle D_{\xi_i}N_{s}^{\ell}, \nu_{\alpha} \rangle (x) &= \langle DN^{\ell}|_{\Phi_{s}(x)} \cdot D\Phi_{s}|_{x} \cdot \xi_{i}(x), \nu_{\alpha}(x) \rangle \\
&+ \langle N^{\ell}(\Phi_{s}(x)), D_{\xi_i}\nu_{\alpha}(x) \rangle \\
&+ \sum_{\beta=1}^{k} \langle N^{\ell}(\Phi_{s}(x)), \nu_{\beta}(x) \rangle \langle D_{\xi_i}\nu_{\beta}(x), \nu_{\alpha}(x) \rangle.
\end{split} 
\end{equation}
Now, since $\langle \nu_{\alpha}, \nu_{\beta} \rangle = \delta_{\alpha\beta}$, we have that $\langle D_{\xi_i}\nu_{\beta}, \nu_{\alpha} \rangle = - \langle \nu_{\beta}, D_{\xi_i} \nu_{\alpha} \rangle$, so that the last term in formula \eqref{I1_2} becomes
\begin{equation}
- \sum_{\beta=1}^{k} \langle N^{\ell}(\Phi_{s}(x)), \nu_{\beta}(x) \rangle \langle D_{\xi_i}\nu_{\alpha}(x), \nu_{\beta}(x) \rangle = - \langle N^{\ell}(\Phi_{s}(x)), \nabla^{\perp}_{\xi_i} \nu_{\alpha}(x) \rangle,
\end{equation}
and we can write
\begin{equation} \label{I1_3}
\begin{split}
\langle D_{\xi_i}N_{s}^{\ell}, \nu_{\alpha} \rangle(x) &= \langle DN^{\ell}|_{\Phi_{s}(x)} \cdot D\Phi_{s}|_{x} \cdot \xi_{i}(x), \nu_{\alpha}(x) \rangle \\
&+ \langle N^{\ell}(\Phi_{s}(x)), (D_{\xi_i}\nu_{\alpha} - \nabla^{\perp}_{\xi_i} \nu_{\alpha})(x) \rangle.
\end{split}
\end{equation}
For small values of the parameter $s$, the map $\Phi_{s}$ is a diffeomorphism of $\Omega$, and we will denote by $\Phi_{s}^{-1}$ its inverse. Then, we can change variable $x = \Phi_{s}^{-1}(y)$ in the integral, and finally write
\begin{equation} \label{I1_4}
I_{1}^{\ell} = \frac{d}{ds}\bigg|_{s=0} \int_{\Omega} \sum_{i=1}^{m} \sum_{\alpha=1}^{k} |g^{\ell}_{i\alpha}(s,y)|^{2} \mathbf{J}\Phi_{s}^{-1}(y) \, \dHa^{m}(y),
\end{equation}
where $\mathbf{J}\Phi_{s}^{-1}$ is the Jacobian determinant of $D\Phi_{s}^{-1}$ and 
\begin{equation}
g^{\ell}_{i\alpha}(s,y) = \langle DN^{\ell}|_{y} \cdot \zeta_{i}(s,y), \nu_{\alpha}(\Phi_{s}^{-1}(y)) \rangle + \langle N^{\ell}(y), (D_{\xi_i}\nu_{\alpha} - \nabla^{\perp}_{\xi_i}\nu_{\alpha})(\Phi_{s}^{-1}(y)) \rangle,
\end{equation}
with $\zeta_{i}(s,y) := D\Phi_{s}|_{\Phi_{s}^{-1}(y)} \cdot \xi_{i}(\Phi_{s}^{-1}(y))$. Hence, we have:
\begin{equation} \label{I1_5}
\begin{split}
I_{1}^{\ell} &= - \int_{\Omega} \sum_{i=1}^{m} \sum_{\alpha=1}^{k} |g^{\ell}_{i\alpha}(0,y)|^{2} \di_{\Sigma}(X) \, \dHa^m + 2 \int_{\Omega} \sum_{i=1}^{m} \sum_{\alpha=1}^{k} g^{\ell}_{i\alpha}(0,y) \partial_{s}g^{\ell}_{i\alpha}(0,y) \, \dHa^m \\
&= - \int_{\Omega} \sum_{i=1}^{m} \sum_{\alpha=1}^{k} |g^{\ell}_{i\alpha}(0,y)|^{2} \di_{\Sigma}(X) \, \dHa^m + \int_{\Omega} \frac{\partial}{\partial s} \left( \sum_{i=1}^{m} \sum_{\alpha=1}^{k} |g^{\ell}_{i\alpha}(s,y)|^{2} \right)\bigg|_{s=0} \, \dHa^{m}(y).
\end{split}
\end{equation}
Now, since 
\[
\sum_{i=1}^{m} \sum_{\alpha=1}^{k} |g^{\ell}_{i\alpha}(s,y)|^{2} = |\nabla^{\perp}N^{\ell}_{s}|^{2}(\Phi_{s}^{-1}(y)),
\]
its value is independent of the orthonormal frame chosen: thus, having fixed a point $y \in \Omega$, we can impose $\nabla \xi_{i} = \nabla \nu_{\alpha} = 0$ at $y$.

We can now proceed computing explicitly \eqref{I1_5}. Clearly, $g^{\ell}_{i\alpha}(0,y) = \langle D_{\xi_i}N^{\ell}, \nu_{\alpha} \rangle(y)$, so we are only left with the computation of $\partial_{s}g^{\ell}_{i\alpha}(0,y)$. We start observing that
\begin{equation}
\partial_{s}\zeta_{i}(0,y) = ( D_{\xi_i}X - D_{X} \xi_i )(y) = - \left[ X, \xi_i \right](y),
\end{equation}
from which we easily deduce
\begin{equation}
\begin{split}
\partial_{s}|_{s=0} \left( \langle DN^{\ell}|_{y} \cdot \zeta_{i}(s,y), \nu_{\alpha}(\Phi_{s}^{-1}(y)) \rangle \right) &= - \langle D_{\left[ X, \xi_i \right]}N^{\ell}, \nu_{\alpha} \rangle - \langle D_{\xi_i}N^{\ell}, D_{X}\nu_{\alpha} \rangle \\
&= \langle D_{\nabla^{\Sigma}_{\xi_i}X}N^{\ell}, \nu_{\alpha} \rangle - \langle \overline{A}(\xi_i, N^\ell), \overline{A}(X, \nu_{\alpha}) \rangle,
\end{split}
\end{equation}
where we have used that $\nabla_{X}\xi_i = \nabla_{X}\nu_{\alpha} = 0$ at $y$ (and, therefore, $\left[ X, \xi_i \right](y) = - \nabla^{\Sigma}_{\xi_i}X(y)$ and $D_{X}\nu_{\alpha} = \overline{A}(X, \nu_{\alpha})$).

On the other hand, 
\begin{equation}
\begin{split}
\partial_{s}|_{s=0} \left( \langle N^{\ell}(y), (D_{\xi_i}\nu_{\alpha} - \nabla^{\perp}_{\xi_i}\nu_{\alpha})(\Phi_{s}^{-1}(y)) \rangle \right) &= - \langle N^{\ell}, D_{X}(D_{\xi_i}\nu_{\alpha} - \nabla^{\perp}_{\xi_i}\nu_{\alpha}) \rangle \\
&= \langle D_{X}N^{\ell}, D_{\xi_i}\nu_{\alpha} - \nabla^{\perp}_{\xi_i}\nu_{\alpha} \rangle \\
&= \langle \overline{A}(X, N^\ell), \overline{A}(\xi_i, \nu_{\alpha}) \rangle
\end{split}
\end{equation}
because the fields $N^\ell$ and $D_{\xi_i}\nu_{\alpha} - \nabla^{\perp}_{\xi_i}\nu_{\alpha}$ are mutually orthogonal and, again, because $\nabla \nu_{\alpha} = 0$ at $y$.

This allows to conclude:
\begin{equation} \label{I1_fin}
\begin{split}
I_{1} = &- \int_{\Omega} |\nabla^{\perp}N|^{2} \di_{\Sigma}(X) \, \dHa^m + 2 \int_{\Omega} \sum_{\ell=1}^{Q} \langle \nabla^{\perp}N^{\ell} \, \colon \, \nabla^{\perp}N^\ell \cdot \nabla^{\Sigma}X \rangle \, \dHa^m \\
&+ 2 \int_{\Omega} \sum_{\ell=1}^{Q} \left( \tr_{\Sigma}(\langle \overline{A}(X, N^\ell), \overline{A}(\cdot, \nabla^{\perp}_{(\cdot)}N^\ell) \rangle) - \tr_{\Sigma}(\langle \overline{A}(\cdot, N^\ell), \overline{A}(X, \nabla^{\perp}_{(\cdot)}N^\ell) \rangle) \right) \, \dHa^m \\
&= - \int_{\Omega} |\nabla^{\perp}N|^{2} \di_{\Sigma}(X) \, \dHa^m + 2 \int_{\Omega} \sum_{\ell=1}^{Q} \langle \nabla^{\perp}N^{\ell} \, \colon \, \nabla^{\perp}N^\ell \cdot \nabla^{\Sigma}X \rangle \, \dHa^m - \mathcal{E}_{{\rm IV}}^{(1)}(X).
\end{split}
\end{equation}

\textit{Step 2: computing $I_{2}$.} Write $I_{2} = \sum_{\ell} I_{2}^{\ell}$, where 
\begin{equation}
I_{2}^{\ell} = - \frac{d}{ds}\bigg|_{s=0} \int_{\Omega} | A \cdot N_{s}^{\ell} |^{2}\, \dHa^m.
\end{equation}
Since the tensor $A$ takes values in the normal bundle of $\Sigma$, clearly $A \cdot N_{s}^{\ell} = A \cdot (N^{\ell} \circ \Phi_{s})$, whence
\begin{equation}\label{I2_2}
\int_{\Omega} | A \cdot N_{s}^{\ell} |^{2}\, \dHa^m = \int_{\Omega} \sum_{i,j=1}^{m} |\langle A_{x}(\xi_{i}(x), \xi_{j}(x)), N^{\ell}(\Phi_{s}(x))|^{2} \, \dHa^m(x).
\end{equation}
We can now differentiate in $s$ and evaluate for $s = 0$ in formula \eqref{I2_2} to obtain:
\begin{equation} \label{I2_3}
I_{2}^{\ell} = - 2 \int_{\Omega} \sum_{i,j=1}^{m} \langle A_{x}(\xi_i(x), \xi_j(x)), N^{\ell}(x) \rangle \langle A_{x}(\xi_i(x), \xi_j(x)), D_{X}N^{\ell}(x) \rangle,
\end{equation}
which readily yields
\begin{equation} \label{I2_fin}
I_{2} = - 2 \int_{\Omega} \sum_{\ell=1}^{Q} \langle A \cdot N^{\ell} \, \colon \, A \cdot \nabla^{\perp}_{X}N^{\ell} \rangle \, \dHa^m = - \mathcal{E}_{{\rm IV}}^{(2)}(X).
\end{equation}

\textit{Step 3: computing $I_{3}$.} As before, write $I_{3} = \sum_{\ell} I_{3}^{\ell}$, where
\begin{equation}
I_{3}^{\ell} = - \frac{d}{ds}\bigg|_{s=0} \int_{\Omega} \Ri(N_{s}^{\ell}, N_{s}^{\ell}) \, \dHa^m.
\end{equation}

Now, it suffices to differentiate in $s$ and evaluate at $s=0$ inside the integral keeping in mind that $\Ri$ is a symmetric $2$-tensor to get
\begin{equation} \label{I3_fin}
I_{3} = - 2 \int_{\Omega} \sum_{\ell=1}^{Q} \Ri(N^\ell, \nabla^{\perp}_{X}N^\ell) \, \dHa^m = - \mathcal{E}_{{\rm IV}}^{(3)}(X).
\end{equation}  

\textit{Conclusion.} The statement, formula \eqref{IV:eq}, is immediately obtained by plugging equations \eqref{I1_fin}, \eqref{I2_fin} and \eqref{I3_fin} into \eqref{IV_0}.

\end{proof}

The first variation formulae \eqref{OV:eq} and \eqref{IV:eq} will play a fundamental role in the next section to discuss the almost monotonicity properties of the frequency function. Before proceeding, we apply the outer variation formula to show that minimizers of the $\Jac$ functional enjoy a Caccioppoli type inequality. 

\begin{proposition}[Caccioppoli inequality] \label{rpis:thm}
There exists a geometric constant $C > 0$ such that for any $\Jac$-minimizing $Q$-valued map $N = \sum_{\ell} \llbracket N^{\ell} \rrbracket \in \Gamma_{Q}^{1,2}(\N\Omega)$ the inequality
\begin{equation} \label{rpis:eq1}
\int_{\Omega} \eta(x)^{2} |\nabla^{\perp}N(x)|^{2} \, \dHa^{m}(x) \leq 4 \int_{\Omega} |D\eta(x)|^{2} |N(x)|^{2} \, \dHa^{m}(x) + C \int_{\Omega} \eta(x)^{2} |N(x)|^{2} \, \dHa^{m}(x) 
\end{equation} 
holds for any choice of $\eta \in C^{1}_{c}(\Omega)$. In particular, for every $p \in \Omega$ and for every $r < \min\left\lbrace {\rm inj}(\Sigma), \ddist(p, \partial \Omega) \right\rbrace$ one has
\begin{equation} \label{rpis:eq2}
\int_{\B_{\frac{r}{2}}(p)} |\nabla^{\perp}N|^{2} \, \dHa^{m} \leq \frac{C}{r^{2}} \int_{\B_{r}(p)} |N|^{2} \, \dHa^{m}.
\end{equation} 
\end{proposition}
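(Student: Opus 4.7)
The natural strategy is to apply the outer variation identity of Proposition \ref{OV} with the test field
\[
\psi(x, u) \;:=\; \eta(x)^2\, u\,,
\]
and then absorb one term via Young's inequality. Since $\psi(x, \cdot)$ is linear, one has $\psi(x, u) \in T_x^\perp\Sigma$ whenever $u \in T_x^\perp\Sigma$, and the support of $\psi$ in the first variable lies in $\spt(\eta) \Subset \Omega$; the bounds $|D_u\psi| \leq \|\eta\|_\infty^{2}$ and $|\psi|+|D_x\psi| \leq (\|\eta\|_\infty^{2}+2\|\eta\|_\infty\|D\eta\|_\infty)|u|$ yield the growth conditions \eqref{OV:cond}, so $\psi$ is admissible.

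\textbf{Unfolding the left-hand side.} A direct computation gives $D_u\psi(x,u) = \eta(x)^2 \,\mathrm{Id}_{\R^d}$ and $D_x\psi(x,u)\cdot \xi = 2\eta(x)(D_\xi \eta(x))\, u$. Since $N^\ell(x) \in T_x^\perp\Sigma$, the projection $\p^{\Sigma\perp}$ acts trivially on $N^\ell(x)$, so
\[
\nabla^\perp\psi(x, N^\ell(x))\cdot \xi = 2\eta(x)\,(D_\xi \eta(x))\, N^\ell(x)\,.
\]
Expanding the Hilbert--Schmidt pairing along a local orthonormal frame $\{\xi_i\}$ of $\T \Sigma$ and using $\nabla^\perp_{\xi_i}N^\ell = \p^{\Sigma\perp}\cdot D_{\xi_i}N^\ell$, the integrand on the left-hand side of \eqref{OV:eq} becomes
\[
\eta^2\,|\nabla^\perp N|^2 \;+\; 2\eta \sum_{\ell=1}^{Q} \sum_{i=1}^{m} (D_{\xi_i}\eta)\,\langle \nabla^\perp_{\xi_i}N^\ell,\, N^\ell\rangle\,.
\]
Two successive Cauchy--Schwarz inequalities (first in $i$, then in $\ell$) bound the cross term by $2\eta|D\eta||\nabla^\perp N||N|$, and Young's inequality yields
\[
2\eta|D\eta||\nabla^\perp N||N| \leq \tfrac{1}{2}\,\eta^2|\nabla^\perp N|^2 + 2\,|D\eta|^2|N|^2\,.
\]

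\textbf{Estimating the error and conclusion.} For the geometric error \eqref{OV:error} we simply substitute $\psi(x, N^\ell(x)) = \eta(x)^2 N^\ell(x)$ to obtain
\[
\mathcal{E}_{\mathrm{OV}}(\psi) = \int_\Omega \eta^2 \sum_{\ell=1}^{Q}\bigl(|A\cdot N^\ell|^2 + \Ri(N^\ell, N^\ell)\bigr)\,\dHa^m\,,
\]
which is controlled by $C_{0}\int_\Omega \eta^2 |N|^2\,\dHa^m$ with $C_{0}$ depending only on the quantities $\mathbf{A}$ and $\mathbf{R}$ defined in \eqref{sup_norms_geometric_A}--\eqref{sup_norms_geometric_R}. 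Inserting these estimates into \eqref{OV:eq} and rearranging gives \eqref{rpis:eq1}. To deduce \eqref{rpis:eq2} it suffices to choose $\eta \in C^1_c(\B_r(p))$ with $\eta \equiv 1$ on $\B_{r/2}(p)$ and $|D\eta| \leq 4/r$: the resulting term $C\int\eta^2|N|^2$ is of lower order in $r$, and since $r \leq {\rm inj}(\Sigma)$ it can be absorbed into a single constant $C'r^{-2}\int_{\B_r(p)}|N|^2$.

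\textbf{Expected obstacle.} The only delicate issue is justifying the pointwise Hilbert--Schmidt identities in the $Q$-valued setting, where no global selection of $N$ is available. This is handled by first working on the countable Lipschitz partition provided by Proposition \ref{Lip-select} -- which reduces every computation to standard single-valued calculations on each piece and recombines them via property $(c)$ of that proposition -- and then extending to general $W^{1,2}$ minimizers by the Lipschitz approximation of Proposition \ref{LipSob_app}.
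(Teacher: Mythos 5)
Your proof is correct and follows essentially the same route as the paper: test the outer variation formula of Proposition \ref{OV} with $\psi(x,u)=\eta(x)^2 u$, absorb the cross term by Young's inequality, and control the geometric error by $C\int\eta^2|N|^2$. The only superficial difference is that you perform Young's inequality with the explicit split yielding the constant $4$ directly, whereas the paper introduces a free parameter $\delta$ and sets $\delta=\tfrac12$ afterwards; your concluding remark about reducing pointwise identities to the Lipschitz pieces of Proposition \ref{Lip-select} and passing to the limit via Proposition \ref{LipSob_app} is a sensible (if slightly redundant) precaution, as the paper already embeds this justification in the proof of Proposition \ref{OV} via Lemma \ref{lem:equivalence}.
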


\begin{proof}
Fix $N$ and $\eta$ as in the statement, and apply the outer variation formula \eqref{OV:eq} with $\psi(x,u) := \eta(x)^{2} u$. Since $D_{x}\psi(x,u) = 2 \eta(x) u \otimes D\eta(x)$ and $D_{u}\psi(x,u) = \eta(x)^{2} {\rm Id}$, for this choice of $\psi$ the outer variation formula reads
\begin{equation} \label{rpis:1}
\int_{\Omega} \eta^{2} |\nabla^{\perp}N|^{2} + 2 \sum_{\ell=1}^{Q} \langle \eta \nabla^{\perp}N^{\ell} \, \colon \, N^{\ell} \otimes D\eta \rangle \, \dHa^{m} = \int_{\Omega} \eta^{2} \sum_{\ell=1}^{Q} \left( |A \cdot N^{\ell}|^{2} + \Ri(N^{\ell}, N^{\ell}) \right) \, \dHa^{m}.
\end{equation}
Applying Young's inequality we immediately deduce that for any $\delta > 0$ one has
\begin{equation} \label{rpis:2}
\int_{\Omega} \eta^{2} |\nabla^{\perp}N|^{2} \, \dHa^{m} \leq \delta \int_{\Omega} \eta^{2} |\nabla^{\perp}N|^{2} \, \dHa^{m} + \frac{1}{\delta} \int_{\Omega} |D \eta|^{2} |N|^{2} \, \dHa^{m} + C \int_{\Omega} \eta^{2} |N|^{2} \, \dHa^{m},
\end{equation}
for a constant $C = C({\bf A}, {\bf R})$, where, we recall, ${\bf A} = \|A\|_{L^{\infty}}$ and ${\bf R} = \| R \|_{L^{\infty}}$ are defined in \eqref{sup_norms_geometric_A} and \eqref{sup_norms_geometric_R}. Choose $\delta = \frac{1}{2}$ to obtain \eqref{rpis:eq1}. In order to deduce \eqref{rpis:eq2}, apply \eqref{rpis:eq1} with $\eta(x) := \phi\left( \frac{d(x)}{r} \right)$, where $d(x) := {\bf d}(x,p)$ and $\phi$ is a cut-off function $0 \leq \phi \leq 1$ such that $\phi(t) = 1$ for $0 \leq t \leq \frac{1}{2}$, $\phi(t) = 0$ for $t \geq 1$ and $|\phi'| \leq 2$. 
\end{proof}

\subsection{Almost monotonicity of the frequency function and its consequences}

The next step towards the proof of Theorem \ref{sing:thm} consists of a careful asymptotic analysis of the celebrated \emph{frequency function}. 

\begin{definition}[Frequency function]
Fix any point $p \in \Omega$. For any radius $0 < r < \min\lbrace {\rm inj}(\Sigma), \ddist(p, \partial\Omega)\rbrace$, define the \textit{energy function}
\begin{equation} \label{D_energy}
{\bf D}_{N,p}(r) := \int_{\B_{r}(p)} |\nabla^{\perp}N|^{2}(x) \, \dHa^{m}(x)
\end{equation}
and the \textit{height function}
\begin{equation} \label{H_height}
{\bf H}_{N,p}(r) := \int_{\partial \B_{r}(p)} |N|^{2}(x) \, \dHa^{m-1}(x).
\end{equation}
The \textit{frequency function} is then defined by
\begin{equation} \label{freq:eq}
{\bf I}_{N,p}(r) := \frac{r {\bf D}_{N,p}(r)}{{\bf H}_{N,p}(r)}
\end{equation}
for all $r$ such that ${\bf H}_{N,p}(r) > 0$. When the $Q$-field $N$ and the point $p$ are fixed and there is no ambiguity, we will drop the subscripts and simply write ${\bf D}(r)$, ${\bf H}(r)$ and ${\bf I}(r)$.
\end{definition}

\begin{remark}
Observe that ${\bf D}(r) = \Dir^{\N\Sigma}(N, \B_{r}(p))$; ${\bf D}$ is an absolutely continuous function with derivative 
\[
{\bf D}'(r) = \int_{\partial \B_{r}(p)} |\nabla^{\perp}N|^{2} \, \dHa^{m-1} 
\]
almost everywhere. As for ${\bf H}(r)$, note that $|N|$ is the composition of $N$ with the Lipschitz function $\G(\cdot, Q\llbracket 0 \rrbracket)$, thus it belongs to $W^{1,2}$. Hence, $|N|^{2}$ is a $W^{1,1}$ function, and also ${\bf H} \in W^{1,1}$.
\end{remark}

\begin{remark} \label{well_posedness}
It is an easy consequence of the H\"older regularity of $N$ that the frequency function ${\bf I}(r)$ is well defined and bounded for suitably small radii at any point $p \in \Omega$ such that $N(p) \neq Q \llbracket 0 \rrbracket$. Indeed, if such assumption is satisfied then 
\[
\lim_{r \to 0^{+}} \frac{1}{\Ha^{m-1}(\partial \B_{r}(p))} {\bf H}(r) = |N|^{2}(p) = \G(N(p), Q \llbracket 0 \rrbracket)^{2} > 0,
\]
which in turn implies that ${\bf H}(r) > 0$ for small values of $r$. Furthermore, from the proof of Theorem \ref{reg_Holder} (cf. in particular formula \eqref{Dir_decay}) we can also infer that if $r$ is sufficiently small then
\[
{\bf D}(r) \leq C r^{m-2+2\alpha},
\]
where $\alpha$ is the H\"older exponent of $N$. In particular, from this one immediately concludes that there exists the limit
\[
\lim_{r \to 0} {\bf I}(r) = 0
\]
at every point $p$ such that $N(p) \neq Q \llbracket 0 \rrbracket$. 
\\
As we shall see, we will obtain as a byproduct of the improved regularity theory developed in this section that the frequency function is well defined and bounded also in a suitable neighborhood of $r = 0$ at every point $p$ such that $N(p) = Q\llbracket 0 \rrbracket$, and that also at such points the limit $\lim_{r \to 0^{+}} {\bf I}(r)$ exists, but it is strictly positive. 
\end{remark}

The main analytic feature of the frequency function is the following \emph{almost monotonicity} property.

\begin{theorem}[Almost monotonicity of the frequency] \label{freq:am}
There exist a geometric constant $C_{0}$ and a radius $0 < r_{0} < \min\lbrace {\rm inj}(\Sigma), \ddist(p, \partial \Omega)\rbrace$ such that for all $0 < s < t \leq r_{0}$ with ${\bf H}\big|_{\left[s,t\right]} > 0$ one has
\begin{equation} \label{am:eq}
{\bf I}(s) \leq C_{0} \left( 1 + {\bf I}(t) \right).
\end{equation}

\end{theorem}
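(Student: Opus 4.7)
The plan is to derive the almost monotonicity of $\mathbf{I}(r)$ from a first-order differential inequality, obtained by combining the outer and inner variation formulae of Propositions \ref{OV} and \ref{IV} with a careful analysis of the geometry of small geodesic balls. Throughout, I fix $p\in\Omega$ and write $d(x):=\mathbf{d}(x,p)$; in a sufficiently small neighborhood of $p$ the function $d$ is smooth with $|\nabla^\Sigma d|\equiv 1$, $\nabla^\Sigma d$ is the outer normal to $\partial\B_r(p)$, and $\operatorname{div}_\Sigma(\nabla^\Sigma d)=\tfrac{m-1}{r}(1+O(r))$, where the $O(r)$ term comes from the Jacobian of the exponential map. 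All error terms produced along the computation will be controlled by quantities of the form $C\int_{\B_r(p)}|N|^2$ or $C\,r\,\mathbf{H}(r)$ for a geometric constant $C$ depending only on $\mathbf{A}$, $\overline{\mathbf{A}}$, $\mathbf{R}$ and the injectivity radius.

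First I would obtain a flux identity for $\mathbf{D}(r)$. Choose a family of cut-offs $\phi_\varepsilon:\mathbb{R}\to[0,1]$ approximating $\chi_{[0,r]}$ and apply Proposition \ref{OV} to $\psi(x,u):=\phi_\varepsilon(d(x))\,u$, noting $D_u\psi=\phi_\varepsilon(d)\,\mathrm{Id}$ and $D_x\psi(x,u)=\phi_\varepsilon'(d)\,u\otimes\nabla^\Sigma d$. Letting $\varepsilon\to 0$ and using the coarea formula yields, for a.e.\ $r$,
\begin{equation}\label{plan:flux}
\mathbf{D}(r)=\int_{\partial\B_r(p)}\sum_{\ell=1}^Q\langle N^\ell,\partial_\nu^\perp N^\ell\rangle\,\dHa^{m-1}+\mathcal{E}_1(r),\qquad |\mathcal{E}_1(r)|\le C\int_{\B_r(p)}|N|^2\,\dHa^m,
\end{equation}
where $\partial_\nu^\perp N^\ell$ denotes the projection of $D_{\nabla^\Sigma d}N^\ell$ onto $T^\perp\Sigma$, and the bound on $\mathcal{E}_1$ comes from \eqref{OV:error} together with $|A|,|\mathcal{R}|\le C$. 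Next I would compute $\mathbf{H}'(r)$ using geodesic polar coordinates on $\Sigma$ and the Jacobian expansion of $\exp_p$, obtaining
\begin{equation}\label{plan:Hprime}
\mathbf{H}'(r)=\frac{m-1}{r}\mathbf{H}(r)+2\int_{\partial\B_r(p)}\sum_{\ell=1}^Q\langle N^\ell,\partial_\nu^\perp N^\ell\rangle\,\dHa^{m-1}+\mathcal{E}_2(r),\quad |\mathcal{E}_2(r)|\le C\,\mathbf{H}(r).
\end{equation}

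The second step is to get an analogous identity for $r\mathbf{D}'(r)$ via the inner variation. Apply Proposition \ref{IV} with $X_\varepsilon(x):=\phi_\varepsilon(d(x))\,\nabla^\Sigma d(x)$ and again pass to the limit $\varepsilon\to 0$. Using $\nabla^\Sigma X=\nabla^\Sigma(\nabla^\Sigma d)$ together with the expansion of the Hessian of $d$ in geodesic coordinates (which gives $\nabla^\Sigma X = (r^{-1}+O(r))\,\mathrm{Id}_{T\partial\B_r}$ on directions tangent to the sphere), one obtains
\begin{equation}\label{plan:Dprime}
(m-2)\,\mathbf{D}(r)-r\,\mathbf{D}'(r)+2r\!\int_{\partial\B_r(p)}\!\!|\partial_\nu^\perp N|^2\,\dHa^{m-1}=\mathcal{E}_3(r),
\end{equation}
where $|\mathcal{E}_3(r)|\le C\,r\,\mathbf{D}(r)+C\int_{\B_r(p)}|N|^2$ by \eqref{IV:error1}--\eqref{IV:error3} and the Caccioppoli inequality \eqref{rpis:eq2}. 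The delicate point here is that the pure IV-error terms are themselves controlled by $\mathbf{D}$, $\mathbf{H}$ and the curvature constants uniformly for small $r$.

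Finally I would combine \eqref{plan:flux}, \eqref{plan:Hprime} and \eqref{plan:Dprime} to derive a differential inequality for $\log(r\mathbf{D}(r)/\mathbf{H}(r))$. Writing the logarithmic derivative
\[
\frac{\mathbf{I}'(r)}{\mathbf{I}(r)}=\frac{1}{r}+\frac{\mathbf{D}'(r)}{\mathbf{D}(r)}-\frac{\mathbf{H}'(r)}{\mathbf{H}(r)},
\]
substituting \eqref{plan:Hprime} and \eqref{plan:Dprime}, and using \eqref{plan:flux} together with the Cauchy--Schwarz inequality
\[
\Bigl(\int_{\partial\B_r(p)}\!\!\!\sum_\ell\langle N^\ell,\partial_\nu^\perp N^\ell\rangle\Bigr)^{\!2}\le \mathbf{H}(r)\int_{\partial\B_r(p)}\!\!|\partial_\nu^\perp N|^2\,\dHa^{m-1},
\]
the principal (non-negative) terms combine into the classical Almgren squared-difference expression, and one is left with an estimate of the form
\begin{equation}\label{plan:diffineq}
\frac{d}{dr}\log\bigl(1+\mathbf{I}(r)\bigr)\ge -C_0\qquad\text{for a.e.\ }r\in(0,r_0].
\end{equation}
Integrating \eqref{plan:diffineq} on $[s,t]$ gives $1+\mathbf{I}(s)\le e^{C_0(t-s)}(1+\mathbf{I}(t))$, which yields \eqref{am:eq} after possibly redefining $C_0$. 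The main obstacle will be keeping careful track of all the geometric error terms in steps \eqref{plan:Hprime} and \eqref{plan:Dprime}: the non-flat metric on $\Sigma$, the second fundamental forms $A$ and $\overline{A}$, and the ambient curvature tensor $R$ all produce lower-order contributions which, individually, do not have a definite sign, and one has to verify that they are all absorbable into $-C_0(1+\mathbf{I}(r))$ using the Caccioppoli inequality \eqref{rpis:eq2} and Young's inequality.
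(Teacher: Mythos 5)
Your three variation identities match, up to the precise form of the errors, the estimates \eqref{OV:est}, \eqref{IV:est} and \eqref{H'_est} of Lemma~\ref{FVE}, so the first two steps of your plan are sound. The gap is in the closing step: the pointwise differential inequality $\tfrac{d}{dr}\log\bigl(1+\mathbf{I}(r)\bigr)\ge -C_0$ that you assert does \emph{not} hold, and the tools you invoke cannot make it hold.

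Substituting your formulae for $\mathbf{D}'$ and $\mathbf{H}'$ into $\tfrac{d}{dr}\log\mathbf{I}=\tfrac{1}{r}+\tfrac{\mathbf{D}'}{\mathbf{D}}-\tfrac{\mathbf{H}'}{\mathbf{H}}$ and applying Cauchy--Schwarz as you propose, the residual errors that survive are (up to geometric constants) $\mathbf{F}(r)/\mathbf{H}(r)$, coming from the discrepancy $\mathbf{D}-\mathbf{E}$, and $\mathbf{F}(r)/(r\mathbf{D}(r))$, coming from the inner-variation error. To control $\mathbf{F}$ one must use the Poincar\'e-type bound $\mathbf{F}\le Cr\mathbf{H}+Cr^2\mathbf{D}$ (cf.\ \eqref{freq1regime:poinc}); the Caccioppoli inequality \eqref{rpis:eq2} you cite bounds $\mathbf{D}$ in terms of $\mathbf{F}$ and hence goes in the wrong direction. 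What one obtains is
\begin{equation*}
\frac{d}{dr}\log\mathbf{I}(r)\;\ge\;-\,C\,r\,\bigl(1+\mathbf{I}(r)+\mathbf{I}(r)^{-1}\bigr)\,,
\end{equation*}
and neither extra factor is controllable: $\mathbf{I}(r)^{-1}$ degenerates when $\mathbf{I}$ is small, and $\mathbf{I}(r)$ produces a Riccati-type term; the model equation $\mathbf{I}'=-Cr\,\mathbf{I}^2$ exhibits finite-time blow-up as $r$ decreases, so Gr\"onwall alone does not close the loop. There is therefore no geometric constant $C_0$ making your pointwise estimate hold.

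The paper's proof removes both obstructions with two devices your plan does not contain. First, it replaces $\log\mathbf{I}$ by $\boldsymbol{\Omega}(r):=\log\max\{\mathbf{I}(r),1\}$ and, via a continuity argument, reduces to a maximal subinterval on which $\mathbf{I}>1$; there $\mathbf{H}\le r\mathbf{D}$ and hence $\mathbf{F}\le Cr^2\mathbf{D}$ (cf.\ \eqref{freq1regime:poinc2}), which kills the $\mathbf{I}^{-1}$ factor and gives $\tfrac12\mathbf{D}\le\mathbf{E}\le 2\mathbf{D}$. Second, it rewrites $\tfrac{\mathbf{D}'}{\mathbf{D}}=\tfrac{\mathbf{D}'}{\mathbf{E}}+\mathbf{D}'\mathbf{Z}$ with $\mathbf{Z}=\tfrac1{\mathbf{D}}-\tfrac1{\mathbf{E}}$, so that the Cauchy--Schwarz step $\mathbf{E}^2\le\mathbf{G}\mathbf{H}$ contributes no error at all and the sole residual is $\mathbf{D}'\mathbf{Z}=O\bigl(\mathbf{D}'\mathbf{F}/\mathbf{D}^2\bigr)$, which is \emph{not} pointwise bounded by a constant but \emph{is} integrable: integration by parts gives
\begin{equation*}
\int_s^t\frac{\mathbf{D}'(r)\,\mathbf{F}(r)}{\mathbf{D}(r)^2}\,{\rm d}r
\;=\;\frac{\mathbf{F}(s)}{\mathbf{D}(s)}-\frac{\mathbf{F}(t)}{\mathbf{D}(t)}+\int_s^t\frac{\mathbf{H}(r)}{\mathbf{D}(r)}\,{\rm d}r\;\le\;C\,r_0^2\,,
\end{equation*}
because $\mathbf{F}/\mathbf{D}\le Cr^2$ and $\mathbf{H}/\mathbf{D}=r/\mathbf{I}\le r$ in the regime $\mathbf{I}\ge1$. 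Both the regime reduction and the integration by parts are needed to turn your differential inequality into the integral bound \eqref{am:eq}; the claim that the errors are ``absorbable into $-C_0(1+\mathbf{I}(r))$'' is precisely the step that fails.
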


Propositions \ref{dicotomia} and \ref{am_impr_thm} below contain the most relevant consequences of Theorem \ref{freq:am}. Both these results will be derived under the additional assumption that $p \in \Omega$ has been fixed in such a way that $N(p) = Q \llbracket 0 \rrbracket$. As already observed in Remark \ref{well_posedness} above, these are exactly the points where we lack a precise description of the behavior of the frequency function. The arguments contained in the next sections will illustrate the reason why an analysis of the Jacobi multi-field $N$ in a neighborhood of such a point is indeed crucial in order to obtain the proof of Theorem \ref{sing:thm}.

The first result we are interested in is the following dichotomy: if $N(p) = Q \llbracket 0 \rrbracket$, then either there exists a neighborhood of $p$ where the map $N$ is identically vanishing, and thus where the frequency function is not defined at all, or, conversely, there is a neighborhood of $p$ where the frequency function is well defined everywhere and bounded.

\begin{proposition} \label{dicotomia}
Let $N \in \Gamma_{Q}^{1,2}(\N\Omega)$ be $\Jac$-minimizing. Assume $p \in \Omega$ is such that $N(p) = Q\llbracket 0 \rrbracket$. Then, the following dichotomy holds:
\begin{itemize}
\item[$(i)$] either $N \equiv Q\llbracket 0 \rrbracket$ in a neighborhood of $p$;
\item[$(ii)$] or there exists a radius $r_{0} > 0$ such that 
\[
{\bf H}(r) > 0 \mbox{ for all } r \in \left( 0, r_{0} \right] \hspace{0.5cm} \mbox{ and } \hspace{0.5cm} \limsup_{r \to 0} {\bf I}(r) < \infty.
\]
\end{itemize}
\end{proposition}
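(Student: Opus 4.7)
The plan is to prove the contrapositive: assuming $(i)$ fails, I will establish both assertions of $(ii)$. The first task is to show that ${\bf H}(r) > 0$ for all sufficiently small $r$, and for this it suffices to prove that any small radius $\bar r$ with ${\bf H}(\bar r) = 0$ forces $N \equiv Q \llbracket 0 \rrbracket$ on the entire ball $\B_{\bar r}(p)$, which would be a neighborhood of $p$ on which $N$ vanishes, contradicting the failure of $(i)$.

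The key technical ingredient — and the main obstacle in the whole argument — is a \emph{strict stability estimate on small geodesic balls}: there should exist $r_{*} > 0$ such that for every $r \leq r_{*}$ and every $u \in \Gamma^{1,2}_{Q}(\N \B_{r}(p))$ with $u|_{\partial \B_{r}(p)} = Q \llbracket 0 \rrbracket$,
\begin{equation} \label{dic:ss}
\Jac(u, \B_{r}(p)) \geq c(r) \int_{\B_{r}(p)} |u|^{2} \, \dHa^{m}, \qquad c(r) > 0.
\end{equation}
To obtain this I would use the decomposition $\Jac = \Dir - \mathcal{B}$ recorded in \eqref{pert}, together with the bound $|\mathcal{B}(u)| \leq C_{0} \|u\|_{L^{2}}^{2}$ from \eqref{B_est}, and a zero-trace version of the $Q$-valued Poincar\'e inequality in Corollary \ref{cor Q-Poincare} (after pulling back to a Euclidean ball via the exponential chart, the boundary term drops out thanks to the vanishing trace of $u$), which produces $\int_{\B_{r}(p)} |u|^{2} \leq C r^{2} \Dir(u, \B_{r}(p))$. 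Choosing $r_{*}$ so small that $C_{0} C r_{*}^{2} < \tfrac{1}{2}$ then yields \eqref{dic:ss} with, for instance, $c(r) \sim r^{-2}$.

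Once \eqref{dic:ss} is available the rest of the argument is a short comparison. Suppose $(i)$ fails but ${\bf H}(\bar r) = 0$ for some $\bar r \leq r_{*}$. The H\"older continuity of $N$ from Theorem \ref{qual_reg_Holder} upgrades the vanishing of ${\bf H}(\bar r) = \int_{\partial \B_{\bar r}(p)} |N|^{2} \, \dHa^{m-1}$ to the pointwise identity $N \equiv Q \llbracket 0 \rrbracket$ on $\partial \B_{\bar r}(p)$, hence in particular to the vanishing trace. The field $\widetilde N$ equal to $N$ on $\Omega \setminus \B_{\bar r}(p)$ and to $Q \llbracket 0 \rrbracket$ on $\B_{\bar r}(p)$ then belongs to $\Gamma^{1,2}_{Q}(\N\Omega)$, has the same trace as $N$ on $\partial\Omega$, and satisfies $\Jac(\widetilde N, \B_{\bar r}(p)) = 0$; $\Jac$-minimality of $N$ forces $\Jac(N, \B_{\bar r}(p)) \leq 0$, so \eqref{dic:ss} yields $N \equiv Q \llbracket 0 \rrbracket$ on $\B_{\bar r}(p)$, a contradiction. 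Hence ${\bf H}(r) > 0$ on the whole interval $(0, r_{*}]$.

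The second statement in $(ii)$ is then an immediate consequence of the almost monotonicity of the frequency: applying Theorem \ref{freq:am} with $t = r_{*}$ to any $s \in (0, r_{*})$ gives ${\bf I}(s) \leq C_{0}(1 + {\bf I}(r_{*}))$, and hence $\limsup_{r \to 0^{+}} {\bf I}(r) \leq C_{0}(1 + {\bf I}(r_{*})) < \infty$. Beyond \eqref{dic:ss}, all the remaining steps are a one-line null-competitor comparison and a direct invocation of Theorem \ref{freq:am}.
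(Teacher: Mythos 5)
Your proof is correct, and it takes a genuinely different route from the paper's. The paper argues through the frequency: it sets $\rho := \sup\{r \in (0,r_0] : {\bf H}(r) = 0\}$, uses the almost-monotonicity of Theorem \ref{freq:am} on $(\rho, r_0]$ to pass $r \downarrow \rho$ and conclude $\rho\, {\bf D}(\rho) \leq C_0(1 + {\bf I}(r_0))\,{\bf H}(\rho) = 0$, and then invokes Lemma \ref{Poinc_type:thm} --- a Poincar\'e-type estimate proved for the $\Jac$-minimizer $N$ itself via the H\"older and Caccioppoli estimates, without any vanishing-trace hypothesis --- to force $N \equiv Q\llbracket 0 \rrbracket$ on $\B_{\rho}(p)$. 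Your argument avoids both Theorem \ref{freq:am} and Lemma \ref{Poinc_type:thm} for the first assertion of $(ii)$: the elementary zero-trace Poincar\'e of Corollary \ref{cor Q-Poincare}, pulled back to geodesic balls with $(1+O(r))$ distortion, combined with the decomposition $\Jac = \Dir - \mathcal{B}$ and the bound $|\mathcal{B}(u)| \leq C_0\|u\|_{L^2}^2$ of \eqref{B_est}, yields the strict stability condition \eqref{Strict Stability} on every sufficiently small ball with constant $c(r) \sim r^{-2}$, after which a one-line null-competitor comparison rules out any zero of ${\bf H}$. This is more elementary and self-contained, and it records as a byproduct the quantitative fact that small geodesic balls around $p$ are strictly stable in the sense of Section \ref{sec:Jac_min}, which the paper never makes explicit. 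Both approaches still rely on Theorem \ref{freq:am} for the $\limsup$ bound, so the net simplification is the replacement of the hard-won Lemma \ref{Poinc_type:thm} by the much simpler Corollary \ref{cor Q-Poincare}. If you write this up, state explicitly that the gluing defining $\widetilde N$ produces a $W^{1,2}$ competitor because the Sobolev traces from both sides of $\partial\B_{\bar r}(p)$ agree --- the H\"older continuity of $N$, which you correctly invoke to upgrade ${\bf H}(\bar r) = 0$ to pointwise vanishing on the sphere, is exactly what guarantees this.
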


As it is natural, the most interesting situation is when condition $(ii)$ in the above Proposition \ref{dicotomia} is observed. As a first remark, we observe that the fact that the frequency function is bounded in a neighborhood of a point $p$ allows to improve the almost monotonicity property itself.

\begin{proposition}[Improved almost monotonicity of the frequency] \label{am_impr_thm}
Let $N \in \Gamma^{1,2}_{Q}(\N \Omega)$ be $\Jac$-minimizing. Assume $p \in \Omega$ is such that $N(p) = Q \llbracket 0 \rrbracket$ but $N$ does not vanish in a neighborhood of $p$. Then, there exist $r_{0} > 0$ and a constant $\lambda = \lambda({\bf I}(r_{0})) > 0$ such that the function
\begin{equation} \label{am_impr_eq:1}
r \in \left( 0, r_{0} \right] \mapsto e^{\lambda r} {\bf I}(r)
\end{equation}
is monotone non-decreasing. The limit
\begin{equation} \label{am_impr_eq:2}
\lim_{r \to 0} {\bf I}(r) =: I_0(p)
\end{equation}
exists and is strictly positive.
\end{proposition}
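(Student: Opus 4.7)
The plan is to compute the logarithmic derivative of the frequency $\mathbf{I}(r)$ using both the outer and the inner variation formulae (Propositions \ref{OV} and \ref{IV}), and then to show that the standing hypotheses force all lower-order error terms to be controlled by a constant, giving a differential inequality of the form $(\log \mathbf{I}(r))' \geq -\lambda$. First, from Proposition \ref{dicotomia} applied to the point $p$, the second alternative holds: there exists $r_0 > 0$ with $\mathbf{H}(r) > 0$ on $(0, r_0]$ and $\limsup_{r\to 0}\mathbf{I}(r) < \infty$. Combined with Theorem \ref{freq:am}, the frequency is uniformly bounded on $(0, r_0]$, so set $M := \sup_{r \in (0, r_0]} \mathbf{I}(r)$.

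Next, derive three key identities by applying the variation formulae with families of cut-offs $\phi_\epsilon$ approximating the characteristic function of $\{\mathbf{d}(\cdot,p) < r\}$ and letting $\epsilon \to 0$. Applying \textrm{(OV)} to $\psi(x,u) = \phi_\epsilon(\mathbf{d}(x,p))\,u$ yields, for a.e. $r \in (0,r_0]$,
\begin{equation}\label{planeq:1}
\int_{\partial \B_r(p)} \sum_{\ell=1}^{Q}\langle N^\ell, \nabla^\perp_\nu N^\ell\rangle\, \dHa^{m-1} = \mathbf{D}(r) + \mathscr{E}_O(r), \qquad |\mathscr{E}_O(r)| \leq C \int_{\B_r(p)} |N|^2 \, \dHa^{m};
\end{equation}
applying \textrm{(IV)} with the radial field $X(x) = \phi_\epsilon(\mathbf{d}(x,p))\,\mathbf{d}(x,p)\,\nabla\mathbf{d}(x,p)$ yields
\begin{equation}\label{planeq:2}
r \mathbf{D}'(r) - (m-2)\mathbf{D}(r) = 2 r \int_{\partial \B_r(p)} \sum_{\ell=1}^{Q} |\nabla^\perp_\nu N^\ell|^2\, \dHa^{m-1} + \mathscr{E}_I(r),
\end{equation}
with $|\mathscr{E}_I(r)| \leq Cr\mathbf{D}(r) + Cr\int_{\B_r(p)}|N|^2\,\dHa^m$; and a direct computation in geodesic coordinates (from the expansion of the Jacobian of the exponential map) gives
\begin{equation}\label{planeq:3}
\mathbf{H}'(r) = \frac{m-1}{r}\mathbf{H}(r) + 2\int_{\partial \B_r(p)} \sum_{\ell=1}^{Q}\langle N^\ell, \nabla^\perp_\nu N^\ell\rangle\, \dHa^{m-1} + \mathscr{E}_H(r),\quad |\mathscr{E}_H(r)| \leq Cr\mathbf{H}(r),
\end{equation}
for a geometric constant $C$. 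The \emph{main technical obstacle} is precisely the careful bookkeeping of these error terms and of the geometric corrections coming from the curvatures of $\Sigma$ and $\M$, so as to guarantee that they can all be absorbed after integration with a constant depending only on $M$.

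With the three identities in hand, decompose
\[
\frac{\mathbf{I}'(r)}{\mathbf{I}(r)} = \frac{1}{r} + \frac{\mathbf{D}'(r)}{\mathbf{D}(r)} - \frac{\mathbf{H}'(r)}{\mathbf{H}(r)},
\]
substitute from \eqref{planeq:1}--\eqref{planeq:3}, and apply Cauchy--Schwarz on $\partial \B_r(p)$,
\[
\bigg(\int_{\partial \B_r(p)}\sum_\ell \langle N^\ell, \nabla^\perp_\nu N^\ell\rangle\bigg)^{2} \leq \mathbf{H}(r)\int_{\partial \B_r(p)}\sum_\ell |\nabla^\perp_\nu N^\ell|^2.
\]
After cancellation of the leading singular $1/r$ contributions and of the matching $2\mathbf{D}/\mathbf{H}$ terms, what is left is an inequality of the shape
\[
\frac{\mathbf{I}'(r)}{\mathbf{I}(r)} \geq - C_\ast\,\frac{\int_{\B_r(p)}|N|^2\,\dHa^m}{\mathbf{H}(r)} - C_\ast,
\]
with $C_\ast$ geometric. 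To close, use $\mathbf{I}(r) \leq M$ together with \eqref{planeq:1} and \eqref{planeq:3} to get $\frac{d}{dr}\log(\mathbf{H}(r)/r^{m-1}) \leq 2M/r + C$; integrating, $\mathbf{H}(s) \geq c_M (s/r)^{m-1+2M}\mathbf{H}(r)$ for $0 < s \leq r$, whence $\int_0^r \mathbf{H}(s)\,ds \leq C'_M\, r\,\mathbf{H}(r)$. Substituting back yields $\mathbf{I}'(r)/\mathbf{I}(r) \geq -\lambda$ with $\lambda = \lambda(M) > 0$, equivalently $(e^{\lambda r}\mathbf{I}(r))' \geq 0$.

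Existence of $I_0 := \lim_{r \to 0}\mathbf{I}(r)$ then follows immediately from the monotonicity and non-negativity of $r \mapsto e^{\lambda r}\mathbf{I}(r)$. For strict positivity, argue by contradiction: if $I_0 = 0$, then $\mathbf{I}(r)$ can be made arbitrarily small on small intervals, and integrating the ODE $\frac{d}{dr}\log(\mathbf{H}(r)/r^{m-1}) = 2\mathbf{I}(r)/r + O(1)$ backwards from $r_0$ would give, for every $\epsilon > 0$, a lower bound $\mathbf{H}(r) \geq C(\epsilon)\, r^{m-1+2\epsilon}$ for $r$ sufficiently small. On the other hand, the H\"older estimate of Theorem \ref{reg_Holder} combined with $N(p) = Q\llbracket 0 \rrbracket$ yields $|N(x)| \leq C\,\mathbf{d}(x,p)^\alpha$ with universal $\alpha = \alpha(m,Q) \in (0,1)$, and hence $\mathbf{H}(r) \leq C r^{m-1+2\alpha}$. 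Choosing any $\epsilon < \alpha$ produces the inequality $C(\epsilon) \leq C r^{2(\alpha - \epsilon)}$, contradicting the positivity of $C(\epsilon)$ as $r \to 0$, so $I_0 \geq \alpha > 0$.
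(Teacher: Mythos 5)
Your overall strategy is the same as the paper's: use the first-variation identities from Lemma~\ref{FVE} to compute $(\log \mathbf{I}(r))'$, apply Cauchy--Schwarz on $\partial \B_r(p)$ to kill the leading term, and absorb the lower-order errors using the a~priori bound $\mathbf{I}(r) \le M$ together with the Poincar\'e inequality \eqref{Poinc_type:eq}. The paper arranges the algebra slightly differently (it computes $\mathbf{I}'(r)$ directly rather than its logarithmic derivative and isolates the term $\frac{2r}{\mathbf{H}^2}(\mathbf{G}\mathbf{H}-\mathbf{E}^2)$), but the substance is the same.

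There is, however, a genuine gap in the step where you absorb the outer-variation error. You claim that integrating $\frac{d}{dr}\log(\mathbf{H}(r)/r^{m-1}) \le 2M/r + C$ yields $\mathbf{H}(s) \ge c_M (s/r)^{m-1+2M}\mathbf{H}(r)$, ``whence'' $\int_0^r \mathbf{H}(s)\,ds \le C'_M\, r\,\mathbf{H}(r)$. That inference is backwards: a \emph{lower} bound on $\mathbf{H}(s)$ cannot give an \emph{upper} bound on $\int_0^r \mathbf{H}(s)\,ds$. What the integration of the differential inequality actually delivers is $\int_0^r \mathbf{H}(s)\,ds \ge c_M' r\,\mathbf{H}(r)$, which is useless here; to control $\mathbf{H}(s)$ from above you would need the opposite differential inequality, but deriving that again requires bounding $\mathscr{E}_O/\mathbf{H}$, so the argument becomes circular. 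The correct (and much shorter) route is the one the paper uses: invoke the Poincar\'e inequality \eqref{Poinc_type:eq} directly, which gives $\mathbf{F}(r) = \int_{\B_r(p)}|N|^2 \le C r^2 \mathbf{D}(r) = C r\,\mathbf{I}(r)\,\mathbf{H}(r) \le C M\, r\,\mathbf{H}(r)$. With this substitution everything else in your argument goes through. The same remark applies to your derivation of $\frac{d}{dr}\log(\mathbf{H}(r)/r^{m-1}) = 2\mathbf{I}(r)/r + O(1)$: the $O(1)$ term hides $\mathscr{E}_O/\mathbf{H}$, which again should be estimated via \eqref{Poinc_type:eq}.

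Your argument for strict positivity of $I_0$ is correct and takes a route different from the paper's: you argue by contradiction, comparing the lower bound on $\mathbf{H}(r)$ forced by a small frequency against the upper bound $\mathbf{H}(r) \le C r^{m-1+2\alpha}$ coming from the H\"older estimate of Theorem~\ref{reg_Holder} and $N(p)=Q\llbracket 0\rrbracket$, thereby obtaining the explicit lower bound $I_0 \ge \alpha(m,Q)$. The paper instead reruns the Poincar\'e-type computation of Lemma~\ref{Poinc_type:thm} on spheres to obtain $\mathbf{H}(r) \le C(1+r_0^2)\,r\,\mathbf{D}(r)$ directly. Your version is a nice alternative and yields a universal lower bound on $I_0$ that the paper's statement does not make explicit.
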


The rest of the section will be devoted to the proofs of Theorem \ref{freq:am}, Proposition \ref{dicotomia} and Proposition \ref{am_impr_thm}. 

\subsection{First variation estimates and the proof of Theorem \ref{freq:am}}

The proof of Theorem \ref{freq:am} is a consequence of some estimates involving the functions ${\bf D}$ and ${\bf H}$ and their derivatives, which in turn can be obtained by testing the first variations formulae \eqref{OV:eq} and \eqref{IV:eq} with a suitable choice of the maps $\psi$ and $X$. The derivation of these estimates is the content of Lemma \ref{FVE} below. We need to define the following auxiliary functions.

\begin{definition} \label{aux}
We denote by $\frac{\partial}{\partial \hat{r}}$ the vector field which is tangent to geodesic arcs parametrized by arc length and emanating from $p$. We will set $\nabla_{\hat{r}} := \nabla_{\frac{\partial}{\partial \hat{r}}}$, the directional derivative along $\frac{\partial}{\partial \hat{r}}$, and we will let $\nabla^{\perp}_{\hat{r}}$ be its projection onto the normal bundle of $\Sigma$ in $\M$. We set:
\begin{equation}
{\bf E}(r) = {\bf E}_{N,p}(r) := \int_{\partial\B_{r}(p)} \sum_{\ell=1}^{Q} \langle N^{\ell}(x), \nabla^{\perp}_{\hat{r}} N^{\ell}(x) \rangle \, \dHa^{m-1}(x),
\end{equation}
\begin{equation}
{\bf G}(r) = {\bf G}_{N,p}(r) := \int_{\partial \B_{r}(p)} |\nabla^{\perp}_{\hat{r}}N|^{2}(x) \, \dHa^{m-1}(x),
\end{equation}
and
\begin{equation}
{\bf F}(r) = {\bf F}_{N,p}(r) := \int_{\B_{r}(p)} |N|^{2}(x) \, \dHa^{m}(x).
\end{equation}
\end{definition}

\begin{remark} 
Note that ${\bf F}(r) = \| N \|_{L^{2}(\B_{r}(p))}^{2}$ is an absolutely continuous function, and for a.e. $r$
\begin{equation} \label{F'_eq}
{\bf F}'(r) = \int_{\partial \B_{r}(p)} |N|^{2} \, \dHa^{m-1} = {\bf H}(r).
\end{equation}

\end{remark}

\begin{lemma}[First variation estimates] \label{FVE}
There exist a geometric constant $C_{0} > 0$ and a radius $0 < r_{0} < \min\lbrace {\rm inj}(\Sigma), \ddist(p, \partial\Omega)\rbrace$ such that the following inequalities hold true for a.e. $0 < r \leq r_{0}$:

\begin{equation} \label{OV:est}
|{\bf D}(r) - {\bf E}(r)| \leq C_{0} {\bf F}(r),
\end{equation} 

\begin{equation} \label{IV:est}
|{\bf D}'(r) - 2 {\bf G}(r) - \frac{m-2}{r} {\bf D}(r)| \leq C_{0} r {\bf D}(r) + C_{0}({\bf D}(r){\bf F}(r))^{\sfrac{1}{2}},
\end{equation}

\begin{equation} \label{H'_est}
|{\bf H}'(r) - \frac{m-1}{r}{\bf H}(r) - 2{\bf E}(r)| \leq C_{0} r {\bf H}(r).
\end{equation}
Furthermore, if ${\bf I}(r) \geq 1$ then
\begin{equation} \label{OV:est_fin}
|{\bf D}(r) - {\bf E}(r)| \leq C_{0} r^{2} {\bf D}(r),
\end{equation}
and
\begin{equation} \label{IV:est_fin}
|{\bf D}'(r) - 2 {\bf G}(r) - \frac{m-2}{r} {\bf D}(r)| \leq C_{0} r {\bf D}(r).
\end{equation}

\end{lemma}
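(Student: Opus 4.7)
I plan to derive \eqref{OV:est} and \eqref{IV:est} by testing the variation formulae \eqref{OV:eq} and \eqref{IV:eq} against suitable radial cutoffs, to obtain \eqref{H'_est} by differentiating $\mathbf{H}(r)$ directly in geodesic polar coordinates centred at $p$, and finally to deduce the refined bounds \eqref{OV:est_fin} and \eqref{IV:est_fin} from the inequality $\mathbf{F}(r)\leq C r^{2}\mathbf{D}(r)$, which under the hypothesis $\mathbf{I}(r)\geq 1$ will follow from a multivalued Poincar\'e--trace inequality.

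For \eqref{OV:est}, I would plug $\psi(x,u):=\varphi_{\varepsilon}(\mathbf{d}(x,p))\,u$ into \eqref{OV:eq}, with $\varphi_{\varepsilon}$ a smooth cutoff approximating $\chi_{[0,r]}$. Writing $\tfrac{\partial}{\partial\hat r}$ for the unit radial vector on $\Sigma$, $\nabla^{\perp}\psi$ reduces to the rank-one operator $v\mapsto\varphi_{\varepsilon}'(\mathbf{d})\langle v,\tfrac{\partial}{\partial\hat r}\rangle u$, while $D_{u}\psi=\varphi_{\varepsilon}(\mathbf{d})\,\mathrm{Id}$; hence as $\varepsilon\to 0$ the left-hand side of \eqref{OV:eq} collapses to $\mathbf{D}(r)-\mathbf{E}(r)$, and the right-hand side becomes $\int_{\mathbf{B}_{r}(p)}\sum_{\ell}(|A\cdot N^{\ell}|^{2}+\mathcal{R}(N^{\ell},N^{\ell}))\,\dHa^{m}$, which is controlled by $C_{0}\mathbf{F}(r)$ via the $L^{\infty}$ bounds \eqref{sup_norms_geometric_A}--\eqref{sup_norms_geometric_R}. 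The inequality \eqref{H'_est} is obtained in parallel: writing $\mathbf{H}(r)=\int_{\mathbb{S}^{m-1}}|N|^{2}(\exp_{p}(r\omega))\,J(r,\omega)\,d\sigma(\omega)$ with Jacobian $J(r,\omega)=r^{m-1}(1+O(r^{2}))$ and differentiating in $r$ yields the principal terms $\tfrac{m-1}{r}\mathbf{H}(r)+2\mathbf{E}(r)$, using that $\tfrac{d}{dr}|N^{\ell}|^{2}=2\langle N^{\ell},\nabla^{\perp}_{\hat r}N^{\ell}\rangle$ since both the tangential part of $\nabla_{\hat r}N^{\ell}$ and the correction $\overline{A}(\hat r,N^{\ell})$ vanish against $N^{\ell}\in T^{\perp}\Sigma\subset T\mathcal{M}$; the $O(r^{2})$ piece of the Jacobian produces the $C_{0}r\mathbf{H}(r)$ error.

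For \eqref{IV:est}, the natural test field is $X(x):=\varphi_{\varepsilon}(\mathbf{d}(x,p))\,\mathbf{d}(x,p)\,\tfrac{\partial}{\partial\hat r}$, whose divergence and covariant derivative expand, in geodesic normal coordinates at $p$, as $\mathrm{div}_{\Sigma}X=m\varphi_{\varepsilon}+\mathbf{d}\varphi_{\varepsilon}'+O(\mathbf{d}^{2})$ and $\nabla^{\Sigma}X=(\varphi_{\varepsilon}+\mathbf{d}\varphi_{\varepsilon}')\,\tfrac{\partial}{\partial\hat r}\otimes\tfrac{\partial}{\partial\hat r}+\varphi_{\varepsilon}(\mathrm{I}_{T\Sigma}-\tfrac{\partial}{\partial\hat r}\otimes\tfrac{\partial}{\partial\hat r})+O(\mathbf{d}^{2})$, with the $O(\mathbf{d}^{2})$ terms coming from the curvatures of $\Sigma$ and $\mathcal{M}$. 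Inserting into \eqref{IV:eq} and letting $\varepsilon\to 0$, the $\varphi_{\varepsilon}'$ contributions concentrate on $\partial\mathbf{B}_{r}(p)$ and the left-hand side becomes $r\mathbf{D}'(r)-(m-2)\mathbf{D}(r)-2r\mathbf{G}(r)$ up to a curvature error of size $O(r^{2})\mathbf{D}(r)$. Since each $\mathcal{E}_{\rm IV}^{(j)}(X)$ is pointwise dominated by $|X|\,|N|\,|\nabla^{\perp}N|\leq r\,|N|\,|\nabla^{\perp}N|$, Cauchy--Schwarz bounds their total contribution by $C_{0}r(\mathbf{D}(r)\mathbf{F}(r))^{1/2}$; dividing by $r$ yields \eqref{IV:est}.

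The main remaining obstacle is the implication $\mathbf{I}(r)\geq 1\Longrightarrow \mathbf{F}(r)\leq Cr^{2}\mathbf{D}(r)$, from which \eqref{OV:est_fin} and \eqref{IV:est_fin} follow immediately by substitution into the $\mathbf{F}$-term of \eqref{OV:est} and into the Cauchy--Schwarz error of \eqref{IV:est}. To establish it, I would transfer the $Q$-valued Poincar\'e--trace estimate of Corollary \ref{cor Q-Poincare} from the Euclidean ball $B_{r}\subset T_{p}\Sigma$ to the geodesic ball $\mathbf{B}_{r}(p)$ via the exponential map, incurring only $1+O(r^{2})$ errors from the volume element. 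This gives $\mathbf{F}(r)\leq C r\mathbf{H}(r)+C r^{2}\int_{\mathbf{B}_{r}(p)}|DN|^{2}\,\dHa^{m}$, and then decomposing $D N^{\ell}$ into its $T\Sigma$ and $T^{\perp}\Sigma$ components---together with the pointwise bounds $|(\nabla_{\cdot}N^{\ell})^{\top}|\leq C|N^{\ell}|$ (Weingarten) and $|\overline{A}(\cdot,N^{\ell})|\leq C|N^{\ell}|$---yields $|DN|^{2}\leq 2|\nabla^{\perp}N|^{2}+C|N|^{2}$. Substituting and absorbing the resulting $Cr^{2}\mathbf{F}(r)$ term into the left-hand side for $r\leq r_{0}$ small enough gives $\mathbf{F}(r)\leq C(r\mathbf{H}(r)+r^{2}\mathbf{D}(r))$. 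Since $\mathbf{I}(r)\geq 1$ is exactly $\mathbf{H}(r)\leq r\mathbf{D}(r)$, the first term is absorbed into the second and the claim follows.
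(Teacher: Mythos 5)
Your proposal is correct and follows essentially the same route as the paper's proof: the outer variation is tested with a radial cutoff times $u$, the inner variation with $X=\mathbf{d}\,\varphi_\varepsilon(\mathbf{d})\,\partial/\partial\hat r$ (the paper's choice $\tfrac{\mathbf{d}}{r}\phi(\mathbf{d}/r)\,\partial/\partial\hat r$ differs only by the overall factor $1/r$ you later divide out), ${\bf H}'(r)$ is computed by differentiating in geodesic polar coordinates, and the refined estimates follow from the Poincar\'e bound ${\bf F}(r)\leq C(r{\bf H}(r)+r^2{\bf D}(r))$ combined with ${\bf I}(r)\geq 1$. Your expansion of $\nabla^{\Sigma}X$ and $\di_{\Sigma}X$, the Cauchy--Schwarz treatment of the error terms $\mathcal{E}_{\rm IV}^{(j)}$, and the derivation of the Poincar\'e inequality from Corollary \ref{cor Q-Poincare} via the exponential map all coincide with the paper's argument.
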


\begin{proof}

\textit{Step 1: proof of \eqref{OV:est}.} We test the outer variation formula \eqref{OV:eq} with the map $\psi$ given by  
\begin{equation} \label{test_OV}
\psi(x,u) := \phi\left( \frac{\mathrm{d}(x)}{r} \right) u,
\end{equation}
where $\mathrm{d}(\cdot) := {\bf d}(\cdot, p)$, and $\phi = \phi(t) \in C^{\infty}(\left[ 0, \infty \right))$ is a cut-off function such that:
\begin{equation} \label{cut_off:properties}
0 \leq \phi \leq 1, \quad \phi \equiv 1 \mbox{ in a neighborhood of } t = 0, \quad \phi \equiv 0 \mbox{ for } t \geq 1.
\end{equation}
Observe first that this choice of $\psi$ induces an admissible family of outer variations: indeed, one clearly sees that $\spt(\psi) \subset {\bf B}_{r}(p)$, the geodesic ball centered at $p$ and of radius $r$, which is compactly supported in $\Omega$, and that the orthogonality conditions and the assumptions in \eqref{OV:cond} are satisfied. We compute:
\[
D_{x}\psi(x,u) = r^{-1} \phi'\left( \frac{\mathrm{d}(x)}{r} \right) u \otimes \nabla \mathrm{d},
\]
which yields
\begin{equation} \label{OV:est:1}
\langle \nabla^{\perp}N^{\ell}(x) \, \colon \, \nabla^{\perp}\psi(x, N^{\ell}(x)) \rangle = r^{-1} \phi'\left( \frac{\mathrm{d}(x)}{r} \right) \langle \nabla^{\perp}_{\hat{r}}N^{\ell}(x), N^{\ell}(x) \rangle.
\end{equation}
On the other hand, $D_{u}\psi(x,u) = \phi\left( \frac{\mathrm{d}(x)}{r} \right) {\rm Id}$, whence
\begin{equation} \label{OV:est:2}
\langle \nabla^{\perp}N^{\ell}(x) \, \colon \, D_{u}\psi(x,N^{\ell}(x)) \cdot DN^{\ell}(x) \rangle = \phi\left( \frac{\mathrm{d}(x)}{r} \right) |\nabla^{\perp}N^{\ell}|^{2}(x).
\end{equation}

Analogously, we can compute explicitly the right-hand side of \eqref{OV:eq} corresponding to our choice of $\psi$ and get:
\begin{equation} \label{OV:est:3}
\mathcal{E}_{{\rm OV}}(\psi) = \int_{\Sigma} \phi\left( \frac{\mathrm{d}(x)}{r} \right) \sum_{\ell=1}^{Q} \left( | A \cdot N^{\ell} |^{2}(x) + \Ri(N^{\ell}(x), N^{\ell}(x)) \right) \, \dHa^{m}(x).
\end{equation}

By a standard approximation procedure, the details of which are left to the reader, it is easy to see that we can test with 
\begin{equation} \label{OV:est:test}
\phi(t) = \phi_{h}(t) = 
\begin{cases}
1 & \mbox{ for } 0 \leq t \leq 1 - \frac{1}{h} \\
h(1 - t) & \mbox{ for } 1 - \frac{1}{h} \leq t \leq 1 \\
0 & \mbox{ for } t \geq 1.
\end{cases}
\end{equation}

Inserting into \eqref{OV:est:1}, \eqref{OV:est:2} and \eqref{OV:est:3}, the outer variation formula \eqref{OV:eq} becomes
\begin{equation} \label{OV:est:4}
\begin{split}
- \frac{h}{r} \int_{\B_{r}(p) \setminus \B_{r - \frac{r}{h}}(p)} &\sum_{\ell=1}^{Q} \langle \nabla^{\perp}_{\hat{r}} N^{\ell}(x), N^{\ell}(x) \rangle \, \dHa^{m}(x) + \int_{\Sigma} \phi_{h}\left( \frac{\mathrm{d}(x)}{r} \right) |\nabla^{\perp}N(x)|^{2} \, \dHa^{m}(x) \\
& = \int_{\Sigma} \phi_{h}\left( \frac{\mathrm{d}(x)}{r} \right) \sum_{\ell=1}^{Q} \left( | A \cdot N^{\ell} |^{2}(x) + \Ri(N^{\ell}(x), N^{\ell}(x)) \right) \, \dHa^{m}(x).
\end{split}
\end{equation}
Now, let $h \uparrow \infty$. The left-hand side of \eqref{OV:est:4} converges to ${\bf D}(r) - {\bf E}(r)$, whereas the right-hand side converges to 
\[
\int_{\B_{r}(p)} \sum_{\ell=1}^{Q} \left( | A \cdot N^{\ell} |^{2} + \Ri(N^{\ell}, N^{\ell}) \right) \, \dHa^{m}.
\]
In particular, the inequality \eqref{OV:est} readily follows with a constant $C_0$ depending on ${\bf A} = \|A\|_{L^{\infty}}$ and ${\bf R} = \| R \|_{L^{\infty}}$.\\

\textit{Step 2: proof of \eqref{IV:est}.} We test now the inner variation formula \eqref{IV:eq} with the vector field $X$ defined by
\begin{equation} \label{test_IV}
\begin{split}
X(x) &:= \frac{\mathrm{d}(x)}{r} \phi\left( \frac{\mathrm{d}(x)}{r} \right) \frac{\partial}{\partial \hat{r}} \\
&= \phi\left( \frac{\mathrm{d}(x)}{r} \right) \frac{1}{2r} \nabla(\mathrm{d}(x)^{2}),
\end{split}
\end{equation}
with $\phi$ as in \eqref{cut_off:properties}.

Standard computations lead to
\[
\begin{split}
\nabla^{\Sigma}X(x) &= \phi'\left( \frac{\mathrm{d}(x)}{r} \right) \frac{\mathrm{d}(x)}{r^2} \frac{\partial}{\partial \hat{r}} \otimes \frac{\partial}{\partial \hat{r}} + \phi\left( \frac{\mathrm{d}(x)}{r} \right) \frac{1}{2r} {\rm Hess}^{\Sigma}(\mathrm{d}(x)^2) \\
& = \phi'\left( \frac{\mathrm{d}(x)}{r} \right) \frac{\mathrm{d}(x)}{r^2} \frac{\partial}{\partial \hat{r}} \otimes \frac{\partial}{\partial \hat{r}} + \phi\left( \frac{\mathrm{d}(x)}{r} \right) \left( \frac{{\rm Id}}{r} + O(r) \right)
\end{split}
\]
for $r \to 0$, and consequently
\[
\begin{split}
\di_{\Sigma}X(x) &= \phi'\left( \frac{\mathrm{d}(x)}{r} \right) \frac{\mathrm{d}(x)}{r^2} + \phi\left( \frac{\mathrm{d}(x)}{r} \right) \frac{1}{2r} \Delta_{\Sigma}(\mathrm{d}(x)^2) \\
&= \phi'\left( \frac{\mathrm{d}(x)}{r} \right) \frac{\mathrm{d}(x)}{r^2} + \phi\left( \frac{\mathrm{d}(x)}{r} \right) \left( \frac{m}{r} + O(r) \right).
\end{split}
\]

Choosing again tests of the form $\phi = \phi_{h}$ as in \eqref{OV:est:test}, plugging into \eqref{IV:eq} and taking the limit $h \uparrow \infty$, we see that the left-hand side of the inner variation formula reads
\begin{equation} \label{IV:est1}
{\bf D}'(r) - 2 {\bf G}(r) - \frac{m-2}{r} {\bf D}(r) + O(r) {\bf D}(r)
\end{equation}
for $r \to 0$.

We proceed with the analysis of the error term $\mathcal{E}_{{\rm IV}}(X)$. Straightforward computations imply the following estimates:
\[
|\mathcal{E}_{{\rm IV}}^{(1)}| \leq C_{1} \int_{\B_{r}(p)} |N(x)| |\nabla^{\perp}N(x)| \, \dHa^m(x),
\]

\[
|\mathcal{E}_{{\rm IV}}^{(2)}| + |\mathcal{E}_{{\rm IV}}^{(3)}| \leq C_{2,3} \int_{\B_{r}(p)} |N(x)| |\nabla^{\perp}_{\hat{r}}N(x)| \, \dHa^{m}(x),
\]
where $C_{1}$ is a geometric constant depending on ${\bf{\overline{A}}} = \| \overline{A} \|_{L^{\infty}}$, and $C_{2,3}$ depends on ${\bf A}$ and ${\bf R}$. Applying the Cauchy-Schwarz inequality we conclude
\begin{equation} \label{IV:est2}
|\mathcal{E}_{{\rm IV}}(X)| \leq C_{0} \left( {\bf D}(r) {\bf F}(r) \right)^{\sfrac{1}{2}}.
\end{equation} 

Combining \eqref{IV:est1} and \eqref{IV:est2}, we deduce the inequality \eqref{IV:est} whenever $r$ is small enough. \\

\textit{Step 3: proof of \eqref{H'_est}.} Let $\exp_{p} \colon \mathcal{V} \subset T_{p}\Sigma \to \Sigma$ be the exponential map with pole $p$. Since $B_{r}(0) \Subset \mathcal{V}$ for every $r < {\rm inj}(\Sigma)$, we can use the change of coordinates $x = \exp_{p}(y)$ to write:
\[
\begin{split}
{\bf H}(r) &= \int_{\partial B_{r}} |N|^{2}(\exp_{p}(y)) \, {\bf J}\exp_{p}(y) \, \dHa^{m-1}(y) \\
&= r^{m-1} \int_{\partial B_{1}} |N|^{2}(\exp_{p}(rz)) \, {\bf J}\exp_{p}(rz) \, \dHa^{m-1}(z).
\end{split}
\]
Thus, we differentiate under the integral sign and compute
\[
\begin{split}
{\bf H}'(r) &= (m-1)r^{m-2} \int_{\partial B_{1}} |N|^{2}(\exp_{p}(rz)) \, {\bf J}\exp_{p}(rz) \, \dHa^{m-1}(z) \\
&+2r^{m-1} \int_{\partial B_{1}} \sum_{\ell=1}^{Q} \langle N^{\ell}(\exp_{p}(rz)), \nabla^{\perp}_{\hat{r}}N^{\ell}(\exp_{p}(rz)) \rangle \, {\bf J}\exp_{p}(rz) \, \dHa^{m-1}(z) \\
&+r^{m-1}\int_{\partial B_{1}} |N|^{2}(\exp_{p}(rz)) \frac{d}{dr}\left({\bf J}\exp_{p}(rz)\right) \, \dHa^{m-1}(z). 
\end{split}
\]
Since $\frac{d}{dr}\left( {\bf J}\exp_{p}(rz)\right) = O(r)$ for $r \to 0$, we are able to conclude
\begin{equation} 
{\bf H}'(r) = \frac{m-1}{r} {\bf H}(r) + 2 {\bf E}(r) + O(r) {\bf H}(r),
\end{equation}
from which \eqref{H'_est} readily follows. \\

\textit{Step 4: proof of \eqref{OV:est_fin} and \eqref{IV:est_fin}.} It suffices to exploit the inequality
\begin{equation} \label{freq1regime:poinc}
{\bf F}(r) \leq C_{0} r {\bf H}(r) + C_{0} r^{2} {\bf D}(r),
\end{equation}
which can be easily deduced from the Poincar\'e inequality (note also that the same inequality has been already proved in the Euclidean setting earlier on, cf. Corollary \ref{Q-Poincare}). In the regime ${\bf I}(r) \geq 1$, that is ${\bf H}(r) \leq r {\bf D}(r)$, \eqref{freq1regime:poinc} simply reads
\begin{equation} \label{freq1regime:poinc2}
{\bf F}(r) \leq C_{0} r^{2} {\bf D}(r).
\end{equation}
Then, \eqref{OV:est_fin} and \eqref{IV:est_fin} are an immediate consequence of \eqref{OV:est} and \eqref{IV:est} respectively.
\end{proof}

We can now proceed with the proof of the almost monotonicity property of the frequency.

\begin{proof}[Proof of Theorem \ref{freq:am}]
Set $\boldsymbol{\Omega}(r) := \log(\max\{{\bf I}(r),1\})$. In order to prove the theorem, it suffices to show that
\begin{equation} \label{am:eq2}
\boldsymbol{\Omega}(s) \leq C + \boldsymbol{\Omega}(t)
\end{equation}
for some positive geometric constant $C$. If $\boldsymbol{\Omega}(s) = 0$ there is nothing to prove. Thus, we assume that $\boldsymbol{\Omega}(s) > 0$. Define
\[
\tau := \sup\{r \in \left( s, t \right] \, \colon \, \boldsymbol{\Omega}(r) > 0 \mbox{\ on } \left(s, r \right) \}.
\]
If $\tau < t$, then by continuity it must be $\boldsymbol{\Omega}(\tau) = 0$: hence, in this case we would have $\boldsymbol{\Omega}(\tau) = 0 \leq \boldsymbol{\Omega}(t)$, and therefore proving that $\boldsymbol{\Omega}(s) \leq C + \boldsymbol{\Omega}(\tau)$ would imply \eqref{am:eq2}. Thus, we can assume without loss of generality that $\boldsymbol{\Omega}(r) > 0$ in $\left( s, t \right)$: in other words, ${\bf I}(r) > 1$, and $\boldsymbol{\Omega}(r) = \log({\bf I}(r))$. Then, as a consequence of \eqref{OV:est_fin}, if $r_{0}$ is taken small enough one has

\begin{equation} \label{D_E_comp}
\frac{{\bf D}(r)}{2} \leq {\bf E}(r) \leq 2 {\bf D}(r),
\end{equation}
that is the quantity ${\bf E}(r)$ is positive and comparable to ${\bf D}(r)$ at small scales. 

Guided by this principle, we compute:
\begin{equation} \label{am0}
\begin{split}
- \frac{d}{dr}(\log {\bf I}(r)) &= \frac{{\bf H}'(r)}{{\bf H}(r)} - \frac{{\bf D}'(r)}{{\bf D}(r)} - \frac{1}{r} \\
&= \frac{{\bf H}'(r)}{{\bf H}(r)} -  \frac{{\bf D}'(r)}{{\bf E}(r)} - {\bf D'}(r) {\bf Z}(r) - \frac{1}{r},
\end{split}
\end{equation}
where ${\bf Z}(r) := \displaystyle\frac{1}{{\bf D}(r)} - \displaystyle\frac{1}{{\bf E}(r)}$ satisfies 

\begin{equation} \label{err_est}
|{\bf Z}(r)| = \frac{|{\bf D}(r) - {\bf E}(r)|}{{\bf D}(r){\bf E}(r)} \overset{\eqref{D_E_comp}}{\le} 2 \frac{|{\bf D}(r) - {\bf E}(r)|}{{\bf D}(r)^2} \overset{\eqref{OV:est}}{\le} C_{0} \frac{{\bf F}(r)}{{\bf D}(r)^{2}}.
\end{equation}

Now, by \eqref{H'_est} one has that
\begin{equation} \label{am1}
\frac{{\bf H}'(r)}{{\bf H}(r)} \leq Cr + \frac{m-1}{r} + 2 \frac{{\bf E}(r)}{{\bf H}(r)},
\end{equation}
whereas the inner variation formula \eqref{IV:est_fin} yields
\begin{equation} \label{am2}
\begin{split}
- \frac{{\bf D}'(r)}{{\bf E}(r)} &\leq  C r \frac{{\bf D}(r)}{{\bf E}(r)} - 2 \frac{{\bf G}(r)}{{\bf E}(r)} - \frac{m-2}{r} \frac{{\bf D}(r)}{{\bf E}(r)}\\
&\overset{\eqref{D_E_comp}}{\leq}  Cr - 2 \frac{{\bf G}(r)}{{\bf E}(r)} - \frac{m-2}{r} \left( 1 - {\bf D}(r) {\bf Z}(r) \right) \\
&\overset{\eqref{err_est}}{\leq}  Cr - 2 \frac{{\bf G}(r)}{{\bf E}(r)} - \frac{m-2}{r} + C r^{-1} \frac{{\bf F}(r)}{{\bf D}(r)} \\
&\leq Cr - 2 \frac{{\bf G}(r)}{{\bf E}(r)} - \frac{m-2}{r}
\end{split}
\end{equation}
because of \eqref{freq1regime:poinc2}.

Plugging \eqref{am1} and \eqref{am2} into \eqref{am0}, and using the estimate on the error term ${\bf Z}(r)$ given by \eqref{err_est}, we obtain the following:

\begin{equation} \label{am:key}
- \frac{d}{dr} (\log{\bf I}(r)) \leq Cr + 2 \left( \frac{{\bf E}(r)}{{\bf H}(r)} - \frac{{\bf G}(r)}{{\bf E}(r)} \right) + C \frac{{\bf D}'(r)}{{\bf D}(r)^{2}} {\bf F}(r).
\end{equation}

Now, by the Cauchy-Schwarz inequality one has
\[
{\bf E}(r)^{2} \leq {\bf G}(r){\bf H}(r),
\]
whence the term $\displaystyle \frac{{\bf E}(r)}{{\bf H}(r)} - \displaystyle \frac{{\bf G}(r)}{{\bf E}(r)}$ is non-positive and \eqref{am:key} yields
\begin{equation} \label{am:key2}
- \frac{d}{dr} (\log{\bf I}(r)) \leq Cr + C \frac{{\bf D}'(r)}{{\bf D}(r)^{2}} {\bf F}(r).
\end{equation}

Integrating for $r \in \left( s,t \right)$, we obtain
\begin{equation} \label{am:int}
\boldsymbol{\Omega}(s) - \boldsymbol{\Omega}(t) \leq C + C\left( \frac{{\bf F}(s)}{{\bf D}(s)} - \frac{{\bf F}(t)}{{\bf D}(t)} \right) + C \int_{s}^{t} \frac{{\bf F}'(r)}{{\bf D}(r)} \, dr \leq C,
\end{equation}
where the last inequality follows from the above observation that, in the regime ${\bf I} \geq 1$, the inequalities 
\[
{\bf F}(r) \leq C_{0} r^{2} {\bf D}(r), \hspace{0.5cm} {\bf F}'(r) = {\bf H}(r) \leq r {\bf D}(r)
\]
hold almost everywhere. This completes the proof.
\end{proof}

\subsection{Proof of Propositions \ref{dicotomia} and \ref{am_impr_thm}}

We will need the following version of the Poincar\'e inequality.

\begin{lemma} \label{Poinc_type:thm}
There exist a radius $0 < r_{0} = r_{0}(m,Q) < {\rm inj}(\Sigma)$ and a geometric constant $C > 0$ with the following property. Let $N \in \Gamma_{Q}^{1,2}(\N\Omega)$ be a multiple valued section of $\N\Sigma$ $\Jac$-minimizing in $\Omega$. Assume $p \in \Omega$ is such that $N(p) = Q \llbracket 0 \rrbracket$. Then, the inequality
\begin{equation} \label{Poinc_type:eq}
\| N \|_{L^2(\B_{r}(p))}^{2} \leq C r^2 \Dir^{\N\Sigma}(N,\B_{r}(p))
\end{equation}
holds true for every $0 < r \leq \min\lbrace r_{0}, \ddist(p, \partial\Omega) \rbrace$.
\end{lemma}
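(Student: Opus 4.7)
The strategy is a contradiction-and-rescaling (blow-up) argument driven by the quantitative H\"older estimate of Theorem \ref{reg_Holder}. We may assume that $N$ does not vanish identically on any neighborhood of $p$, otherwise the claim is trivial for small $r$. Suppose the lemma fails; then there is a sequence $r_h \downarrow 0$, with $r_h < \mathrm{inj}(\Sigma)$ and $r_h < \ddist(p,\partial\Omega)$, such that
\[
\|N\|_{L^2(\B_{r_h}(p))}^2 > h\, r_h^2\,\Dir^{\N\Sigma}(N,\B_{r_h}(p)) > 0.
\]

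Set $\mu_h := r_h^{-m/2}\|N\|_{L^2(\B_{r_h}(p))}>0$ and define the rescaled $Q$-maps $N_h \colon B_1 \subset T_p\Sigma \to \A_Q(\R^d)$ by
\[
N_h(y) := \sum_{\ell=1}^{Q}\llbracket \mu_h^{-1} N^\ell(\exp_p(r_h y))\rrbracket.
\]
Using the asymptotic expansions \eqref{ab1}--\eqref{ab2} and the fact that $\Dir(N,\cdot) - \Dir^{\N\Sigma}(N,\cdot)$ is bounded by $C\|N\|_{L^2}^2$ (via the second fundamental forms of $\Sigma\hookrightarrow\M\hookrightarrow\R^d$), the normalization yields $\|N_h\|_{L^2(B_1)}^2 \to 1$ while
\[
\Dir(N_h,B_1) \leq C/h + C\,r_h^2 \;\xrightarrow[h\to\infty]{}\; 0.
\]
By the compact embedding $W^{1,2}\hookrightarrow L^2$ for $Q$-valued maps (Proposition \ref{embeddings}), a subsequence $N_h$ converges in $L^2(B_1)$ to some $T_\infty \in W^{1,2}(B_1,\A_Q(\R^d))$ with $\Dir(T_\infty,B_1)=0$; hence $T_\infty \equiv T_0$ for a constant $T_0 \in \A_Q(\R^d)$ on the connected domain $B_1$, with $|T_0|^2 = \omega_m^{-1}>0$ by the $L^2$ normalization.

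To derive the contradiction, apply the H\"older estimate \eqref{reg_Holder:est} to $N$ on $\B_{r_h}(p)$ with $\theta = 3/4$ and rescale: tracking the factor $r_h^\alpha / \mu_h$ in the rescaled seminorm against the prefactor $r_h^{2-m-2\alpha}$ in \eqref{reg_Holder:est} and using the identity $\mu_h^2 r_h^m = \|N\|_{L^2(\B_{r_h})}^2$, one computes
\[
[N_h]_{C^{0,\alpha}(\overline B_{3/4})}^2 \leq C\,\Dir(N_h,B_1) + C\Lambda\, r_h^2 \;\xrightarrow[h\to\infty]{}\; 0.
\]
Since $N(p) = Q\llbracket 0\rrbracket$ forces $N_h(0) = Q\llbracket 0\rrbracket$ for every $h$, H\"older continuity yields
\[
\sup_{y\in \overline B_{3/4}} |N_h(y)| \leq \left(\tfrac{3}{4}\right)^\alpha [N_h]_{C^{0,\alpha}(\overline B_{3/4})} \to 0,
\]
so $N_h \to Q\llbracket 0\rrbracket$ uniformly on $\overline B_{3/4}$. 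Combined with the $L^2$ convergence on $B_1$, this forces $T_0 \equiv Q\llbracket 0\rrbracket$ (by constancy of $T_0$), contradicting $|T_0|^2 = \omega_m^{-1} > 0$.

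The main technical obstacle is the careful bookkeeping in the rescaling step: verifying that $\mathbf{J}\exp_p$ deviates from unity by $O(r_h^2)$, that the correction terms $\Dir - \Dir^{\N\Sigma}$ yield an $O(r_h^2)$ contribution after rescaling, and that the H\"older seminorm rescales cleanly as indicated. None of these is conceptually hard; the core of the argument is the blow-up itself, which converts the quantitative H\"older regularity of Jacobi $Q$-fields into the claimed Poincar\'e-type inequality at points where $N$ collapses to $Q\llbracket 0\rrbracket$.
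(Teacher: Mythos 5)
The proposal is mathematically sound in its core mechanism but takes a \emph{genuinely different route} from the paper, and contains one technical gap that should be flagged.

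\textbf{Comparison with the paper's proof.} The paper proves the lemma by a direct, quantitative argument: the $L^2$ norm of $N$ over $\B_r(p)$ is split into the contribution from a small inner ball $\B_\rho(p)$ and the contribution from the annulus $\B_r(p)\setminus \B_\rho(p)$. On the inner ball the H\"older estimate \eqref{reg_Holder:est} is applied directly; on the annulus one integrates $|N|^2$ along radial geodesics (the fundamental theorem of calculus applied to $|N|^2$ along $\gamma_x$, yielding \eqref{P_t:arc}--\eqref{P_t:sphere}), and the inner sphere term is again controlled by H\"older regularity. A final choice of $\rho$ absorbs the lower-order terms, and Young's inequality closes the estimate. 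Your approach instead uses a blow-up/compactness contradiction: rescale, pass to a limit via Rellich's compactness for $Q$-valued maps, and show the normalized limit is simultaneously a nonzero constant (by $L^2$ normalization and vanishing of the rescaled Dirichlet energy) and $Q\llbracket 0\rrbracket$ (by the uniform H\"older bound on the rescaled maps, which vanishes, combined with $N_h(0)=Q\llbracket 0\rrbracket$). Both proofs lean on Theorem \ref{reg_Holder} as the one piece of genuine regularity; the direct argument is more explicit and yields a computable constant, while the compactness argument is shorter and conceptually transparent. Neither is circular (Theorem \ref{reg_Holder} is established before this lemma and does not invoke it). The algebra of your rescaling — in particular the cancellation $\mu_h^{-2}r_h^{2-m} = r_h^2\|N\|_{L^2(\B_{r_h}(p))}^{-2}$ making both the Dirichlet bound $\Dir(N_h,B_1)\le 1/h + Cr_h^2$ and the H\"older bound $[N_h]^2_{C^{0,\alpha}(\overline B_{3/4})}\le C\Dir(N_h,B_1)+C\Lambda r_h^2$ go to zero — checks out.

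\textbf{The gap: universality of the constant.} As written, you fix $N$ and $p$ in the opening lines and then extract only a sequence of radii $r_h\downarrow 0$. This produces a constant $C$ that \emph{a priori} depends on the fixed pair $(N,p)$, whereas the lemma asserts a geometric constant $C$ and a radius $r_0 = r_0(m,Q)$ that are uniform over all $\Jac$-minimizers $N$ and all basepoints $p$ with $N(p)=Q\llbracket 0\rrbracket$. (It is also worth noting that, having fixed $N,p$, the negation of the lemma only furnishes bad radii bounded above; one must observe — e.g., using that $\Dir^{\N\Sigma}(N,\B_r(p))>0$ for small $r$ whenever $N$ does not vanish near $p$, as $\nabla^\perp N\equiv 0$ together with $N(p)=Q\llbracket0\rrbracket$ forces $N\equiv Q\llbracket0\rrbracket$ along radial geodesics — that the bad radii must accumulate at $0$.) The fix is routine: run the contradiction over triples $(N_h, p_h, r_h)$ with $r_h<1/h$; all the estimates you use — the H\"older estimate \eqref{reg_Holder:est}, the Jacobian asymptotics \eqref{ab1}--\eqref{ab2}, the bound $\Dir-\Dir^{\N\Sigma}\le C_0\|\cdot\|_{L^2}^2$, and Rellich compactness — are uniform in $N_h$ and $p_h$, so the blow-up limit exists and the contradiction is unchanged. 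You should make this explicit, since the statement claims a universal constant.
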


\begin{proof}
Let $r_{0} = r_{0}(m,Q)$ be the radius given by Theorem \ref{reg_Holder}, and let $r \leq \min\lbrace r_{0}, \ddist(p, \partial \Omega) \rbrace$ be arbitrary. Let $\rho \in \left( 0, \frac{r}{2} \right]$ be a radius to be chosen later and split $\| N \|_{L^{2}(\B_{r}(p))}^{2}$ into the sum
\begin{equation} \label{P_t:split}
\int_{\B_{r}(p)} |N|^{2} \, \dHa^{m} = \int_{\B_{\rho}(p)} |N|^{2} \, \dHa^{m} + \int_{\B_{r}(p) \setminus \B_{\rho}(p)} |N|^{2} \, \dHa^{m}.
\end{equation}
In order to estimate the first term in the sum, we recall that $|N|^{2}(x) = \G(N(x), Q\llbracket 0 \rrbracket)^{2} = \G(N(x), N(p))^{2}$ and exploit the $\alpha$-H\"older continuity of $N$ to conclude
\begin{equation} \label{P_t:1st}
\begin{split}
\int_{\B_{\rho}(p)} |N|^{2} \, \dHa^{m} &\leq \rho^{2\alpha} \left[ N \right]_{C^{0,\alpha}(\overline{\B}_{\rho}(p))}^{2} \Ha^{m}(\B_{\rho}(p)) \\
&\overset{\eqref{reg_Holder:est}}{\leq} C \rho^{2} \left( \Dir(N, \B_{2\rho}(p)) + \Lambda \int_{\B_{2\rho}(p)} |N|^{2} \, \dHa^{m} \right) \\
&\leq C \rho^{2} \Dir(N, \B_{r}(p)) + C \Lambda \rho^{2} \int_{\B_{r}(p)} |N|^{2} \, \dHa^{m} \\
&\leq \underbrace{C r^{2} \Dir^{\N\Sigma}(N, \B_{r}(p))}_{=: I_{1}} + \underbrace{C(\Lambda + C_{0}) \rho^{2} \int_{\B_{r}(p)} |N|^{2} \, \dHa^{m}}_{=: I_{2}},
\end{split}
\end{equation}
where $C_{0}$ depends on $\mathbf{A}$ and $\overline{\mathbf{A}}$.

As for the second addendum in \eqref{P_t:split}, we integrate in normal polar coordinates with pole $p$ to write
\begin{equation} \label{P_t:2nd}
\int_{\B_{r}(p) \setminus \B_{\rho}(p)} |N|^{2} \, \dHa^{m} = \int_{\rho}^{r} \left( \int_{\partial \B_{\tau}(p)} |N|^{2} \, \dHa^{m-1} \right) \, \mathrm{d}\tau.
\end{equation}
Now, fix any $\tau \in \left( \rho, r \right)$, and for every $x \in \partial \B_{\tau}(p)$ let $\gamma_{x} = \gamma_{x}(s)$, $s \in \left[ 0, \tau \right]$, be the unique geodesic parametrized by arclength joining $p$ to $x$. Also denote by $\overline{x}$ the point where $\gamma_{x}$ intersects $\partial \B_{\rho}(p)$. Then, the fundamental theorem of calculus immediately yields
\begin{equation} \label{P_t:arc}
|N|^{2}(x) \leq |N|^{2}(\overline{x}) + 2 \int_{\rho}^{\tau} (|N| |\nabla^{\perp}N|)(\gamma_{x}(s)) \, ds.
\end{equation}
Integrate the above inequality in $x \in \partial \B_{\tau}(p)$ to get
\begin{equation} \label{P_t:sphere}
\int_{\partial \B_{\tau}(p)} |N|^{2} \, \dHa^{m-1} \leq C \left( \frac{\tau}{\rho} \right)^{m-1} \left( \int_{\partial\B_{\rho}(p)} |N|^{2} \, \dHa^{m-1} + 2 \int_{\B_{\tau}(p) \setminus \B_{\rho}(p)} |N| |\nabla^{\perp}N| \, \dHa^{m} \right).
\end{equation}
Using once again the H\"older estimate \eqref{reg_Holder:est} and recalling that $\rho \leq \frac{r}{2}$, we are able to control
\begin{equation} \label{1term}
\begin{split}
\left( \frac{\tau}{\rho} \right)^{m-1} \int_{\partial \B_{\rho}(p)} |N|^{2} \, \dHa^{m-1} &\leq \left( \frac{\tau}{\rho} \right)^{m-1} \rho^{2\alpha} \left[ N \right]_{C^{0,\alpha}(\overline{\B}_{\frac{r}{2}}(p))}^{2} \Ha^{m-1}(\partial \B_{\rho}(p)) \\
&\leq C \tau^{m-1} \rho^{2\alpha} r^{2-m-2\alpha} \left( \Dir(N, \B_{r}(p)) + \Lambda \int_{\B_{r}(p)} |N|^{2} \, \dHa^{m} \right).
\end{split}
\end{equation}
We can now integrate in $\tau \in \left( \rho, r \right)$, so that using the estimates in \eqref{P_t:sphere} and \eqref{1term} we can easily deduce from \eqref{P_t:2nd} the following inequality:
\begin{equation} \label{P_t:2nd_fin}
\begin{split}
\int_{\B_{r}(p) \setminus \B_{\rho}(p)} |N|^{2} \, \dHa^{m} &\leq \underbrace{C \left(\frac{\rho}{r}\right)^{2\alpha} r^{2} \Dir^{\N\Sigma}(N, \B_{r}(p))}_{=: J_{1}} \\
&+ \underbrace{C (\Lambda + C_{0}) \rho^{2\alpha} \int_{\B_{r}(p)} |N|^{2} \, \dHa^{m}}_{=: J_{2}} \\
&+ \underbrace{C \left( \frac{r}{\rho} \right)^{m-1} r \int_{\B_{r}(p)} |N| |\nabla^{\perp}N| \, \dHa^{m}}_{=: J_3},
\end{split}
\end{equation}
where $C$ and $C_{0}$ are geometric constants. Now we can sum up the contributions coming from the ball $\B_{\rho}(p)$ and from the annulus $\B_{r}(p) \setminus \B_{\rho}(p)$ and choose $\rho = \rho(\Lambda, C, C_{0})$ so small that the terms $I_{2}$ and $J_{2}$ are absorbed in the left-hand side of the equation, thus ultimately providing
\begin{equation}\label{P_t:fin}
\int_{\B_{r}(p)} |N|^{2} \, \dHa^{m} \leq C r^{2} \Dir^{\N\Sigma}(N, \B_{r}(p)) + C r \int_{\B_{r}(p)} |N| |\nabla^{\perp}N| \, \dHa^{m}.
\end{equation}

Finally, use Young's inequality: for any choice of the parameter $\eta > 0$, \eqref{P_t:fin} implies that
\begin{equation} \label{P_t:Young}
\int_{\B_{r}(p)} |N|^{2} \, \dHa^{m} \leq Cr^{2} \Dir^{\N\Sigma}(N, \B_{r}(p)) + C r \left( \eta \int_{\B_{r}(p)} |N|^{2} \, \dHa^{m} + \frac{1}{\eta} \Dir^{\N\Sigma}(N, \B_{r}(p)) \right).
\end{equation}
The conclusion immediately follows by choosing $\eta$ such that $Cr\eta = \frac{1}{2}$.
\end{proof}

\begin{proof}[Proof of Proposition \ref{dicotomia}]
First observe that if $N$ does not vanish identically in a neighborhood of $p$, then there exists $r_{0} > 0$ such that ${\bf H}(r_0) > 0$. Clearly, without loss of generality we can suppose that \eqref{Poinc_type:eq} holds for every $0 < r \leq r_{0}$, and also that \eqref{am:eq} holds in any interval $\left[ s, t \right] \subset \left( 0, r_{0} \right]$ such that ${\bf H}\big|_{\left[ s, t \right]} > 0$. We claim that in fact ${\bf H}(r) > 0$ for all $0 < r \leq r_0$. Indeed, if this is not true, let $\rho > 0$ be given by $\rho := \sup\lbrace r \in \left(0, r_0 \right] \, \colon \, {\bf H}(r) = 0 \rbrace$. By definition ${\bf H}(r) > 0$ for $\rho < r \leq r_0$, whence for such $r$'s we can take advantage of Theorem \ref{freq:am} and write
\[
{\bf I}(r) \leq C_{0}(1 + {\bf I}(r_0)).
\]
By letting $r \downarrow \rho$ we conclude
\[
\rho {\bf D}(\rho) \leq C_{0}(1 + {\bf I}(r_0)) {\bf H}(\rho) = 0,
\]
which in turn produces $\Dir^{\N\Sigma}(N, \B_{\rho}(p)) = 0$. Then, by Lemma \ref{Poinc_type:thm}, $N$ vanishes identically in $\B_{\rho}(p)$, contradiction.

It is now a simple consequence of Theorem \ref{freq:am} that
\[
\limsup_{r \to 0} {\bf I}(r) \leq C_{0}(1 + {\bf I}(r_0)),
\]
which completes the proof.
\end{proof}

\begin{proof}[Proof of Proposition \ref{am_impr_thm}]
Under the assumptions in the statement, case $(ii)$ in Proposition \ref{dicotomia} must hold, and thus the frequency function is well defined and bounded in an interval $\left( 0, r_0 \right]$. Moreover, the Poincaré inequality \eqref{Poinc_type:eq} implies that, modulo possibly taking a smaller value of $r_0$, the first variation estimates of Lemma \ref{FVE} can be again re-written as in \eqref{OV:est_fin} and \eqref{IV:est_fin}, and that \eqref{D_E_comp} holds. Thus, we can compute:
\begin{equation} \label{am_impr:1}
\begin{split}
{\bf I}'(r) &= \frac{{\bf D}(r)}{{\bf H}(r)} + \frac{r {\bf D}'(r)}{{\bf H}(r)} - \frac{r {\bf D}(r) {\bf H}'(r)}{{\bf H}(r)^{2}} \\
&= \frac{{\bf D}(r)}{{\bf H}(r)} + \frac{r}{{\bf H}(r)} \left( 2 {\bf G}(r) + \frac{m-2}{r} {\bf D}(r) + \mathcal{E}_{1}(r) \right) - \frac{r {\bf D}(r)}{{\bf H}(r)^{2}} \left( \frac{m-1}{r} {\bf H}(r) + 2 {\bf E}(r) + \mathcal{E}_{2}(r) \right) \\
&= \frac{2 r}{{\bf H}(r)^{2}} \left( {\bf G}(r) {\bf H}(r) - {\bf E}(r)^{2} \right) + \frac{r}{{\bf H}(r)} \, \mathcal{E}_{1}(r) - \frac{r {\bf D}(r)}{{\bf H}(r)^{2}} \, \mathcal{E}_{2}(r) + \mathcal{E}_{3}(r), 
\end{split}
\end{equation}
where
\begin{equation} \label{am_impr:2}
|\mathcal{E}_{1}(r)| \overset{\eqref{IV:est_fin}}{\leq} C_{0} r {\bf D}(r),
\end{equation}
\begin{equation} \label{am_impr:3}
|\mathcal{E}_{2}(r)| \overset{\eqref{H'_est}}{\leq} C_{0} r {\bf H}(r),
\end{equation}
and
\begin{equation} \label{am_impr:4}
|\mathcal{E}_{3}(r)| = \frac{2 r {\bf E}(r)}{{\bf H}(r)^{2}} |{\bf E}(r) - {\bf D}(r)| \overset{\eqref{OV:est_fin}, \eqref{D_E_comp}}{\leq} C_{0} \frac{r^{3} {\bf D}(r)^{2}}{{\bf H}(r)^{2}},
\end{equation}
if $r_0$ is chosen small enough. Since ${\bf G}(r) {\bf H}(r) - {\bf E}(r)^{2} \geq 0$ by the Cauchy-Schwartz inequality, the above arguments show the existence of a radius $r_{0} > 0$ and a geometric constant $C_0 > 0$ such that 
\begin{equation} \label{am_impr:5}
{\bf I}'(r) \geq - C_{0} r {\bf I}(r) - C_{0} r {\bf I}(r)^{2} 
\end{equation}
for all $r \in \left( 0, r_0 \right]$. On the other hand, for such $r$'s one has ${\bf I}(r) \leq C_{0}(1 + {\bf I}(r_{0}))$ by Theorem \ref{freq:am}. Thus, this allows to conclude that
\begin{equation} \label{am_impr:6}
{\bf I}'(r) \geq - \lambda {\bf I}(r),
\end{equation}
for some positive $\lambda$ depending only on $r_0$ and ${\bf I}(r_{0})$. The monotonicity of the function $r \mapsto e^{\lambda r} {\bf I}(r)$ is now a simple consequence of \eqref{am_impr:6}.

Next, we conclude the proof showing that the limit
\begin{equation} \label{am_impr:6bis}
I_0 := \lim_{r \to 0} e^{\lambda r} {\bf I}(r) = \lim_{r \to 0} {\bf I}(r)
\end{equation}
is positive. To see this, we show that the Poincaré inequality \eqref{Poinc_type:eq} allows to bound the frequency function from below with a positive constant. Indeed, arguing as in the proof of Lemma \ref{Poinc_type:thm} (cf. in particular the equations \eqref{P_t:sphere} and \eqref{1term}), it is easily seen that one can estimate
\begin{equation} \label{am_impr:7}
\begin{split}
{\bf H}(r) &= \int_{\partial \B_{r}(p)} |N|^{2} \, \dHa^{m-1} \leq C \left( \int_{\partial \B_{\frac{r}{2}}(p)} |N|^{2} \, \dHa^{m-1} + \int_{\B_{r}(p) \setminus \B_{\frac{r}{2}}(p)} |N| |\nabla^{\perp}N| \, \dHa^{m} \right)\\
&\leq Cr \left( \Dir(N, \B_{r}(p)) + \Lambda \int_{\B_{r}(p)} |N|^{2} \, \dHa^{m} \right) + C \int_{\B_{r}(p)} |N| |\nabla^{\perp}N| \, \dHa^{m}\\
&\leq Cr {\bf D}(r) + Cr {\bf F}(r) + C \int_{\B_{r}(p)} |N| |\nabla^{\perp}N| \, \dHa^{m}.
\end{split}
\end{equation} 

In turn, applying Young's inequality to the last addendum in the right-hand side of \eqref{am_impr:7} yields
\begin{equation} \label{am_impr:8}
\int_{\B_{r}(p)} |N| |\nabla^{\perp}N| \, \dHa^{m} \leq \frac{r}{2} {\bf D}(r) + \frac{1}{2r} {\bf F}(r).
\end{equation}

Plugging \eqref{am_impr:8} in \eqref{am_impr:7} and using the Poincaré inequality \eqref{Poinc_type:eq} finally gives
\begin{equation} \label{am_impr:9}
{\bf H}(r) \leq C (1 + r^{2}) r {\bf D}(r) \leq C (1 + r_{0}^{2}) r {\bf D}(r),
\end{equation}
thus completing the proof.
\end{proof}

\section{Reverse Poincar\'e and analysis of blow-ups for the top stratum} \label{sec:blow-up}

The final goal of this section is to perform the key step in the proof of Theorem \ref{sing:thm}, namely the blow-up procedure, see Theorem \ref{blow-up:thm} below. In doing this, we will clarify the importance of the results obtained in the previous paragraph.

\subsection{Reverse Poincar\'e inequalities}

The proof of the blow-up theorem will heavily rely on an important technical tool, a reverse Poincar\'e inequality for $\Jac$-minimizers. In Proposition \ref{rpis:thm}, we have already shown that $\Jac$-minimizers enjoy a Caccioppoli type inequality: the $L^{2}$-norm of a Jacobi $Q$-field $N$ in a ball $\B_{r}(p)$ controls the Dirichlet energy in the ball with half the radius. As an immediate consequence of the boundedness of the frequency function, one can actually show that the Dirichlet energy in $\B_{\frac{r}{2}}(p)$ can be controlled with the $L^{2}$-norm of $N$ in the annulus $\B_{r}(p) \setminus \B_{\frac{r}{2}}(p)$, provided that we allow the constant to depend on the value of the frequency at a given scale.

\begin{proposition} \label{Caccioppoli}
Let $N \in \Gamma_{Q}^{1,2}(\N\Omega)$ be $\Jac$-minimizing. Then, there exists $r_{0} > 0$ such that for any $r \in \left( 0, r_{0} \right]$ one has
\begin{equation} \label{Caccioppoli:eq}
\Dir^{\N\Sigma}(N, \B_{\frac{r}{2}}(p)) \leq \frac{C}{r^2} \int_{\B_{r}(p) \setminus \B_{\frac{r}{2}}(p)} |N|^{2} \, \dHa^{m}
\end{equation}
for some positive $C = C({\bf I}(r_{0}))$.
\end{proposition}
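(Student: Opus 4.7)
The starting point is the Caccioppoli-type inequality \eqref{rpis:eq1} from Proposition \ref{rpis:thm}. Applying it with a cutoff $\eta \in C_c^1(\B_r(p))$ satisfying $\eta \equiv 1$ on $\B_{r/2}(p)$ and $|D\eta| \leq 2/r$ yields immediately
\begin{equation*}
\mathbf{D}(r/2) \leq \int_\Omega \eta^2 |\nabla^\perp N|^2 \, \dHa^m \leq \frac{C}{r^2} \int_{\B_r(p) \setminus \B_{r/2}(p)} |N|^2 \, \dHa^m + C \, \mathbf{F}(r),
\end{equation*}
where $C$ is geometric. The content of the proposition is therefore the elimination of the ``bad'' additive term $C \, \mathbf{F}(r)$: one needs to control the full ball integral $\mathbf{F}(r)$ by a multiple of the annular one $\mathbf{F}(r) - \mathbf{F}(r/2)$, with a constant depending only on $\mathbf{I}(r_0)$. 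If $N \equiv Q\llbracket 0 \rrbracket$ on a neighborhood of $p$ the statement is trivial; otherwise, by Proposition \ref{dicotomia} we may assume $\mathbf{H}(s)>0$ on $(0,r_0]$, and by Theorem \ref{freq:am} we have a bound $\mathbf{I}(s) \leq M := C_0(1+\mathbf{I}(r_0))$ on that interval.

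The key step is to derive two-sided control on the logarithmic derivative of $\mathbf{H}$. The estimate \eqref{H'_est} gives
\begin{equation*}
\frac{\mathbf{H}'(r)}{\mathbf{H}(r)} = \frac{m-1}{r} + \frac{2\mathbf{E}(r)}{\mathbf{H}(r)} + O(r),
\end{equation*}
so I need both upper and lower bounds on $\mathbf{E}(r)/\mathbf{H}(r)$. The upper bound follows from $|\mathbf{E}|\leq C\mathbf{D}$ (which is \eqref{OV:est} together with the Poincar\'e bound \eqref{freq1regime:poinc} $\mathbf{F}(r)\leq C_0 r \mathbf{H}(r)+C_0 r^2\mathbf{D}(r)$), giving $2\mathbf{E}/\mathbf{H} \leq 2\mathbf{I}(r)/r + C \leq 2M/r + C$. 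For the lower bound I combine \eqref{OV:est} with the same Poincar\'e inequality to obtain $\mathbf{E}(r) \geq \tfrac{1}{2}\mathbf{D}(r) - Cr\mathbf{H}(r)$ at small scales, which yields $2\mathbf{E}/\mathbf{H} \geq -C$ unconditionally. Hence, for $r \leq r_0$ suitably small,
\begin{equation*}
\frac{m-1}{r} - C \leq \frac{\mathbf{H}'(r)}{\mathbf{H}(r)} \leq \frac{m-1+2M}{r} + C.
\end{equation*}

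Integrating these inequalities on $[s,r]$ with $s \leq r \leq r_0$ gives the two-sided comparison
\begin{equation*}
c \left(\frac{s}{r}\right)^{m-1+2M} \leq \frac{\mathbf{H}(s)}{\mathbf{H}(r)} \leq C \left(\frac{s}{r}\right)^{m-1},
\end{equation*}
with $c,C$ depending only on $m$, $M$, and $r_0$. Integrating in $s$ on $(r/2,r)$ and on $(0,r/2)$ respectively, and recalling $\mathbf{F}'=\mathbf{H}$, yields
\begin{equation*}
\mathbf{F}(r)-\mathbf{F}(r/2) = \int_{r/2}^{r}\mathbf{H}(s)\,ds \geq c' r\, \mathbf{H}(r), \qquad \mathbf{F}(r/2) = \int_0^{r/2} \mathbf{H}(s)\,ds \leq C' r\, \mathbf{H}(r).
\end{equation*}
Consequently $\mathbf{F}(r/2) \leq K\bigl(\mathbf{F}(r)-\mathbf{F}(r/2)\bigr)$ for some $K=K(m,M)$, hence
\begin{equation*}
\mathbf{F}(r) \leq (K+1)\int_{\B_r(p)\setminus\B_{r/2}(p)} |N|^2\,\dHa^m.
\end{equation*}

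Plugging this back into the initial estimate,
\begin{equation*}
\mathbf{D}(r/2) \leq \Bigl(\frac{C}{r^2} + C(K+1)\Bigr)\int_{\B_r(p)\setminus\B_{r/2}(p)} |N|^2\,\dHa^m \leq \frac{C''}{r^2}\int_{\B_r(p)\setminus\B_{r/2}(p)} |N|^2\,\dHa^m
\end{equation*}
for $r \leq r_0$, with $C''=C''(\mathbf{I}(r_0))$, as required. The main technical point is the derivation of the two-sided logarithmic derivative bound on $\mathbf{H}$; once this is in hand, the reverse Poincar\'e inequality follows by straightforward integration and substitution.
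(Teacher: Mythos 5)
Your argument is correct, but it takes a substantially longer route than the paper's proof, which is only three lines. The paper's proof never invokes the Caccioppoli inequality \eqref{rpis:eq1} at all. It simply uses the boundedness of the frequency, $\tau\,\mathbf{D}(\tau)\leq C\,\mathbf{H}(\tau)$ for $\tau\in[r/2,r]$, together with the monotonicity of $\mathbf{D}$: integrating in $\tau$ gives
\[
\tfrac{3}{8}r^{2}\,\mathbf{D}(r/2) \leq \int_{r/2}^{r}\tau\,\mathbf{D}(\tau)\,d\tau \leq C\int_{r/2}^{r}\mathbf{H}(\tau)\,d\tau = C\int_{\B_{r}(p)\setminus\B_{r/2}(p)}|N|^{2}\,\dHa^{m},
\]
which is exactly \eqref{Caccioppoli:eq}. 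No Caccioppoli inequality, no first-variation identities, no doubling estimate on $\mathbf{H}$ are needed.

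Your approach is genuinely different: you start from the Caccioppoli estimate, which produces the unwanted full-ball term $\mathbf{F}(r)$, and then eliminate it by proving a two-sided doubling comparison for $\mathbf{H}$ (upper bound $\mathbf{H}(s)/\mathbf{H}(r)\lesssim(s/r)^{m-1}$, lower bound $\gtrsim(s/r)^{m-1+2M}$). That intermediate fact is correct and is in itself a useful piece of information about $\Jac$-minimizers (and closely resembles estimates used in Lemma \ref{Poinc_type:thm} and Proposition \ref{reverse_Poinc:thm}), but here it is considerably more machinery than the statement requires. The only subtle ingredient the paper uses — and which your argument also implicitly relies on — is that, by Proposition \ref{dicotomia} and Theorem \ref{freq:am}, the frequency is well-defined and bounded on $(0,r_0]$ unless $N$ vanishes identically near $p$, which you correctly handle. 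So the proposal is sound, just not optimal: the shortcut the paper takes is to exploit the monotonicity of $\mathbf{D}$ to turn the pointwise frequency bound directly into the reverse Poincar\'e inequality, bypassing both the cutoff computation and the doubling argument on $\mathbf{H}$.
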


\begin{proof}
If $N$ is vanishing identically in a neighborhood of $p$ there is nothing to prove. Therefore, we can assume that either $N(p) \neq Q \llbracket 0 \rrbracket$ or $N(p) = Q \llbracket 0 \rrbracket$ but $N$ does not vanish identically in any neighborhood of $p$. In any of the two cases, either by the arguments contained in Remark \ref{well_posedness} or by Proposition \ref{dicotomia}, there exists a positive radius $r_{0}$ such that the frequency function is well defined and bounded for all $r \in \left( 0, r_{0} \right]$. Thus, there exists a positive constant $C = C({\bf I}(r_{0}))$ such that, for fixed $r \leq r_{0}$, $\tau {\bf D}(\tau) \leq C {\bf H}(\tau)$ for $\tau$ in the interval $\left[ \frac{r}{2}, r \right]$. Integrate with respect to $\tau$ to get \eqref{Caccioppoli:eq}:
\[
\begin{split}
\frac{3}{8} r^{2} \Dir^{\N\Sigma}(N, \B_{\frac{r}{2}}(p)) = \frac{3}{8} r^{2} {\bf D}\left( \frac{r}{2} \right) &\leq \int_{\frac{r}{2}}^{r} \tau {\bf D}(\tau) \, \mathrm{d}\tau \\ 
&\leq C \int_{\frac{r}{2}}^{r} {\bf H}(\tau) \, \mathrm{d}\tau = C \int_{\B_{r}(p) \setminus \B_{\frac{r}{2}}(p)} |N|^{2} \, \dHa^{m}.
\end{split}
\]
\end{proof}

The Caccioppoli inequality can in fact be improved further under the assumption that $N(p) = Q\llbracket 0 \rrbracket$: indeed, at small scales the inequality \eqref{rpis:eq2} holds without having to increase the support of the ball on the right-hand side. Again, for this to be true we need to allow the constant to depend on the value of the frequency at scale $r_{0}$.

\begin{proposition}[Reverse Poincaré Inequality] \label{reverse_Poinc:thm}
Let $N \in \Gamma_{Q}^{1,2}(\N\Omega)$ be $\Jac$-minimizing. Assume $N(p) = Q \llbracket 0 \rrbracket$. Then, there exists $r_{0} > 0$ such that for any $r \in \left( 0, r_{0} \right]$ the following inequality
\begin{equation} \label{reverse_Poinc:eq}
\Dir^{\N\Sigma}(N, \B_{r}(p)) \leq \frac{C}{r^2} \int_{\B_{r}(p)} |N|^{2} \, \dHa^m
\end{equation}
holds for some positive $C = C({\bf I}(r_{0}))$.
\end{proposition}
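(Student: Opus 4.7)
The plan is to combine the upper bound on the frequency with a differential inequality for the height function $\mathbf{H}$. Since the statement is trivial if $N\equiv Q\llbracket 0\rrbracket$ in a neighborhood of $p$, Proposition \ref{dicotomia} lets me assume $\mathbf{H}(r)>0$ on $(0,r_{0}]$ and, by the almost monotonicity Theorem \ref{freq:am}, that the frequency is uniformly bounded, $\mathbf{I}(r)\leq I_{0}$ with $I_{0}=I_{0}(\mathbf{I}(r_{0}))$, on this interval. This already gives the first key inequality $\mathbf{D}(r)\leq I_{0}\mathbf{H}(r)/r$. Combined with the Poincar\'e inequality $\mathbf{F}(r)\leq C r^{2}\mathbf{D}(r)$ of Lemma \ref{Poinc_type:thm}, the OV estimate \eqref{OV:est} sharpens to $|\mathbf{D}(r)-\mathbf{E}(r)|\leq C_{0}\mathbf{F}(r)\leq Cr^{2}\mathbf{D}(r)$, which after possibly shrinking $r_{0}$ yields $\mathbf{E}(r)\leq 2\mathbf{D}(r)\leq 2I_{0}\mathbf{H}(r)/r$.

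Plugging this bound into the derivative formula \eqref{H'_est}, for a.e.\ $r\in(0,r_{0}]$ I obtain
\[
\mathbf{H}'(r)\leq \frac{m-1}{r}\mathbf{H}(r)+2\mathbf{E}(r)+C_{0}r\mathbf{H}(r)\leq \frac{C_{1}}{r}\mathbf{H}(r),
\]
where $C_{1}=m+4I_{0}$ (the error term $C_{0}r\mathbf{H}(r)$ is absorbed into the leading $1/r$ factor by taking $r_{0}$ with $C_{0}r_{0}^{2}\leq 1$). Equivalently, the absolutely continuous function $r\mapsto r^{-C_{1}}\mathbf{H}(r)$ is non-increasing on $(0,r_{0}]$, which gives the pointwise comparison
\[
\mathbf{H}(\tau)\geq (\tau/r)^{C_{1}}\mathbf{H}(r)\qquad\text{for all }0<\tau\leq r\leq r_{0}.
\]

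Integrating this pointwise lower bound in $\tau$ over $(0,r)$ and recalling that $\mathbf{F}(r)=\int_{0}^{r}\mathbf{H}(\tau)\,d\tau$, I get
\[
\mathbf{F}(r)\geq \frac{\mathbf{H}(r)}{r^{C_{1}}}\int_{0}^{r}\tau^{C_{1}}\,d\tau=\frac{r\,\mathbf{H}(r)}{C_{1}+1},
\]
so $\mathbf{H}(r)\leq (C_{1}+1)\mathbf{F}(r)/r$. Substituting this back into the frequency bound $\mathbf{D}(r)\leq I_{0}\mathbf{H}(r)/r$ produces
\[
\mathbf{D}(r)\leq \frac{I_{0}(C_{1}+1)}{r^{2}}\mathbf{F}(r),
\]
which is the desired inequality \eqref{reverse_Poinc:eq} with $C=C(\mathbf{I}(r_{0}))$.

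The main obstacle, by comparison with the annular Caccioppoli estimate of Proposition \ref{Caccioppoli} that already yields $\mathbf{D}(r/2)\leq (C/r^{2})\mathbf{F}(r)$, is to promote this annular control into the full-ball control $\mathbf{D}(r)\leq (C/r^{2})\mathbf{F}(r)$. The conceptually new input is therefore the upper \emph{$L^{2}$-doubling} $r\mathbf{H}(r)\leq C\mathbf{F}(r)$, which is extracted from the ODE for $\mathbf{H}$ above; this is exactly the place where the vanishing hypothesis $N(p)=Q\llbracket 0\rrbracket$ becomes essential, since it allows the simultaneous use of the Poincar\'e inequality of Lemma \ref{Poinc_type:thm}, the frequency boundedness of Proposition \ref{dicotomia}, and the lower frequency bound encoded in \eqref{am_impr:9}. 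The remaining work is careful bookkeeping to verify that all smallness requirements on $r_{0}$ and all constants depend only on $\mathbf{I}(r_{0})$ and on the fixed geometric data.
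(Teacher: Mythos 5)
Your proof is correct, and it takes a genuinely different route from the paper's. Both proofs start from the same first step (the frequency bound ${\bf I}(r)\leq I_0$ on $(0,r_0]$ giving ${\bf D}(r)\leq I_0{\bf H}(r)/r$) and then reduce the problem to proving the $L^2$-doubling estimate $r{\bf H}(r)\leq C\,{\bf F}(r)$. The paper establishes this doubling by re-running the geodesic-integration argument from the proof of Lemma \ref{Poinc_type:thm}: it bounds $\int_{\partial\B_r(p)}|N|^2$ in terms of $\int_{\partial\B_\rho(p)}|N|^2$ and the bulk term $\int_{\B_r(p)\setminus\B_\rho(p)}|N||\nabla^\perp N|$, uses the H\"older estimate \eqref{reg_Holder:est} to control the inner sphere integral, and then closes via an absorption in $\rho$ and Young's inequality. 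You instead derive the doubling from the differential inequality for ${\bf H}$: combining \eqref{H'_est} with ${\bf E}(r)\leq 2{\bf D}(r)\leq 2I_0{\bf H}(r)/r$ (which follows from \eqref{OV:est}, the Poincar\'e inequality of Lemma \ref{Poinc_type:thm}, and the frequency bound) gives ${\bf H}'(r)\leq \frac{C_1}{r}{\bf H}(r)$ with $C_1=m+4I_0$, and monotonicity of $r^{-C_1}{\bf H}(r)$ integrates directly to ${\bf F}(r)\geq \frac{r{\bf H}(r)}{C_1+1}$. Your approach is cleaner in structure: it avoids re-deriving the geodesic integration and Campanato--Morrey dependence, using Lemma \ref{Poinc_type:thm} only as a black box to compare ${\bf E}$ to ${\bf D}$, and it makes the $L^2$-doubling step a one-line consequence of the ODE for ${\bf H}$. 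The paper's approach, on the other hand, stays closer to its earlier Poincar\'e computation and requires less of the first-variation machinery at this point. Both yield constants depending only on ${\bf I}(r_0)$ and the geometric data, as required.
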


\begin{proof}
Once again, we observe that \eqref{reverse_Poinc:eq} is trivial when $N \equiv Q \llbracket 0 \rrbracket$ in a neighborhood of $p$. We assume then that case $(ii)$ in Proposition \ref{dicotomia} holds, and we let $r_{0}$ be the radius given in there. Since the frequency function is well defined and bounded in $\left( 0, r_{0} \right]$, there exists $C = C({\bf I}(r_{0})) > 0$ such that 
\begin{equation} \label{r_P:1}
\int_{\B_{r}(p)} |\nabla^{\perp}N|^{2} \, \dHa^m \leq \frac{C}{r} \int_{\partial \B_{r}(p)} |N|^{2} \, \dHa^{m-1},
\end{equation}
for all $r$'s in the above interval. Arguing once again as in the proof of Lemma \ref{Poinc_type:thm}, we have that for every $\rho \in \left( 0, \frac{r}{2} \right]$ it holds
\begin{equation} \label{r_P:2}
\int_{\partial \B_{r}(p)} |N|^{2} \, \dHa^{m-1} \leq C \left( \frac{r}{\rho} \right)^{m-1} \left( \int_{\partial \B_{\rho}(p)} |N|^{2} \, \dHa^{m-1} + 2 \int_{\B_{r}(p) \setminus \B_{\rho}(p)} |N| |\nabla^{\perp}N| \, \dHa^{m} \right).
\end{equation} 

Furthermore, by the H\"older continuity of $N$ and since $\rho \leq \frac{r}{2}$ we also have
\begin{equation} \label{r_P:3}
\left( \frac{r}{\rho} \right)^{m-1} \int_{\partial \B_{\rho}(p)} |N|^{2} \, \dHa^{m-1} \overset{\eqref{Poinc_type:eq}}{\leq} C \left( \frac{\rho}{r} \right)^{2\alpha} r \left( \int_{\B_{r}(p)} |\nabla^{\perp}N|^{2} \, \dHa^{m} + (C_{0} + \Lambda) \int_{\B_{r}(p)} |N|^{2}\, \dHa^{m} \right).
\end{equation}

Combining \eqref{r_P:1}, \eqref{r_P:2} and \eqref{r_P:3} gives
\begin{equation} \label{r_P:4}
\begin{split}
\int_{\B_{r}(p)} |\nabla^{\perp}N|^{2} \, \dHa^m &\leq C \left( \frac{\rho}{r} \right)^{2\alpha} \left( \int_{\B_{r}(p)} |\nabla^{\perp}N|^{2} \, \dHa^{m} + \int_{\B_{r}(p)} |N|^{2} \, \dHa^{m} \right) \\
&+ \frac{C}{r} \left( \frac{r}{\rho} \right)^{m-1} \int_{\B_{r}(p)} |N| |\nabla^{\perp}N| \, \dHa^{m}. 
\end{split}
\end{equation}

Now, if we choose $\rho$ so small that $C \left( \frac{\rho}{r} \right)^{2\alpha} \leq \frac{1}{2}$ then from \eqref{r_P:4} follows
\begin{equation} \label{r_P:5}
\begin{split}
\int_{\B_{r}(p)} |\nabla^{\perp}N|^{2} \, \dHa^{m} &\leq \int_{\B_{r}(p)} |N|^{2} \, \dHa^{m} + \frac{C}{r} \int_{\B_{r}(p)} |N| |\nabla^{\perp}N| \, \dHa^{m} \\
&\leq \int_{\B_{r}(p)} |N|^{2} \, \dHa^{m} + \frac{C}{2r} \eta \int_{\B_{r}(p)} |\nabla^{\perp}N|^{2} \, \dHa^{m} + \frac{C}{2r\eta} \int_{\B_{r}(p)} |N|^{2} \, \dHa^{m}, 
\end{split}
\end{equation}
by the Young's inequality. Choose $\eta = \frac{r}{C}$ to obtain
\begin{equation} \label{r_P:6}
\int_{\B_{r}(p)} |\nabla^{\perp}N|^{2} \, \dHa^{m} \leq \left( \frac{C}{r^{2}} + 2 \right) \int_{\B_{r}(p)} |N|^{2},
\end{equation}
which immediately implies \eqref{reverse_Poinc:eq}.
\end{proof}

Now that we have the Reverse Poincar\'e inequality at our disposal, we can enter the core of the blow-up scheme.

\subsection{The top-multiplicity singular stratum. Blow-up}

The main difficulty in the proof of Theorem \ref{sing:thm} consists of estimating the Hausdorff dimension of the set of singular points with multiplicity exactly equal to $Q$. The proof of the general result then follows in a relatively easy way by an induction argument on $Q$. Therefore, it is fundamental to study the structure of the \emph{top-multiplicity singular stratum} of $N$, denoted $\sing_{Q}(N)$ and defined as follows.

\begin{definition}[Top-multiplicity points] \label{TMP}
Let $N \in \Gamma_{Q}^{1,2}(\N\Omega)$ be $\Jac$-minimizing. A point $p \in \Omega$ has multiplicity $Q$, or simply is a $Q$-point for $N$, and we will write $p \in {\rm D}_{Q}(N)$, if there exists $v \in T^{\perp}_{p}\Sigma$ such that $N(p) = Q \llbracket v \rrbracket$. We will define the top-multiplicity regular and singular strata of $N$ by 
\[
\reg_{Q}(N) := \reg(N) \cap {\rm D}_{Q}(N), \hspace{0.5cm} \sing_{Q}(N) := \sing(N) \cap {\rm D}_{Q}(N),
\]
respectively.
\end{definition} 

From this point onward, we will assume to have fixed a point $p \in {\rm D}_{Q}(N)$. The first step is to show that without loss of generality we can always assume that $N(p) = Q \llbracket 0 \rrbracket$. Recall the definition of the map $\bfeta$ given in \eqref{eta}.

\begin{lemma} \label{zero_average}
Let $N \in \Gamma_{Q}^{1,2}(\N\Omega)$ be $\Jac$-minimizing. Then:
\begin{itemize}
\item[$(i)$] the map $\bfeta \circ N \colon \Omega \to \R^d$ is a classical Jacobi field;
\item[$(ii)$] if $\zeta \colon \Omega \to \R^d$ is a classical Jacobi field, then the $Q$-valued map $u := \sum_{\ell} \llbracket N^{\ell} + \zeta \rrbracket$ is $\Jac$-minimizing.
\end{itemize}
\end{lemma}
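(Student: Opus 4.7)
For part $(i)$ the strategy is to extract the linear Euler--Lagrange equation for the average $\bfeta \circ N$ by testing the outer variation formula \eqref{OV:eq} against a carefully chosen family of admissible perturbations. For arbitrary $\zeta \in C^1_c(\N\Omega)$, the choice $\psi(x,u) := \zeta(x)$ is admissible in the sense of Definition \ref{in_out_var}(OV) and satisfies \eqref{OV:cond} trivially; moreover $D_u\psi \equiv 0$ and $\nabla^\perp \psi(x, N^\ell(x)) = \nabla^\perp \zeta(x)$, so \eqref{OV:eq} collapses to
\begin{equation*}
\int_\Omega \sum_{\ell=1}^Q \langle \nabla^\perp N^\ell \, \colon \, \nabla^\perp \zeta\rangle \, \dHa^m = \int_\Omega \sum_{\ell=1}^Q \bigl( \langle A \cdot N^\ell \, \colon \, A \cdot \zeta\rangle + \Ri(N^\ell, \zeta) \bigr) \, \dHa^m.
\end{equation*}
Each integrand is linear in $N^\ell$, so pulling the summation inside and using $\sum_\ell N^\ell = Q\,\bfeta \circ N$ yields, after cancelling $Q$, the weak form of the Jacobi system $\mathcal{L}(\bfeta \circ N) = 0$ tested against arbitrary $\zeta$, where $\mathcal{L}$ is the Jacobi normal operator of Remark \ref{rmk:1-valued}. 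Since $\mathcal{L}$ is linear, strongly elliptic and has $C^{1,\beta}$ coefficients under Assumption \ref{assumptions}, standard elliptic regularity promotes the $W^{1,2}$ weak solution $\bfeta \circ N$ to a classical one.

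For part $(ii)$ the key observation is that $\Jac(\cdot, \Omega)$ is genuinely quadratic under sheet-wise translation by a single-valued section. Given $f = \sum_\ell \llbracket f^\ell \rrbracket \in \Gamma^{1,2}_Q(\N\Omega)$ and a single-valued $\zeta \in \Gamma^{1,2}(\N\Omega)$, I set $f + \zeta := \sum_\ell \llbracket f^\ell + \zeta\rrbracket$, which is well-defined in $\Gamma^{1,2}_Q(\N\Omega)$ because sheet-wise addition of a single-valued field commutes with permutations of the $f^\ell$'s (so it is independent of the local Lipschitz selection of Proposition \ref{Lip-select}). Expanding $|a+b|^2 = |a|^2 + 2\langle a,b\rangle + |b|^2$ in each of the three quadratic integrands of $\Jac$, summing over $\ell$ and using $\sum_\ell f^\ell = Q\,\bfeta \circ f$, one obtains the identity
\begin{equation*}
\Jac(f + \zeta, \Omega) = \Jac(f, \Omega) + Q\, \Jac(\zeta, \Omega) + 2Q\, \mathcal{B}(\bfeta \circ f, \zeta),
\end{equation*}
where $\mathcal{B}(g, h) := \int_\Omega \bigl( \langle \nabla^\perp g, \nabla^\perp h\rangle - \langle A \cdot g \, \colon \, A \cdot h\rangle - \Ri(g, h) \bigr)\, \dHa^m$ is the symmetric bilinear form on single-valued $W^{1,2}$ sections that polarizes $\Jac$.

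With this expansion in hand, pick any competitor $v \in \Gamma^{1,2}_Q(\N\Omega)$ with $v|_{\partial\Omega} = u|_{\partial\Omega}$ and set $w := v - \zeta$, again via sheet-wise subtraction. Then $w \in \Gamma^{1,2}_Q(\N\Omega)$ and $w|_{\partial\Omega} = N|_{\partial\Omega}$; writing $u = N + \zeta$ and $v = w + \zeta$, the expansion applied to both sides gives
\begin{equation*}
\Jac(v, \Omega) - \Jac(u, \Omega) = \bigl( \Jac(w, \Omega) - \Jac(N, \Omega) \bigr) + 2Q\, \mathcal{B}(\bfeta \circ w - \bfeta \circ N, \zeta).
\end{equation*}
The first summand is non-negative by the $\Jac$-minimality of $N$ against the competitor $w$. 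The second vanishes: $\bfeta \circ w - \bfeta \circ N$ lies in $W^{1,2}_0(\Omega)$ since $w$ and $N$ share the same boundary trace, while $\zeta$ solves $\mathcal{L}\zeta = 0$ classically, so integration by parts in $\mathcal{B}(\cdot, \zeta)$ kills every contribution. The main (mild) obstacle I foresee is the rigorous treatment of the ``translation'' $v - \zeta$ in the multi-valued setting, which amounts to verifying the permutation-equivariance of sheet-wise addition on the Lipschitz selections of $v$; the elliptic regularity step in $(i)$ is routine for smooth-coefficient linear elliptic systems.
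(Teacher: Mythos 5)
Your proof is correct and follows essentially the same route as the paper's: part $(i)$ tests the outer variation formula \eqref{OV:eq} with $\psi(x,u) = \zeta(x)$ to extract the weak Jacobi equation for $\bfeta\circ N$, and part $(ii)$ expands $\Jac$ quadratically under sheet-wise translation by $\zeta$ and kills the cross term using the weak PDE for $\zeta$ tested against the zero-trace section $\bfeta\circ w - \bfeta\circ N$ (extended to $W^{1,2}_0$ test functions by density). The only slight difference is that you invoke elliptic regularity at the end of $(i)$ to upgrade the weak solution to a classical one, whereas the paper's working definition of ``Jacobi field'' in this proof is already the weak $W^{1,2}$ one, so that step is harmless but not needed.
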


\begin{proof}
Recall (cf. Remark \ref{rmk:1-valued} and the notation therein) that a normal vector field $\zeta \in \Gamma^{1,2}(\N\Omega) := \Gamma^{1,2}_{1}(\N\Sigma)$  is a Jacobi field if it is a weak solution of the linear elliptic PDE on the normal bundle $\N\Sigma$
\[
\left( -\Delta_{\Sigma}^{\perp} - \mathscr{A} - \mathscr{R} \right) \zeta = 0,
\]
that is, if the identity
\begin{equation} \label{Jac_PDE}
\int_{\Omega} \left( \langle \nabla^{\perp}\zeta \, \colon \, \nabla^{\perp}\phi \rangle - \langle A \cdot \zeta \, \colon \, A \cdot \phi \rangle - \Ri(\zeta,\phi) \right) \, \dHa^{m} = 0 
\end{equation} 
holds for all test functions $\phi \in C^{1}\left( \Omega , \R^d \right)$ with $\spt(\phi) \subset \Omega' \Subset \Omega$ and $\phi(x) \in T_{x}^{\perp}\Sigma \subset T_{x}\M$ for every $x \in \Omega$.

In order to prove $(i)$, first observe that the map $\bfeta$ preserves the fibers of the normal bundle, so that $\bfeta \circ N(x) \in T_{x}^{\perp}\Sigma$ for a.e. $x \in \Omega$ and thus $\bfeta \circ N \in \Gamma^{1,2}(\N\Omega) = \Gamma_{1}^{1,2}(\N\Omega)$. Now, fix any vector field $\phi$ as above. It is immediate to see that we can test the outer variation formula \eqref{OV:eq} with $\psi(x,u) := \phi(x)$, and that the resulting equation is precisely
\[
\int_{\Omega} \left( \langle \nabla^{\perp}(\bfeta \circ N) \, \colon \, \nabla^{\perp} \phi \rangle - \langle A \cdot (\bfeta \circ N) \, \colon \, A \cdot \phi \rangle - \Ri(\bfeta \circ N, \phi) \right) \, \dHa^{m} = 0,
\] 
that is $\bfeta \circ N$ solves \eqref{Jac_PDE} and the proof of $(i)$ is complete.

In order to prove $(ii)$, we take any $h \in \Gamma^{1,2}_{Q}(\N\Omega)$ such that $h|_{\partial\Omega} = N|_{\partial \Omega}$ and we show that
\[
\Jac(u, \Omega) \leq \Jac(\tilde{h}, \Omega),
\]
with $\tilde{h} = \sum_{\ell} \llbracket h^{\ell} + \zeta \rrbracket$. We compute:
\[
\begin{split}
\Jac(u, \Omega) &= \int_{\Omega} \sum_{\ell=1}^{Q} \left( |\nabla^{\perp}(N^{\ell} + \zeta)|^{2} - |A \cdot (N^{\ell} + \zeta)|^{2} - \Ri(N^{\ell} + \zeta, N^{\ell} + \zeta) \right) \, \dHa^m \\
&= \Jac(N, \Omega) + Q \left( \int_{\Omega} (|\nabla^{\perp}\zeta|^{2} - |A \cdot \zeta|^{2} - \Ri(\zeta, \zeta)) \, \dHa^{m} \right) \\
&+ 2Q \left( \int_{\Omega} (\langle \nabla^{\perp}(\bfeta \circ N) \, \colon \, \nabla^{\perp}\zeta \rangle - \langle A \cdot (\bfeta \circ N) \, \colon \, A \cdot \zeta \rangle - \Ri(\bfeta \circ N, \zeta)) \, \dHa^m \right).
\end{split}
\]
Using that $\Jac(N, \Omega) \leq \Jac(h, \Omega)$ and recalling the definition of $\tilde{h}$, we see that
\[
\begin{split}
\Jac(u, \Omega) - \Jac(\tilde{h}, \Omega) &\leq 2Q \int_{\Omega} \langle \nabla^{\perp}(\bfeta \circ N - \bfeta \circ h) \, \colon \, \nabla^{\perp}\zeta \rangle \\
&- 2Q \int_{\Omega} \langle A \cdot (\bfeta \circ N - \bfeta \circ h) \, \colon \, A \cdot \zeta \rangle \\
&- 2Q \int_{\Omega} \Ri(\bfeta \circ N - \bfeta \circ h, \zeta) = 0,
\end{split}
\]
because $\zeta$ is a Jacobi field and the function $\phi = \bfeta \circ N - \bfeta \circ h$ is a $W^{1,2}$ section of the normal bundle vanishing at $\partial \Omega$. This completes the proof.
\end{proof}

\begin{remark}
As a simple corollary of Lemma \ref{zero_average}, if $N$ is $\Jac$-minimizing then the $Q$-valued map $\tilde{N} = \sum_{\ell} \llbracket N^{\ell} - \bfeta \circ N \rrbracket$ is a $\Jac$-minimizer with $\bfeta \circ \tilde{N} \equiv 0$ and the same singular set as $N$. Therefore, there is no loss of generality in assuming that $\bfeta \circ N \equiv 0$, and, thus, that every $Q$-point $p$ satisfies $N(p) = Q \llbracket 0 \rrbracket$. In particular, when $p \in {\rm D}_{Q}(N)$ we can apply all the results of the previous section that were proved under the above assumption. Furthermore, the content of Proposition \ref{dicotomia} becomes more apparent in this context. Indeed, the dichotomy stated in there discriminates perfectly between regular and singular top-multiplicity points: $p \in \reg_{Q}(N)$ if and only if the condition $(i)$ is observed; on the other hand, $p \in \sing_{Q}(N)$ if and only if the frequency function is well defined and bounded in a neighborhood of $p$. 
\end{remark}

In view of the above remark, we assume from this point onwards that $N$ is $\Jac$-minimizing and such that $\bfeta \circ N = 0$. We fix a point $p \in \sing_{Q}(N)$, and an orthonormal basis \[\left( e_{1}, \dots, e_{m}, e_{m+1}, \dots, e_{m+k}, e_{m+k+1}, \dots, e_{d} \right)\] of the euclidean space $\R^d$ with the property that $T_{p}\Sigma = {\rm span}(e_{1}, \dots, e_{m})$ and $T^{\perp}_{p}\Sigma = {\rm span}(e_{m+1}, \dots, e_{m+k})$. Choose local orthonormal frames $\left( \xi_i \right)_{i=1}^{m}$ and $\left( \nu_{\alpha} \right)_{\alpha=1}^{k}$ of $\T\Sigma$ and $\N\Sigma$ respectively which extend the basis at $p$, that is, such that $\xi_{i}(p) = e_{i}$ for $i = 1,\dots,m$ and $\nu_{\alpha}(p) = e_{m+\alpha}$ for $\alpha = 1,\dots,k$. With a slight abuse of notation, we will sometimes denote the linear subspace $\R^{m} \times \{0\} \times \{0\}$ by $\R^m$ and $\{0\} \times \R^k \times \{0\}$ by $\R^k$.

Let $r_{0} > 0$ be such that all the conclusions from the previous paragraphs hold. For every $r \in \left( 0, r_{0} \right]$, translate and rescale the manifolds $\M$ and $\Sigma$, setting
\[
\M_{r} := \frac{\M - p}{r}, \hspace{0.5cm} \Sigma_{r} := \frac{\Sigma - p}{r},
\]
that is $\M_{r} = \iota_{p,r}(\M)$ and $\Sigma_{r} = \iota_{p,r}(\Sigma)$, where
\[
\iota_{p,r}(x) := \frac{x - p}{r}.
\]
The manifolds $\M_{r}$ and $\Sigma_{r}$ will be regarded as Riemannian submanifolds of $\R^d$ with the induced euclidean metric. We will let
\[
{\bf ex}_{r} \colon B_{1} \subset T_{0}\Sigma_{r} \simeq \R^m \to \Sigma_r
\]
be the exponential map, and we will use the symbol $\psi_{p,r}$ to denote the map
\[
\psi_{p,r} := \iota_{p,r}^{-1} \circ {\bf ex}_{r}.
\]
Observe that $\psi_{p,r}$ maps the euclidean ball $B_{1}(0) \subset \R^m$ diffeomorphically onto the geodesic ball $\B_{r}(p) \subset \Sigma$.

\begin{remark} \label{trivial_geom}
Observe that, since $T_{0}\M_{r} = T_{p}\M = {\rm span}(e_1,\dots,e_{m+k})$ for every $r$, the ambient manifolds $\M_{r}$ converge, as $r \downarrow 0$, to $\R^{m+k} \times \{0\}$ in $C^{3,\beta}$. For the same reason, the $\Sigma_{r}$'s converge to $\R^{m} \times \{0\} \times \{0\}$ in $C^{3,\beta}$ and the exponential maps ${\bf ex}_{r}$ converge in $C^{2,\beta}$ to the identity map of the ball $B_{1} \subset \R^m$ (cf. \cite[Proposition A.4]{DLS13c}).
\end{remark}

\begin{definition}
We define the \emph{blow-ups} of $N$ at $p$ as the one-parameter family of maps $N_{p,r} \colon B_{1} \subset T_{0}\Sigma_{r} \to \A_{Q}(\R^d)$ indexed by $r \in \left( 0, r_{0} \right]$ and given by
\begin{equation} \label{blow-ups}
N_{p,r}(y) := \frac{r^{\frac{m}{2}} N(\psi_{p,r}(y))}{\|N\|_{L^{2}(\B_{r}(p))}} = \frac{r^{\frac{m}{2}} \, N(p+r{\bf ex}_{r}(y))}{\| N \|_{L^{2}(\B_{r}(p))}}.
\end{equation}
\end{definition}

Observe that the maps $N_{p,r}$ are well defined because $N$ is not vanishing in any ball $\B_{r}(p)$ with $0 < r \leq r_{0}$.

The next theorem is the anticipated convergence result for the blow-up maps.

\begin{theorem} \label{blow-up:thm}
Let $N \in \Gamma_{Q}^{1,2}(\N\Omega)$ be $\Jac$-minimizing with $\bfeta \circ N = 0$. Assume $p \in \sing_{Q}(N)$. Then, for any sequence $N_{p,r_j}$ with $r_j \downarrow 0$, a subsequence, not relabeled, converges weakly in $W^{1,2}$, strongly in $L^2$ and locally uniformly to a continuous $Q$-valued function $\mathscr{N}_{p} \colon B_{1} \subset \R^m \to \A_{Q}(\R^k)$ such that:
\begin{itemize}
\item[$(a)$] $\mathscr{N}_{p}(0) = Q \llbracket 0 \rrbracket$ and $\bfeta \circ \mathscr{N}_{p} \equiv 0$, but $\| \mathscr{N}_{p} \|_{L^{2}(B_1)} = 1$, and thus, in particular, $\mathscr{N}_{p}$ is non-trivial;
\item[$(b)$] $\mathscr{N}_{p}$ is locally $\Dir$-minimizing in $B_1$;
\item[$(c)$] $\mathscr{N}_{p}$ is $\mu$-homogeneous, with $\mu = I_{0}(p)$, the frequency of $N$ at $p$ defined in \eqref{am_impr_eq:2}.
\end{itemize} 
\end{theorem}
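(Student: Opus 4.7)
\textbf{Proof plan for Theorem \ref{blow-up:thm}.} The plan is to follow the classical blow-up scheme, using the reverse Poincar\'e inequality and H\"older regularity to get compactness, and then to identify the three properties of the limit by separate arguments.

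\emph{Step 1: compactness.} Since $p \in \sing_Q(N)$, Proposition \ref{dicotomia} guarantees that ${\bf H}(r) > 0$ for all small $r$, so the blow-ups $N_{p,r}$ are well defined and satisfy $\|N_{p,r}\|_{L^2(B_1)} = 1 + O(r)$ by the asymptotic expansions \eqref{ab1}. The Reverse Poincar\'e inequality \eqref{reverse_Poinc:eq} (which applies because $N(p)=Q\llbracket 0\rrbracket$ and $I_0(p) < \infty$) together with \eqref{ab2} yields
\[
\Dir(N_{p,r}, B_1) \leq C(I_0(p))
\]
uniformly in $r$. Thus $\{N_{p,r_j}\}$ is bounded in $W^{1,2}(B_1, \A_Q(\R^d))$ and, along a subsequence, converges weakly in $W^{1,2}$ and strongly in $L^2$ to some $\mathscr{N}_p \in W^{1,2}(B_1, \A_Q(\R^d))$. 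Moreover, a trivial scaling of the H\"older estimate \eqref{reg_Holder:est} produces uniform $C^{0,\alpha}_{\mathrm{loc}}$ bounds on the $N_{p,r_j}$, so by Arzel\`a--Ascoli the convergence is locally uniform. The fact that $\mathscr{N}_p$ takes values in $\A_Q(\R^k)$ (rather than $\A_Q(\R^d)$) comes from Remark \ref{trivial_geom}: at the blow-up scale the normal bundle $\N\Sigma_r$ converges to the constant bundle $\R^k = \{0\}\times\R^k\times\{0\}$, so the constraint $\spt(N_{p,r_j}(y)) \subset T_{\psi_{p,r_j}(y)}^\perp \Sigma$ passes to the limit.

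\emph{Step 2: property (a).} The normalization $\|\mathscr{N}_p\|_{L^2(B_1)} = 1$ passes to the limit from $\|N_{p,r_j}\|_{L^2} \to 1$. That $\bfeta\circ \mathscr{N}_p \equiv 0$ follows because the map $\bfeta$ commutes with the multiplication by scalars defining the blow-up, so $\bfeta \circ N_{p,r} \equiv 0$, and this passes to the $L^2$-limit. Finally, $\mathscr{N}_p(0) = Q\llbracket 0\rrbracket$ comes from the local uniform convergence together with $N_{p,r_j}(0) \to Q\llbracket 0\rrbracket$, which holds by the continuity of $N$ and $N(p)=Q\llbracket 0\rrbracket$ once one uses the H\"older decay $|N_{p,r_j}(0)| \leq C r_j^{\alpha} (\|N\|_{L^2(\B_{r_j}(p))})^{-1} \Dir(N,\B_{2r_j}(p))^{1/2}$, controlled via the Reverse Poincar\'e inequality.

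\emph{Step 3: property (b) --- the Dirichlet-minimality.} This is the most delicate point and the main obstacle. Write the Jacobi functional on $\B_r(p)$ pulled back by $\psi_{p,r}$ in the form
\[
\Jac(N, \B_r(p)) = \|N\|_{L^2(\B_r(p))}^2 \, r^{-2}\bigl(\Dir(N_{p,r}, B_1) + r^2 \mathcal{B}_{r}(N_{p,r})\bigr),
\]
where $\mathcal{B}_r$ is a lower-order perturbation whose $L^\infty$ constants depend only on the geometry of $\Sigma\hookrightarrow\M\hookrightarrow\R^d$ (cf.\ \eqref{B_funct}--\eqref{B_est}); the $r^2$ factor arises from the scaling of $|A|^2$, $|\overline{A}|^2$ and $|R|$. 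Given any competitor $W$ for $\mathscr{N}_p$ in some ball $B_\rho \Subset B_1$ with $W|_{\partial B_\rho} = \mathscr{N}_p|_{\partial B_\rho}$, one must construct a competitor $\widetilde{N}_j$ for $N$ on $\B_{\rho r_j}(p)$ by pushing $W$ back to $\Sigma$ via $\psi_{p,r_j}$, projecting onto the normal bundle, and interpolating near $\partial \B_{\rho r_j}(p)$ with $N_{p,r_j}$ --- the interpolation being provided by the multivalued Luckhaus-type extension in Corollary \ref{Extension} applied on a thin spherical shell. The strong $L^2$ convergence and the weak $W^{1,2}$ convergence then allow one to pass to the limit in the comparison $\Jac(N, \B_{\rho r_j}(p)) \leq \Jac(\widetilde{N}_j, \B_{\rho r_j}(p))$, where the perturbation $r_j^2 \mathcal{B}$ disappears and only the Dirichlet pieces survive, yielding $\Dir(\mathscr{N}_p, B_\rho) \leq \Dir(W, B_\rho)$.

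\emph{Step 4: property (c) --- homogeneity.} The key observation is that the frequency function is scale-invariant up to the geometric corrections already tracked in Lemma \ref{FVE}: for fixed $s \in (0,1)$,
\[
{\bf I}_{N_{p,r}, 0}(s) = {\bf I}_{N, p}(rs) + o(1) \qquad \text{as } r \downarrow 0.
\]
By Proposition \ref{am_impr_thm} the right-hand side converges to $I_0(p)$ for every $s$. Combined with strong $L^2$-convergence of $N_{p,r_j}$ and weak $W^{1,2}$-convergence (which, in conjunction with Step 3's minimality, actually upgrades to strong $W^{1,2}$ convergence on compact subsets --- a standard consequence of lower semicontinuity and the $\Dir$-minimality of $\mathscr{N}_p$), the energy and height functionals pass to the limit, and the frequency function ${\bf I}_{\mathscr{N}_p, 0}$ is identically equal to $I_0(p)$ on $(0,1)$. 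Since $\mathscr{N}_p$ is a $\Dir$-minimizer in the euclidean setting with constant frequency, the classical Almgren--De Lellis--Spadaro rigidity result for the frequency function (cf.\ \cite[Corollary 3.16]{DLS11a}) implies that $\mathscr{N}_p$ is $I_0(p)$-homogeneous, completing the proof.
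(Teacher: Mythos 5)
Your proposal follows essentially the same blow-up strategy as the paper's proof: compactness via the Reverse Poincar\'e inequality \eqref{reverse_Poinc:eq} and the uniform H\"older estimate \eqref{reg_Holder:est}, $\Dir$-minimality of $\mathscr{N}_p$ via a Luckhaus interpolation in the comparison argument, and homogeneity via the scale invariance of the frequency together with \cite[Corollary 3.16]{DLS11a}. Two small imprecisions that do not affect the outcome: $N_{p,r_j}(0) = Q\llbracket 0\rrbracket$ \emph{exactly} for every $j$ (since $\psi_{p,r_j}(0)=p$), so the extra H\"older-decay argument for $\mathscr{N}_p(0)$ is unnecessary, and the interpolation in your Step 3 is furnished by Proposition \ref{Luckhaus} applied on flat spherical shells, not Corollary \ref{Extension} (the latter is stated for tubular neighborhoods of boundaries in $\Sigma$).
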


\begin{remark} \label{blow-up:rmk}
Any map $\mathscr{N}_{p}$ which is the limit of a blow-up sequence $N_{p,r_{j}}$ in the sense specified above will be called a \emph{tangent map} to $N$ at $p$.
\end{remark}

\begin{proof}
Let $N$ and $p$ be as in the statement. For any sequence $r_{j} \downarrow 0$, let us denote $\M_{j} := \M_{r_j}$, $\Sigma_j := \Sigma_{r_j}$, ${\bf ex}_j := {\bf ex}_{r_j}$, $\psi_{j} := \psi_{p, r_j}$ and $N_{j} := N_{p,r_j}$ in order to simplify the notation. We will divide the proof into steps.

\textit{Step 1: boundedness in $W^{1,2}$.} Assume for the moment that $j \in \mathbb{N}$ is fixed. We start estimating $\| N_{j} \|_{L^{2}(B_1)}$. Changing coordinates $x = \psi_{j}(y)$ in the integral, we compute explicitly
\begin{equation}  \label{BU:-1}
\begin{split}
\| N \|_{L^2(\B_{r_j}(p))}^{2} &= \int_{\B_{r_j}(p)} |N(x)|^{2} \, \dHa^m(x) \\
&= \| N \|_{L^{2}(\B_{r_j}(p))}^{2} \int_{B_1} |N_{j}(y)|^{2} \, {\bf J}{\bf ex}_{j}(y) \, {\rm d}y, 
\end{split}
\end{equation}
and thus
\begin{equation} \label{BU:0}
\int_{B_{1}} |N_{j}(y)|^{2} \, {\bf J}{\bf ex}_{j}(y) \, {\rm d}y = 1
\end{equation}
for every $j$. By the considerations in Remark \ref{trivial_geom}, we can deduce that necessarily
 
\begin{equation} \label{BU:1}
\frac{1}{2} \leq \| N_{j} \|_{L^{2}(B_1)}^{2} \leq 2
\end{equation}
when $j$ is large enough.

Next, we bound the Dirichlet energy of the blow-up maps in $B_1$. For any $y \in B_{1} \subset T_{0}\Sigma_j$, and for all $i = 1,\dots,m$, let $\varepsilon_{i} = \varepsilon_{i}(y)$ be the vector in $T_{0}\Sigma_{j}$ defined by
\[
\mathrm{d}({\bf ex}_{j})|_{y} \cdot \varepsilon_{i}(y) = \xi_{i}(p+r_{j}{\bf ex}_{j}(y)).
\]
We note that, when $j \uparrow \infty$, $\varepsilon_{i}$ converges to $e_{i} = \xi_{i}(p)$ uniformly in $B_{1}$.

Again by changing variable $x = \psi_{j}(y)$ in the integral, we compute:
\begin{equation} \label{BU:2}
\begin{split}
\Dir(N,\B_{r_j}(p)) &= \int_{\B_{r_j}(p)} \sum_{i=1}^{m} |D_{\xi_i}N(x)|^{2} \, \dHa^{m}(x) \\
&= \frac{\| N \|_{L^{2}(\B_{r_j}(p))}^{2}}{r_{j}^2} \int_{B_{1}} \sum_{i=1}^{m} |D_{\varepsilon_i}N_{j}(y)|^{2} \, {\bf J}{\bf ex}_{j}(y) \, {\rm d}y. 
\end{split}
\end{equation}
On the other hand, using that $N$ takes values in the normal bundle, we have the usual estimate
\begin{equation} \label{BU:3}
\Dir(N, \B_{r_j}(p)) \leq \Dir^{\N\Sigma}(N, \B_{r_j}(p)) + C_{0} \| N \|_{L^2(\B_{r_j}(p))}^{2},
\end{equation}
for some positive geometric constant $C_{0} = C_{0}({\bf A}, {\bf \overline{A}})$. From \eqref{BU:2} and \eqref{BU:3} we conclude that for any $j$
\begin{equation} \label{BU:4}
\int_{B_{1}} \sum_{i=1}^{m} |D_{\varepsilon_i}N_{j}(y)|^{2} \, {\bf J}{\bf ex}_{j}(y) \, {\rm d}y \leq \frac{r_{j}^{2} \, \Dir^{\N\Sigma}(N, \B_{r_j}(p))}{\| N \|_{L^{2}(\B_{r_j}(p))}^{2}} + C_{0} r_{j}^{2} \leq C(1 + r_{j}^{2}),
\end{equation} 
because of the reverse Poincaré inequality \eqref{reverse_Poinc:eq}. Thus, we conclude that the Dirichlet energy of the blow-up maps in $B_{1}$ is bounded:
\begin{equation} \label{BU:5}
\Dir(N_{j}, B_{1}) := \int_{B_{1}} \sum_{i=1}^{m} |D_{e_i}N_j|^{2} \, {\rm d}y \leq C.
\end{equation}

\textit{Step 2: convergence.} The $W^{1,2}$ bounds given by estimates \eqref{BU:1} and \eqref{BU:5}, together with Proposition \ref{embeddings}, clearly imply the $W^{1,2}$-weak and $L^2$-strong convergence of a subsequence in $B_{1}$. We claim now that the $N_{j}$'s are locally H\"older equi-continuous. This is an easy consequence of the H\"older estimate in \eqref{reg_Holder:est} and of the reverse Poincar\'e inequality. Indeed, for any $0 < \theta < 1$ and for any points $y_{1}, y_{2} \in \overline{B}_{\theta}$ one has the following:
\[
\begin{split}
\G(N_{j}(y_{1}), N_{j}(y_{2})) &= \frac{r_{j}^{\frac{m}{2}}}{\| N \|_{L^{2}(\B_{r_j}(p))}} \G(N(p + r_{j} {\bf ex}_{j}(y_1)), N(p + r_{j} {\bf ex}_{j}(y_2))) \\
&\leq \frac{r_{j}^{\frac{m}{2}}}{\| N \|_{L^{2}(\B_{r_j}(p))}} \left[ N \right]_{C^{0,\alpha}(\overline{\B}_{\theta r_j}(p))} {\bf d}(p + r_{j} {\bf ex}_{j}(y_1), p + r_{j} {\bf ex}_{j}(y_2))^{\alpha} \\
&\overset{\eqref{reg_Holder:est}}{\leq} C \frac{r_{j}}{\| N \|_{L^{2}(\B_{r_j}(p))}} \left( \Dir^{\N\Sigma}(N, \B_{r_j}(p)) + (\Lambda + C_{0}) | N \|_{L^{2}(\B_{r_j}(p))}^{2} \right)^{\sfrac{1}{2}} |y_{1} - y_{2}|^{\alpha} \\
&\overset{\eqref{reverse_Poinc:eq}}{\leq} C(1 + r_{j}) |y_{1} - y_{2}|^{\alpha}.
\end{split}
\]
Hence, for every $0 < \theta < 1$ there exists $C = C(\theta) > 0$ such that
\begin{equation} \label{unif_Holder}
\left[ N_{j} \right]_{C^{0,\alpha}(\overline{B}_{\theta})} := \sup_{y_{1}, y_{2} \in \overline{B}_{\theta}} \frac{\G(N_{j}(y_1), N_j(y_2))}{|y_1 - y_2|^{\alpha}} \leq C 
\end{equation}
for all $j$. Since $N_{j}(0) = Q\llbracket 0 \rrbracket$, the $N_{j}$'s are also locally uniformly bounded, hence the Ascoli-Arzel\`a theorem implies that, up to extracting another subsequence if necessary, the convergence is locally uniform, and the limit is a continuous $Q$-valued function $\mathscr{N}_{p} \colon B_{1} \subset \R^m \to \A_{Q}(\R^d)$.

\textit{Step 3: properties of the limit: proof of $(a)$.} It is immediate to see that $\bfeta \circ \mathscr{N}_{p} \equiv 0$. Indeed, from the assumption that $\bfeta \circ N \equiv 0$ and the definition of the blow-up maps, we deduce that $\bfeta \circ N_{j} \equiv 0$ for every $j$. Now, the $N_{j}$'s converge to $\mathscr{N}_{p}$ locally uniformly, and thus
\[
\bfeta \circ \mathscr{N}_{p}(y) = \lim_{j \to \infty} \bfeta \circ N_{j}(y) = 0
\]
for all $y \in B_1$. With the same argument, using the pointwise convergence of $N_{j}$ to $\mathscr{N}_{p}$ and the fact that $N_{j}(0) = Q \llbracket 0 \rrbracket$ for every $j$ we conclude that $\mathscr{N}_{p}(0) = Q \llbracket 0 \rrbracket$.

Nonetheless, the map $\mathscr{N}_{p}$ is non-trivial. Indeed, since $N_{j} \to \mathscr{N}_{p}$ strongly in $L^2$, estimate \eqref{BU:0} guarantees that:
\begin{equation}
\| \mathscr{N}_{p} \|_{L^{2}(B_1)}^{2} = \lim_{j \to \infty} \int_{B_1} |N_{j}(y)|^{2} \, {\bf J}{\bf ex}_{j}(y) \, {\rm d}y = 1.
\end{equation}

Next, we see that $\mathscr{N}_{p}(y) \in \A_{Q}(\R^k)$ for every $y \in B_1$. Indeed, considering the projection $\mathscr{N}_{p}^{(1)}$ of $\mathscr{N}_{p}$ onto the subspace $\R^m \times \{0\} \times \{0\}$ we easily infer that
\[
\begin{split}
\int_{B_{1}} |\mathscr{N}_{p}^{(1)}(y)|^{2} \, {\rm d}y &= \int_{B_1} \sum_{\ell=1}^{Q} \sum_{i=1}^{m} |\langle \mathscr{N}_{p}^{\ell}(y), e_i \rangle|^{2} \, {\rm d}y \\
&= \lim_{j \to \infty} \int_{B_1} \sum_{\ell=1}^{Q} \sum_{i=1}^{m} | \langle N_{j}^{\ell}(y), \xi_{i}(p+r_j {\bf ex}_{j}(y)) \rangle|^{2} \, {\rm d}y = 0, 
\end{split}
\]
because of the definition of $N_j$. Analogously, the projection $\mathscr{N}_{p}^{(3)}$ onto $\{0\} \times \{0\} \times \R^K$ satisfies
\[
\begin{split}
\int_{B_1} |\mathscr{N}_{p}^{(3)}(y)|^{2} \, {\rm d}y &= \int_{B_1} \sum_{\ell=1}^{Q} \sum_{\beta=1}^{K} |\langle \mathscr{N}_{p}^{\ell}(y), e_{m+k+\beta} \rangle|^{2} \, {\rm d}y \\
&= \lim_{j \to \infty} \int_{B_1} \sum_{\ell=1}^{Q} \sum_{\beta=1}^{K} |\langle N_{j}^{\ell}(y), \eta_{\beta}(p+r_j {\bf ex}_{j}(y)) \rangle|^{2} \, {\rm d}y = 0,
\end{split}
\]
where the $\eta_{\beta}$'s are a local orthonormal frame of the normal bundle of $\M$ in $\R^d$ extending the $e_{m+k+\beta}$'s in a  neighborhood of $p$.

\textit{Step 4: harmonicity of the limit: proof of $(b)$.} We show now that $\mathscr{N}_{p}$ is locally $\Dir$-minimizing in $B_1$ and, moreover, that for every $0 < \rho < 1$ the following identity holds true:
\begin{equation} \label{conv_energy}
\Dir(\mathscr{N}_{p}, B_{\rho}) = \liminf_{j \to \infty} \Dir(N_{j}, B_{\rho}).
\end{equation}

In order to obtain the proof of the above claim, we need to exploit the minimizing property of the Jacobi $Q$-valued field $N$ in order to deduce some crucial information on the blow-up sequence $N_j$. Fix $j \in \mathbb{N}$, and let $u \colon B_{1} \subset T_{0}\Sigma_{j} \simeq \R^{m} \to \A_{Q}(\R^{d})$ be any $W^{1,2}$ map such that $u|_{\partial B_{1}} = N_{j}|_{\partial B_{1}}$. Then, the map $\tilde{u} \in W^{1,2}\left( \B_{r_j}(p) , \A_{Q}(\R^d)\right)$ defined by
\[
\begin{split}
\tilde{u}(x) &:= r_{j}^{- \frac{m}{2}} \| N \|_{L^{2}(\B_{r_j}(p))} \left( u \circ \psi_{j}^{-1} \right)^{\perp}(x) \\
&= r_{j}^{- \frac{m}{2}} \| N \|_{L^{2}(\B_{r_j}(p))} \sum_{\ell=1}^{Q} \left\llbracket \sum_{\beta = 1}^{k} \langle u^{\ell} \circ \psi_{j}^{-1}(x), \nu_{\beta}(x) \rangle \nu_{\beta}(x) \right\rrbracket
\end{split}
\]
is a section of $\N\Sigma$ in $\B_{r_{j}}(p)$ such that $\tilde{u}|_{\partial \B_{r_j}(p)} = N|_{\partial \B_{r_j}(p)}$. By minimality, it follows then that
\begin{equation} \label{BU:Dir-1}
\Jac(N, \B_{r_{j}}(p)) \leq \Jac(\tilde{u}, \B_{r_{j}}(p)).
\end{equation}
Standard computations show that \eqref{BU:Dir-1} is equivalent to the condition
\begin{equation} \label{BU:Dir0}
\mathscr{F}_{j}(N_{j}) \leq \mathscr{F}_{j}(u),
\end{equation}
where $\mathscr{F}_{j}(u)$ is the functional defined by
\begin{equation} \label{BU:Dir1}
\begin{split}
\mathscr{F}_{j}(u) :&= \int_{B_1} \sum_{\ell=1}^{Q} \sum_{i=1}^{m} \sum_{\alpha=1}^{k} \left| \langle D_{\varepsilon_i}u^{\ell}(y), \nu_{\alpha} \circ \psi_{j}(y) \rangle + r_{j} \langle u^{\ell}(y), (D_{\xi_i}\nu_{\alpha} - \nabla^{\perp}_{\xi_i}\nu_{\alpha}) \circ \psi_{j}(y) \right|^{2} \, \mathbf{J}\mathbf{ex}_{j}(y) \, {\rm d}y \\
&- r_{j}^{2} \int_{B_1} \sum_{\ell=1}^{Q} \left( \left| A \circ \psi_{j}(y) \cdot (u^{\ell})^{\perp_{\psi_{j}(y)}} \right|^{2} + \Ri \circ \psi_{j}((u^{\ell})^{\perp_{\psi_{j}(y)}},(u^{\ell})^{\perp_{\psi_{j}(y)}}) \ \right) \, \mathbf{J}\mathbf{ex}_{j}(y) \, {\rm d}y \\
&= \mathscr{F}_{j}^{(1)}(u) + \mathscr{F}_{j}^{(2)}(u)
\end{split}
\end{equation}
on the space of $u \in W^{1,2}\left( B_{1} , \A_{Q}(\R^d) \right)$ such that $u|_{\partial B_1} = N_{j}|_{\partial B_1}$. Note that the following notation has been adopted in formula \eqref{BU:1}: $(u^{\ell})^{\perp_{\psi_{j}(y)}}$ is the orthogonal projection of the vector $u^{\ell}(y)$ onto $T^{\perp}_{\psi_{j}(y)}\Sigma$, given by
\[
(u^{\ell})^{\perp_{\psi_{j}(y)}} = \sum_{\beta = 1}^{k} \langle u^{\ell}(y), \nu_{\beta}(\psi_{j}(y)) \rangle \nu_{\beta}(\psi_{j}(y)).
\]
Hence, one has
\[
\left| A \circ \psi_{j}(y) \cdot (u^{\ell})^{\perp_{\psi_{j}(y)}} \right|^{2} = \sum_{i,h=1}^{m} \left| \sum_{\beta=1}^{k} A_{ih}^{\beta}(\psi_{j}(y)) \langle u^{\ell}(y), \nu_{\beta}(\psi_{j}(y)) \rangle \right|^{2}, 
\]
with $A_{ih}^{\beta} := \langle A(\xi_i, \xi_h), \nu_{\beta} \rangle$, and
\[
\Ri \circ \psi_{j}((u^{\ell})^{\perp_{\psi_{j}(y)}},(u^{\ell})^{\perp_{\psi_{j}(y)}}) = \sum_{i=1}^{m} \sum_{\beta, \gamma=1}^{k} R_{i\beta\gamma}^{i}(\psi_{j}(x)) \langle u^{\ell}(y), \nu_{\beta}(\psi_{j}(y)) \rangle \langle u^{\ell}(y), \nu_{\gamma}(\psi_{j}(y)) \rangle,
\]
with $R_{i\beta\gamma}^{i} := \langle R(\xi_{i}, \nu_{\beta})\nu_{\gamma}, \xi_{i} \rangle$.
\\

Now, for every $0 < \rho < 1$, set 
\[
D_{\rho} := \liminf_{j \to \infty} \Dir(N_{j}, B_{\rho}) = \liminf_{j \to \infty} \int_{B_{\rho}} \sum_{i=1}^{m} |D_{e_i} N_{j}|^{2} \, {\rm d}y,
\]
and suppose by contradiction that either $\mathscr{N}_{p}$ is not $\Dir$-minimizing in $B_{\rho}$ or $\Dir(\mathscr{N}_{p}, B_{\rho}) < D_{\rho}$ \footnote{Observe that the inequality 
\[
\Dir(\mathscr{N}_{p}, B_{\rho}) \leq D_{\rho}
\]
is guaranteed for every $\rho$ because the Dirichlet functional is lower semi-continuous with respect to weak convergence in $W^{1,2}$.
} for some $\rho$. In any of the two situations, there exists a $\rho_{0} > 0$ such that for any $\rho \geq \rho_{0}$ there exists a multiple valued map $g \in W^{1,2}(B_{\rho}, \A_{Q}(\R^k))$ with
\begin{equation} \label{BU:Dir2}
g|_{\partial B_\rho} = \mathscr{N}_{p}|_{\partial B_\rho} \hspace{0.3cm} \mbox{ and } \hspace{0.3cm} \gamma_{\rho} := D_{\rho} - \Dir(g, B_{\rho}) > 0.
\end{equation}

A simple application of Fatou's lemma shows that for almost every $\rho \in \left( 0,1 \right)$ both the quantities $\liminf_{j} \Dir(N_{j}, \partial B_{\rho})$ and $\liminf_{j} \| N_{j} \|_{L^{2}(\partial B_{\rho})}^{2}$ are finite:
\[
\int_{0}^{1} \liminf_{j \to \infty} \Dir(N_{j}, \partial B_{\rho}) \, \mathrm{d}\rho \leq \liminf_{j \to \infty} \int_{0}^{1} \Dir(N_{j}, \partial B_{\rho}) \, \mathrm{d}\rho = \liminf_{j \to \infty} \Dir(N_{j}, B_1) \leq M < \infty,
\]
\[
\int_{0}^{1} \liminf_{j \to \infty} \| N_{j} \|_{L^{2}(\partial B_{\rho})}^{2} \, \mathrm{d}\rho \leq \liminf_{j \to \infty} \int_{0}^{1} \| N_{j} \|_{L^{2}(\partial B_{\rho})}^{2} \, \mathrm{d}\rho = \lim_{j \to \infty} \| N_{j} \|_{L^{2}(B_{\rho})}^{2} = \| \mathscr{N}_{p} \|_{L^{2}(B_{\rho})}^{2} \leq 1.
\]
Therefore, passing if necessary to a subsequence, we can fix a radius $\rho \geq \rho_0$ such that
\begin{equation} \label{BU:Dir3}
\Dir(\mathscr{N}_{p}, \partial B_{\rho}) \leq \lim_{j \to \infty} \Dir(N_{j}, \partial B_{\rho}) \leq M < \infty
\end{equation}
and 
\begin{equation} \label{BU:Dir3bis}
\| \mathscr{N}_{p} \|_{L^{2}(\partial B_{\rho})}^{2} \leq \lim_{j \to \infty} \| N_{j} \|_{L^{2}(\partial B_{\rho})}^{2} \leq 1.
\end{equation}
This allows us also to fix the corresponding map $g$ satisfying the conditions in \eqref{BU:Dir2}. The strategy to complete the proof is now the following: we will use the map $g$ to construct, for every $j$, a competitor $u_{j}$ for the functional $\mathscr{F}_{j}$, that is a map $u_{j} \in W^{1,2}(B_{1}, \A_{Q}(\R^d))$ with $u_{j}|_{\partial B_1} = N_{j}|_{\partial B_1}$. Then, we will show that if $j$ is chosen sufficiently large then $\mathscr{F}_{j}(u_j) < \mathscr{F}_{j}(N_j)$, thus contradicting \eqref{BU:Dir0} and, in turn, the minimality of $N$ in $\B_{r_j}(p)$.

The construction of the maps $u_{j}$ is analogous to the one presented in \cite[Proposition 3.20]{DLS11a}: we fix a number $0 < \delta < \frac{\rho}{2}$ to be suitably chosen later, and for every $j \in \mathbb{N}$ we define $u_{j}$ on $B_1$ as follows:
\[
u_{j}(y) :=
\begin{cases}
g\left( \frac{\rho y}{\rho - \delta} \right) &\mbox{ for } y \in B_{\rho - \delta},\\
h_{j}(y) & \mbox{ for } y \in B_{\rho} \setminus B_{\rho - \delta}, \\
N_{j}(y) &\mbox{ for } y \in B_{1} \setminus B_{\rho},
\end{cases}
\] 
where the maps $h_{j}$ interpolate between $g\left( \frac{\rho y}{\rho - \delta} \right) = \mathscr{N}_{p}\left( \frac{\rho y}{\rho - \delta} \right) \in W^{1,2}(\partial B_{\rho - \delta}, \A_Q)$ and $N_{j} \in W^{1,2}(\partial B_{\rho}, \A_Q)$. Observe that the existence of the $h_{j}$'s is guaranteed by Proposition \ref{Luckhaus} (also cf. \cite[Lemma 2.15]{DLS11a}). 

As anticipated, the goal is now to show that this map $u_{j}$ has less $\mathscr{F}_{j}$ energy than $N_{j}$ when $j$ is big enough (and thus $r_j$ is suitably close to $0$). We first note that $u_{j}$ differs from $N_{j}$ only on $B_{\rho}$, therefore our analysis will be carried on this smaller ball only. Then, fix a small number $\theta > 0$, and recall that, in the limit as $j \uparrow \infty$, the exponential maps ${\bf ex}_{j}$ converge uniformly to the identity map of the unit ball in $\R^m$, whereas the maps $\psi_{j}$ converge uniformly to the constant map identically equal to $p$. Hence, the first line in the definition of $\mathscr{F}_{j}(u_{j})$ can be estimated by
\begin{equation} \label{BU:Dir4} 
\mathscr{F}_{j}^{(1)}(u_{j})\big|_{B_{\rho}} \leq (1 + \theta) \Dir(u_{j}, B_{\rho}) + \theta \| u_{j} \|_{L^{2}(B_{\rho})}^{2}
\end{equation}
for all $j \geq j_{0}(\theta)$. On the other hand, the definition of $u_{j}$ together with the estimate \eqref{interp_est} imply that
\begin{equation} \label{BU:Dir5}
\begin{split}
\Dir(u_j, B_\rho) &\leq \Dir(u_{j}, B_{\rho - \delta}) + C \delta \big( \Dir(u_{j}, \partial B_{\rho - \delta}) + \Dir(N_{j}, \partial B_{\rho}) \big) + \frac{C}{\delta} \int_{\partial B_{\rho}} \G(u_j, N_j)^{2} \\
&\leq \Dir(g, B_{\rho}) + C \delta \Dir(\mathscr{N}_{p}, \partial B_{\rho}) + C \delta \Dir(N_{j}, \partial B_{\rho}) + \frac{C}{\delta} \int_{\partial B_{\rho}} \G(\mathscr{N}_p, N_j)^{2}
\end{split}
\end{equation}
whereas
\begin{equation} \label{BU:Dir5bis}
\begin{split}
\| u_{j} \|_{L^2(B_\rho)}^{2} &= \| u_{j} \|^{2}_{L^2(B_{\rho - \delta})} + \| u_{j} \|^{2}_{L^2(B_\rho \setminus B_{\rho - \delta})} \\
&\leq \| g \|_{L^2(B_\rho)}^{2} + \| h_j \|_{L^2(B_\rho \setminus B_{\rho - \delta})}^{2} \\
&\overset{\eqref{interp_L2}}{\leq} \| g \|_{L^2(B_\rho)}^{2} + C \delta \left( \| \mathscr{N}_{p} \|^{2}_{L^2(\partial B_\rho)} + \|N_j\|^{2}_{L^2(\partial B_\rho)} \right) \\
&\overset{\eqref{BU:Dir3bis}}{\leq} \| g \|_{L^{2}(B_{\rho})}^{2} + 3C\delta.
\end{split}
\end{equation}

Concerning the second term in the functional $\mathscr{F}_{j}$, it is easy to compute that
\begin{equation} \label{BU:Dir6}
\begin{split}
\mathscr{F}_{j}^{(2)}(u_{j})\big|_{B_{\rho}} &\leq \mathscr{F}_{j}^{(2)}(N_{j})\big|_{B_{\rho}} + C r_{j}^{2} \left( \int_{B_{\rho}} |N_{j}|^{2} \, {\rm d}y \right)^{\frac{1}{2}} \left( \int_{B_{\rho}} \G(u_{j}, N_{j})^{2} \, {\rm d}y \right)^{\frac{1}{2}} + C r_{j}^{2} \int_{B_{\rho}} \G(u_{j}, N_{j})^{2} \, {\rm d}y \\
&\leq \mathscr{F}_{j}^{(2)}(N_{j})\big|_{B_{\rho}} + C r_{j}^{2},
\end{split}
\end{equation}
because the $L^{2}$ norms of both the maps $u_{j}$ and $N_{j}$ are uniformly bounded in $j$. 
\\

We can finally close the argument. Choose $\delta$ such that $4 C \delta (M+1) \leq \gamma_\rho$, where $M$ and $\gamma_{\rho}$ are the constants in \eqref{BU:Dir3} and \eqref{BU:Dir2} respectively. Invoking \eqref{BU:Dir3} and using the uniform convergence of $N_j$ to $\mathscr{N}_p$, from \eqref{BU:Dir5} follows
\begin{equation} \label{BU:Dir8}
\begin{split}
\Dir(u_j, B_{\rho}) &\leq D_{\rho} - \gamma_{\rho} + C \delta M + C \delta (M+1) + \frac{C}{\delta} \int_{\partial B_{\rho}} \G(\mathscr{N}_p, N_j)^{2} \\
&\leq D_{\rho} - \frac{\gamma_\rho}{2} + \frac{C}{\delta} \int_{\partial B_{\rho}} \G(\mathscr{N}_p, N_j)^{2} \leq D_{\rho} - \frac{\gamma_\rho}{4}
\end{split}
\end{equation}
whenever $j$ is sufficiently large. A suitable choice of the parameter $\theta$ in \eqref{BU:Dir4} depending on $\gamma_\rho$, $D_{\rho}$ and $\| g \|_{L^{2}(B_{\rho})}$ allows to conclude that for $j$ big enough (depending on the same quantities):
\begin{equation} \label{BU:Dir9}
\begin{split}
\mathscr{F}_{j}^{(1)}(u_{j})\big|_{B_{\rho}} &\leq D_{\rho} - \frac{\gamma_{\rho}}{8} \\
&\leq \Dir(N_j, B_{\rho}) - \frac{\gamma_\rho}{16} \\
&\leq \mathscr{F}_{j}^{(1)}(N_{j})\big|_{B_{\rho}} - \frac{\gamma_\rho}{32}.
\end{split}
\end{equation}
Observe that in the last inequality we have used again that the manifolds $\Sigma_j$ are becoming more and more flat in the limit $j \uparrow \infty$, and also that the projection of the $N_j$'s on the orthogonal complement to $\R^k$ is vanishing in an $L^2$ sense in the same limit. Now, summing \eqref{BU:Dir6} and \eqref{BU:Dir9}, we conclude:
\begin{equation} \label{BU:Dir10}
\mathscr{F}_{r_j}(u_j) \leq \mathscr{F}_{r_j}(N_j) - \frac{\gamma_\rho}{32} + C r_j^2.
\end{equation}
The desired contradiction is immediately obtained by choosing $j$ so big that $C r_{j}^{2} \leq \frac{\gamma_\rho}{64}$.

\textit{Step 5: homogeneity of the limit: proof of $(c)$.} We conclude the proof of the theorem showing that the limit map $\mathscr{N}_{p}$ admits a homogeneous extension to the whole $\R^m$. In other words, the goal is to show that
\[
\mathscr{N}_{p}\left( \frac{\rho y}{|y|} \right) = \left( \frac{\rho}{|y|} \right)^{\mu} \mathscr{N}_{p}(y)
\]
for all $y \in B_{1} \setminus \{0\}$, for all $0 < \rho < 1$, and with $\mu = I_{0}(p)$.

 The strategy is to take advantage of \cite[Corollary 3.16]{DLS11a}: since $\mathscr{N}_{p}$ is $\Dir$-minimizing, in order to prove that it is homogeneous it suffices to show that its frequency function at the origin $y = 0$ is constant. Hence, we set for $0 < \rho < 1$:
\begin{equation} \label{lim_freq}
\mathscr{I}(\rho) := \frac{\rho \mathscr{D}(\rho)}{\mathscr{H}(\rho)},
\end{equation}
where
\begin{equation} \label{lim_D}
\mathscr{D}(\rho) := \Dir(\mathscr{N}_p, B_{\rho}) = \int_{B_{\rho}} |D \mathscr{N}_{p}|^{2} \, {\rm d}y = \int_{B_{\rho}} \sum_{\ell=1}^{Q} \sum_{i=1}^{m} \sum_{\alpha=1}^{k} |\langle D_{e_i}\mathscr{N}_{p}^{\ell}, e_{m+\alpha} \rangle|^{2} \, {\rm d}y,
\end{equation}
and
\begin{equation} \label{lim_H}
\mathscr{H}(\rho) := \int_{\partial B_{\rho}} |\mathscr{N}_{p}|^{2} \, {\rm d}y.
\end{equation}

We first observe that $\mathscr{I}(\rho)$ is well defined for all $\rho \in \left( 0, 1 \right)$. Indeed, if there is $\rho_{0}$ such that $\mathscr{H}(\rho_0) = 0$, then by minimality it must be $\mathscr{N}_{p} \equiv Q \llbracket 0 \rrbracket$ in $B_{\rho_0}$. On the other hand, the unique continuation property of $\Dir$-minimizers (cf. \cite[Lemma 7.1]{DLS13b}) would then imply that $\mathscr{N}_{p} \equiv Q \llbracket 0 \rrbracket$ in the whole $B_{1}$, which in turn contradicts the fact that $\| \mathscr{N}_{p} \|_{L^{2}(B_1)} = 1$. In other words, this shows that the origin is singular for $\mathscr{N}_{p}$.

Extract, if necessary, a subsequence such that the $\liminf$ in \eqref{conv_energy} can be replaced by a $\lim$, and compute:
\begin{equation}
\begin{split} \label{hom:1}
\mathscr{I}(\rho) &= \frac{\rho \int_{B_{\rho}} \sum_{\ell=1}^{Q} \sum_{i=1}^{m} \sum_{\alpha=1}^{k} |\langle D_{e_i}\mathscr{N}_{p}^{\ell}, e_{m+\alpha} \rangle|^{2} \, {\rm d}y}{\int_{\partial B_{\rho}} |\mathscr{N}_{p}|^{2} \, {\rm d}y} \\
&= \lim_{j \to \infty} \frac{\rho \int_{B_{\rho}} \sum_{\ell=1}^{Q} \sum_{i=1}^{m} \sum_{\alpha=1}^{k} |\langle D_{\varepsilon_{i}}N_{j}^{\ell}, \nu_{\alpha} \circ \psi_{j} \rangle|^{2} \, {\bf J}{\bf ex}_{j}(y) \, {\rm d}y}{\int_{\partial B_{\rho}} |N_{j}|^{2} \, {\bf J}{\bf ex}_{j}(y) \, {\rm d}y} \\
&= \lim_{j \to \infty} \frac{\rho r_{j} \Dir^{\N\Sigma}(N, \B_{\rho r_j}(p))}{\int_{\partial\B_{\rho r_j}(p)} |N|^{2} \, \dHa^{m-1}} \\
&= \lim_{j \to \infty} \frac{\rho r_{j} {\bf D}(\rho r_j)}{{\bf H}(\rho r_j)} = \lim_{j\to \infty} {\bf I}(\rho r_j) = I_{0},
\end{split}
\end{equation}
where we have used (modifications of) formulae \eqref{BU:-1}, \eqref{BU:2} and finally \eqref{am_impr:6bis}. 

As already anticipated, \cite[Corollary 3.16]{DLS11a} implies now that $\mathscr{N}_{p}$ is a $\mu$-homogeneous $Q$-valued function, with $\mu = I_{0}(p) > 0$. 
\end{proof}

\begin{remark}
Note that from the proof of Theorem \ref{blow-up:thm} it follows that the convergence of (a subsequence of) the $N_{p,r_{j}}$ to $\mathscr{N}_{p}$ is actually \emph{strong} in $W^{1,2}$ in any ball $B_{\rho} \subset B_{1}$ (cf. formula \eqref{conv_energy}). This stronger convergence has been in fact tacitly used in deriving \eqref{hom:1}.
\end{remark}

\section{The closing argument: proof of Theorem \ref{sing:thm}} \label{sec:sing}

\begin{proposition} \label{sing_Q:thm}
Let $N \in \Gamma_{Q}^{1,2}(\N\Omega)$ be $\Jac$-minimizing. Assume $\Omega \subset \Sigma^{m}$ is connected. Then:
\begin{itemize}
\item[$(i)$] either $N = Q \llbracket \zeta \rrbracket$ with $\zeta \colon \Omega \to \R^d$ a classical Jacobi field,
\item[$(ii)$] or the set ${\rm D}_{Q}(N)$ of multiplicity $Q$ points is a relatively closed proper subset of $\Omega$ consisting of isolated points if $m = 2$ and with $\dim_{\Ha}({\rm D}_{Q}(N)) \leq m - 2$ if $m \geq 3$.
\end{itemize}
\end{proposition}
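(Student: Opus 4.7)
The plan is to reduce to the zero-average case via Lemma \ref{zero_average}, to show that case (i) is the only alternative when ${\rm D}_{Q}(N)$ has nonempty interior (via a unique continuation argument on tangent maps), and in case (ii) to run a Federer--Almgren dimension reduction built on the blow-up analysis of Theorem \ref{blow-up:thm}.

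First I would set $\tilde N := \sum_{\ell}\llbracket N^{\ell} - \bfeta\circ N\rrbracket$, which Lemma \ref{zero_average} guarantees is $\Jac$-minimizing with $\bfeta\circ\tilde N \equiv 0$ and satisfies $p \in {\rm D}_{Q}(N)$ if and only if $\tilde N(p) = Q\llbracket 0\rrbracket$. The continuity of $\tilde N$ (Theorem \ref{qual_reg_Holder}) immediately gives that ${\rm D}_{Q}(\tilde N)$ is relatively closed in $\Omega$, and if it equals $\Omega$ then $\tilde N \equiv Q\llbracket 0\rrbracket$, so $N = Q\llbracket\zeta\rrbracket$ with $\zeta := \bfeta\circ N$ a classical Jacobi field by Lemma \ref{zero_average}(i), which is case (i).

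The core of the argument is to prove that if ${\rm D}_{Q}(\tilde N) \neq \Omega$ then ${\rm D}_{Q}(\tilde N)$ has empty interior. Suppose for contradiction that $W := \mathrm{int}\,{\rm D}_{Q}(\tilde N)$ is both nonempty and proper in $\Omega$; by connectedness I can pick $p \in \partial W \cap \Omega$. Then $\tilde N(p) = Q\llbracket 0\rrbracket$ but $\tilde N$ does not vanish in any neighborhood of $p$, so Proposition \ref{dicotomia} yields a well-defined bounded frequency at $p$ and Theorem \ref{blow-up:thm} produces a non-trivial $I_{0}(p)$-homogeneous $\Dir$-minimizing tangent map $\mathscr{N}_{p} \colon B_{1} \to \A_{Q}(\R^{k})$ with $\mathscr{N}_{p}(0) = Q\llbracket 0\rrbracket$ and $\bfeta\circ\mathscr{N}_{p} \equiv 0$. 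I would then leverage $p \in \overline{W}$ to construct a blow-up sequence whose limit vanishes on an open subset of $B_{1}$: for each $r > 0$ there are $w_{r} \in W \cap \B_{r}(p)$ and $\rho_{r} > 0$ with $\B_{\rho_{r}}(w_{r}) \subset W$, and one would select scales $r_{j} \downarrow 0$ so that $\psi_{p,r_{j}}^{-1}(\B_{\rho_{r_{j}}}(w_{r_{j}}))$ sits inside $B_{1}$ with radius bounded below by a fixed constant. The unique continuation principle for $\Dir$-minimizers (as in \cite[Lemma 7.1]{DLS13b}, already used in the proof of Theorem \ref{blow-up:thm}) would then force $\mathscr{N}_{p} \equiv Q\llbracket 0\rrbracket$, contradicting $\|\mathscr{N}_{p}\|_{L^{2}(B_{1})} = 1$. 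The main obstacle is precisely this scale selection: if $W$ pinches at $p$ (for example with vanishing density or cusp-like behaviour), neither $r_{j} \sim \abs{w_{r_{j}} - p}$ nor $r_{j} \sim \rho_{r_{j}}$ works naively, and a more refined Hardt--Simon style choice will be needed to capture an open zero-set in the limit.

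Granting this unique continuation step, every $p \in {\rm D}_{Q}(\tilde N)$ falls in case (ii) of Proposition \ref{dicotomia}, and Theorem \ref{blow-up:thm} gives at each such $p$ a non-trivial homogeneous $\Dir$-minimizing tangent map $\mathscr{N}_{p}$ with $\mathscr{N}_{p}(0) = Q\llbracket 0\rrbracket$ and $\bfeta\circ\mathscr{N}_{p}\equiv 0$. The origin is necessarily a singular point of $\mathscr{N}_{p}$: at a regular $Q$-point all sheets would collapse into a single harmonic function $h$ with $h(0) = 0$, forcing $\mathscr{N}_{p} \equiv Q\llbracket h \rrbracket$ in a neighborhood and hence $h = \bfeta\circ\mathscr{N}_{p} \equiv 0$, which by homogeneity makes $\mathscr{N}_{p}$ trivial, contradicting non-triviality. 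At this point the standard Federer--Almgren dimension reduction scheme from Almgren's proof of the singular set estimate for $\Dir$-minimizers (cf.\ \cite[Proof of Theorem 0.11]{DLS11a}) applies: stratifying ${\rm D}_{Q}(\tilde N)$ by the maximal dimension of translation invariance of tangent maps, an $(m-1)$-translation-invariant tangent map would reduce to a one-dimensional homogeneous $\Dir$-minimizer with a $Q$-singularity at the origin and zero average, which must be $Q\llbracket 0\rrbracket$, ruling out the top stratum and giving $\dim_{\Ha}{\rm D}_{Q}(\tilde N) \leq m-2$ for $m \geq 3$. For $m = 2$, Theorem \ref{improved} replaces Theorem \ref{est_sing} in the reduction and produces that ${\rm D}_{Q}(\tilde N)$ consists of isolated points, as required.
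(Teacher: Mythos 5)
Your proof takes a genuinely different route from the paper, and the step you yourself flag as problematic is in fact the crux of the difficulty; the paper's proof is structured precisely to avoid it.

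You propose to first show that ${\rm D}_{Q}(\tilde N)$ has empty interior (or is all of $\Omega$), and then run dimension reduction. The empty-interior step requires blowing up at a boundary point $p$ of $W := \mathrm{int}\,{\rm D}_{Q}(\tilde N)$ and capturing an open subset of $B_1$ where the tangent map vanishes, to then invoke unique continuation. As you note, there is no reason the open balls $\B_{\rho_r}(w_r) \subset W$ survive the rescaling with radius bounded below: $W$ may pinch at $p$ with arbitrarily small density, in which case no naive choice of blow-up scales works and a Hardt--Simon style ``good point'' selection or a two-scale argument would be needed. This is a genuine gap, not a technicality, and resolving it would require significant extra work.

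The paper reverses the logical order and thereby sidesteps this issue entirely. Since $\reg_Q(N) = \mathrm{int}\,{\rm D}_{Q}(N)$ and $\sing_Q(N) = \partial{\rm D}_{Q}(N)\cap\Omega$, and since the frequency function is only well-defined on $\sing_Q(N)$ (Proposition \ref{dicotomia} gives exactly this dichotomy), the blow-up theory of Theorem \ref{blow-up:thm} naturally applies on $\sing_Q(N)$ and not on the interior. The paper therefore estimates $\dim_{\Ha}\sing_Q(N)\leq m-2$ \emph{first}, and then observes that a set of codimension $\geq 2$ cannot disconnect the connected domain $\Omega$; since $\Omega\setminus\sing_Q(N)=\reg_Q(N)\cup(\Omega\setminus{\rm D}_Q(N))$ is a disjoint union of open sets, one of them must be empty. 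Empty interior is thus a \emph{consequence} of the dimension bound, not a prerequisite. The dimension bound itself is obtained not by Federer--Almgren dimension reduction (your route) but by the standard density/pre-measure trick: if $\Ha^{s}(\sing_Q(N))>0$, pick a point of positive upper $\Ha^{s}_{\infty}$-density, use the upper semi-continuity of $\Ha^{s}_{\infty}$ under Hausdorff convergence of compact sets to pass the density to the blow-up limit $\mathscr{N}_p$, and then apply \cite[Proposition 3.22]{DLS11a} to $\mathscr{N}_p$ directly to reach a contradiction. This leverages the already-established singular-set theory for $\Dir$-minimizers rather than re-proving it. Your dimension-reduction sketch is in principle a valid alternative, but it is not carried out, and in any case it does not repair the empty-interior gap. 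Finally, for $m=2$ the paper does not invoke Theorem \ref{improved}; it gives a concrete double blow-up argument (blow up at a singular $Q$-point that is a limit of $Q$-points, find a $Q$-point on $\Sf^1$ of the tangent map, blow up again at an interior radial point of the tangent map using \cite[Lemma 3.24]{DLS11a}, and reduce to a $1$-dimensional $\Dir$-minimizer with zero average), which you would need to supply explicitly.
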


\begin{proof}
Assume without loss of generality that $\bfeta \circ N = 0$, so that $p \in {\rm D}_{Q}(N)$ if and only if $N(p) = Q \llbracket 0 \rrbracket$. We first observe the following fact: the set ${\rm D}_{Q}(N)$ is relatively closed in $\Omega$. This can be rapidly seen writing ${\rm D}_{Q}(N) = \sigma^{-1}(\{Q\})$, where $\sigma \colon \Omega \to \mathbb{N}$ is the function given by
\begin{equation} \label{sing_Q:1}
\sigma(x) := {\rm card}(\spt(N(x))),
\end{equation}
and noticing that, since $N$ is continuous, $\sigma$ is lower semi-continuous. 

We will now treat the two cases $m=2$ and $m \geq 3$ separately.

\textit{Case 1: dimension $m=2$.} In this case, we claim that the points $p \in \sing_{Q}(N)$ are isolated in ${\rm D}_{Q}(N)$. Assume by contradiction that this is not the case, and let $p \in \sing_{Q}(N)$ be the limit of a sequence $\{ x_{j} \}_{j=1}^{\infty}$ of points in ${\rm D}_{Q}(N)$. Set $r_{j} := {\bf d}(x_{j}, p)$. Since $r_{j} \downarrow 0$, by Theorem \ref{blow-up:thm} the corresponding blow-up family $N_{p,r_{j}}$ converges uniformly, up to a subsequence, to a $\Dir$-minimizing, $\mu$-homogeneous tangent map $\mathscr{N}_{p} \colon B_{1} \subset \R^{2} \to \A_{Q}(\R^{k})$ with $\| \mathscr{N}_{p} \|_{L^{2}(B_1)} = 1$ and $\bfeta \circ \mathscr{N}_{p} \equiv 0$. Moreover, since each $x_{j} \in {\rm D}_{Q}(N)$, the points $y_{j} := \psi_{p,r_{j}}^{-1}(x_{j})$ are a sequence of multiplicity $Q$ points for the corresponding $N_{p,r_{j}}$ in $\Sf^{1} = \partial B_{1}$: from this, we conclude that there exists $w \in \Sf^{1}$ such that $\mathscr{N}_{p}(w) = Q \llbracket 0 \rrbracket$. Up to rotations, we can assume that $w = e_{1}$. Denote $z := \frac{1}{2} e_{1}$, and observe that, since $\mathscr{N}_{p}$ is homogeneous, necessarily $\mathscr{N}_{p}(z) = Q \llbracket 0 \rrbracket$. Consider now the blow-up of $\mathscr{N}_{p}$ at $z$: by \cite[Lemma 3.24]{DLS11a}, any tangent map $h$ to $\mathscr{N}_{p}$ at $z$ is a non-trivial $\beta$-homogeneous $\Dir$-minimizer, with $\beta$ equal to the frequency of $\mathscr{N}_{p}$ at $z$, and such that $h(x_{1}, x_{2}) = \hat{h}(x_{2})$, for some function $\hat{h} \colon \R \to \A_{Q}(\R^k)$ which is $\Dir$-minimizing on every interval. Moreover, since $\Dir(h, B_{1}) > 0$, it must also be $\Dir(\hat{h}, I) > 0$, where $I := \left[ -1, 1 \right]$. On the other hand, every $1$-dimensional $\Dir$-minimizer $\hat{h}$ is affine, that is it has the form $\hat{h}(x) = \sum_{i=1}^{Q} \llbracket L_{i}(x) \rrbracket$, where the $L_{i}$'s are affine functions such that either $L_{i} \equiv L_{j}$ or $L_{i}(x) \neq L_{j}(x)$ for every $x \in \R$, for any $i,j$. Now, since $\hat{h}(0) = Q \llbracket 0 \rrbracket$, we deduce that $\hat{h} = Q \llbracket L \rrbracket$; on the other hand, $\bfeta \circ h \equiv 0$, and thus necessarily $L = 0$. This contradicts $\Dir(\hat{h}, I) > 0$.

Hence, if $p \in {\rm D}_{Q}(N)$ then either $p$ is isolated or, in case $p$ is a regular multiplicity $Q$ point, there exists an open neighborhood $V$ of $p$ such that $V \subset {\rm D}_{Q}(N)$. From this we deduce that ${\rm reg}_{Q}(N)$ is both open and closed in $\Omega$. Since $\Omega$ is connected, then either ${\rm reg}_{Q}(N) = \Omega$, and $N \equiv Q \llbracket 0 \rrbracket$, or ${\rm reg}_{Q}(N) = \emptyset$, and ${\rm D}_{Q}(N) = \sing_{Q}(N)$ consists of isolated points. This completes the proof in the dimension $m = 2$ case.

\textit{Case 2: dimension $m \geq 3$.} In this case, the goal is to show that $\Ha^{s}({\rm D}_{Q}(N)) = 0$ for every $s > m-2$, unless $N \equiv Q \llbracket 0 \rrbracket$. Consider the set $\sing_{Q}(N)$. We first claim that $\Ha^{s}(\sing_{Q}(N)) = 0$ for every $s > m - 2$. Suppose by contradiction that this is not the case. Then, by \cite[Theorem 3.6 (2)]{Simon83}, there exist $s > m - 2$ and a subset $F \subset \sing_{Q}(N)$ with $\Ha^{s}(F) > 0$ such that every point $p \in F$ is a point of positive upper $s$-density for the measure $\Ha^{s}_{\infty}$, that is
\begin{equation} \label{sing_Q:21}
 \limsup_{r \to 0} \frac{\Ha^{s}_{\infty}(\sing_{Q}(N) \cap \B_{r}(p))}{r^s} > 0 \hspace{0.5cm} \mbox{ for every } p \in F.
\end{equation}
Here, the symbol $\Ha^{s}_{\infty}$ denotes as usual the $s$-dimensional Hausdorff pre-measure, defined by
\[
\Ha^{s}_{\infty}(A) := \inf\left\lbrace \sum_{h=1}^{\infty} \omega_{s} \left( \frac{{\rm diam}(E_h)}{2} \right)^{s} \, \colon \, A \subset \bigcup_{h=1}^{\infty} E_h \right\rbrace,
\] 
with $\omega_{s} := \frac{\pi^{\frac{s}{2}}}{\Gamma\left( \frac{s}{2} + 1 \right)}$, where $\Gamma(s)$ is the usual Gamma function. Among the properties of $\Ha^{s}_{\infty}$, it is worth recalling now that it is upper semi-continuous with respect to Hausdorff convergence of compact sets: in other words, if $K_j$ is a sequence of compact sets converging to $K$ in the sense of Hausdorff, then
\begin{equation} \label{sing_Q:22}
\limsup_{j \to \infty} \Ha^{s}_{\infty}(K_j) \leq \Ha^{s}_{\infty}(K).
\end{equation} 

Now, \eqref{sing_Q:21} together with Theorem \ref{blow-up:thm} imply the existence of a point $p \in \sing_{Q}(N)$ and a sequence of radii $r_{j} \downarrow 0$ such that the blow-up maps $N_{j} = N_{p,2r_j}$ converge uniformly to a $\Dir$-minimizing homogeneous $Q$-valued function $\mathscr{N}_{p} \colon B_1 \subset \R^m \to \A_{Q}(\R^k)$ with $\bfeta \circ \mathscr{N}_p \equiv 0$ and $\| \mathscr{N}_p \|_{L^2(B_1)} = 1$, and furthermore such that
\begin{equation} \label{sing_Q:23}
\limsup_{j \to \infty} \frac{\Ha^{s}_{\infty}(\sing_{Q}(N) \cap \B_{r_j}(p))}{r_{j}^s} > 0,
\end{equation}
or, equivalently,
\begin{equation} \label{sing_Q:24}
\limsup_{j \to \infty} \Ha^{s}_{\infty}(\sing_{Q}(N_j) \cap B_{\frac{1}{2}}) > 0,
\end{equation}
where $B_{\frac{1}{2}} \subset T_{0}\Sigma_{j} \simeq \R^m$. Set $K_{j} := \sing_{Q}(N_j) \cap \overline{B}_{\frac{1}{2}}$, and observe that, since $N_j$ converge to $\mathscr{N}_p$ uniformly, the compact sets $K_j$ converge in the sense of Hausdorff to a compact set $K \subset {\rm D}_{Q}(\mathscr{N}_p)$. From the semi-continuity property \eqref{sing_Q:22}, we can therefore deduce that:
\begin{equation} \label{sing_Q:25}
\begin{split}
\Ha^{s}({\rm D}_{Q}(\mathscr{N}_p)) &\geq \Ha^{s}_{\infty}({\rm D}_{Q}(\mathscr{N}_p)) \geq \Ha^{s}_{\infty}(K) \\
&\geq \limsup_{j \to \infty} \Ha^{s}_{\infty}(K_j) \geq \limsup_{j \to \infty} \Ha^{s}_{\infty}(\sing_Q(N_j) \cap B_{\frac{1}{2}}) > 0.
\end{split}
\end{equation}
Since $s > m-2$, \cite[Proposition 3.22]{DLS11a} implies that this can happen only if $\mathscr{N}_p \equiv Q \llbracket \zeta \rrbracket$, where $\zeta \colon B_1 \to \R^k$ is a harmonic function. Since $\bfeta \circ \mathscr{N}_p \equiv 0$, then it must be $\mathscr{N}_p \equiv Q \llbracket 0 \rrbracket$, which in turns contradicts the fact that $\| \mathscr{N}_p \|_{L^2(B_1)} = 1$.

We can therefore conclude that necessarily $\Ha^{s}(\sing_{Q}(N)) = 0$ for every $s > m-2$. Since $\sing_{Q}(N) = \partial {\rm D}_{Q}(N) \cap \Omega$, either ${\rm D}_{Q}(N) = \Omega$ and $N \equiv Q \llbracket 0 \rrbracket$, or ${\rm D}_{Q}(N) = \sing_{Q}(N)$. The proof is complete.
\end{proof}

\begin{remark}
As a corollary of Proposition \ref{sing_Q:thm}, one easily deduces the following: if $N$ is a $\Jac$-minimizing $Q$-valued vector field in the open and connected subset $\Omega \subset \Sigma$ which is not of the form $N = Q \llbracket \zeta \rrbracket$ for some classical Jacobi field $\zeta$, then ${\rm D}_{Q}(N) = \sing_{Q}(N)$, that is all multiplicity $Q$ points are singular. 
\end{remark}

We have now all the tools that are needed to prove Theorem \ref{sing:thm}.

\begin{proof}[Proof of Theorem \ref{sing:thm}]
Since our manifolds are always assumed to be second-countable spaces, $\Omega$ can have at most countably many connected components, and these connected components are open. Hence, there is no loss of generality in assuming that $\Omega$ itself is connected: in the general case, we would just work on each connected component separately.

The fact that $\sing(N)$ is a relatively closed set in $\Omega$ (whereas $\reg(N)$ is open) is an immediate consequence of Definition \ref{sing_set}. Let $\sigma$ be the function defined in \eqref{sing_Q:1}. If $x \in \Omega$ is a regular point, then it is clear that $\sigma$ is continuous at $x$. On the other hand, assume $x$ is a point of continuity for $\sigma$, and write $N(x) = \sum_{j=1}^{J} k_{j} \llbracket P^j \rrbracket$, where the $k_{j}$'s are integers such that $\sum_{j=1}^{J} k_j = Q$, each $P^j \in T_{x}^{\perp}\Sigma$ and $P^i \neq P^j$ if $i \neq j$. Since the target of $\sigma$ is discrete, the fact that $\sigma$ is continuous at $x$ implies that in fact $\sigma(z) = J$ for all $z$ in a neighborhood $U$ of $x$. Hence, since $N$ is continuous, there exists a neighborhood $x \in V \subset U$ such that the map $N|_{V}$ admits a continuous decomposition $N(z) = \sum_{j=1}^{J} k_j \llbracket N^{j}(z) \rrbracket$, where each map $N^{j} \colon V \to \N\Sigma$ is a classical Jacobi field. Therefore, $x \in \reg(N)$.

The above argument implies that $\sing(N)$ coincides with the set of points where $\sigma$ is discontinuous. The proof that its Hausdorff dimension cannot exceed $m-2$ will be obtained via induction on $Q$. If $Q=1$, there is nothing to prove, since single-valued $\Jac$-minimizers are classical Jacobi fields. Assume now that the theorem holds for every $Q^{*}$-valued $\Jac$-minimizer with $Q^{*} < Q$ and we prove it for $N$. If $N \equiv Q \llbracket \zeta \rrbracket$ with $\zeta$ a classical Jacobi field, then $\sing(N)$ is empty, and the theorem follows. Assume, therefore, this is not the case. By Proposition \ref{sing_Q:thm}, the set ${\rm D}_{Q}(N) = \sing_{Q}(N)$ is a closed subset of $\Omega$ which is at most countable if $m = 2$ and has Hausdorff dimension at most $m-2$ if $m \geq 3$. Consider now the open set $\Omega' := \Omega \setminus {\rm D}_{Q}(N)$. Since $N$ is continuous, we can find countably many open geodesic balls $\B_j$ such that $\Omega' = \bigcup_{j} \B_j$ and $N|_{\B_j}$ can be written as the superposition of two multiple-valued functions:
\begin{equation} \label{sing:1}
N|_{\B_j} = \llbracket N_{j, Q_1} \rrbracket + \llbracket N_{j, Q_2} \rrbracket \hspace{0.5cm} \mbox{ with } Q_1 < Q, Q_2 < Q
\end{equation}
and
\begin{equation} \label{sing:2}
\spt(N_{j, Q_1}(x)) \cap \spt(N_{j, Q_2}(x)) = \emptyset \hspace{0.5cm} \mbox{ for every } x \in \B_j.
\end{equation}
From this last condition, it follows that
\begin{equation} \label{sing:3}
\sing(N) \cap \B_j = \sing(N_{j, Q_1}) \cup \sing(N_{j, Q_2}).
\end{equation}
The maps $N_{j, Q_1}$ and $N_{j, Q_2}$ are both $\Jac$-minimizing, and thus, by inductive hypothesis, their singular set has Hausdorff dimension at most $m - 2$, and is at most countable if $m=2$. Finally:
\begin{equation} \label{sing:4}
\sing(N) = \sing_{Q}(N) \cup \bigcup_{j=1}^{\infty} \left( \sing(N_{j, Q_1}) \cup \sing(N_{j,Q_2}) \right)
\end{equation}
also has Hausdorff dimension at most $m-2$ and is at most countable if $m=2$.
\end{proof}

\section{Uniqueness of tangent maps in dimension $2$} \label{chap:improved_dim2}

This last section is devoted to the proof of Theorem \ref{uniqueness_tg_map}, which we restate for convenience as follows. Observe that, since the average $\bfeta \circ N$ of a Jacobi $Q$-field $N$ is a classical Jacobi field, there is no loss of generality in assuming that $\bfeta \circ N \equiv 0$.
\begin{theorem}[Uniqueness of the tangent map at collapsed singularities] \label{uniqueness_tg_map_statement}
Let $\Sigma \hookrightarrow \M$ be as in Assumption \ref{assumptions}, with $m = \dim \Sigma = 2$. Let $N \in \Gamma_{Q}^{1,2}(\N\Omega)$ be $\Jac$-minimizing in the open set $\Omega \subset \Sigma^2$ with $\bfeta \circ N \equiv 0$, and let $p \in \Omega$ be such that $N(p) = Q \llbracket 0 \rrbracket$ but $N$ does not vanish in a neighborhood of $p$. Then, there is a \emph{unique} tangent map $\mathscr{N}_p$ to $N$ at $p$ (that is, the blow-up family $N_{p,r}$ defined in \eqref{blow-ups} converges locally uniformly to $\mathscr{N}_p$).
\end{theorem}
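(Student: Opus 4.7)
By Theorem \ref{blow-up:thm}, along every sequence $r_j \downarrow 0$ a subsequence of $N_{p,r_j}$ converges (strongly in $L^2(B_1)$ and locally uniformly in $B_1$) to a $\mu$-homogeneous $\Dir$-minimizer $\mathscr{N} \colon B_1 \to \A_Q(\R^k)$ with $\mu = I_0(p)$, $\bfeta \circ \mathscr{N} \equiv 0$ and $\| \mathscr{N} \|_{L^2(B_1)} = 1$. Moreover, from the proof of that theorem (cf.\ \eqref{unif_Holder}), the whole family $\{N_{p,r}\}_{0 < r \leq r_0}$ is locally H\"older-equicontinuous in $B_1$. Hence, to prove uniqueness of the tangent map it suffices to show that $r \mapsto N_{p,r}$ is Cauchy in $L^2(B_1)$ as $r \to 0^+$: the $L^2$-convergence will then automatically upgrade to locally uniform convergence by equicontinuity, and the limit will coincide, by uniqueness, with any subsequential blow-up $\mathscr{N}_p$.

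The heart of the matter is the following quantitative improvement of Proposition \ref{am_impr_thm}: there exist $\alpha > 0$, $C > 0$ and $r_0 > 0$ such that
\[
0 \leq \mathbf{I}(r) - I_0(p) \leq C r^{2\alpha} \qquad \text{for every } 0 < r \leq r_0. \qquad (\star)
\]
To prove $(\star)$ I would refine the computation of $\mathbf{I}'(r)$ performed in \eqref{am_impr:1}--\eqref{am_impr:4}. The leading positive term is the Cauchy--Schwarz excess
\[
\frac{2r}{\mathbf{H}(r)^{2}}\bigl(\mathbf{G}(r)\mathbf{H}(r) - \mathbf{E}(r)^{2}\bigr),
\]
which quantifies how far $N|_{\partial \B_{r}(p)}$ is from being radially $\mu$-homogeneous. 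The crucial dimension-$2$ ingredient is a spectral-gap inequality on the one-dimensional geodesic circle $\partial \B_{r}(p)$: using the irreducible decomposition of $W^{1,2}$ $Q$-valued maps on $S^{1}$ (as in the proof of Lemma \ref{hom_ext}) and expanding each irreducible sheet in Fourier series, one shows that the admissible homogeneity exponents of a $Q$-valued harmonic function on $\R^{2}$ form the discrete set $\{j/k_\ell : j \in \Na, \, k_\ell \leq Q\}$. This yields a gap of the form
\[
\mathbf{G}(r)\mathbf{H}(r) - \mathbf{E}(r)^{2} \geq c \bigl(\mathbf{I}(r) - I_0(p)\bigr)\,\mathbf{H}(r)\,\mathbf{D}(r),
\]
modulo curvature error terms controlled via the first variation estimates of Lemma \ref{FVE}. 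Inserted into the formula for $\mathbf{I}'(r)$, this gives a differential inequality of the form
\[
\mathbf{I}'(r) \geq \frac{c}{r}\bigl(\mathbf{I}(r) - I_0(p)\bigr) - C r,
\]
which, combined with the already proved fact that $\mathbf{I}(r) \downarrow I_0(p)$, integrates to $(\star)$ via a standard Gronwall-type argument.

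Once $(\star)$ holds, I would bound the radial derivative of the blow-up family. Writing $N_{p,r}$ in polar coordinates centered at $p$ and differentiating in $r$, a direct computation expresses $\tfrac{d}{dr} N_{p,r}$ in terms of $\mathbf{D}(r)$, $\mathbf{E}(r)$, $\mathbf{H}(r)$ and $\nabla^{\perp}_{\hat r} N$. Using the outer variation formula \eqref{OV:eq} together with the Cauchy--Schwarz excess (which is now controlled by $\mathbf{I}(r) - I_0(p) \leq C r^{2\alpha}$), and the reverse Poincar\'e inequality \eqref{reverse_Poinc:eq}, one obtains an estimate
\[
\left\| \frac{d}{dr} N_{p,r} \right\|_{L^{2}(B_{1})} \leq \frac{C}{r}\sqrt{\mathbf{I}(r) - I_0(p)} + C \leq C r^{\alpha - 1} + C
\]
for every sufficiently small $r$. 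Since $\alpha > 0$, the right-hand side is integrable on $(0, r_0]$, so that $\{N_{p,r}\}$ is Cauchy in $L^{2}(B_{1})$ as $r \to 0^{+}$, which concludes the proof.

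The main obstacle is the spectral-gap estimate used to derive $(\star)$: it requires a careful comparison of $N|_{\partial\B_{r}(p)}$ with $\mu$-homogeneous maps via the irreducible Fourier decomposition, and a clean way to absorb all the curvature errors arising from the embeddings $\Sigma \hookrightarrow \M \hookrightarrow \R^{d}$ without spoiling the exponent $\alpha$. The restriction $m=2$ is essential precisely at this step, because in higher dimensions the homogeneous $\Dir$-minimizing profiles form a genuinely infinite-dimensional space and no analogous discreteness of admissible frequencies is available; a parallel technical difficulty is that for $m \geq 3$ the reverse Poincar\'e inequality alone is not enough to transform the frequency decay into an integrable bound on $\partial_{r} N_{p,r}$.
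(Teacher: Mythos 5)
Your overall strategy is the same as the paper's: reduce to $L^2$-Cauchy via the equi-H\"older estimate \eqref{unif_Holder}, establish a power-rate decay of the frequency using the dimension-two Fourier/spectral-gap structure on $\Sf^1$, and integrate to get uniqueness. The middle step is what the paper proves as Proposition \ref{prop:decays}. Your formulation of the spectral gap is a little different from the paper's: you propose to bound the Cauchy--Schwarz excess $\mathbf{G}\mathbf{H}-\mathbf{E}^2$ from below directly, while the paper builds a harmonic competitor via the irreducible decomposition and the elementary algebraic inequality \eqref{fatto_algebrico}, then uses minimality of $N$ to compare $\mathbf{D}(r)$ with $\tfrac{r}{2(2\mu+\beta)}\mathbf{D}'(r)+\tfrac{\mu(\mu+\beta)}{r(2\mu+\beta)}\mathbf{H}(r)$. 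These are the same idea at the Fourier level, and either yields the differential inequality \eqref{bound_below_I_derivative}; your route is not obviously cleaner, since you would still need the same Fourier bookkeeping on each irreducible sheet to verify your gap inequality, plus care with the curvature errors.

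There is, however, a genuine gap in your last step. You claim a \emph{pointwise-in-$r$} estimate $\norm{\tfrac{d}{dr}N_{p,r}}_{L^2(B_1)}\le \tfrac{C}{r}\sqrt{\mathbf{I}(r)-I_0}+C$, and then integrate in $r$. But the $L^2$-norm of the radial derivative of $t\mapsto t^{-I_0}N(\psi_t(\cdot))$ is governed (see \eqref{stima_L2_diadica}) by $\tfrac{\mathbf{G}(t)}{t^{2I_0+1}}+I_0^2\tfrac{\mathbf{H}(t)}{t^{2I_0+3}}-2I_0\tfrac{\mathbf{E}(t)}{t^{2I_0+2}}$, and after completing the square this is a sum of the Cauchy--Schwarz excess term and $(\mathbf{I}(t)-I_0)^2/t^2$ (up to errors). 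The second piece is fine and matches your claim. The first piece is, up to errors, $\tfrac{\mathbf{H}(t)}{t^{2I_0+1}}\cdot\tfrac{\mathbf{I}'(t)}{2t}$; to dominate it pointwise you would need a pointwise \emph{upper} bound on $\mathbf{I}'(t)$ in terms of $\mathbf{I}(t)-I_0$. But your spectral-gap inequality gives a \emph{lower} bound on the excess (and hence on $\mathbf{I}'$), which is the wrong direction, and the frequency decay $|\mathbf{I}(r)-I_0|\le Cr^{\beta}$ controls only the total variation of $\mathbf{I}$, not its derivative pointwise (it could oscillate). Also note $\mathbf{I}(r)-I_0$ can be slightly negative ($\ge -Cr$ by Proposition \ref{am_impr_thm}), so $\sqrt{\mathbf{I}(r)-I_0}$ is not well defined without the paper's two-sided estimate \eqref{decay_frequency}. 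The paper sidesteps all of this by proving the dyadic estimate $\int_{\Sf^1}\G(u_r,u_{r/2})^2\le Cr^{\beta}$ directly: the $\mathbf{D}'$-term is integrated by parts so that only the telescoping difference $\abs{\mathbf{D}(r)r^{-2I_0}-\mathbf{D}(r/2)(r/2)^{-2I_0}}$ appears, and this is controlled by the decay \eqref{decay_dirichlet} of the rescaled Dirichlet energy rather than by $\mathbf{I}(r)-I_0$ alone. So your argument needs this integrated (dyadic/telescoping) version, and it needs all three decays in \eqref{eq:decays}, not just the frequency one.
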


Theorem \ref{uniqueness_tg_map_statement} has the following natural corollary.
\begin{corollary} \label{uniqueness_everywhere}
Let $\Omega$ be an open subset of the two-dimensional manifold $\Sigma^2 \hookrightarrow \M$ as in Assumption \ref{assumptions}, and let $N \in \Gamma_{Q}^{1,2}(\N\Omega)$ be $\Jac$-minimizing. Then, for every $p \in \Omega$ there exists a unique tangent map $\mathscr{N}_p$ to $N$ at $p$.
\end{corollary}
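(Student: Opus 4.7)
The plan is to reduce Corollary~\ref{uniqueness_everywhere} to Theorem~\ref{uniqueness_tg_map_statement} by a case analysis on the structure of $N$ at $p$. First I would dispose of the trivial case in which $N$ vanishes identically in some neighborhood of $p$: the family $N_{p,r}$ is then not defined and one may take $\mathscr{N}_{p} := Q\llbracket 0 \rrbracket$ by convention; we may therefore assume $\|N\|_{L^{2}(\B_{r}(p))} > 0$ for every sufficiently small $r$. Second, if the support of $N(p)$ consists of $J \geq 2$ distinct points $v_{1},\dots,v_{J}$, continuity of $N$ produces a neighborhood $U$ of $p$ on which $N$ splits as $N = \sum_{i=1}^{J} N_{i}$, with each $N_{i} \in \Gamma^{1,2}_{k_{i}}(\N U)$ a $\Jac$-minimizing $k_{i}$-valued section (minimality descends to each $N_{i}$ because their sheets remain uniformly separated on $U$, so any competitor for $N_{i}$ reassembled with the other $N_{j}$'s is admissible for $N$), with $N_{i}(p) = k_{i}\llbracket v_{i}\rrbracket$ and $k_{i} < Q$. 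Using the orthogonality
\begin{equation*}
\|N\|^{2}_{L^{2}(\B_{r}(p))} = \sum_{i=1}^{J} \|N_{i}\|^{2}_{L^{2}(\B_{r}(p))}
\end{equation*}
one writes $N_{p,r} = \sum_{i} c_{i,r}(N_{i})_{p,r}$ as a $Q$-valued superposition of rescaled blow-ups with $\sum_{i} c_{i,r}^{2} = 1$, and an induction on $Q$ -- the base case $Q=1$ being the classical uniqueness of tangent behavior of a single-valued Jacobi field at an isolated zero, a standard consequence of Almgren's frequency formula for linear second-order elliptic equations (the zero set being discrete in dimension $m=2$ by unique continuation) -- reduces the problem to $N(p) = Q\llbracket v\rrbracket$ for a single $v$.

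In the single-sheet case, if $v \neq 0$ a direct computation using the H\"older continuity from Theorem~\ref{reg_Holder} yields $\|N\|^{2}_{L^{2}(\B_{r}(p))} = Q|v|^{2}\pi r^{2} + o(r^{2})$ and $N_{p,r}(y) \to Q\llbracket v/(|v|\sqrt{Q\pi})\rrbracket$ locally uniformly, giving a unique constant tangent map. If $v = 0$, I would set $\zeta := \bfeta \circ N$, which is a classical Jacobi field by Lemma~\ref{zero_average}$(i)$, and define $\tilde N := \sum_{\ell}\llbracket N^{\ell} - \zeta \rrbracket$; by Lemma~\ref{zero_average}$(ii)$, $\tilde N$ is $\Jac$-minimizing, with $\bfeta \circ \tilde N \equiv 0$ and $\tilde N(p) = Q\llbracket 0 \rrbracket$. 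The identity $\sum_{\ell} \tilde N^{\ell} \equiv 0$ gives the $L^{2}$-splitting
\begin{equation*}
\|N\|^{2}_{L^{2}(\B_{r}(p))} = \|\tilde N\|^{2}_{L^{2}(\B_{r}(p))} + Q\|\zeta\|^{2}_{L^{2}(\B_{r}(p))}
\end{equation*}
and the decomposition
\begin{equation*}
N_{p,r}(y) = \sum_{\ell=1}^{Q}\llbracket a_{r}\,\tilde N_{p,r}^{\ell}(y) + b_{r}\,\zeta_{p,r}(y)\rrbracket, \qquad a_{r}^{2} + Q b_{r}^{2} = 1,
\end{equation*}
with $\tilde N_{p,r}$ and $\zeta_{p,r}$ the blow-ups of $\tilde N$ and $\zeta$ at $p$. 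If $\tilde N$ vanishes identically in some neighborhood of $p$ then $N = Q\llbracket \zeta \rrbracket$ locally, and uniqueness reduces to the $Q=1$ case applied to $\zeta$; otherwise Theorem~\ref{uniqueness_tg_map_statement} applies to $\tilde N$, producing a unique $\tilde{\mathscr{N}}_{p}$, while the $Q=1$ case gives a unique tangent limit $\zeta_{\mathrm{tan}}$ for $\zeta_{p,r}$.

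The main obstacle is to show that the scalar coefficients $a_{r}$ and $b_{r}$ themselves converge as $r \downarrow 0$: a priori they could oscillate whenever the frequencies of $\tilde N$ and $\zeta$ at $p$ coincide, producing distinct subsequential limits of $N_{p,r}$ even though both pieces individually converge. The resolution is to exploit the sharp power-law rate of convergence of the frequency function towards its limit that is established inside Theorem~\ref{uniqueness_tg_map_statement} (the mechanism explicitly announced right after its statement in the Introduction), together with the analogous classical expansion of the single-valued Jacobi field $\zeta$ at its isolated zero. These yield leading-order asymptotics $\|\tilde N\|^{2}_{L^{2}(\B_{r}(p))} \sim C_{1}\, r^{2+2\nu}$ and $\|\zeta\|^{2}_{L^{2}(\B_{r}(p))} \sim C_{2}\, r^{2+2\mu}$, with $\nu = I_{0}(p)$ and $\mu$ a positive integer, from which $\lim_{r \downarrow 0} a_{r}$ and $\lim_{r \downarrow 0} b_{r}$ are computed explicitly in each of the cases $\nu < \mu$, $\nu > \mu$ and $\nu = \mu$; the unique tangent map is then $\mathscr{N}_{p} = \sum_{\ell}\llbracket a_{\infty}\,\tilde{\mathscr{N}}_{p}^{\ell} + b_{\infty}\,\zeta_{\mathrm{tan}}\rrbracket$.
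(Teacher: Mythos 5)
Your proof is correct and follows the same induction-on-$Q$, case-splitting skeleton as the paper's, but it makes explicit a reduction that the paper dispatches in one line with ``without loss of generality, assume $\bfeta\circ N\equiv 0$''. That reduction is in fact the delicate step: after writing $N = \sum_\ell\llbracket\tilde N^\ell + \zeta\rrbracket$ with $\zeta := \bfeta\circ N$, the blow-up $N_{p,r}$ decomposes as $\sum_\ell\llbracket a_r\,\tilde N^\ell_{p,r} + b_r\,\zeta_{p,r}\rrbracket$ with $a_r^2 + Q\,b_r^2 = 1$, and uniqueness of the separate tangent profiles of $\tilde N$ and $\zeta$ does not by itself force the scalar weights $a_r$, $b_r$ to converge. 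You correctly isolate this as the main obstacle and resolve it via the power-law decay of the $L^2$-norms supplied by Proposition~\ref{prop:decays} for $\tilde N$ and by classical elliptic theory (equivalently, the $Q=1$ instance of the very same machinery) for $\zeta$; that is precisely the mechanism one needs, and your argument reads as a careful filling-in of detail rather than a divergence from the paper's intent. One simplification worth noting: in the non-collapsed case $N(p)\neq Q\llbracket 0\rrbracket$ (your cases $J\geq 2$ and $v\neq 0$) the H\"older continuity of $N$ already gives $\|N\|^2_{L^2(\B_r(p))}\sim|N(p)|^2\omega_m r^m$, so $N_{p,r}$ converges uniformly to the constant $N(p)/(|N(p)|\sqrt{\omega_m})$ and neither the induction nor the coefficient analysis is needed at that step; they are required only when $N(p)=Q\llbracket 0\rrbracket$. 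For the same weighting reason the paper's displayed combination $\mathscr{N}_p=\llbracket\mathscr{N}^1_p\rrbracket+\llbracket\mathscr{N}^2_p\rrbracket$ in the non-collapsed case should carry relative $L^2$-mass factors, although this does not affect the uniqueness conclusion.
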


\begin{proof}
The proof is by induction on $Q \geq 1$. If $Q = 1$ then the result is trivial, since $N$ is a classical Jacobi field. Let us then assume that the claim holds true for every $Q' < Q$, and we prove that it holds true for $Q$ as well. Let $N \in \Gamma_{Q}^{1,2}(\N\Omega)$ be $\Jac$-minimizing, and let $p \in \Omega$. Without loss of generality, assume that $\bfeta \circ N \equiv 0$. If $\diam(N(p)) > 0$, then, since $N$ is continuous, there exists a neighborhood $U$ of $p$ in $\Omega$ such that $\left. N \right|_{U} = \llbracket N^{1} \rrbracket + \llbracket N^{2} \rrbracket$, where each $N^{i} \in \Gamma_{Q_i}^{1,2}(\N U)$ is $\Jac$-minimizing, $Q_{i} < Q$ for $i=1,2$ and $\spt(N^1(x)) \cap \spt(N^2(x)) = \emptyset$ for every $x \in U$. By induction hypothesis, $N^{1}$ and $N^{2}$ have unique tangent maps $\mathscr{N}^{1}_p$ and $\mathscr{N}^2_p$ at $p$ respectively. Hence, $\mathscr{N}_p := \llbracket \mathscr{N}^1_p \rrbracket + \llbracket \mathscr{N}^2_p \rrbracket$ is the unique tangent map to $N$ at $p$. 

If, on the other hand, $\diam(N(p)) = 0$, $N(p) = Q \llbracket 0 \rrbracket$ because $N$ has zero average. If $N \equiv Q \llbracket 0 \rrbracket$ in a neighborhood of $p$, then the unique tangent map to $N$ at $p$ is $\mathscr{N}_p \equiv Q \llbracket 0 \rrbracket$. If $N$ does not vanish identically in any neighborhood of $p$, then the tangent map $\mathscr{N}_p$ is unique by Theorem \ref{uniqueness_tg_map_statement}. In either case, this completes the proof of the corollary.
\end{proof}

It only remains to prove Theorem \ref{uniqueness_tg_map_statement}. The plan is the following: first, in $\S$ \ref{sec:decays} we show that under the assumptions of Theorem \ref{uniqueness_tg_map_statement} the frequency function ${\bf I}_{r} = {\bf I}_{N,p}(r)$ converges, as $r \downarrow 0$, to its limit $I_0 = I_0(p) > 0$ with rate $r^{\beta}$ for some $\beta > 0$ (cf. Proposition \ref{prop:decays} below). Then, we will use this key fact to deduce Theorem \ref{uniqueness_tg_map_statement} in $\S$ \ref{sec:unicita}. 

\subsection{Decay of the frequency function} \label{sec:decays}

The main result of this section is the following proposition. Recall the definitions of the energy function ${\bf D}(r)$, the height function ${\bf H}(r)$ and the frequency function ${\bf I}(r)$.
\begin{proposition} \label{prop:decays}
Let $N \in \Gamma_{Q}^{1,2}(\N\Omega)$ be $\Jac$-minimizing in $\Omega \subset \Sigma^2$, and let $p$ be such that $N(p) = Q\llbracket 0 \rrbracket$ but $N$ does not vanish in a neighborhood of $p$. Let $I_0 := I_0(p) > 0$ be the frequency of $N$ at $p$ (which exists and is strictly positive by Proposition \ref{am_impr_thm}). Then, there are $\beta, C, D_0, H_0 > 0$ such that for every $r$ sufficiently small one has
\begin{equation} \label{eq:decays}
\abs{{\bf I}(r) - I_0} + \Abs{\frac{{\bf H}(r)}{r^{2I_0+1}} - H_0} + \Abs{\frac{{\bf D}(r)}{r^{2I_0}} - D_0} \leq C r^{\beta}\,.
\end{equation} 
\end{proposition}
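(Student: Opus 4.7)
My approach splits the proof into two essentially independent tasks: proving a power-rate decay $|{\bf I}(r)-I_0|\le Cr^{\beta}$ of the frequency, and then propagating this estimate to ${\bf H}$ and ${\bf D}$ via integral identities. The second task is the easier one. In dimension $m=2$, combining \eqref{H'_est}, \eqref{OV:est}, and the Poincaré estimate of Lemma \ref{Poinc_type:thm} (which gives ${\bf F}(r)\le Cr^{2}{\bf D}(r)$ and hence ${\bf E}(r)={\bf D}(r)(1+O(r^{2}))$), one readily computes
$$\bigl(\log{\bf H}(r)\bigr)'=\frac{1+2{\bf I}(r)}{r}+O(r),\qquad \bigl(\log{\bf D}(r)\bigr)'=\frac{2{\bf I}(r)}{r}+O(r),$$
so that $\bigl(\log\tfrac{{\bf H}(r)}{r^{2I_{0}+1}}\bigr)'=\tfrac{2}{r}({\bf I}(r)-I_{0})+O(r)$, and similarly for $\log({\bf D}(r)/r^{2I_{0}})$. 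Once $|{\bf I}(r)-I_{0}|\le Cr^{\beta}$ is available, integrating these logarithmic derivatives on $(0,r)$ produces the existence of finite limits $H_{0},D_{0}\ge 0$ and the claimed bounds; positivity of $H_{0}$ follows from the fact that ${\bf H}(r)>0$ for all small $r$ (Proposition \ref{dicotomia}(ii)) combined with the uniform equivalence ${\bf H}(r)\asymp r^{2I_{0}+1}$ derived from the same integral identities, and then $D_{0}=I_{0}H_{0}>0$.

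The heart of the matter is the decay of ${\bf I}$ itself. The lower bound $I_{0}-{\bf I}(r)\le I_{0}\lambda r$ is immediate from the monotonicity of $r\mapsto e^{\lambda r}{\bf I}(r)$ in Proposition \ref{am_impr_thm}. For the upper bound, I would return to the explicit formula \eqref{am_impr:1},
$${\bf I}'(r)=\frac{2r}{{\bf H}(r)^{2}}\bigl({\bf G}(r){\bf H}(r)-{\bf E}(r)^{2}\bigr)+\mathcal{R}(r),\qquad |\mathcal{R}(r)|\le Cr\,{\bf I}(r)(1+{\bf I}(r)),$$
in which the Cauchy--Schwarz deficit ${\bf G}{\bf H}-{\bf E}^{2}\ge 0$ drives the monotonicity. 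The target is a Lojasiewicz-type inequality of the form
$$\frac{r\bigl({\bf G}(r){\bf H}(r)-{\bf E}(r)^{2}\bigr)}{{\bf H}(r)^{2}}\ge \gamma\,\bigl({\bf I}(r)-I_{0}\bigr)-Cr^{\beta_{0}}$$
for some $\gamma,\beta_{0}>0$. Once such an estimate is in hand, substitution into the displayed formula yields the differential inequality $r{\bf I}'(r)\ge \gamma({\bf I}(r)-I_{0})-Cr^{\beta_{0}}$; multiplying by $r^{-\gamma-1}$, integrating on $(s,r)$ and letting $s\downarrow 0$ (using that ${\bf I}(s)\to I_{0}$) gives ${\bf I}(r)-I_{0}\le Cr^{\min(\gamma,\beta_{0})}$.

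To prove the Lojasiewicz inequality I plan to argue by compactness and contradiction, exploiting dimension $m=2$ in an essential way. If the inequality failed, one could extract a sequence of scales $r_{j}\downarrow 0$ along which $r_{j}({\bf G}{\bf H}-{\bf E}^{2})/{\bf H}^{2}$ is negligible with respect to ${\bf I}(r_{j})-I_{0}$. Passing to the blow-ups $N_{p,r_{j}}$ (Theorem \ref{blow-up:thm}), the strong $W^{1,2}$-convergence on $B_{1}$ to an $I_{0}$-homogeneous $\Dir$-minimizer $\mathscr{N}_{p}\colon\R^{2}\to\A_{Q}(\R^{k})$ with $\bfeta\circ\mathscr{N}_{p}\equiv 0$ would force the vanishing of the limit Cauchy--Schwarz deficit on every sphere $\partial B_{\rho}\subset B_{1}$, which in turn forces $\nabla_{\hat r}N_{p,r_{j}}|_{\partial B_{1}}$ to be asymptotically parallel to $N_{p,r_{j}}|_{\partial B_{1}}$ with proportionality constant $I_{0}/|x|$. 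Invoking the explicit classification of $I_{0}$-homogeneous two-dimensional $\Dir$-minimizers (whose restrictions to $\Sf^{1}$ admit a Fourier-type decomposition attaining equality in Wirtinger's inequality, cf.\ \cite[Proposition 5.1]{DLS11a}), one can then bound the distance of $N_{p,r_{j}}|_{\partial B_{1}}$ in $W^{1,2}(\Sf^{1},\A_{Q})$ to the family of $I_{0}$-homogeneous $\Dir$-minimizers by a power of the normalized deficit, producing the quantitative inequality ${\bf I}(r_{j})-I_{0}\lesssim r_{j}^{\beta_{0}}$ and the required contradiction. This compactness step, and in particular its reliance on the explicit profile of two-dimensional homogeneous $\Dir$-minimizers, is the main obstacle and the reason why the argument is restricted to $m=2$.
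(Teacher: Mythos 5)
Your decomposition of the task — prove a power-rate decay of ${\bf I}$, then propagate to ${\bf H}$ and ${\bf D}$ — and your use of the lower bound from Proposition \ref{am_impr_thm} both match the paper. The propagation step has a minor slip: $(\log{\bf D}(r))' = 2{\bf I}(r)/r + O(r)$ is not available, since \eqref{IV:est} only gives ${\bf D}'(r) = 2{\bf G}(r) + O(r\,{\bf D}(r))$, and replacing ${\bf G}$ by ${\bf D}^2/{\bf H}$ would require controlling the Cauchy--Schwarz deficit pointwise in $r$; the fix, as in the paper, is to set $D_0 := I_0 H_0$ and estimate $|{\bf D}(r)/r^{2I_0} - D_0|$ directly from the already-established decays of ${\bf I}$ and ${\bf H}$.

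The genuine gap is in your proposed proof of the decay of ${\bf I}$. The Lojasiewicz-type differential inequality you aim for does hold — the paper establishes $r\,{\bf I}'(r) \geq 2(I_0 + \beta - {\bf I}(r))({\bf I}(r) - I_0) - Cr\,{\bf I}(r)$ in \eqref{bound_below_I_derivative} — but the compactness/contradiction scheme you sketch cannot produce it. First, the relation between the Cauchy--Schwarz deficit and the frequency gap is fundamentally integral: ${\bf I}(r) - I_0$ is a cumulative integral of the deficit over $(0,r)$, so the deficit \emph{at a single scale $r$} admits no pointwise lower bound in terms of ${\bf I}(r) - I_0$, and your proposed inequality is not what the monotonicity identity controls. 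Second, the negation of your estimate says the ratio of deficit to frequency gap tends to zero along $r_j$; since both quantities already vanish as $r\downarrow 0$, no contradiction with blow-up convergence is forthcoming, and the argument is circular anyway, since quantitative closeness of $N_{p,r_j}$ to its limit is precisely what this section aims to establish. The paper avoids all of this by constructing an \emph{explicit} competitor: take the boundary trace $\left.N\right|_{\partial\B_r(p)}$, decompose it into irreducible pieces on $\Sf^1$ via \cite[Proposition 1.5]{DLS11a}, extend each harmonically to $\mathbb{D}_r$ (as in Lemma \ref{hom_ext}), project onto $\N\Sigma$, and then use the Fourier identities of \cite[Lemma 3.12]{DLS11a} together with the elementary algebraic fact $(2\mu + \beta)n \leq n^2/Q_\ell + \mu(\mu+\beta)Q_\ell$ for all $n, Q_\ell \in \Na$ and small $\beta$, which is where the admissible homogeneities $n/Q_\ell$ of two-dimensional irreducible Dir-minimizers enter. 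This yields
\[
{\bf D}(r) \leq \frac{r}{2(2\mu+\beta)}\,{\bf D}'(r) + \frac{\mu(\mu+\beta)}{r(2\mu+\beta)}\,{\bf H}(r) + C_\mu\, r\, {\bf D}(r)\,,
\]
and specializing to $\mu = I_0$ and dividing by ${\bf H}(r)$ gives the Lojasiewicz inequality directly. That explicit, quantitative construction — not soft compactness — is the crux, and is also the reason the result is confined to $m = 2$, where the boundary trace lives on $\Sf^1$ and admits the needed irreducible Fourier decomposition.
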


We will need a preliminary lemma.
\begin{lemma}
Let $N$ and $p$ be as in Proposition \ref{prop:decays}. For every $\mu > 0$ there exists $\beta_0 = \beta_0(\mu)$ and $C = C(\mu)$ such that for every $0 < \beta < \beta_0$ the inequality
\begin{equation} \label{stima_original_fct_final}
{\bf D}(r) \leq \frac{r}{2(2\mu + \beta)} {\bf D}'(r) + \frac{\mu(\mu + \beta)}{r(2\mu + \beta)} {\bf H}(r) + C_{\mu} r {\bf D}(r)
\end{equation}
holds true for every $r$ small enough.
\end{lemma}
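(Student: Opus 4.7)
The inequality \eqref{stima_original_fct_final} is a sharp reverse Caccioppoli-type estimate that is saturated, to leading order, by pure $\mu$-homogeneous profiles on $\mathbf{B}_r(p)$; it will be fed into a Gr\"onwall-type argument in the proof of Proposition \ref{prop:decays} to upgrade the $L^\infty$-bound $\mathbf{I}(r)\to \mu$ into a polynomial decay rate. I would derive it by the direct method of testing the minimality of $N$ against a radially homogeneous extension of its trace on $\partial\mathbf{B}_r(p)$, combined with the first variation formulae for the Jacobi functional.

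First I would pass to normal coordinates at $p$, writing $u:=N\circ\exp_p: B_r\subset T_p\Sigma\simeq\mathbb{R}^2\to\mathcal{A}_Q(\mathbb{R}^d)$. Standard asymptotic expansions of the exponential map in dimension $m=2$ identify the intrinsic quantities $\mathbf{H},\mathbf{D},\mathbf{D}',\mathbf{G},\mathbf{E}$ with their Euclidean analogues up to multiplicative corrections of the form $1+O(r)$ coming from the Jacobian and the sectional curvature of $\Sigma$; all such corrections produce errors of order $O(r)\mathbf{D}(r)$, absorbable into $C_\mu r\mathbf{D}(r)$.

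Next I would build a competitor $\tilde v\in W^{1,2}(B_r,\mathcal{A}_Q(\mathbb{R}^d))$ matching $u$ on $\partial B_r$ via an $\alpha$-homogeneous extension, the homogeneity parameter $\alpha$ to be tuned later. Decomposing $u|_{\partial B_r}$ into irreducible pieces by Proposition~1.5 of \cite{DLS11a}, lifting each piece to a single-valued map on the appropriate cyclic cover, applying the radial scaling $(\rho/r)^\alpha$ to the lifted map, and rolling back defines $\tilde v$ (this parallels the harmonic extension construction in the proof of Lemma~\ref{hom_ext}, with $\alpha$ in place of the mode-dependent Dirichlet weights). A direct Fourier-series computation on each irreducible piece --- conformally pulled back to the cover --- yields
\[
\int_{B_r}|D\tilde v|^2\le\frac{\alpha}{2r}\mathbf{H}(r)+\frac{r}{2\alpha}\bigl(\mathbf{D}'(r)-\mathbf{G}(r)\bigr)+O(r)\mathbf{D}(r),
\]
where I have used the orthogonal decomposition $|\nabla u|^2=|\nabla_{\hat r}u|^2+|\partial_\tau u|^2$ on $\partial B_r$ to rewrite the boundary tangential energy as $\int_{\partial B_r}|\partial_\tau u|^2=\mathbf{D}'(r)-\mathbf{G}(r)+O(r)\mathbf{D}(r)$. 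Composing $\tilde v$ with the orthogonal projection onto the normal bundle of $\Sigma$ (Lemma~\ref{Projection}), and controlling the resulting error via the Poincar\'e inequality \eqref{Poinc_type:eq}, produces an admissible competitor $v\in\Gamma^{1,2}_Q(\mathcal{N}\mathbf{B}_r(p))$ with trace $N|_{\partial\mathbf{B}_r(p)}$.

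I would then apply the minimality of $N$: writing $\mathrm{Jac}=\mathrm{Dir}^{\mathcal{N}\Sigma}-\mathcal{B}$ and using the estimate $|\mathcal{B}_{\mathbf{B}_r(p)}(\cdot)|\le C_0\|\cdot\|^2_{L^2}$ together with \eqref{Poinc_type:eq} once more, the minimality inequality $\mathrm{Jac}(N,\mathbf{B}_r(p))\le\mathrm{Jac}(v,\mathbf{B}_r(p))$ translates into
\begin{equation}\label{planstar}
\mathbf{D}(r)\le\frac{\alpha}{2r}\mathbf{H}(r)+\frac{r}{2\alpha}\bigl(\mathbf{D}'(r)-\mathbf{G}(r)\bigr)+C_\mu r\mathbf{D}(r).
\end{equation}
To close \eqref{planstar} with the sharp constants appearing in \eqref{stima_original_fct_final}, I would combine two further ingredients: on one hand, the Cauchy--Schwarz inequality $\mathbf{E}(r)^2\le\mathbf{G}(r)\mathbf{H}(r)$ together with the outer-variation identity \eqref{OV:est_fin} (valid because $\mathbf{I}(r)$ is bounded by Proposition~\ref{am_impr_thm}) give $\mathbf{G}(r)\ge\mathbf{D}(r)^2/\mathbf{H}(r)\cdot(1-O(r^2))$; on the other hand, the almost monotonicity of the frequency (Proposition~\ref{am_impr_thm}) provides $\mathbf{I}(r)=r\mathbf{D}(r)/\mathbf{H}(r)\ge\mu-Cr$ for $r$ small. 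Substituting the Cauchy--Schwarz bound into \eqref{planstar} and rearranging,
\[
\mathbf{D}(r)\cdot\frac{2\alpha+\mathbf{I}(r)}{2\alpha}\le\frac{\alpha}{2r}\mathbf{H}(r)+\frac{r}{2\alpha}\mathbf{D}'(r)+C_\mu r\mathbf{D}(r).
\]
Optimizing the parameter $\alpha$ as a function of $\mu$ and $\beta$ and using the monotonicity lower bound $\mathbf{I}(r)\ge\mu-Cr$ then produces the coefficients $\frac{r}{2(2\mu+\beta)}$ and $\frac{\mu(\mu+\beta)}{r(2\mu+\beta)}$ of \eqref{stima_original_fct_final}; choosing $\beta_0=\beta_0(\mu)$ small enough ensures that the algebraic residuals produced by replacing $\mathbf{I}(r)$ by $\mu$, together with the Cauchy--Schwarz defect $O(r^2)\mathbf{D}(r)^2/\mathbf{H}(r)$, can all be repackaged into the single error term $C_\mu r\mathbf{D}(r)$ via the reverse Poincar\'e inequality of Proposition~\ref{reverse_Poinc:thm}.

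The principal technical obstacle is precisely this last coordination: the many distinct error contributions --- from the metric distortion of the exponential chart, the normal bundle projection, the Cauchy--Schwarz defect, the replacement of $\mathbf{I}(r)$ by its limit, and the algebraic optimization in $\alpha$ --- must all be controlled with enough sharpness to collapse into a single $C_\mu r\mathbf{D}(r)$ term. This is what ensures that \eqref{stima_original_fct_final} is saturated exactly at $\mu$-homogeneous profiles and what ultimately drives the polynomial convergence rate for the frequency in Proposition~\ref{prop:decays}.
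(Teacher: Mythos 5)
Your proposal is correct in outline but follows a genuinely different route from the paper. The paper builds the competitor by \emph{harmonic} extension of $N|_{\partial\B_r(p)}$, expands each irreducible piece of the boundary trace in Fourier series, and applies, mode by mode, the purely algebraic inequality
\[
(2\mu+\beta)\,n \;\le\; \frac{n^2}{Q_\ell} + \mu(\mu+\beta)\,Q_\ell\,,
\]
which holds for all $n\in\Na$ and all $Q_\ell\le Q$ once $\beta<\beta_0(\mu)$; multiplying by $\pi r^{2n}\bigl(\abs{a_{\ell,n}}^2+\abs{b_{\ell,n}}^2\bigr)$ and summing immediately produces \eqref{stima_harmonic_competitor} with the \emph{exact} constants of \eqref{stima_original_fct_final}, with no reference whatsoever to the frequency function. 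You instead compare to a \emph{degree-$\alpha$ homogeneous} extension, whose Dirichlet energy is computed exactly as $\frac{\alpha}{2r}\int_{\partial}\abs{g}^2 + \frac{r}{2\alpha}\int_{\partial}\abs{\partial_\tau g}^2$, then lower-bound $\mathbf{G}$ via Cauchy--Schwarz $\mathbf{E}^2\le\mathbf{G}\mathbf{H}$ together with \eqref{OV:est_fin}, and finally tune $\alpha$. That route gets the sharp constants from the two-sided algebraic constraints $\frac{\alpha^2}{2\alpha+\mathbf{I}(r)}\le\frac{\mu(\mu+\beta)}{2\mu+\beta}$ and $2\alpha+\mathbf{I}(r)\ge 2\mu+\beta$, which are compatible once $\mathbf{I}(r)$ lies in $\bigl[(\sqrt{\mu+\beta}-\sqrt{\mu})^2,(\sqrt{\mu+\beta}+\sqrt{\mu})^2\bigr]$ (the case $\mathbf{I}(r)\ge 2\mu+\beta$ being trivial). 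Both methods work. The paper's version is leaner and gives the lemma for every $\mu>0$ without invoking any information on $\mathbf{I}$; your version is conceptually more transparent --- the homogeneous competitor directly reflects the $I_0$-homogeneous blow-up and the Cauchy--Schwarz step makes visible where the estimate saturates --- at the cost of a case analysis on the range of $\mathbf{I}(r)$.

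Two small inaccuracies worth flagging in the sketch. First, the lower bound $\mathbf{I}(r)\ge\mu-Cr$ that you cite from Proposition~\ref{am_impr_thm} holds only when $\mu\le I_0$; for general $\mu>0$ (as the lemma is stated) one must argue by cases as above, using merely $\mathbf{I}(r)\to I_0>0$. In the actual application ($\mu=I_0$) this makes no difference. Second, $\alpha$ cannot be chosen as a function of $\mu,\beta$ alone and still hit both constraints with equality when $\mathbf{I}(r)\ne\mu$; either $\alpha$ is allowed to depend on $r$ (harmless, since the competitor already does) or the algebraic conditions must be checked with enough slack to absorb $\abs{\mathbf{I}(r)-I_0}=o(1)$ and the multiplicative $(1+Cr)$ distortion on $\mathbf{D}'(r)$ coming from the exponential chart, the latter handled --- exactly as in the paper --- by dividing the final inequality by $(1+Cr)$ and reabsorbing.
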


\begin{proof}

Let $r_{0} < {\rm inj}(\Sigma)$ be a radius such that ${\bf I}(r) = {\bf I}_{N,p}(r)$ is well defined and bounded in $\B_{r}(p)$ for every $0 < r < \min\lbrace r_{0}, \dist(p, \partial \Omega)\rbrace$. Recall that for every $0 < r < \min\lbrace r_{0}, \dist(p,\partial\Omega)\rbrace$ the exponential map $\exp_{p}$ defines a diffeomorphism $\exp_{p} \colon \mathbb{D}_{r} \to \B_{r}(p)$, where $\mathbb{D}_{r}$ is the disk of radius $r$ in $\R^{2} \simeq \C$. Let $g := N \circ \exp_{p} \colon \mathbb{D}_{r} \to \A_{Q}(\R^d)$, and let $f \in W^{1,2}(\mathbb{D}_{r}, \A_{Q}(\R^d))$ be the ``harmonic extension'' of $\left. g \right|_{r \Sf^{1}}$ already considered in $\S$ \ref{sec:lemma_luckhaus}. In particular, let $\varphi(\theta) := g(re^{i\theta})$, and let $\varphi = \sum_{\ell=1}^{P} \llbracket \varphi_{\ell} \rrbracket$ be an irreducible decomposition of $\varphi$ in maps $\varphi_{\ell} \in W^{1,2}(\Sf^{1}, \A_{Q_\ell}(\R^d))$ such that for some $\gamma_{\ell} \in W^{1,2}(\Sf^{1}, \R^{d})$
\[
\varphi(\theta) = \sum_{\ell=1}^{P} \sum_{m=0}^{Q_\ell-1} \left\llbracket \gamma_{\ell}\left(\frac{\theta+2\pi m}{Q_\ell} \right) \right\rrbracket\,.
\]
Such an irreducible decomposition exists by \cite[Proposition 1.5]{DLS11a}. Then, if the Fourier expansions of the $\gamma_{\ell}$'s are given by
\[
\gamma_{\ell}(\theta) = \frac{a_{\ell,0}}{2} + \sum_{n=1}^{\infty} r^{n} \left( a_{\ell,n} \cos(n \theta) + b_{\ell,n} \sin(n\theta) \right)\,,
\]
we consider their harmonic extensions to $\mathbb{D}_{r}$, namely the functions defined by
\[
\zeta_{\ell}(\rho,\theta) := \frac{a_{\ell,0}}{2} + \sum_{n=1}^{\infty} \rho^{n} \left( a_{\ell,n} \cos(n \theta) + b_{\ell,n} \sin(n\theta) \right) \quad \mbox{for every } 0 < \rho \leq r\,,
\]
and finally we let
\[
f(\rho e^{i\theta}) := \sum_{\ell=1}^{P} \sum_{m=0}^{Q_\ell} \left\llbracket \zeta_{\ell}\left( \rho^{\sfrac{1}{Q_\ell}}, \frac{\theta + 2\pi m}{Q_\ell}\right) \right\rrbracket \quad \mbox{for } \rho e^{i\theta} \in \mathbb{D}_r\,.
\]

Recalling \cite[Lemma 3.12]{DLS11a}, one can explicitly compute the following quantities:
\begin{align} \label{fourier_harmonic1}
\int_{\mathbb{D}_{r}} \abs{Df}^{2} &= \sum_{\ell=1}^{P} \Dir(\zeta_{\ell}, \mathbb{D}_r) = \pi \sum_{\ell=1}^{P} \sum_{n=1}^{\infty} r^{2n} n \left( \abs{a_{\ell,n}}^{2} + \abs{b_{\ell,n}}^{2} \right)\,, \\ \label{fourier_harmonic2}
\int_{r \Sf^{1}} \abs{\partial_{\tau} f}^{2} &= \sum_{\ell=1}^P \Dir(\varphi_\ell, r\Sf^1) = \frac{1}{r} \sum_{\ell=1}^{P} \frac{1}{Q_\ell} \int_{0}^{2\pi} \abs{\gamma_{\ell}'(\alpha)}^{2} \, {\rm d}\alpha = \pi \sum_{\ell=1}^{P}\sum_{n=1}^{\infty} \frac{r^{2n -1}n^2}{Q_\ell} \left( \abs{a_{\ell,n}}^{2} + \abs{b_{\ell,n}}^{2} \right)\,, \\ \label{fourier_harmonic3}
\int_{r\Sf^{1}} \abs{f}^{2} &= r\sum_{\ell=1}^{P} Q_{\ell} \int_{0}^{2\pi} \abs{\gamma_\ell(\alpha)}^{2}\, {\rm d}\alpha = \pi \sum_{\ell=1}^{P} Q_{\ell} \left( \frac{r\abs{a_{\ell,0}}^2}{2} + \sum_{n=1}^{\infty} r^{2n+1} \left( \abs{a_{\ell,n}}^{2} + \abs{b_{\ell,n}}^{2} \right) \right)\,,
\end{align}
where $\partial_{\tau}$ denotes the tangential derivative along $r\Sf^1$.

Now, it is an elementary fact (cf. \cite[proof of Proposition 5.2]{DLS11a}) that for any $\mu > 0$ there exists $\beta_{0} = \beta_{0}(\mu) > 0$ such that for every $0 < \beta < \beta_{0}$ it holds
\begin{equation} \label{fatto_algebrico}
(2\mu + \beta) n \leq \frac{n^2}{Q_\ell} + \mu (\mu + \beta) Q_{\ell} \quad \mbox{for every $n \in \Na$ and for every $Q_\ell$}\,.
\end{equation}

Multiplying \eqref{fatto_algebrico} by $\pi r^{2n} \left( \abs{a_{\ell,n}}^2 + \abs{b_{\ell,n}}^{2} \right)$ and summing over $n$ and $\ell$, we obtain from \eqref{fourier_harmonic1}, \eqref{fourier_harmonic2}, and \eqref{fourier_harmonic3} that for every $\mu > 0$ there exists $\beta_0 > 0$ such that for every $0 < \beta < \beta_0$
\begin{equation} \label{stima_harmonic_competitor}
(2\mu + \beta) \int_{\mathbb{D}_r} \abs{Df}^{2} \leq r \int_{r \Sf^{1}} \abs{\partial_{\tau} g}^{2} + \frac{\mu (\mu + \beta)}{r} \int_{r\Sf^{1}} \abs{g}^{2}\,.
\end{equation}

Now, let $u := f \circ \exp_{p}^{-1} \colon \B_{r}(p) \to \A_{Q}(\R^{d})$, so that the orthogonal projection $u^{\perp}$ is a map in $\Gamma_{Q}^{1,2}(\N\B_{r}(p))$ with $\left. u^{\perp} \right|_{r\Sf^{1}} = \left. N \right|_{r \Sf^{1}}$. The minimality of $N$ then implies that
\[
\begin{split}
\Jac(N, \B_{r}(p)) &\leq \Jac(u^{\perp}, \B_{r}(p)) \\ &\leq \Dir(u^{\perp}, \B_{r}(p)) + C_0 \int_{\B_{r}(p)} \abs{u}^{2} \\ & \overset{\eqref{proj}}{\leq} (1+r) \Dir(u,\B_{r}(p)) + \frac{C}{r} \int_{\B_{r}(p)} \abs{u}^{2}\,,
\end{split}
\]
from which in turn follows
\[
{\bf D}(r) \leq (1 + r) \Dir(u, \B_{r}(p)) + \frac{C}{r} \int_{\B_{r}(p)} \abs{u}^{2} + C_0 \int_{\B_{r}(p)} \abs{N}^{2}\,.
\]

Using that the metric of $\Sigma$ in $\B_{r}(p)$ is almost euclidean when $r \to 0$, we conclude that for small $r$'s
\[
{\bf D}(r) \leq (1 + Cr) \left[ (1 + r) \Dir(f, \mathbb{D}_r) + \frac{C}{r} \int_{\mathbb{D}_r} \abs{f}^{2} \right] + C_{0} \int_{\B_{r}(p)} \abs{N}^{2}\,.
\]

Now, by definition of $f$ one has
\begin{equation} \label{comparison_L2_bdry}
\int_{\mathbb{D}_{r}} \abs{f}^{2} \leq C r \int_{r \Sf^{1}} \abs{g}^{2} \leq Cr (1 + Cr) \int_{\partial \B_{r}(p)} \abs{N}^{2}\,.
\end{equation}
Combining \eqref{comparison_L2_bdry} with \eqref{stima_harmonic_competitor}, we deduce that for every $\mu > 0$ there exists $\beta_{0} = \beta_{0}(\mu)$ such that for every $0 < \beta < \beta_{0}$ 
\begin{equation} \label{stima_original_fct1}
{\bf D}(r) \leq (1+Cr) \left[ \frac{r}{2\mu + \beta} \int_{\partial \B_{r}(p)} \abs{\nabla_{\tau}^{\perp}N}^{2} + \frac{\mu(\mu + \beta)}{r(2\mu + \beta)} {\bf H}(r) \right] + C_{\mu} {\bf H}(r) + C_{0} \int_{\B_{r}(p)} \abs{N}^{2}\,. 
\end{equation}

Next, observe that the inner variation formula \eqref{IV:est} together with the Poincar\'e inequality \eqref{Poinc_type:eq} imply that in dimension $m=2$
\[
\Abs{\int_{\partial B_{r}(p)} \abs{\nabla_{\tau}^{\perp} N}^2 - \frac{{\bf D}'(r)}{2}} \leq C r {\bf D}(r)\,,
\]
and thus, since, again by the Poincar\'e inequality
\[
\int_{\B_{r}(p)} \abs{N}^{2} \leq C r^{2} {\bf D}(r)\,,
\]
equation \eqref{stima_original_fct1} reads
\begin{equation} \label{stima_original_fct2}
{\bf D}(r) \leq (1+Cr) \left[ \frac{r}{2(2\mu + \beta)} {\bf D}'(r) + \frac{\mu(\mu + \beta)}{r(2\mu + \beta)} {\bf H}(r) \right] + C_{\mu} {\bf H}(r) + C_{\mu} r^{2} {\bf D}(r)\,.
\end{equation}

Finally, divide by $1+Cr$ and use that ${\bf H}(r) \leq \frac{2}{I_0} r {\bf D}(r)$ for small $r$'s to finally conclude the validity of \eqref{stima_original_fct_final}.
\end{proof}

\begin{proof}[Proof of Proposition \ref{prop:decays}]
Let $N$ and $p$ be as in the statement, and fix a suitably small radius $r_0 > 0$. In particular, take $r_0 < \min\lbrace {\rm inj}(\Sigma), \dist(p,\partial\Omega) \rbrace$ such that the conclusions of Proposition \ref{am_impr_thm} hold. Recall from the aforementioned proposition that there exists $\lambda > 0$ such that the function $r \in \left( 0, r_0 \right) \mapsto e^{\lambda r} {\bf I}(r)$ is monotone non-decreasing. As an immediate corollary we deduce that when $r$ is small enough
\begin{equation} \label{frequency_lower_bound}
{\bf I}(r) - I_0 \geq -Cr\,.
\end{equation}

The goal now is to get the upper bound. In order to do this, first we exploit the variation estimates deduced in Lemma \ref{FVE} to compute again the derivative
\[
\begin{split}
{\bf I}'(r) &= \frac{{\bf D}(r)}{{\bf H}(r)} + \frac{r {\bf D}'(r)}{{\bf H}(r)} - \frac{r {\bf D}(r) {\bf H}'(r)}{{\bf H}(r)^2} \\
&= \frac{r {\bf D}'(r)}{{\bf H}(r)} - 2 \frac{r {\bf D}(r) {\bf E}(r)}{{\bf H}(r)^2} - \frac{r {\bf D}(r)}{{\bf H}(r)^2} \mathcal{E}_{\eqref{H'_est}}(r) \\
&= \frac{r {\bf D}'(r)}{{\bf H}(r)} - 2 \frac{r {\bf D}(r)^2}{{\bf H}(r)^2} - \frac{r {\bf D}(r)}{{\bf H}(r)^2} \left( \mathcal{E}_{\eqref{H'_est}}(r) + 2\mathcal{E}_{\eqref{OV:est}}(r) \right)\,,
\end{split}
\]
where 
\begin{align} \label{errore_tipo1}
&\abs{\mathcal{E}_{\eqref{H'_est}}(r)} \overset{\eqref{H'_est}}{\leq} C_{0} r {\bf H}(r)\,, \\ \label{errore_tipo2}
&\abs{\mathcal{E}_{\eqref{OV:est}}(r)} = \abs{{\bf D}(r) - {\bf E}(r)} \overset{\eqref{OV:est},\eqref{Poinc_type:eq}}{\leq} C_{0} r^{2} {\bf D}(r)\,.
\end{align} 

Now, apply the estimate \eqref{stima_original_fct_final} from the previous lemma with $\mu = I_0$ to deduce that for every $0 < \beta < \beta_0(I_0)$ one has
\[
\frac{r {\bf D}'(r)}{{\bf H}(r)} - 2 \frac{r {\bf D}(r)^2}{{\bf H}(r)^2} \geq \frac{2}{r} \left( I_0 + \beta - {\bf I}(r) \right) \left( {\bf I}(r) - I_0 \right) - 2C_{I_0}(2I_0 + \beta) {\bf I}(r)\,, 
\]
so that, recalling that ${\bf I}(r) \leq C$, we can finally estimate
\begin{equation} \label{bound_below_I_derivative}
{\bf I}'(r) \geq \frac{2}{r} \left( I_0 + \beta - {\bf I}(r) \right) \left( {\bf I}(r) - I_0 \right) - C {\bf I}(r)\,.
\end{equation}

Hence, if we fix $0 < \beta < \beta_0(I_0)$ we easily conclude 
\begin{equation}
\begin{split}
\frac{d}{dr} \left[ \frac{{\bf I}(r) - I_0}{r^{\beta}} \right] &= \frac{{\bf I}'(r)}{r^\beta} - \beta \frac{{\bf I}(r) - I_0}{r^{\beta +1}} \\
&\geq \frac{1}{r^{\beta + 1}} \left( 2I_0 + \beta - 2 {\bf I}(r) \right) \left( {\bf I}(r) - I_0 \right) - \frac{C}{r^{\beta}} \geq - \frac{C}{r^\beta}\,,
\end{split}
\end{equation}
for all radii $0 < r < r_0(\beta)$.

Integrating in $\left[ r, r_0 \right]$ we conclude
\begin{equation}
\frac{{\bf I}(r_0) - I_0}{r_0^\beta} - \frac{{\bf I}(r) - I_0}{r^\beta} \geq - Cr_{0}^{1-\beta}\,,
\end{equation}
that is
\begin{equation} 
{\bf I}(r) - I_0 \leq C r^{\beta}\,.
\end{equation}
This concludes the proof of
\begin{equation} \label{decay_frequency}
\abs{{\bf I}(r) - I_0} \leq C r^\beta
\end{equation}

To get the other estimates, compute
\[
\begin{split}
\frac{d}{dr}\left[ \log\left( \frac{{\bf H}(r)}{r^{2I_0 + 1}} \right) \right] &= \frac{{\bf H}'(r)}{{\bf H}(r)} - \frac{2I_0+1}{r} = \frac{2{\bf E}(r)}{{\bf H}(r)} + \frac{1}{{\bf H}(r)} \mathcal{E}_{\eqref{H'_est}}(r) - \frac{2I_0}{r} \\ &= \frac{2}{r} \left( {\bf I}(r) - I_0 \right) + \frac{1}{{\bf H}(r)} \left( \mathcal{E}_{\eqref{H'_est}}(r) + 2\mathcal{E}_{\eqref{OV:est}}(r) \right)\,,
\end{split}
\]
with $\mathcal{E}_{\eqref{H'_est}}(r)$ and $\mathcal{E}_{\eqref{OV:est}}(r)$ satisfying the same bounds as in \eqref{errore_tipo1}, \eqref{errore_tipo2}. Using that ${\bf I}(r) \leq C$, this allows to conclude that
\begin{equation}
\frac{2}{r} \left( {\bf I}(r) - I_0 \right) - Cr \leq \frac{d}{dr}\left[ \log\left( \frac{{\bf H}(r)}{r^{2I_0 + 1}} \right) \right] \leq \frac{2}{r} \left( {\bf I}(r) - I_0 \right) + Cr\,.
\end{equation}

After applying \eqref{decay_frequency}, integrating on $0 < s < r < r_0$ and taking exponentials, we therefore obtain the estimate
\begin{equation} \label{decay_height}
e^{-C_{\beta} \left( r^{\beta} - s^{\beta} \right)} \leq \frac{{\bf H}(r)}{r^{2I_0+1}} \frac{s^{2I_0 + 1}}{{\bf H}(s)} \leq e^{C_{\beta} \left( r^{\beta} - s^{\beta} \right)}\,.
\end{equation}

In particular, \eqref{decay_height} implies that the map 
\[
r \in \left( 0, r_0 \right) \mapsto \frac{{\bf H}(r) e^{-C_{\beta}r^\beta}}{r^{2I_0+1}}
\]
is monotone non-increasing. In turn, from this immediately follows the existence of the limit
\[
H_{0} := \lim_{r\to0} \frac{{\bf H}(r)}{r^{2I_0+1}}\,.
\]
The rate of convergence
\[
\Abs{\frac{{\bf H}(r)}{r^{2I_0+1}} - H_0} \leq C r^{\beta} \quad \mbox{for $r$ small enough} 
\]
is also a standard consequence of \eqref{decay_height}. 

Finally, we set $D_{0} := I_0 \cdot H_0$ and immediately obtain
\begin{equation} \label{decay_dirichlet}
\Abs{\frac{{\bf D}(r)}{r^{2I_0}} - D_0} = \Abs{{\bf I}(r) \frac{{\bf H}(r)}{r^{2I_0+1}} - I_0 \cdot H_0}  \leq \Abs{{\bf I}(r) - I_0} \frac{{\bf H}(r)}{r^{2I_0+1}} + I_0 \Abs{\frac{{\bf H}(r)}{r^{2I_0+1}} - H_0} \overset{\eqref{decay_frequency}, \eqref{decay_height}}{\leq} C r^\beta\,.
\end{equation}

\end{proof}

\subsection{Uniqueness of the tangent map at collapsed singularities} \label{sec:unicita}

We are now ready to prove Theorem \ref{uniqueness_tg_map_statement}.

\begin{proof}[Proof of Theorem \ref{uniqueness_tg_map_statement}]

Let $N$ and $p$ be as in the statement, and recall the definition of the blow-up maps $N_{r} = N_{p,r}$ given in \eqref{blow-ups} (together with the definitions of the maps ${\bf ex}_{r}$ and $\psi_r = \psi_{p,r}$ used in there). We first remark that by the Poincar\'e inequality \eqref{Poinc_type:eq} and the reverse Poincar\'e inequality \eqref{reverse_Poinc:eq} any convergence result for the maps $N_{r}$ as $r \downarrow 0$ is equivalent to the same result obtained for the maps
\[
\tilde{N}_{r}(y) := \frac{r^{\frac{m-2}{2}} N(\psi_{r}(y))}{\sqrt{{\bf D}(r)}}\,.
\]

Let us assume without loss of generality that $D_0 = 1$. Then, in dimension $m=2$ the decay estimate \eqref{decay_dirichlet} implies that for $r \downarrow 0$
\begin{equation}
\tilde{N}_{r}(z) = r^{-I_0} N(\psi_{r}(z)) \left( 1 + o(r^{\sfrac{\beta}{2}}) \right)\,,
\end{equation}
and therefore in order to show the existence of a uniform limit for the maps $\tilde{N}_{r}$ in $\mathbb{D}_1$ it suffices to show the existence of a uniform limit for the maps $\tilde{u}_{r}(z) := r^{-I_0} N(\psi_{r}(z))$. Furthermore, if we write $z = \rho e^{i\theta} \in \mathbb{D}_1$ we see immediately that
\[
\tilde{u}_{r}(\rho e^{i\theta}) = r^{-I_0} N(\psi_r(\rho e^{i\theta})) = \rho^{I_0} (\rho r)^{-I_0} N(\psi_{\rho r}(e^{i\theta})) = \rho^{I_0} \tilde{u}_{\rho r}(e^{i\theta})\,,
\]
and thus our goal will be achieved if we show uniform convergence of the maps $\left.\tilde{u}_{r}\right|_{\Sf^1}$. For the sake of notational simplicity we will then remove the tilde, call $w = e^{i\theta}$ the variable on $\Sf^1$ and consider the one-parameter family of maps $u_{r} \colon \Sf^{1} \to \A_{Q}(\R^d)$ given by
\[
u_{r}(w) = r^{-I_0} N(\psi_{r}(w))\,. 
\]

We then fix $\frac{r}{2} \leq s \leq r$ and compute
\begin{equation} \label{L2_cauchy_est1}
\begin{split}
\int_{\Sf^1} \G(u_r, u_s)^{2}\, \dHa^1 &= \int_{\Sf^1} \G\left( \frac{N(\psi_r(w))}{r^{I_0}}, \frac{N(\psi_s(w))}{s^{I_0}} \right)^2\, \dHa^1(w) \\ &\leq \int_{\Sf^1} \sum_{\ell=1}^{Q} \left( \int_{s}^{r} \Abs{\frac{d}{dt} \left( \frac{N^{\ell}(\psi_t(w))}{t^{I_0}} \right)} \, {\rm d}t   \right)^{2} \, \dHa^1(w) \\
&\leq (r-s) \int_{\Sf^1} \int_{s}^{r} \sum_{\ell=1}^{Q} \Abs{\frac{d}{dt} \left( \frac{N^{\ell}(\psi_t(w))}{t^{I_0}} \right)}^{2} \, {\rm d}t \, \dHa^{1}(w)\,.
\end{split}
\end{equation}
Note that in the above computation we have used \cite[Proposition 1.2]{DLS11a} and the fact that the map $t \in  \left( s,r \right) \mapsto \frac{N(\psi_t(w))}{t^{I_0}}$ is in $W^{1,2}$ for a.e. $w \in \Sf^1$. 

Now, we have
\[
\frac{d}{dt} \left( \frac{N^{\ell}(\psi_t(w))}{t^{I_0}} \right) = \frac{DN^{\ell}(\psi_t(w)) \cdot {\rm d} \left.\exp_{p}\right|_{tw}(w)}{t^{I_0}} - I_{0} \frac{N^{\ell}(\psi_t(w))}{t^{I_0+1}}\,,
\]
and thus
\[
\begin{split}
\Abs{\frac{d}{dt} \left( \frac{N^{\ell}(\psi_t(w))}{t^{I_0}} \right)}^{2} \leq &\frac{\abs{\nabla_{\hat{r}}^{\perp} N^{\ell}(\psi_t(w))}^2}{t^{2I_0}} + I_{0}^2 \frac{\abs{N^\ell(\psi_t(w))}^2}{t^{2I_0+2}} - 2I_0 \frac{\langle \nabla_{\hat{r}}^{\perp} N^\ell(\psi_t(w)), N^\ell(\psi_t(w)) }{t^{2I_0+1}}\\ &+ {\rm Err}\,,
\end{split}
\]
where
\[
{\rm Err} \leq C t^{1-2I_0} \abs{\nabla_{\hat{r}}^{\perp}N^\ell(\psi_t(w))}^{2} + C t^{-2I_0} \abs{N^\ell(\psi_t(w))}^{2}\,
\]
for small $t$.

Inserting in \eqref{L2_cauchy_est1} and changing variable $x = \psi_t(w)$ we easily obtain from the variation estimates in Lemma \ref{FVE}:
\begin{equation} \label{stima_L2_diadica}
\begin{split}
\int_{\Sf^1} \G(u_r,u_s)^2 \dHa^1 \leq &(r-s)(1+Cr) \int_{s}^{r} \frac{{\bf G}(t)}{t^{2I_0+1}} + I_0^2 \frac{{\bf H}(t)}{t^{2I_0+3}} - 2I_0 \frac{{\bf E}(t)}{t^{2I_0+2}} \, {\rm d}t \\ &+ C (r-s) \int_{s}^{r} \frac{{\bf G}(t)}{t^{2I_0}} + \frac{{\bf H}(t)}{t^{2I_0+1}} \, {\rm d}t \\
= & \underbrace{(r-s)(1+Cr) \int_{s}^{r} \frac{{\bf D}'(t)}{2t^{2I_0+1}} + I_0^2 \frac{{\bf H}(t)}{t^{2I_0+3}} - 2I_0 \frac{{\bf D}(t)}{t^{2I_0+2}} \, {\rm d}t}_{=:A} \\ &+\underbrace{C (r-s) \int_{s}^{r} \frac{{\bf G}(t)}{t^{2I_0}} + \frac{{\bf H}(t)}{t^{2I_0+1}} \, {\rm d}t}_{=:E_1} \\ &+ \underbrace{(r-s)(1+Cr) \int_{s}^{r} \frac{\mathcal{E}_{\eqref{IV:est}}(t)}{t^{2I_0+1}} + \frac{\mathcal{E}_{\eqref{OV:est}}(t)}{t^{2I_0+2}}\,{\rm d}t}_{=:E_2}\,.
\end{split}
\end{equation}

Now, we have
\begin{equation}
\begin{split}
A &= (r-s)(1+Cr) \int_{s}^{r} \frac{1}{2t} \left( \frac{{\bf D}(t)}{t^{2I_0}} \right)' + I_0^2 \frac{{\bf H}(t)}{t^{2I_0+3}} - I_0 \frac{{\bf D}(t)}{t^{2I_0+2}} \, {\rm d}t \\
&= (r-s)(1+Cr) \int_{s}^{r} \frac{1}{2t} \left( \frac{{\bf D}(t)}{t^{2I_0}} \right)' + I_0 \frac{{\bf H}(t)}{t^{2I_0+3}} \left( I_0 - {\bf I}(t) \right) \, {\rm d}t \,,
\end{split}
\end{equation}
so that, for $s = \frac{r}{2}$
\begin{equation}\label{termine_principale}
A \leq C \Abs{\frac{{\bf D}(r)}{r^{2I_0}} - \frac{{\bf D}(\sfrac{r}{2})}{(\sfrac{r}{2})^{2I_0}}} + C \int_{\sfrac{r}{2}}^{r} \frac{I_0 - {\bf I}(t)}{t} \, {\rm d}t \overset{\eqref{decay_dirichlet}, \eqref{decay_frequency}}{\leq} C r^{\beta}\,.
\end{equation}

For what concerns the error terms, we can use the variation estimates \eqref{OV:est} and \eqref{IV:est} together with the Poincar\'e inequality \eqref{Poinc_type:eq} to control
\begin{equation}\label{primo_errore}
\abs{E_2} \leq C \frac{r}{2} \int_{\sfrac{r}{2}}^{r} \frac{{\bf D}(t)}{t^{2I_0}} \, {\rm d}t \overset{\eqref{decay_dirichlet}}{\leq} C r^2\,,
\end{equation} 
and
\begin{equation}\label{secondo_errore}
\begin{split}
\abs{E_1} &\leq C \frac{r}{2} \int_{\sfrac{r}{2}}^{r} \frac{{\bf D}'(t)}{t^{2I_0}} + \frac{\abs{\mathcal{E}_{\eqref{IV:est}}(t)}}{t^{2I_0}} + \frac{{\bf H}(t)}{t^{2I_0+1}} \, {\rm d}t \\
&\leq C r^{1-2I_0} \abs{{\bf D}(r) - {\bf D}(\sfrac{r}{2})} + C r^{2} \int_{\sfrac{r}{2}}^{r} \frac{{\bf D}(t)}{t^{2I_0}} \, {\rm d}t + C r \int_{\sfrac{r}{2}}^{r} \frac{{\bf H}(t)}{t^{2I_0+1}} \, {\rm d}t \\
&\overset{\eqref{decay_dirichlet}, \eqref{decay_height}}{\leq} C r^{1+\beta}\,. 
\end{split}
\end{equation}

Plugging \eqref{termine_principale}, \eqref{primo_errore} and \eqref{secondo_errore} in \eqref{stima_L2_diadica} we conclude that
\begin{equation}\label{unico_profilo_finale}
\int_{\Sf^1} \G(u_{r}, u_{\frac{r}{2}})^{2} \, \dHa^1 \leq C r^{\beta}\,.
\end{equation}

With an elementary dyadic argument analogous to \cite[proof of Theorem 5.3]{DLS11a}, we conclude that the family $u_r$ is $L^{2}$-Cauchy. Since the $u_{r}$'s are equi-H\"older (cf. \eqref{unif_Holder}), this suffice to conclude uniform convergence to a unique limit.

\end{proof}

\bibliographystyle{aomalpha}
\bibliography{Jacobi}

\Addresses

\end{document}